\documentclass[11pt,reqno]{amsart}

\usepackage[T1]{fontenc}
\usepackage{lmodern}
\usepackage{amsmath,amssymb,mathtools,mathrsfs}
\usepackage{amsthm}
\usepackage{aliascnt}
\usepackage{enumitem}
\usepackage{array,tabularx}
\usepackage{microtype}
\usepackage{xcolor}
\usepackage[colorlinks=true,linkcolor=blue,citecolor=blue,urlcolor=blue]{hyperref}
\allowdisplaybreaks
\numberwithin{equation}{section}

\usepackage[
    a4paper,
    left=29mm,
    right=29mm,
    top=25mm,
    bottom=28mm
]{geometry}

\setlist[itemize]{topsep=3pt,itemsep=2pt,parsep=0pt}
\setlist[enumerate]{topsep=3pt,itemsep=2pt,parsep=0pt}

\newtheorem{theorem}{Theorem}[section]
\newaliascnt{proposition}{theorem}
\newtheorem{proposition}[proposition]{Proposition}
\aliascntresetthe{proposition}
\newaliascnt{lemma}{theorem}
\newtheorem{lemma}[lemma]{Lemma}
\aliascntresetthe{lemma}

\newaliascnt{corollary}{theorem}
\newtheorem{corollary}[corollary]{Corollary}
\aliascntresetthe{corollary}
\theoremstyle{remark}
\newaliascnt{remark}{theorem}
\newtheorem{remark}[remark]{Remark}
\aliascntresetthe{remark}
\usepackage[nameinlink,noabbrev]{cleveref}

\newcommand{\T}{\mathbb T}
\newcommand{\Z}{\mathbb Z}
\newcommand{\R}{\mathbb R}
\newcommand{\N}{\mathbb N}
\newcommand{\cH}{\mathcal H}
\newcommand{\cS}{\mathcal S}
\newcommand{\cL}{\mathcal L}

\newcommand{\PP}{\mathbb P}
\newcommand{\Ran}{\operatorname{Ran}}
\newcommand{\Ker}{\operatorname{Ker}}
\newcommand{\Id}{\operatorname{Id}}
\newcommand{\HS}{\operatorname{HS}}
\newcommand{\supp}{\operatorname{supp}}
\newcommand{\dd}{\,\mathrm d}
\newcommand{\ip}[2]{\left\langle #1,#2\right\rangle}
\newcommand{\norm}[1]{\left\lVert #1\right\rVert}

\newcolumntype{Y}{>{\raggedright\arraybackslash}X}

\title[Unique ergodicity of the projective process]
{Unique Ergodicity for the Projective Process of the 2D Navier--Stokes
Equation with Nondegenerate Noise}
\author{Zeng Lian}
\address{School of Mathematics, Sichuan University, Chengdu,
Sichuan 610016, P. R. China}
\email{lianzeng@scu.edu.cn}
\author{Rongchang Liu}
\address{School of Mathematics, Sichuan University, Chengdu,
Sichuan 610064, P. R. China}
\email{rcliu@scu.edu.cn}
\author{Kening Lu}
\address{School of Mathematics, Sichuan University, Chengdu,
Sichuan 610064, P. R. China}
\email{keninglu@scu.edu.cn}
\hypersetup{
  pdftitle={Unique Ergodicity for the Projective Process of the 2D Navier--Stokes Equation with Nondegenerate Noise},
  pdfauthor={Zeng Lian, Rongchang Liu, and Kening Lu},
  pdfsubject={Unique ergodicity and the Furstenberg--Khasminskii formula for the projectivised linearised 2D stochastic Navier--Stokes equation},
  pdfkeywords={stochastic Navier--Stokes equations; projective process; unique ergodicity; Malliavin calculus; asymptotic coupling}
}

\date{}

\begin{document}

\begin{abstract}
We prove unique ergodicity of the projective process associated with the two dimensional Navier--Stokes equation in
vorticity form, with additive diagonal noise acting on every nonzero real Fourier phase and satisfying two sided power law bounds. Consequently, the exact Furstenberg--Khasminskii formula for the top Lyapunov exponent holds. 

The main new ingredient is a compact dense mechanism for asymptotic generalized coupling in the absence of a Foia\c{s}--Prodi type high-low mode decomposition for the projective dynamics. Using the dense range of the Malliavin derivative and compactness of the state derivative, we construct finite rank perturbations of the Wiener path that compensate, to first order, for perturbations of the initial condition, leaving a residual whose logarithmic growth has negative stationary mean and hence contracts locally at an exponential rate, with a cost controlled by a triangular scheme of blockwise Ramer transformations. 
\end{abstract}

\subjclass[2020]{Primary 60H15; Secondary 35Q30, 37H15, 37A25}
\keywords{stochastic Navier--Stokes equations, projective process, unique
ergodicity, Malliavin calculus, asymptotic coupling}
\maketitle

\begingroup
\renewcommand{\thefootnote}{}
\footnotetext{
This work was supported by the Fundamental Research Funds for the Central Universities and the National Natural Science Foundation of China (Grant Nos.~12090010, 12090013 and 12471154). 
}
\endgroup

\setcounter{tocdepth}{1}
\tableofcontents

\section{Introduction}

A central question in the mathematical theory of turbulence is to identify
the probability measure that describes observations over long time scales.
In his article \cite{Ruelle78}, Ruelle proposed that the statistical
properties of a dissipative system, and in particular of a turbulent fluid,
should be described by an invariant measure selected by the dynamics.  For
uniformly hyperbolic attractors, SRB measures provide the canonical model
for such physical measures.  The survey of Eckmann and Ruelle
\cite{EckmannRuelle85} placed this program in the broader ergodic theory of
strange attractors and emphasized that physical measures, Lyapunov
exponents, entropy, and dimension are complementary aspects of a
statistical description of chaotic dynamics including fluid flows.

Within this program, positivity of the top Lyapunov exponent is a basic
signature of dynamical instability and chaos.  Random forcing provides a
particularly tractable setting for this question.  Indeed, for many
stochastic fluid equations the associated Markov dynamics possesses a
unique invariant measure, see, among others,
\cite{FM95,BKL01,EM01,HM06,HM08,HM11,KS12}, thereby providing a canonical
stationary law with respect to which Lyapunov exponents can be studied.
For Galerkin approximations of stochastic fluid equations, positivity of
the top Lyapunov exponent has been established through a detailed study of
the associated projective dynamics and the Furstenberg--Khasminskii formula
in \cite{BBPS22b,BPS}, see also related results on
Lagrangian chaos and scalar mixing in
\cite{BBPS21,BBPS22a,BBPS22}.  For the full stochastic Navier--Stokes equation,
however, the corresponding infinite dimensional projective dynamics is
substantially more delicate, and a comparable theory has remained largely
open.

A major recent advance was made in \cite{HPSRY}.  Building on the
spectral median mechanism introduced in \cite{HR25} for the normalised
linearised flow, the authors proved positive time regularisation of the
projective process, constructed a stationary projective law, and obtained
a Furstenberg--Khasminskii lower bound for the top Lyapunov exponent.
The uniqueness of the stationary projective law, and consequently the
exact Furstenberg--Khasminskii formula, remained open.  The purpose of the
present paper is to resolve these questions.

We consider the stochastic Navier--Stokes equation on $\T^2$ in vorticity
form,
\begin{align}\label{eq:intro-sns}
 \dd w_t
 =\bigl(\nu\Delta w_t-B(w_t,w_t)\bigr)\dd t+Q\dd W_t,
 \qquad \nu>0,
\end{align}
where $B(w,w)=(Kw)\cdot\nabla w$ and
$K=\nabla^\perp(-\Delta)^{-1}$.  The noise is diagonal in the real Fourier
basis, acts on every nonzero Fourier phase, and satisfies uniform two sided
power law bounds.  In contrast with the radial covariance in
\cite{HPSRY}, the amplitudes may depend on both the frequency direction and
the real Fourier phase.  The derivative cocycle of \eqref{eq:intro-sns}
induces a Markov process $(w_t,[\rho_t])$ on
$H^b\times\PP(H)$, where $\PP(H)$ denotes the projective space of $H$
and $[\rho]$ the line generated by $0\ne\rho\in H$.  The precise noise
assumption, the choice of $b$, and the functional setting are given in
\Cref{sec:preliminaries-main}.

Our main theorem states that this projective process has exactly one
invariant probability measure.  Thus, over the stationary law of the
stochastic fluid, the tangent direction admits a unique stationary law.
The uniqueness theorem also identifies the stationary logarithmic growth
rate with the top Lyapunov exponent and gives the exact
Furstenberg--Khasminskii formula.  The precise statements are given in
\Cref{thm:main} and \Cref{cor:exact-FK}.

The main difficulty is that the usual high low mode mechanism behind
asymptotic coupling for the base Navier--Stokes equation does not survive
projectivisation.  This failure is already visible for the heat equation.  Let $F_T[\rho]=[e^{\nu T\Delta}\rho]$ be the induced map on $\PP(H)$.  If $|\boldsymbol n|>|\boldsymbol m|$, then in the affine chart at $[e_{\boldsymbol n}]$,
\begin{align*}
DF_T([e_{\boldsymbol n}])e_{\boldsymbol m}=e^{\nu(|\boldsymbol n|^2-|\boldsymbol m|^2)T}e_{\boldsymbol m}.
\end{align*}
Thus a low frequency perturbation of a high frequency projective direction
may be exponentially amplified even though the raw heat flow is strongly
dissipative.  There is therefore no fixed high mode complement on which the
normalised dynamics contract uniformly.  A second obstruction is that the
joint Malliavin endpoint derivative has dense range but need not be onto,
so no bounded inverse Malliavin covariance is available. This later issue was also encountered in \cite{HM06,HM11}, where a 
Tikhonov regularisation was used to obtain an asymptotic
compensation of the initial perturbation.

The new compact dense mechanism of the paper combines these two apparently
insufficient properties.  Positive time parabolic smoothing makes the state
derivative compact, whereas the joint Malliavin endpoint has dense range.
Consequently a compact state derivative can be approximated in operator
norm by the composition of the Malliavin derivative with a finite rank
Cameron--Martin control.  We implement this compact dense
principle simultaneously in the state, the noise path, and the projective
tangent bundle.  On compact state and noise convolution driver it yields finitely many
controls which are patched into a measurable selector.  The corresponding
linear residual is small on a fixed positive fraction of every stationary
path measure and is controlled by the state derivative elsewhere.  Uniform
occupation estimates then turn this into a negative stationary mean for
the logarithmic residual.  This replaces the deterministic contraction of
a Foia\c{s}--Prodi high mode complement and produces, with positive
probability, a random local stable ball for the compensated projective
dynamics.

The dense range argument builds on the Malliavin nondegeneracy mechanism
for the stochastic Navier--Stokes equation developed in \cite{MP06} and
subsequently used in \cite{HM06,HM11}.  The input needed here, however, is
qualitative rather than quantitative. 
We prove that the Malliavin endpoint has almost surely dense range, with a
single exceptional null set independent of both the terminal time and the
initial state.  Starting from an
exact adjoint identity, a bounded variation quadratic variation argument
propagates annihilation through the Fourier Lie algebra even though the
relevant test paths may depend on the whole Brownian trajectory.  A
separation argument for weighted Fourier shifts then forces every
annihilator to vanish. 

There is a further probabilistic issue.  The finite rank selector depends on
the whole fresh noise block, so the compensating Wiener shift is
anticipative inside that block and an adapted Girsanov argument is not
available.  We use Ramer's theorem \cite{Ramer,UZ97} instead.  The Malliavin derivative
of each block shift is Hilbert--Schmidt and its relative entropy cost is
quadratic in the current distance.  Across blocks the construction is
triangular.  An entropy stopping rule and a countable mixture over Cameron--Martin cost
levels give an absolutely continuous generalized coupling on the infinite
path space while retaining positive probability that the compensating
shifts are never stopped.

The resulting stable ball is local, and a separate mechanism is required to
reach it.  Around the prepared state
$(0,[\cos x_1+\cos x_2])$ we prove an observability estimate for the
deterministic tangent skeleton.  Douglas' factorisation theorem then gives
exact first order cancellation by a Cameron--Martin perturbation and hence a
local bridge with both exact transition law marginals.  In particular, this yields local $d$-smallness, which provides one of the
ingredients for a possible future spectral gap argument.

Accessibility of this neighbourhood is proved separately by a control
construction exploiting the specific linearised and projective structure.
The base path is controlled directly, while the fibre is steered indirectly
through first shell seeding, relaxation, and a four dimensional control
algebra.  Gaussian support then gives positive probability accessibility.
Combining accessibility, the prepared bridge, and the random stable ball
closes the asymptotic generalized coupling criterion of \cite{HMS11}.

\section{Preliminaries and main results}
\label{sec:preliminaries-main}

This section gives the precise formulation of the equation, the projective
cocycle, and the two main results.  The analytic facts needed to define the
process globally are collected in \Cref{sec:foundations}.

Let $\T^2=(\R/2\pi\Z)^2$ carry normalised Lebesgue measure and, for
$s\in\R$, let $H^s=H^s_0(\T^2;\R)$ and $H=H^0$, where the subscript
denotes the mean zero subspace.  Since the zero Fourier mode is absent, we
use throughout the equivalent homogeneous Fourier norm
\begin{align*}
 \|f\|_{H^s}^2
 =\sum_{\boldsymbol k\ne\boldsymbol0}
 |\boldsymbol k|^{2s}|\widehat f(\boldsymbol k)|^2.
\end{align*}
Fix the half lattice
\begin{align*}
 \Z^2_+
 =\{\boldsymbol k=(k_1,k_2)\in\Z^2\setminus\{\boldsymbol0\}:
 k_1>0,\ \text{or }k_1=0\text{ and }k_2>0\},
\end{align*}
and let $e_{\boldsymbol k,\cos}$ and $e_{\boldsymbol k,\sin}$ denote the
normalised real Fourier modes.  For mean zero functions $f,g$, set 
\begin{align*}
 B(f,g)&=(Kf)\mathbin\cdot\nabla g, \text{ where }  Kf=\nabla^\perp(-\Delta)^{-1}f,\\
 \cS(f,g)&=-B(f,g)-B(g,f).
\end{align*}

We consider the stochastic Navier--Stokes equation in vorticity form
\begin{align}\label{eq:sns}
 \dd w_t
 =\bigl(\nu\Delta w_t-B(w_t,w_t)\bigr)\dd t+Q\dd W_t,
 \qquad \nu>0,
\end{align}
where $W$ is the cylindrical Wiener process on $H$  and $Q$ is diagonal:
\begin{align*}
 Qe_{\boldsymbol k,\iota}
 =q_{\boldsymbol k}^\iota e_{\boldsymbol k,\iota}.
\end{align*}
We assume that there are constants $0<c_Q\le C_Q<\infty$ and an exponent
$a>14$ such that
\begin{align}\label{eq:q}
 c_Q|\boldsymbol k|^{-a/2}
 \le |q_{\boldsymbol k}^\iota|
 \le C_Q|\boldsymbol k|^{-a/2},
 \qquad
 \boldsymbol k\in\Z^2_+,\quad
 \iota\in\{\cos,\sin\}.
\end{align}
In particular, every nonzero real Fourier phase is forced.  Unlike the
radial covariance considered in \cite{HPSRY}, the amplitudes may depend on
both the frequency direction and the real Fourier phase: we require only
the uniform two sided power law bounds in \eqref{eq:q}. The Lyapunov structure needed for this more general diagonal covariance is
established in \Cref{sec:foundations}.

Throughout,
\begin{align}\label{eq:b}
 b=\frac{a-3}{2}>\frac{11}{2},
\end{align}
which implies that 
\begin{align*}
 \|Q\|_{\HS( H,H^b)}^2
 \lesssim
 \sum_{\boldsymbol k\in\Z^2_+}
 |\boldsymbol k|^{2b-a}
 =
 \sum_{\boldsymbol k\in\Z^2_+}
 |\boldsymbol k|^{-3}
 <\infty,
\end{align*}
so the stochastic convolution is $H^b$ valued.  The equation
\eqref{eq:sns} is globally well posed in $H^b$ by the standard
semilinear parabolic theory, see also 
\Cref{lem:compatible-driver-global}; continuous dependence gives a Feller
Markov semigroup.  Its invariant probability measure is unique \cite{HM06,KS12} and denoted by $\mu$.

For a base trajectory $w$, the derivative cocycle is the solution operator
$U(t,s)$ of
\begin{align}\label{eq:raw-tangent}
 \partial_t\rho_t
 =\nu\Delta\rho_t+\cS(w_t,\rho_t),
 \qquad
 \rho_s=\rho.
\end{align}
Under our regularity assumptions, the standard backward uniqueness
argument for parabolic equations applies to \eqref{eq:raw-tangent}; see,
for instance, \cite[Chapter~III, Section~6]{Temam97}.   Hence
$U(t,s)\rho\ne0$ whenever $t>s$ and $\rho\ne0$, and the derivative
cocycle can be projectivised.  Let
\begin{align*}
 \PP(H)=(H\setminus\{0\})/\R^\times
\end{align*}
and, for unit representatives, use the metric
\begin{align}\label{eq:projective-metric}
 d_{\PP}([u],[v])
 =\min\{\|u-v\|_2,\|u+v\|_2\}.
\end{align}
The state space of the projective process is $\mathsf X=H^b\times\PP(H)$, 
and we write $(P_t)_{t\ge0}$ for the Markov semigroup of
$(w_t,[\rho_t])$.  For a unit representative $\mathsf p$ of the initial
projective direction, we write
\begin{align*}
 \mathsf p_t=\frac{U(t,0)\mathsf p}{\|U(t,0)\mathsf p\|_2}.
\end{align*}
Positive time smoothing and the compact multiplicative ergodic theorem \cite{Ruelle82,LianLu10}
give a well defined top Lyapunov exponent $\lambda_1$ (may take the value $-\infty$) over the stationary
base law $\mu$.  The compactness and integrability hypotheses used for the
multiplicative ergodic theorem are verified in \Cref{subsec:FK-proof}.

Our main result is the unique ergodicity of the full infinite dimensional
projective process.  

\begin{theorem}
\label{thm:main}
Under \eqref{eq:q}--\eqref{eq:b}, the Markov process
\begin{align*}
 (w_t,[\rho_t])\quad\text{on}\quad H^b\times\PP(H)
\end{align*}
has exactly one invariant probability measure $\widehat\mu$.  Its first
marginal is $\mu$, the unique invariant probability measure of
\eqref{eq:sns}.
\end{theorem}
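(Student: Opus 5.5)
Existence and uniqueness are handled separately, and uniqueness goes through the asymptotic generalized coupling criterion of \cite{HMS11}, organised exactly as outlined in the introduction.

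\textbf{Existence.} Existence of an invariant measure follows from Krylov--Bogoliubov. Start the process from $\mu\otimes\delta_{[\mathsf p]}$ for a fixed unit $\mathsf p$ and take time averages of $P_t$.
\begin{itemize}
\item Tightness in $H^b$ comes from the Lyapunov structure of \Cref{sec:foundations}.
\item Tightness in $\PP(H)$ comes from positive time parabolic smoothing of $\mathsf p_t$. The smoothing estimate bounds $\|\mathsf p_t\|_{H^{s}}$ for some $s>0$ in terms of an exponential moment of the base trajectory on $[t-1,t]$. Compact embedding of $H^s$ into $H$ then gives compactness of sublevel sets in $\PP(H)$.
\end{itemize}
The Feller property of $P_t$ on $\mathsf X$ follows from continuous dependence of $U(t,0)$ on $w$ together with backward uniqueness, which guarantees $U(t,0)\mathsf p\ne0$. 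Every invariant measure has first marginal invariant for \eqref{eq:sns}, hence equal to $\mu$.

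\textbf{Uniqueness: the criterion.} I will verify the following version of \cite{HMS11}. Suppose that for any two invariant measures there is a pair of initial points $x,y$ in their supports such that, with positive probability, a coupling of $P_\infty(x,\cdot)$ with a measure absolutely continuous to $P_\infty(y,\cdot)$ on path space converges asymptotically. Then the invariant measure is unique. Three ingredients supply this.

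\begin{enumerate}
\item \emph{Accessibility.} The prepared point $x_\star=(0,[\cos x_1+\cos x_2])$ is accessible from every initial condition. I will show this by a control argument plus Gaussian support. The base is driven to $0$ directly. The fibre is steered through first shell seeding, relaxation, and a four dimensional control algebra. Consequently every invariant measure charges every neighbourhood of $x_\star$.
\item \emph{Local bridge.} Around $x_\star$ I will prove observability of the deterministic tangent skeleton. Douglas' factorisation then gives a Cameron--Martin perturbation that cancels the first order discrepancy exactly. Hence transition laws from nearby points overlap, which is local $d$-smallness.
\item \emph{Random stable ball.} This is the core step. Using the almost sure dense range of the Malliavin endpoint derivative, with an exceptional null set uniform in time and initial state, and compactness of the state derivative, I will proceed as follows.
\begin{itemize}
\item On compacts of state and driver, choose finitely many finite rank Cameron--Martin controls whose Malliavin image approximates the state derivative in operator norm. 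Patch them into a measurable selector.
\item The residual linear map is small on a positive fraction of stationary time and dominated by the state derivative elsewhere. Occupation estimates then give a negative stationary mean of its log-norm, hence exponential contraction on a random ball with positive probability.
\item The shifts are anticipative within each block, so the cost is controlled by Ramer's theorem rather than Girsanov. The Hilbert--Schmidt Malliavin derivative of each shift gives a relative entropy cost quadratic in the current distance.
\item Chain the blocks triangularly, with an entropy stopping rule and a countable mixture over cost levels. This yields an absolutely continuous generalized coupling, with positive probability that no block is stopped.
\end{itemize}
\end{enumerate}

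\textbf{Assembly.} Given two ergodic invariant measures, run both processes to a neighbourhood of $x_\star$ using (1). Use (2) to bridge them into the same small region of the stable ball, with positive probability and with exact marginals. Then use (3) to couple asymptotically. The criterion of \cite{HMS11} forces the two measures to coincide.

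\textbf{Main obstacle.} I expect step (3) to be the hardest, specifically obtaining a \emph{negative} stationary mean for the logarithmic residual without a Foia\c{s}--Prodi splitting. My first attempt is to fix a compact set of base states of $\mu$-measure close to one, on which a single finite rank approximation has residual norm $\le\varepsilon$. Off this set I would bound the log-residual by the log-norm of the state derivative, which is integrable. Choosing $\varepsilon$ small then makes the average negative.
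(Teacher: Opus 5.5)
Your architecture matches the paper's. Existence goes through Krylov--Bogoliubov, and uniqueness through \cite[Corollary~2.2]{HMS11}, fed by accessibility of $(0,[\phi])$, the Douglas bridge, and the Ramer-coupled random stable ball. State the criterion as the paper does: a Borel set charged by every invariant law, together with a Borel family of generalized couplings for \emph{all} pairs in that set; a single pair of support points is not enough.

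The genuine gap is your control of the projective direction. The claim that positive time parabolic smoothing bounds $\norm{\mathsf p_t}_{H^s}$ by a functional of the base path on $[t-1,t]$ is false. Write $\mathsf p_t=U(t,t-1)\mathsf p_{t-1}/\norm{U(t,t-1)\mathsf p_{t-1}}_2$. Smoothing controls only the numerator. The denominator has no lower bound in terms of $w$ alone, because it depends on how much of $\mathsf p_{t-1}$ sits at high frequencies. For $w\equiv0$ one has $[e^{\nu t\Delta}e_{\boldsymbol k,\cos}]=[e_{\boldsymbol k,\cos}]$ for all $t$, so $\norm{\mathsf p_t}_{H^1}=|\boldsymbol k|$ forever. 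With a nonzero base, the available deterministic bounds need $\mathsf p_s\in H^1$ to begin with and grow exponentially in the time integral of the base norm. So smoothing alone gives neither tightness of the time averages in $\PP(H)$ nor any Lyapunov control of the fibre. The missing ingredient is stochastic: the noise must return fibre mass to low modes. This is the uniform reset $\mathbf E_{w,\mathsf p}\norm{\mathsf p_{t_*}}_{H^1}^{2r}\le C_rV_r(w)$ over all unit $\mathsf p\in H$ in \Cref{lem:robust-HPSRY-hierarchy}. For the phase dependent covariance \eqref{eq:q} it requires the radial subnoise splitting, the conditional one trial estimate, and the fixed ratio spectral descent of \Cref{sec:foundations}. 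It produces the inf-compact $W_0$ of \Cref{cor:revision-common-block}, on which the paper's existence proof rests.

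The same input is used silently throughout your step (3), and your proposed route to the negative mean does not work as stated.
\begin{itemize}
\item The residual $J-AK$ depends on $[\mathsf p]$ and on the driver. Its mean must be negative simultaneously for every $P_\tau$ invariant law on $\mathsf X$, because the last exit argument of \Cref{prop:uniform-occupation} identifies limits of empirical measures only as \emph{some} invariant law.
\item A compact set $K$ of base states with large $\mu$ mass is not enough. $K\times\PP(H)$ is not compact, so the pointwise compact dense approximation cannot be made uniform over it. One needs a compact $C_0\subset\mathsf X$ with $\inf_{\widehat\mu\in\mathfrak I}\widehat\mu(C_0)\ge3/4$, i.e.\ uniform tightness of $\mathfrak I$, which comes from $W_0$.
\item One also needs $B_J<\infty$ in \eqref{eq:uniform-logJ}. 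This again comes from the reset, since $\log^+\norm{J}$ contains $\log^+d_\tau^{-1}\le\nu\int_0^\tau\norm{\mathsf p_t}_{H^1}^2\dd t+\dots$ by \eqref{eq:revision-d-inverse}.
\item No finite rank operator gives residual at most $\varepsilon$ for all drivers. The paper applies Dini's theorem to $z\mapsto\mathbf E[1\wedge\min_{i\le N}\norm{J-AK_i}]$ on compact subcharts, then Markov's inequality and a compact driver set. This yields a good core of mass at least $1/2$ independent of the accuracy $q$, which is exactly what makes $p_*\log(2q)+B_J\to-\infty$.
\item ``Bridging into the stable ball'' requires the bridge to land in a set on which the augmented Lyapunov function $\mathcal W$ is bounded, so that \eqref{eq:stable-quantile} gives a radius $r_0$ with probability $p_0$ uniformly. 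The paper obtains this from extra spatial regularity of the convolution on a compact driver event (\Cref{prop:compact-outer-endpoint}).
\end{itemize}
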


As a corollary, we have the Furstenberg--Khasminskii formula for 
the top Lyapunov exponent.  Its proof is given after the proof of 
\Cref{thm:main} in \Cref{subsec:FK-proof}.

\begin{corollary}
      \label{cor:exact-FK}
      Let $\lambda_1$ denote the top Lyapunov exponent of the derivative
      cocycle $U(t,s)$ over the stationary base law. Then $\lambda_1>-\infty$
      and
      \begin{align}\label{eq:exact-FK}
       \lambda_1
       =
       \int_{H^b\times\PP(H)}
       \left(
         -\nu\norm{\nabla\mathsf p}_2^2
         +\ip{w}{B(\mathsf p,\mathsf p)}_2
       \right)
       \widehat\mu(\dd w,\dd[\mathsf p]),
      \end{align}
      where $\mathsf p$ denotes either representative of $[\mathsf p]$ satisfying
      $\norm{\mathsf p}_2=1$. Moreover,
      \begin{align}\label{eq:stationary-top-growth}
       \lim_{t\to\infty}
       \frac1t\log\norm{U(t,0)\mathsf p}_2
       =
       \lambda_1
      \end{align}
      for $\widehat\mu\otimes\mathbf P$ almost every stationary initial state
      $(w,[\mathsf p])$ and driving Wiener path.
\end{corollary}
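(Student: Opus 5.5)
The plan is to deduce the corollary from \Cref{thm:main} together with the multiplicative ergodic theorem and an Itô computation for $\log\|\rho_t\|_2$. The argument has four steps.

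\textbf{Step 1 (Itô identity).} For fixed $(w,[\mathsf p])$, the unit representative $\mathsf p_t$ satisfies
\begin{align*}
 \frac{\dd}{\dd t}\log\norm{U(t,0)\mathsf p}_2
 =\ip{\mathsf p_t}{\nu\Delta\mathsf p_t+\cS(w_t,\mathsf p_t)}_2
 =-\nu\norm{\nabla\mathsf p_t}_2^2+\ip{w_t}{B(\mathsf p_t,\mathsf p_t)}_2 .
\end{align*}
No Itô correction appears, because \eqref{eq:raw-tangent} is a random ODE. To identify $\ip{\mathsf p}{\cS(w,\mathsf p)}_2$, I use $\ip{\mathsf p}{B(w,\mathsf p)}_2=0$, which holds since $Kw$ is divergence free. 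The remaining term $-\ip{\mathsf p}{B(\mathsf p,w)}_2$ is rewritten as $\ip{w}{B(\mathsf p,\mathsf p)}_2$ by antisymmetry. Call the right-hand side $\Phi(w,[\mathsf p])$; it is even in $\mathsf p$, so it is well defined on $\mathsf X$. For $t>0$ the identity is justified by positive-time smoothing, and at $t=0$ by approximation.

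\textbf{Step 2 (integrability).} I need $\Phi\in L^1(\widehat\mu)$. The bound $|\ip{w}{B(\mathsf p,\mathsf p)}_2|\lesssim\|w\|_{H^b}\|\mathsf p\|_{H^1}\|\mathsf p\|_2$ lets the nonlinear term be absorbed into $\tfrac{\nu}{2}\|\nabla\mathsf p\|_2^2+C\|w\|_{H^b}^2$. This gives $\Phi\le C\|w\|_{H^b}^2$, which is integrable under $\mu$ by the moment bounds of \Cref{sec:foundations}.

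The lower integrability, i.e.\ $\int\|\nabla\mathsf p\|_2^2\,\dd\widehat\mu<\infty$, comes from stationarity. Integrating the identity over $[0,1]$ and using invariance of $\widehat\mu$ gives
\begin{align*}
 \mathbb E_{\widehat\mu}\log\norm{U(1,0)\mathsf p}_2=\int\Phi\,\dd\widehat\mu .
\end{align*}
A monotone convergence argument applied to the negative part then shows the right side is finite and the left side is $\ge -C$. Stated precisely, the truncation $\Phi\vee(-N)$ combined with the upper bound gives $\int\Phi^-\,\dd\widehat\mu<\infty$. This step is the one I expect to require care, since a priori the negative part could be infinite.

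\textbf{Step 3 (ergodic theorem).} By \Cref{thm:main}, $\widehat\mu$ is the unique invariant measure, hence ergodic for the skew product on $\mathsf X\times\Omega$. Birkhoff's theorem for the stationary process $\Phi(w_t,[\mathsf p_t])$ then gives, $\widehat\mu\otimes\mathbf P$-a.s.,
\begin{align*}
 \frac1t\log\norm{U(t,0)\mathsf p}_2\to\int\Phi\,\dd\widehat\mu=:\lambda_\ast>-\infty .
\end{align*}

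\textbf{Step 4 (identify $\lambda_\ast=\lambda_1$).} The multiplicative ergodic theorem, whose hypotheses are verified in \Cref{subsec:FK-proof}, shows that for $\mu\otimes\mathbf P$-a.e.\ $(w,\omega)$ the growth rate is $\lambda_1$ for all $\mathsf p$ outside a closed subspace $E(w,\omega)$ of finite codimension. Every other direction has a strictly smaller rate.

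The main obstacle is showing that $\widehat\mu$-typical directions are not trapped in $E$. My route is to construct a second invariant measure from the top Oseledets direction: push forward $\mu\otimes\mathbf P$ by $(w,\omega)\mapsto(w,[\mathsf p])$ with $\mathsf p$ chosen in a fast direction, e.g.\ via the limit of normalised $U(0,-t)$ images in a two-sided extension. Its projective marginal is a stationary law, hence equals $\widehat\mu$ by uniqueness. Along it the rate is $\lambda_1$, so $\lambda_\ast=\lambda_1$.

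If $\lambda_1=-\infty$, the rate would be $-\infty$ for all directions, contradicting $\lambda_\ast>-\infty$. This gives \eqref{eq:exact-FK}, and \eqref{eq:stationary-top-growth} follows from Step 3.
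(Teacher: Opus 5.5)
\textbf{Main gap: the lower integrability in Step 2 is circular.} This lower bound is exactly what excludes $\lambda_1=-\infty$. Stationarity together with Step 1 only gives $\mathbf E\log\norm{U(1,0)\mathsf p}_2=\int\Phi\,\dd\widehat\mu\in[-\infty,C]$ under $\widehat\mu\otimes\mathbf P$.

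You assert that the left side is $\ge -C$. But through the identity of Step 1, a lower bound on $\log\norm{U(1,0)\mathsf p}_2$ is the same thing as an upper bound on $\nu\int_0^1\norm{\nabla\mathsf p_t}_2^2\,\dd t$, which is what you are trying to prove. Backward uniqueness gives only $U(1,0)\mathsf p\ne0$, with no quantitative lower bound. Likewise $\Phi\vee(-N)\downarrow\Phi$, so monotone convergence simply returns $\int\Phi\,\dd\widehat\mu$, which may be $-\infty$.

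The missing input is the positive time reset \eqref{eq:robust-all-H1-moments}: $\mathbf E_{w,\mathsf p}\norm{\mathsf p_{t_*}}_{H^1}^{2}\le C_1V_1(w)$, uniformly over unit $\mathsf p\in H$. This is the output of the spectral median machinery of \Cref{sec:foundations}. Integrate it against $\widehat\mu$, using $\widehat\mu P_{t_*}=\widehat\mu$ and that the first marginal is $\mu$. This gives $\int\norm{\mathsf p}_{H^1}^2\,\dd\widehat\mu\le C_1\int V_1\,\dd\mu<\infty$, which is how the paper obtains integrability of the observable. Without it, Birkhoff's theorem in Step 3 only yields a limit in $[-\infty,\infty)$. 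Your final argument excluding $\lambda_1=-\infty$, which rests on $\lambda_\ast>-\infty$, then has no basis.

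\textbf{Secondary gap: Step 4 needs more than you state when the top exponent is not simple.} If the top exponent is simple, the two sided splitting of \cite{LianLu10} gives a past measurable line $E_1(\omega)$ with $U(t,0)E_1(\omega)=E_1(\theta_t\omega)$. Then the law of $(w_0,E_1(\omega))$ is $P_t$ invariant, equals $\widehat\mu$, and carries rate $\lambda_1$, so your argument closes.

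If the multiplicity is $m_1>1$, normalised backward images do not converge to a line. For a past measurable selection $\mathsf p_0(\omega)\in E_1(\omega)$, the law of $(w_0,[\mathsf p_0])$ is not invariant, because $[U(t,0)\mathsf p_0(\omega)]\ne[\mathsf p_0(\theta_t\omega)]$ in general. You would then need a past measurable invariant random measure on $\PP(E_1(\omega))$, i.e.\ Crauel's Markov measures.

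\textbf{How the paper proceeds instead.} It avoids the two sided splitting altogether. A Gaussian measure and Fubini give a deterministic $\mathsf p_*$ lying outside the forward slow subspace almost surely. The occupation measures of $(w_t,[\mathsf p_t])$ started from $\mu\otimes\delta_{[\mathsf p_*]}$ converge to $\widehat\mu$ by \Cref{thm:main}. Finally, $\mathbf E\log\norm{U(h,0)\mathsf p}_2$ is passed to the limit by truncation on compact cores, using the $L^r$ bounds \eqref{eq:FK-block-UI}.

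That route needs only the forward multiplicative ergodic theorem and treats every multiplicity at once. It gets finiteness of $\lambda_1$ from the uniform $L^r$ bound on $t^{-1}\log\norm{U(t,0)\mathsf p_*}_2$. It, too, rests on the reset estimates you left out.
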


\section{The joint Malliavin derivative and its dense range}
\label{sec:adjoint}

This section identifies the Malliavin derivative of the projective
endpoint and proves that its range is almost surely dense.  The proof first
reduces density to a vertical annihilator problem and then eliminates that
annihilator by a pathwise Fourier--Lie bracket argument.

\subsection{The joint Malliavin derivative and its transpose}
\label{rev:sec-transposition}

Let $T>0$, let $0\ne\pi\in H$, and let
$w\in C([0,T];H^b)$ be a base solution. By homogeneity, we may
assume that $\|\pi\|_2=1$. Put
\begin{align*}
 \rho_t=U_w(t,0)\pi,\qquad d_T=\|\rho_T\|_2,\qquad
 \mathsf p_T=\rho_T/d_T.
\end{align*}
For $v\in\cH_T=L^2(0,T; H)$, where $\cH_T$ is the
Cameron--Martin control space over $[0,T]$, define
\begin{align}
 u_t&=\int_0^tU_w(t,s)Qv_s\,\dd s,
\label{rev:eq-base-control-response}\\
 r_t&=\int_0^tU_w(t,s)\cS(\rho_s,u_s)\,\dd s,
\label{rev:eq-cross-control-response}\\
 \widehat A_Tv&=\left(u_T,d_T^{-1}\Pi_{\mathsf p_T^\perp}r_T\right).\notag
\end{align}
Thus $u_t$ and $r_t$ are the Malliavin derivatives of the base solution
$w_t$ and the raw fibre $\rho_t$, respectively, while
$\widehat A_T$ is the Malliavin derivative of the joint endpoint $(w_T,\mathsf p_T)$.

Define
\begin{align}\label{eq:Qstar-definition}
 Q^*:H^{-b}\to H,\qquad
 (Q^*f)_{\boldsymbol k,\iota}
 =q_{\boldsymbol k}^\iota
 \langle f,e_{\boldsymbol k,\iota}\rangle_{H^{-b},H^b}.
\end{align}
It is bounded because
$|q_{\boldsymbol k}^\iota|^2
\lesssim|\boldsymbol k|^{-a}
\le|\boldsymbol k|^{-2b}$, and injective because every coefficient is
nonzero. Formula \eqref{eq:Qstar-definition} is the limit in $ H$
of finite Fourier sums.

For $\rho\in H$, define $\cS_\rho^*:H^1\to H^{-b}$ by
\begin{align}\label{rev:eq-Srho-transpose-def}
 \langle\cS_\rho^*\beta,h\rangle_{H^{-b},H^b}
 =\langle\beta,\cS(\rho,h)\rangle_{H^1,H^{-1}}.
\end{align}
Then
\begin{align}\label{rev:eq-Srho-transpose-bound}
 \|\cS_\rho^*\beta\|_{H^{-b}}
 \le C_b\|\rho\|_2\|\beta\|_{H^1}.
\end{align}
For smooth functions,
\begin{align*}
 \cS_\rho^*\beta=K\rho\cdot\nabla\beta-K^*(\beta\nabla\rho).
\end{align*}
For $\rho\in H$, this formula means only the transposition above; neither
product is asserted to be $L^2$ valued.

The dense range question is most transparent on the transpose.  The next
proposition computes it exactly and reduces density to the absence of a
vertical annihilator. Note that the continuous dual of the endpoint space is
$H^{-b}\oplus \mathsf p_T^\perp$.
\begin{proposition}
\label{prop:annihilator}
 For
$(f,g)\in H^{-b}\oplus \mathsf p_T^\perp$, set
\begin{align}
 \beta_t&=U_w(T,t)^*(g/d_T),
\label{eq:beta}\\
 \alpha_t&=U_w(T,t)^*f
 +\int_t^TU_w(s,t)^*\cS_{\rho_s}^*\beta_s\,\dd s.
\label{eq:alpha}
\end{align}
In \eqref{eq:beta} the adjoint evolution acts on $H$, whereas the first
term in \eqref{eq:alpha} uses its extension to $H^{-b}$.  Then
\begin{align*}
 \beta\in C([0,T];H)\cap L^2(0,T;H^1),\quad 
 \cS_\rho^*\beta\in L^2(0,T;H^{-b}),
\end{align*}
and
\begin{align*}
 \alpha\in C([0,T];H^{-b}), \quad \widehat A_T'(f,g)(t)=Q^*\alpha_t
 \quad\text{in }L^2(0,T; H),
\end{align*}
where $\widehat A_T'$ denotes the Banach transpose of $\widehat A_T$. Moreover,
\begin{align}\label{eq:kernel}
 \Ker\widehat A_T'
 =\left\{(0,g):
 \begin{array}{l}
 g\in \mathsf p_T^\perp,\quad
 \beta_t=U_w(T,t)^*(g/d_T),\\
 \cS_{\rho_t}^*\beta_t=0
 \text{ in }H^{-b}\text{ for a.e. }t
 \end{array}\right\},
\end{align}
and $\langle\beta_t,\rho_t\rangle_2=0$ for all $t$.
\end{proposition}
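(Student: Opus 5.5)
We verify the regularity claims for $\beta$ and $\alpha$ first, then compute the transpose by duality, and finally read off the kernel.

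\textbf{Regularity of $\beta$.} The adjoint evolution $\beta_t=U_w(T,t)^*(g/d_T)$ solves the backward linear parabolic equation
\begin{align*}
-\partial_t\beta_t=\nu\Delta\beta_t+\cS(w_t,\cdot)^*\beta_t,
\qquad \beta_T=g/d_T .
\end{align*}
The lower order term is controlled using $w\in C([0,T];H^b)$ with $b>11/2$. A standard energy estimate (pairing with $\beta_t$) then gives $\beta\in C([0,T];H)\cap L^2(0,T;H^1)$. The bound \eqref{rev:eq-Srho-transpose-bound} together with $\rho\in C([0,T];H)$ yields $\cS_\rho^*\beta\in L^2(0,T;H^{-b})$.

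\textbf{Regularity of $\alpha$.} The forward cocycle $U_w(t,s)$ acts boundedly on $H^b$, uniformly and strongly continuously in $(t,s)$, by the well-posedness theory of \Cref{sec:foundations}. Its transpose therefore extends to a strongly continuous family on $H^{-b}$. Hence $\alpha\in C([0,T];H^{-b})$: the first term is continuous by strong continuity, and the integral term is continuous by dominated convergence.

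\textbf{Transpose formula.} For $v\in\cH_T$ we compute $\langle f,u_T\rangle+\langle g,d_T^{-1}\Pi_{\mathsf p_T^\perp}r_T\rangle_2$. Since $g\in\mathsf p_T^\perp$, the projection can be dropped. Insert \eqref{rev:eq-base-control-response}--\eqref{rev:eq-cross-control-response} and apply Fubini:
\begin{align*}
\langle g/d_T,r_T\rangle_2&=\int_0^T\langle\beta_s,\cS(\rho_s,u_s)\rangle\,\dd s
=\int_0^T\langle\cS_{\rho_s}^*\beta_s,u_s\rangle\,\dd s\\
&=\int_0^T\!\!\int_t^T\langle U_w(s,t)^*\cS_{\rho_s}^*\beta_s,Qv_t\rangle\,\dd s\,\dd t .
\end{align*}
Adding $\int_0^T\langle U_w(T,t)^*f,Qv_t\rangle\,\dd t$ gives $\int_0^T\langle Q^*\alpha_t,v_t\rangle_2\,\dd t$, which is the claim $\widehat A_T'(f,g)(t)=Q^*\alpha_t$.

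Two points need care here.
\begin{itemize}
\item The pairings $H^{1}/H^{-1}$ and $H^{-b}/H^{b}$ must be justified. We do this by approximating $v$ with smooth finite-mode controls, so that $u\in C([0,T];H^b)$, and then passing to the limit.
\item The transposition identity \eqref{rev:eq-Srho-transpose-def} is used with $h=u_s\in H^b$.
\end{itemize}

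\textbf{Kernel.} Since $Q^*$ is injective, $(f,g)$ lies in the kernel iff $\alpha_t=0$ for all $t$, by continuity. Evaluating at $t=T$ gives $\alpha_T=f$, so $f=0$. With $f=0$, the identity
\begin{align*}
\alpha_t=\int_t^TU_w(s,t)^*\cS_{\rho_s}^*\beta_s\,\dd s\equiv 0
\end{align*}
must force $\cS_{\rho_s}^*\beta_s=0$ for a.e.\ $s$. This is the main obstacle, because the integral could cancel without the integrand vanishing.

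Our approach is to note that $\alpha$ is the mild solution of a backward equation with source,
\begin{align*}
-\partial_t\alpha=\nu\Delta\alpha+\cS(w,\cdot)^*\alpha+\cS_{\rho_t}^*\beta_t
\end{align*}
in $H^{-b-2}$, in the weak sense. Testing against smooth $h$ gives
\begin{align*}
\langle\alpha_t,h\rangle=\int_t^T\langle\cS_{\rho_s}^*\beta_s,h\rangle\,\dd s+\int_t^T\langle\alpha_s,(\text{generator})h\rangle\,\dd s .
\end{align*}
With $\alpha\equiv0$, this yields $\int_t^T\langle\cS_{\rho_s}^*\beta_s,h\rangle\,\dd s=0$ for all $t$ and all $h$. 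Lebesgue differentiation, applied over a countable dense set of $h$, then gives $\cS_{\rho_s}^*\beta_s=0$ a.e. The converse inclusion is immediate from the formula for $\alpha$.

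\textbf{Orthogonality.} The pairing $\langle\beta_t,\rho_t\rangle_2=\langle U_w(T,t)^*g/d_T,\,U_w(t,0)\pi\rangle=\langle g,\rho_T\rangle/d_T$ equals $\langle g,\mathsf p_T\rangle=0$, by the cocycle property.
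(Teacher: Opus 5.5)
Your proposal is correct and follows the paper's proof essentially step by step: the energy estimate for $\beta$, the bound \eqref{rev:eq-Srho-transpose-bound} for the source term, Fubini to identify $\widehat A_T'(f,g)(t)=Q^*\alpha_t$, injectivity of $Q^*$ plus continuity to get $\alpha\equiv0$ and hence $f=0$, and the weak (distributional) backward equation to force $\cS_{\rho}^*\beta=0$ almost everywhere. The only variation is the orthogonality step, where you use the definition of the Hilbert adjoint together with the cocycle identity $U_w(T,t)U_w(t,0)=U_w(T,0)$ rather than the paper's argument that the pairing is constant under the weak forward and adjoint equations; your version is slightly more direct.
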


\begin{proof}
The adjoint energy inequality gives $\beta\in C H\cap L^2H^1$;
\eqref{rev:eq-Srho-transpose-bound} gives the source in $L^2H^{-b}$.
Thus \eqref{eq:alpha} is a continuous $H^{-b}$ valued Bochner
integral.

Equations \eqref{rev:eq-cross-control-response},
\eqref{eq:beta}, and \eqref{rev:eq-Srho-transpose-def} give
\begin{align*}
 \langle g/d_T,r_T\rangle_2
 =\int_0^T\langle\cS_{\rho_t}^*\beta_t,u_t\rangle\,\dd t.
\end{align*}
Substitute \eqref{rev:eq-base-control-response}. Bochner Fubini is
justified by \eqref{rev:eq-Srho-transpose-bound},
$\beta\in L^2H^1$, boundedness of $U_w(t,s)$ on $H^b$, and
$v\in L^2 H$. Adding the base pairing yields
\begin{align*}
 &\langle f,u_T\rangle_{H^{-b},H^b}
 +\langle g/d_T,r_T\rangle_2\\
 &=\int_0^T\left\langle U_w(T,s)^*f+
 \int_s^TU_w(t,s)^*\cS_{\rho_t}^*\beta_t\,\dd t,Qv_s
 \right\rangle\,\dd s\\
 &=\int_0^T\langle Q^*\alpha_s,v_s\rangle_{ H}\,\dd s.
\end{align*}
The projection is invisible because $g\perp \mathsf p_T$, proving the identity $\widehat A_T'(f,g)(t)=Q^*\alpha_t$ in $L^2(0,T; H)$. 

If the transpose vanishes, injectivity of $Q^*$ gives $\alpha=0$ a.e.
and hence everywhere. Its terminal value gives $f=0$, and the
distributional backward equation gives $\cS_\rho^*\beta=0$. The converse
is immediate hence \eqref{eq:kernel} is proved.  The weak forward and adjoint equations imply that the $L^2$
pairing is constant, and at $T$ it equals
$\langle g/d_T,\rho_T\rangle=\langle g,\mathsf p_T\rangle=0$. Hence $\langle\beta_t,\rho_t\rangle_2=0$ for all $t$. 
\end{proof}

\subsection{Almost sure dense range of the joint endpoint}
\label{sec:dense-joint-range}

The vertical condition in \eqref{eq:kernel} can in fact be eliminated.  The
argument is pathwise after choosing  a set for the driving
Brownian motion with full Wiener measure.   We complexify the mean zero spaces and use the bilinear Fourier pairing
$
 \langle f,g\rangle=\sum_{\boldsymbol j\ne\boldsymbol 0}
 f_{\boldsymbol j}g_{-\boldsymbol j}
$.
For every smooth mean zero $h$, set
\begin{align*}
 H_hf:=\mathcal S(h,f).
\end{align*}
We use the same notation for its Sobolev extensions below.  For
$\boldsymbol m\in\mathbb Z^2\setminus\{\boldsymbol 0\}$, put
\begin{align*}
 e_{\boldsymbol m}(x)=e^{\mathrm i\boldsymbol m\cdot x},
 \qquad H_{\boldsymbol m}:=H_{e_{\boldsymbol m}}.
\end{align*}
With an immaterial common choice of Fourier normalisation,
\begin{align}\label{eq:Hm-shift}
 H_{\boldsymbol m}e_{\boldsymbol j}
 =c_{\boldsymbol m}(\boldsymbol j)e_{\boldsymbol j+\boldsymbol m},
 \qquad
 c_{\boldsymbol m}(\boldsymbol j)
 =\det(\boldsymbol m,\boldsymbol j)
 \bigl(|\boldsymbol m|^{-2}-|\boldsymbol j|^{-2}\bigr).
\end{align}
Thus every finite Lie word in the $H_{\boldsymbol m}$'s is a finite sum of weighted
shifts of order at most one.

\subsection{A discrete Fourier symbol class closed under all Lie brackets}
\label{rev:sec-symbol-class}

For a sequence $a:\mathbb Z^2\to\mathbb C$, write
$\Delta_i a(\boldsymbol j)=a(\boldsymbol j+\boldsymbol e_i)
-a(\boldsymbol j)$ and
$\Delta^{\boldsymbol\alpha}=\Delta_1^{\alpha_1}\Delta_2^{\alpha_2}$. For
$\mu\in\mathbb R$, define the set of Fourier symbols of order $\mu$ by
\begin{align}\label{rev:eq-discrete-symbol-class}
 S_\Delta^\mu=\left\{a:
 [a]_{\mu,N}:=
 \max_{|\boldsymbol\alpha|\le N}\sup_{\boldsymbol j\in\mathbb Z^2}
 (1+|\boldsymbol j|)^{-\mu+|\boldsymbol\alpha|}
 |\Delta^{\boldsymbol\alpha} a(\boldsymbol j)|<\infty
 \text{ for every }N\right\}.
\end{align}
The elementary discrete calculus used below is
\begin{align}
 \tau_{\boldsymbol m}:S_\Delta^\mu&\to S_\Delta^\mu,
 &(\tau_{\boldsymbol m}a)(\boldsymbol j)
 &=a(\boldsymbol j+\boldsymbol m),\label{rev:eq-symbol-translation}\\
 S_\Delta^\mu S_\Delta^\eta&\subset S_\Delta^{\mu+\eta},
\label{rev:eq-symbol-product}\\
 \tau_{\boldsymbol m}a-a&\in S_\Delta^{\mu-1}.
\label{rev:eq-symbol-difference}
\end{align}
For fixed $\boldsymbol m$, every output seminorm is bounded by a polynomial in
$(1+|\boldsymbol m|)$ times finitely many input seminorms. Indeed,
\eqref{rev:eq-symbol-translation} follows from
$1+|\boldsymbol j+\boldsymbol m|
\le(1+|\boldsymbol m|)(1+|\boldsymbol j|)$;
\eqref{rev:eq-symbol-product} is the discrete Leibniz rule; and
\eqref{rev:eq-symbol-difference} follows by telescoping along a coordinate
lattice path from $\boldsymbol 0$ to $\boldsymbol m$, followed by the same argument after each
$\Delta^{\boldsymbol\alpha}$.

Let $\mathcal P_1$ be the affine functions of $\boldsymbol j$, and set
\begin{align*}
 \mathfrak A=\mathcal P_1+S_\Delta^{-1}\subset S_\Delta^1.
\end{align*}
For $\boldsymbol m\in\mathbb Z^2$, define the weighted shift
\begin{align*}
 T_{\boldsymbol m,a}e_{\boldsymbol j}
 =a(\boldsymbol j)e_{\boldsymbol j+\boldsymbol m},
\end{align*}
with value zero when input or output is the excluded zero mode.

To iterate the stochastic bracket argument without losing derivatives, we
need a symbol class stable under commutators.  The next lemma gives the exact
closure and identifies the affine principal part.
\begin{lemma}
\label{rev:lem-symbol-closure}
Let $a=A+r$, $b=B+s$ belong to $\mathfrak A$, where
$A,B\in\mathcal P_1$ and $r,s\in S_\Delta^{-1}$. Then
\begin{align}\label{rev:eq-weighted-shift-commutator}
 [T_{\boldsymbol m,a},T_{\boldsymbol n,b}]
 =T_{\boldsymbol m+\boldsymbol n,c},\qquad
 c(\boldsymbol j)=b(\boldsymbol j)a(\boldsymbol j+\boldsymbol n)
 -a(\boldsymbol j)b(\boldsymbol j+\boldsymbol m),
\end{align}
and $c\in\mathfrak A$. Its affine part is exactly
\begin{align}\label{rev:eq-affine-commutator-part}
 c_{\rm aff}(\boldsymbol j)
 =A_{\rm lin}(\boldsymbol n)B(\boldsymbol j)
 -B_{\rm lin}(\boldsymbol m)A(\boldsymbol j),
\end{align}
where $A_{\rm lin},B_{\rm lin}$ are the linear parts.
\end{lemma}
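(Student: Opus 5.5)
The proof splits into three parts: the commutator formula, membership $c\in\mathfrak A$, and the affine part.

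\textbf{Commutator formula.} We evaluate both compositions on a basis vector $e_{\boldsymbol j}$:
\begin{align*}
 T_{\boldsymbol m,a}T_{\boldsymbol n,b}e_{\boldsymbol j}
 &=b(\boldsymbol j)a(\boldsymbol j+\boldsymbol n)e_{\boldsymbol j+\boldsymbol n+\boldsymbol m},\\
 T_{\boldsymbol n,b}T_{\boldsymbol m,a}e_{\boldsymbol j}
 &=a(\boldsymbol j)b(\boldsymbol j+\boldsymbol m)e_{\boldsymbol j+\boldsymbol m+\boldsymbol n}.
\end{align*}
Subtracting gives \eqref{rev:eq-weighted-shift-commutator}. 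The zero-mode convention has to be checked, since the excluded intermediate index differs between the two orders: $\boldsymbol j+\boldsymbol n=\boldsymbol 0$ in one and $\boldsymbol j+\boldsymbol m=\boldsymbol 0$ in the other. I would handle this by saying that the identity holds exactly for all $\boldsymbol j$ with $\boldsymbol j,\boldsymbol j+\boldsymbol m,\boldsymbol j+\boldsymbol n,\boldsymbol j+\boldsymbol m+\boldsymbol n\ne\boldsymbol 0$. At the finitely many exceptional $\boldsymbol j$, one redefines $c$ by the actual matrix entry. Changing a symbol at finitely many lattice points adds a finitely supported sequence, and such a sequence lies in $S_\Delta^{-\infty}\subset S_\Delta^{-1}$. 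So this modification does not affect membership in $\mathfrak A$ or the affine part.

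\textbf{Membership and affine part.} Expand $a=A+r$, $b=B+s$ in the formula for $c$. There are three groups of terms.

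First, the principal part is $B(\boldsymbol j)A(\boldsymbol j+\boldsymbol n)-A(\boldsymbol j)B(\boldsymbol j+\boldsymbol m)$. Since $A$ is affine, $A(\boldsymbol j+\boldsymbol n)=A(\boldsymbol j)+A_{\rm lin}(\boldsymbol n)$, and similarly for $B$. The quadratic terms $A(\boldsymbol j)B(\boldsymbol j)$ cancel, leaving exactly $A_{\rm lin}(\boldsymbol n)B(\boldsymbol j)-B_{\rm lin}(\boldsymbol m)A(\boldsymbol j)$. This is affine and gives \eqref{rev:eq-affine-commutator-part}.

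Second, there are the mixed terms
\begin{align*}
 B(\boldsymbol j)r(\boldsymbol j+\boldsymbol n)-B(\boldsymbol j+\boldsymbol m)r(\boldsymbol j)
 \quad\text{and}\quad
 s(\boldsymbol j)A(\boldsymbol j+\boldsymbol n)-A(\boldsymbol j)s(\boldsymbol j+\boldsymbol m).
\end{align*}
Naively each term lies only in $S_\Delta^{0}$, so the cancellation must be exhibited. I rewrite the first as
\begin{align*}
 B(\boldsymbol j)\bigl(\tau_{\boldsymbol n}r-r\bigr)(\boldsymbol j)-B_{\rm lin}(\boldsymbol m)r(\boldsymbol j).
\end{align*}
Here $B\in S_\Delta^1$ (affine functions have bounded first differences and vanishing higher differences), and $\tau_{\boldsymbol n}r-r\in S_\Delta^{-2}$ by \eqref{rev:eq-symbol-difference}. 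By \eqref{rev:eq-symbol-product} the product lies in $S_\Delta^{-1}$, and $B_{\rm lin}(\boldsymbol m)r\in S_\Delta^{-1}$ trivially. The second mixed group is handled symmetrically:
\begin{align*}
 s(\boldsymbol j)A(\boldsymbol j+\boldsymbol n)-A(\boldsymbol j)s(\boldsymbol j+\boldsymbol m)
 =A_{\rm lin}(\boldsymbol n)s(\boldsymbol j)-A(\boldsymbol j)\bigl(\tau_{\boldsymbol m}s-s\bigr)(\boldsymbol j).
\end{align*}

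Third, the remainder-remainder terms $s\,\tau_{\boldsymbol n}r-r\,\tau_{\boldsymbol m}s$ lie in $S_\Delta^{-2}\subset S_\Delta^{-1}$ by \eqref{rev:eq-symbol-translation} and \eqref{rev:eq-symbol-product}.

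Combining the three groups gives $c=c_{\rm aff}+S_\Delta^{-1}$, hence $c\in\mathfrak A$.

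\textbf{Main point of care.} The main obstacle is the order bookkeeping in the mixed terms: the gain of one order relies on pairing each translated remainder with the untranslated one through \eqref{rev:eq-symbol-difference}, rather than bounding the terms separately. A secondary check is that the affine part is well defined, i.e. that $\mathcal P_1\cap S_\Delta^{-1}=\{0\}$. This holds because a nonzero affine function does not decay at infinity, while every element of $S_\Delta^{-1}$ does. This makes "exactly" in \eqref{rev:eq-affine-commutator-part} meaningful.
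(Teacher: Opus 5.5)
Your proof is correct and follows the paper's argument essentially step for step: the composition formula, the exact cancellation of the quadratic terms via $A(\boldsymbol j+\boldsymbol n)=A(\boldsymbol j)+A_{\rm lin}(\boldsymbol n)$, the rewriting of the mixed terms as $B(\boldsymbol j)(\tau_{\boldsymbol n}r-r)(\boldsymbol j)-B_{\rm lin}(\boldsymbol m)r(\boldsymbol j)$ (and its mirror image), and the $S_\Delta^{-2}$ bound for the products of remainders. You also check two things the paper's proof leaves implicit, and both are handled correctly: the intermediate zero-mode convention, which costs only a finitely supported correction to $c$ that is harmless in $S_\Delta^{-1}$, and the identity $\mathcal P_1\cap S_\Delta^{-1}=\{0\}$, which makes the affine part well defined.
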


\begin{proof}
The composition formula gives
\eqref{rev:eq-weighted-shift-commutator}. Since
$A(\boldsymbol j+\boldsymbol n)=A(\boldsymbol j)
+A_{\rm lin}(\boldsymbol n)$, and similarly for $B$, the pure affine
contribution is \eqref{rev:eq-affine-commutator-part}; thus the quadratic
terms cancel as an identity, not merely asymptotically. The remainder is
\begin{align*}
 &B(\boldsymbol j)r(\boldsymbol j+\boldsymbol n)
 -r(\boldsymbol j)B(\boldsymbol j+\boldsymbol m)\\
 &\quad+s(\boldsymbol j)A(\boldsymbol j+\boldsymbol n)
 -A(\boldsymbol j)s(\boldsymbol j+\boldsymbol m)\\
 &\quad+s(\boldsymbol j)r(\boldsymbol j+\boldsymbol n)
 -r(\boldsymbol j)s(\boldsymbol j+\boldsymbol m).
\end{align*}
The first line equals
\begin{align*}
 B(\boldsymbol j)[r(\boldsymbol j+\boldsymbol n)-r(\boldsymbol j)]
 -B_{\rm lin}(\boldsymbol m)r(\boldsymbol j)\in S_\Delta^{-1}
\end{align*}
by \eqref{rev:eq-symbol-product}--\eqref{rev:eq-symbol-difference}. The
second line is identical after interchanging the letters. The last line
belongs to $S_\Delta^{-2}$. This proves closure in every seminorm from
\eqref{rev:eq-discrete-symbol-class}.
\end{proof}

For $\boldsymbol m\ne\boldsymbol 0$, the Fourier generator $H_{\boldsymbol m}$ has symbol
\begin{align*}
 c_{\boldsymbol m}(\boldsymbol j)
 =\det(\boldsymbol m,\boldsymbol j)
 (|\boldsymbol m|^{-2}-|\boldsymbol j|^{-2})
 =|\boldsymbol m|^{-2}\det(\boldsymbol m,\boldsymbol j)
 -|\boldsymbol j|^{-2}\det(\boldsymbol m,\boldsymbol j).
\end{align*}
The first term is linear.  Since the zero Fourier mode is absent, extend
the second term to $\boldsymbol j=\boldsymbol0$ by setting it equal to zero. 
Away from the origin its discrete derivatives have the bounds of a
homogeneous symbol of degree $-1$, while the finite modification near the
origin does not affect the symbol order.  Hence
$c_{\boldsymbol m}\in\mathfrak A$.

The commutator closure gives the Sobolev mapping properties needed in the
bracket induction.  The next corollary collects the regularity of the Lie
words and their commutators with a rough parameter.  The Sobolev indices
below are not optimized; the stated bounds are sufficient for the bounded
variation argument.
\begin{corollary}
\label{rev:cor-lie-word-mapping}
Every nested finite Lie word $\mathcal{C}$ in the $H_{\boldsymbol m}$ is one weighted shift
$T_{\boldsymbol\ell,a}$ with $a\in\mathfrak A$; a finite linear combination
of words is a finite sum of such shifts. For every $r\in\mathbb R$,
\begin{align}\label{rev:eq-lie-mapping}
 \mathcal{C}:H^r\to H^{r-1},\qquad
 [\mathcal{C},\Delta]:H^r\to H^{r-2}.
\end{align}
If $s>5$, $h\in H^{s-2}$, and $u\in H^s$ or $H^{s-2}$, then
\begin{align}\label{rev:eq-rough-commutator-estimate}
 \|[\mathcal{C},H_h]u\|_{H^{s-4}}
 \le C_{\mathcal{C}}\|h\|_{H^{s-2}}
 \begin{cases}
  \|u\|_{H^s},&u\in H^s,\\
  \|u\|_{H^{s-2}},&u\in H^{s-2}.
 \end{cases}
\end{align}
The same mapping statements hold for Fourier adjoints.
\end{corollary}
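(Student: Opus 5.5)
The proof has four steps: words are weighted shifts by induction on word length, the first mapping property follows from symbol order, the $\Delta$-commutator is an exact symbol computation, and the rough commutator estimate is a bilinear Fourier sum estimate.

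\emph{Step 1: words are single weighted shifts.} We argue by induction on the length of the nested word. For length one, $H_{\boldsymbol m}=T_{\boldsymbol m,c_{\boldsymbol m}}$ with $c_{\boldsymbol m}\in\mathfrak A$, as shown just before the corollary. If $\mathcal C=[T_{\boldsymbol m,a},T_{\boldsymbol n,b}]$ with $a,b\in\mathfrak A$, then \Cref{rev:lem-symbol-closure} gives $\mathcal C=T_{\boldsymbol m+\boldsymbol n,c}$ with $c\in\mathfrak A$. The convention that input or output zero modes are sent to zero is consistent with composition, because $H_{\boldsymbol m}$ never produces the zero mode. By linearity, a linear combination of words is a finite sum of such shifts.

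\emph{Step 2: $\mathcal C:H^r\to H^{r-1}$.} Since $a\in\mathfrak A\subset S^1_\Delta$, we have $|a(\boldsymbol j)|\lesssim 1+|\boldsymbol j|$. Also $|\boldsymbol j+\boldsymbol\ell|^{r-1}\lesssim_{\boldsymbol\ell,r}|\boldsymbol j|^{r-1}$ on nonzero modes, by Peetre's inequality. Hence $\|T_{\boldsymbol\ell,a}u\|_{H^{r-1}}\lesssim\|u\|_{H^r}$.

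\emph{Step 3: $[\mathcal C,\Delta]:H^r\to H^{r-2}$.} We compute directly:
\begin{align*}
[T_{\boldsymbol\ell,a},\Delta]e_{\boldsymbol j}=a(\boldsymbol j)\bigl(|\boldsymbol j+\boldsymbol\ell|^2-|\boldsymbol j|^2\bigr)e_{\boldsymbol j+\boldsymbol\ell}.
\end{align*}
This is a weighted shift with symbol of size $O(|\boldsymbol j|)\cdot O(|\boldsymbol j|)=O(|\boldsymbol j|^2)$, since $|\boldsymbol j+\boldsymbol\ell|^2-|\boldsymbol j|^2=2\boldsymbol j\cdot\boldsymbol\ell+|\boldsymbol\ell|^2$. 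This gives the order-two loss. The Fourier adjoint of $T_{\boldsymbol\ell,a}$ is $T_{-\boldsymbol\ell,\tau_{-\boldsymbol\ell}a}$, which stays in $\mathfrak A$ by \eqref{rev:eq-symbol-translation}. So the same statements hold for adjoints.

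\emph{Step 4: the rough commutator estimate.} This is the main step. Expand $h=\sum_{\boldsymbol m}\widehat h(\boldsymbol m)e_{\boldsymbol m}$, so that $H_h=\sum_{\boldsymbol m}\widehat h(\boldsymbol m)H_{\boldsymbol m}$. By Lemma~\ref{rev:lem-symbol-closure},
\begin{align*}
[T_{\boldsymbol\ell,a},H_{\boldsymbol m}]=T_{\boldsymbol\ell+\boldsymbol m,c_{\boldsymbol m}'},\qquad
c_{\boldsymbol m}'(\boldsymbol j)=c_{\boldsymbol m}(\boldsymbol j)a(\boldsymbol j+\boldsymbol m)-a(\boldsymbol j)c_{\boldsymbol m}(\boldsymbol j+\boldsymbol\ell).
\end{align*}
The crux is a bound on $c'_{\boldsymbol m}$ that is uniform in $\boldsymbol m$ up to polynomial weights. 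A crude triangle-inequality bound suffices, since the indices are not optimized. We have $|c_{\boldsymbol m}(\boldsymbol j)|\lesssim|\boldsymbol j|\,|\boldsymbol m|^{-1}+|\boldsymbol m|\,|\boldsymbol j|^{-1}$. Together with $|a(\boldsymbol j)|\lesssim 1+|\boldsymbol j|$ and Peetre's inequality, this gives
\begin{align*}
|c'_{\boldsymbol m}(\boldsymbol j)|\lesssim(1+|\boldsymbol m|)^{3}(1+|\boldsymbol j|)^2.
\end{align*}
Then
\begin{align*}
\bigl|\widehat{[\mathcal C,H_h]u}(\boldsymbol k)\bigr|\lesssim\sum_{\boldsymbol m}|\widehat h(\boldsymbol m)|(1+|\boldsymbol m|)^3(1+|\boldsymbol j|)^2|\widehat u(\boldsymbol j)|,
\end{align*}
where $\boldsymbol j=\boldsymbol k-\boldsymbol\ell-\boldsymbol m$. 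Multiply by $|\boldsymbol k|^{s-4}$ and use Peetre's inequality to split the weight onto $\boldsymbol m$ or $\boldsymbol j$. This leaves a convolution estimate in weighted $\ell^2$, and Young's inequality in the form $\ell^1*\ell^2\subset\ell^2$ closes it.

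Consider first the case $u\in H^{s-2}$. The weight $(1+|\boldsymbol j|)^{2}$ is absorbed by $|\boldsymbol j|^{s-2}$ when $|\boldsymbol j|\gtrsim|\boldsymbol m|$; the remaining loss is at most $(1+|\boldsymbol m|)^{3+|s-4|}$. We need $\sum_{\boldsymbol m}|\widehat h(\boldsymbol m)|(1+|\boldsymbol m|)^{3+\epsilon}\lesssim\|h\|_{H^{s-2}}$. By Cauchy--Schwarz this requires $s-2-(3+\epsilon)>1$, which fails for $s$ just above $5$. So the crude bound is not enough. The fix is to use that the principal affine parts cancel as in \eqref{rev:eq-affine-commutator-part}. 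This gives the sharper bound
\begin{align*}
|c'_{\boldsymbol m}|\lesssim(1+|\boldsymbol m|)^{2}(1+|\boldsymbol j|)+(1+|\boldsymbol m|)(1+|\boldsymbol j|)^{2}\,|\boldsymbol m|^{-1},
\end{align*}
in which the $|\boldsymbol j|^2$ term carries only $|\boldsymbol m|^0$. The low-frequency sum then needs only $(1+|\boldsymbol m|)^{2}\in\ell^2$-dual against $H^{s-2}$ with $s-2>3$, which holds because $s>5$. I expect exactly this power counting, with the threshold $s>5$, to be the delicate point.

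The case $u\in H^s$ is easier, since the two extra derivatives on $u$ absorb the $|\boldsymbol j|$ weights. In the high-$\boldsymbol m$ regime, both cases put the weight $|\boldsymbol k|^{s-4}\lesssim|\boldsymbol m|^{s-4}$ onto $h$ and use $u$ in $\ell^1$-type norms, which is available because $s-2>1$. The estimates for Fourier adjoints follow identically using the adjoint symbols from Step 3.
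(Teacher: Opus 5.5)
Your Steps 1--3 agree with the paper's proof. For \eqref{rev:eq-rough-commutator-estimate} you take a genuinely different and heavier route. The paper never opens the commutator. It estimates $\mathcal C H_hu$ and $H_h\mathcal Cu$ separately, combining $\mathcal C:H^r\to H^{r-1}$ with two product bounds. The first is $\|H_hu\|_{H^{s-3}}\le C\|h\|_{H^{s-2}}\|u\|_{H^{s-2}}$, hence also with $\|u\|_{H^s}$ on the right. The second is $\|H_hv\|_{H^{s-4}}\le C\|h\|_{H^{s-2}}\|v\|_{H^{s-3}}$. Both come from the Biot--Savart gain $K:H^r\to H^{r+1}$ and the two dimensional multiplication theorem. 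The loss of two derivatives is just the sum of the single losses of $\mathcal C$ and $H_h$, so no commutator structure or cancellation is used.

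Your Fourier-symbol version can be made to work, but the diagnosis in Step 4 is wrong. The crude bound fails only because you wrote it in product form $(1+|\boldsymbol m|)^3(1+|\boldsymbol j|)^2$. Since $|c_{\boldsymbol m}(\boldsymbol j)|\le|\boldsymbol j|/|\boldsymbol m|+|\boldsymbol m|/|\boldsymbol j|\le|\boldsymbol j|+|\boldsymbol m|$ and $|a(\boldsymbol j)|\lesssim1+|\boldsymbol j|$, the triangle inequality gives directly $|c'_{\boldsymbol m}(\boldsymbol j)|\lesssim_{\mathcal C}(1+|\boldsymbol j|+|\boldsymbol m|)^2$. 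Now use $s-4>0$ and $|\boldsymbol k|\le|\boldsymbol j|+|\boldsymbol m|+|\boldsymbol\ell|$. Split $(1+|\boldsymbol j|+|\boldsymbol m|)^{s-2}\lesssim(1+|\boldsymbol j|)^{s-2}+(1+|\boldsymbol m|)^{s-2}$ and apply $\ell^1*\ell^2\subset\ell^2$. This closes the estimate via $\|\widehat h\|_{\ell^1}\lesssim\|h\|_{H^{s-2}}$ and $\|\widehat u\|_{\ell^1}\lesssim\|u\|_{H^{s-2}}$, which need only $s-2>1$; nothing is delicate at $s=5$.

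Your ``sharper'' bound is true, but only because it is weaker than this direct one. It does not come from the affine cancellation \eqref{rev:eq-affine-commutator-part}. That cancellation would remove the $|\boldsymbol j|^2$ growth entirely, rather than leave it with coefficient $|\boldsymbol m|^0$. Moreover, \Cref{rev:lem-symbol-closure} tracks the $\boldsymbol m$-dependence only through an unspecified polynomial in $1+|\boldsymbol m|$, so citing it cannot produce the explicit $\boldsymbol m$-powers your summation needs. With the direct symbol bound in place, your route is a valid, self-contained alternative. The paper's term-by-term argument is shorter and reuses product estimates needed elsewhere.
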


\begin{proof}
The word assertion follows inductively from
\Cref{rev:lem-symbol-closure}. Since $a\in S_\Delta^1$, fixed
translation of Sobolev weights gives the first map in
\eqref{rev:eq-lie-mapping}. Moreover,
\begin{align*}
 [T_{\boldsymbol\ell,a},\Delta]e_{\boldsymbol j}
 =\bigl(|\boldsymbol j+\boldsymbol\ell|^2-|\boldsymbol j|^2\bigr)
 a(\boldsymbol j)e_{\boldsymbol j+\boldsymbol\ell},
\end{align*}
whose coefficient has order at most two.

For \eqref{rev:eq-rough-commutator-estimate}, use the Boit-Savart smoothing $K:H^r\to H^{r+1}$,
the two dimensional multiplication theorem, and $s-3>2$. They give
\begin{align*}
 \|H_hu\|_{H^{s-3}}
 &\le C\|h\|_{H^{s-2}}\|u\|_{H^s},\\
 \|H_hu\|_{H^{s-3}}
 &\le C\|h\|_{H^{s-2}}\|u\|_{H^{s-2}},\\
 \|H_hr\|_{H^{s-4}}
 &\le C\|h\|_{H^{s-2}}\|r\|_{H^{s-3}}.
\end{align*}
Apply the first two estimates before $\mathcal{C}$, and the third after
$\mathcal{C}:H^{s-2}\to H^{s-3}$ or
$\mathcal{C}:H^s\to H^{s-1}$. The adjoint of a fixed
weighted shift has a translated symbol in the same class.
\end{proof}

The bracket induction also needs interior regularity of the forward and
adjoint fibres.  By \Cref{cor:interior-forward-adjoint}, if
$I\Subset(0,T)$, $5<s<b$, and $E=H^{s-2}$, then
\begin{align*}
 \rho,\beta\in C(I;H^s)\cap AC(I;E),\qquad
 \rho',\beta'\in L^1(I;E).
\end{align*}
For a Lie word $\mathcal{C}$, define
\begin{align*}
 \ell_{\mathcal{C}}(t)(h)
 =\langle\beta_t,[\mathcal{C},H_h]\rho_t\rangle,\qquad h\in E.
\end{align*}
Equation \eqref{rev:eq-rough-commutator-estimate}, first with
$\rho\in H^s$ and then with $\rho'\in H^{s-2}$, proves
\begin{align}\label{rev:eq-ell-AC}
 \ell_{\mathcal{C}}\in AC(I;E^*),\qquad
 \ell_{\mathcal{C}}'(t)(h)=
 \langle\beta_t',[\mathcal{C},H_h]\rho_t\rangle+
 \langle\beta_t,[\mathcal{C},H_h]\rho_t'\rangle
\end{align}
in $L^1(I;E^*)$. Likewise
$t\mapsto\langle\beta_t,[\mathcal{C},\Delta]\rho_t\rangle$ is absolutely
continuous: after differentiation the pairings are
$H^{s-2}\times H^{s-2}$ and $H^s\times H^{s-4}$. Hence the
anticipative quadratic variation lemma applies in precisely the dual space
claimed in the dense range proof below.

The bracket induction therefore requires a pathwise quadratic variation
identity that remains valid for anticipative bounded variation test paths.
The next lemma provides this identity on a universal dyadic full measure
set, so that no adaptedness assumption is needed and the exceptional set
does not depend on the later choice of test path.

\begin{lemma}
\label{lem:anticipative-qv}
Let $E$ be a separable Hilbert space, let $Q: H\to E$ be
Hilbert--Schmidt, and let
$Z_t=Z_0+V_t+QW_t$, where $V$ is continuous and of bounded variation
in $E$.  On each $[0,N]$ use the standard dyadic grid and restrict that
grid to subintervals with dyadic endpoints.  There is a probability one set,
depending only on $Q$, $W$, and these countably many grids, with the following
property.  On every such compact subinterval $I=[t_1,t_2]$, for every continuous
bounded variation path
$\ell:I\to E^*$, even when $\ell$ and $V$ are anticipative, and every
real Brownian coordinate $W^r$,
\begin{align}\label{eq:anticipative-qv}
 [\ell(Z),W^r]_t=\int_{t_1}^t\ell_u(Qe_r)\,\mathrm du .
\end{align}
Here the bracket is the limit along the fixed dyadic partitions.  In
particular, if $\ell(Z)$ is of bounded variation, then
$\ell_t(Qe_r)=0$ for every $t\in I$.
The same assertion holds after complexification by taking real and
imaginary parts.
\end{lemma}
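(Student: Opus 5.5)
We prove \eqref{eq:anticipative-qv} by expanding $\ell(Z)$ along the fixed dyadic grids and separating a deterministic bilinear part, which is harmless for any bounded variation $\ell$, from a finite family of genuinely stochastic quantities, which do not involve $\ell$. Only the latter need a probability one set, and that set can be chosen once and for all.

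For a dyadic partition $\{t_k\}$ of $I$ with mesh $2^{-n}$, write
\begin{align*}
 \ell_{t_{k+1}}(Z_{t_{k+1}})-\ell_{t_k}(Z_{t_k})
 =(\ell_{t_{k+1}}-\ell_{t_k})(Z_{t_{k+1}})+\ell_{t_k}(Z_{t_{k+1}}-Z_{t_k}).
\end{align*}
We multiply by $\Delta_kW^r=W^r_{t_{k+1}}-W^r_{t_k}$ and sum over $k$.

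The first term is bounded by $\sup_I\|Z\|_E\,\max_k|\Delta_kW^r|\,\mathrm{Var}_I(\ell)$. This tends to zero because $W^r$ is uniformly continuous and $Z$ is bounded on $I$, so it needs nothing beyond continuity of the path.

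Inside the second term, the $V$ contribution $\sum_k\ell_{t_k}(\Delta_kV)\Delta_kW^r$ is bounded by $\sup\|\ell\|\,\mathrm{Var}(V)\max_k|\Delta_kW^r|$, which also tends to zero. What remains is
\begin{align*}
 \sum_k\ell_{t_k}(Q\Delta_kW)\Delta_kW^r.
\end{align*}
We rewrite this as $\sum_k\ell_{t_k}(M^n_{t_{k+1}}-M^n_{t_k})$, where $M^n_t\in E$ is the piecewise constant path given by the discrete quadratic covariation
\begin{align*}
 M^n_t=\sum_{t_{k+1}\le t}Q\Delta_kW\,\Delta_kW^r.
\end{align*}
This is exactly an $E$-valued Riemann--Stieltjes sum of $\ell$ against $M^n$.

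The key input is that almost surely $M^n_t\to tQe_r$ uniformly on $[0,N]$, and that $\sup_n\mathrm{Var}_{[0,N]}(M^n)<\infty$.
\begin{itemize}
\item For the uniform convergence, use the Lévy-type dyadic quadratic covariation theorem for the $E$-valued Brownian motion $QW$ against $W^r$. Since $Q$ is Hilbert--Schmidt, the mean-square error is of order $2^{-n}N\|Q\|_{\HS}^2$, so by Chebyshev and Borel--Cantelli the convergence holds at all dyadic times. Monotonicity in the diagonal terms and a $2^{-n/2}$ rate give uniformity.
\item For the total variation bound, note $\mathrm{Var}(M^n)\le\sum_k\|Q\Delta_kW\|_E|\Delta_kW^r|$. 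By Cauchy--Schwarz this is at most the square root of $\sum_k\|Q\Delta_kW\|^2\cdot\sum_k|\Delta_kW^r|^2$. Both sums converge almost surely, to $N\|Q\|_{\HS}^2$ and to $N$ respectively, by the same Borel--Cantelli argument.
\end{itemize}
These events involve only $Q$, $W$, $r$ and the grids. Intersecting over countably many $r$ and $N$ gives the universal set.

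On that set, for any continuous bounded variation $\ell$, Abel summation (summation by parts) moves the increments onto $\ell$. The Stieltjes sums then converge to $\int\ell_u(Qe_r)\,\mathrm du$, because $M^n\to\cdot\,Qe_r$ uniformly and $\ell$ has bounded variation. Restricting to dyadic subintervals $I\subset[0,N]$ is immediate. This proves \eqref{eq:anticipative-qv}.

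For the corollary, suppose $\ell(Z)$ has bounded variation. Then its dyadic covariation with the continuous path $W^r$ vanishes, by the same bound used for the first term. Hence $\int_{t_1}^t\ell_u(Qe_r)\,\mathrm du=0$ for all $t$, and continuity of $\ell$ gives $\ell_t(Qe_r)=0$ for every $t\in I$.

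The complexified statement follows by applying the result to the real and imaginary parts of $\ell$.

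The main obstacle is obtaining uniform convergence of $M^n$ in $E$ together with a single null set independent of $\ell$. The Abel-summation step that removes $\ell$ from the probabilistic content is what makes anticipative test paths admissible.
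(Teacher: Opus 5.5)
Your proposal is essentially the paper's proof. It uses the same splitting of each dyadic covariation increment into $(\ell_{t_{k+1}}-\ell_{t_k})(Z_{t_{k+1}})\Delta_kW^r$, the $V$ term, and $\ell_{t_k}(Q\Delta_kW)\Delta_kW^r$, then the same summation by parts against the discrete covariation path $M^n$ (the paper's $C_n^r$), with a single Borel--Cantelli event for $\sup_t\|M^n_t-tQe_r\|_E\to0$ and $\max_k|\Delta_kW^r|\to0$ that never sees $\ell$ or $V$.

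Two small remarks. First, the uniform total variation bound on $M^n$ is unnecessary once the increments have been moved onto $\ell$. Second, monotonicity only handles the diagonal part $Qe_r\sum_k(\Delta_kW^r)^2$, so uniformity in $t$ for the off-diagonal terms is best obtained as in the paper: apply Doob's maximal inequality to the partial sums of the independent centred increments $(Q\Delta_kW)\Delta_kW^r-(t_{k+1}-t_k)Qe_r$, which gives $\mathbf E\sup_t\|M^n_t-tQe_r\|_E^2\lesssim N2^{-n}$ directly.
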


\begin{proof}
Fix a dyadic interval $I=[t_1,t_2]\subset[0,N]$ and a Brownian coordinate
$W^r$.  Let $\pi_n(I)=\{s_i\}$ be the restriction to $I$ of the dyadic
partition at level $n$, and write
$\Delta_iW=W_{s_{i+1}}-W_{s_i}$ and
$\Delta_iW^r=W^r_{s_{i+1}}-W^r_{s_i}$.  Set
\begin{align*}
C_n^r(t)=\sum_{s_i<t}(Q\Delta_iW)\Delta_iW^r,
\qquad
c^r(t)=(t-t_1)Qe_r,
\qquad t\in I.
\end{align*}
The centred increments
\begin{align*}
(Q\Delta_iW)\Delta_iW^r-(s_{i+1}-s_i)Qe_r
\end{align*}
are independent mean zero $E$ valued random variables.  Gaussian moment
bounds and the Hilbert space maximal inequality give
\begin{align*}
\mathbf E\sup_{t\in I}\|C_n^r(t)-c^r(t)\|_E^2
\lesssim_{N,Q}2^{-n}.
\end{align*}
Hence, after Borel--Cantelli,
\begin{align*}
\sup_{t\in I}\|C_n^r(t)-c^r(t)\|_E\longrightarrow0
\end{align*}
almost surely.  On the same probability one set,
$\max_i|\Delta_iW^r|\to0$ as dyadic refinement level $n \rightarrow \infty$.   Taking the countable intersection over $N$,
$r$, and all dyadic endpoint intervals $I$ gives a single probability one
set on which these conclusions hold simultaneously.

Fix a path in this set and let $\ell:I\to E^*$ be continuous and of
bounded variation.  For $X_t=\ell_t(Z_t)$, the discrete covariation on
$\pi_n(I)$ satisfies
\begin{align*}
\sum_{s_i<t}\Delta_iX\,\Delta_iW^r
&=\sum_{s_i<t}\ell_{s_i}(Q\Delta_iW)\Delta_iW^r
 +\sum_{s_i<t}\ell_{s_i}(\Delta_iV)\Delta_iW^r\\
&\quad
 +\sum_{s_i<t}(\ell_{s_{i+1}}-\ell_{s_i})(Z_{s_{i+1}})
 \Delta_iW^r .
\end{align*}
Discrete summation by parts and the uniform convergence $C_n^r\to c^r$
yield
\begin{align*}
\sum_{s_i<t}\ell_{s_i}(Q\Delta_iW)\Delta_iW^r
\longrightarrow
\int_{t_1}^t\ell_u(Qe_r)\,\dd u
\end{align*}
uniformly in $t\in I$.  Indeed, the contribution of $C_n^r-c^r$ is
bounded by
\begin{align*}
\bigl(\|\ell\|_{L^\infty(I;E^*)}
+\operatorname{Var}(\ell;I)\bigr)
\|C_n^r-c^r\|_{L^\infty(I;E)},
\end{align*}
while the sums against $c^r$ converge to the displayed integral.

The remaining terms satisfy
\begin{align*}
\left|\sum_{s_i<t}\ell_{s_i}(\Delta_iV)\Delta_iW^r\right|
&\le
\|\ell\|_{L^\infty(I;E^*)}\operatorname{Var}(V;I)
\max_i|\Delta_iW^r|,\\
\left|\sum_{s_i<t}(\ell_{s_{i+1}}-\ell_{s_i})(Z_{s_{i+1}})
\Delta_iW^r\right|
&\le
\operatorname{Var}(\ell;I)\sup_{u\in I}\|Z_u\|_E
\max_i|\Delta_iW^r|,
\end{align*}
and therefore vanish as $n\to\infty$.  This proves
\eqref{eq:anticipative-qv}.  Since the argument is pathwise on the fixed
probability one set, the exceptional set is independent of the later
choice of the bounded variation paths $\ell$ and $V$.

If $\ell(Z)$ is of bounded variation, then its quadratic covariation with
$W^r$ vanishes.  Applying \eqref{eq:anticipative-qv} on every dyadic
endpoint subinterval $J=[a,b]\subset I$ gives
\begin{align*}
\int_a^b\ell_u(Qe_r)\,\dd u=0.
\end{align*}
Since $u\mapsto\ell_u(Qe_r)$ is continuous and dyadic intervals form a
differentiation basis, $\ell_t(Qe_r)=0$ for every $t\in I$.  The
complexified statement follows by applying the real result to the real and
imaginary parts.
\end{proof}

Later we must pass from Lie words to finite Fourier matrices without
treating the operator closure as an algebra.  Here and below, a finite
Fourier matrix means an operator $S$ for which there is a finite set of
real Fourier modes $\Lambda$ such that $S=\Pi_\Lambda S\Pi_\Lambda$.
The next lemma gives the propagation and one sided limiting steps that are
legitimate.
\begin{lemma}
\label{rev:lem-finite-rank-propagation}
Let $F_S(t)=\langle\beta_t,S\rho_t\rangle$ on $I\Subset(0,T)$.
\begin{enumerate}[label=\textup{(\roman*)}]
\item If $S$ is a finite Fourier matrix and $F_S\equiv0$, then
\begin{align}\label{rev:eq-finite-propagation}
 F_{[S,\mathcal{C}]}\equiv0
 \qquad\text{for every finite Lie word }\mathcal{C}.
\end{align}
\item Suppose $R_1,R_2$ are finite Fourier matrices,
$F_{R_1}\equiv0$, and $\mathcal{C}_n$ are Lie words with
\begin{align*}
 \mathcal{C}_n\to R_2\quad\text{in }\mathcal B(H^1,H).
\end{align*}
If $F_{[R_1,\mathcal{C}_n]}\equiv0$ for every $n$, then
\begin{align*}
 F_{[R_1,R_2]}\equiv0.
\end{align*}
\end{enumerate}
Thus no operator algebra property of the
$\mathcal B(H^1,H)$ closure is assumed.
\end{lemma}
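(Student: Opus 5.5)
Part (i) comes from differentiating $F_S$ and isolating the noise through \Cref{lem:anticipative-qv}; part (ii) is a continuity argument in the operator norm of $\mathcal B(H^1,H)$.

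\emph{Part (i).} Assume $F_S\equiv0$ on $I$.

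1. Write the forward and adjoint fibre equations. With $H_{w_t}=\mathcal S(w_t,\cdot)$,
\begin{align*}
 \rho'=\nu\Delta\rho+H_{w}\rho,\qquad \beta'=-\nu\Delta\beta-H_w^*\beta .
\end{align*}
Since $S$ is a finite Fourier matrix, $F_S$ is absolutely continuous, and
\begin{align*}
 F_S'(t)=\langle\beta_t,[S,\nu\Delta+H_{w_t}]\rho_t\rangle .
\end{align*}

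2. Because $F_S\equiv0$, the derivative vanishes. Hence $t\mapsto\langle\beta_t,[S,H_{w_t}]\rho_t\rangle$ equals $-\nu\langle\beta_t,[S,\Delta]\rho_t\rangle$. The latter is absolutely continuous, by the same argument that follows \eqref{rev:eq-ell-AC}, because $[S,\Delta]$ is again a finite Fourier matrix. So the function is of bounded variation.

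3. Now decompose $w_t=Z_t=w_0+V_t+QW_t$, where $V_t=\int_0^t(\nu\Delta w-B(w,w))\,\dd s$ is of bounded variation in $E=H^{s-2}$. Put $\ell_t(h)=\langle\beta_t,[S,H_h]\rho_t\rangle$. This is continuous and of bounded variation in $E^*$: the finite matrix $S$ makes the pairing harmless, and \eqref{rev:eq-rough-commutator-estimate} together with the interior regularity of $\rho,\beta$ give the required bounds.

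4. Then $\ell(Z)$ is of bounded variation. By \Cref{lem:anticipative-qv} (complexified), $\ell_t(Qe_{\boldsymbol m})=0$ for every $\boldsymbol m$ and every $t\in I$. Since $Q$ is diagonal with nonzero entries, this gives $F_{[S,H_{\boldsymbol m}]}\equiv0$ for every $\boldsymbol m$.

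5. The operator $[S,H_{\boldsymbol m}]$ is again a finite Fourier matrix, because $H_{\boldsymbol m}$ is a single weighted shift. So the argument can be iterated.

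6. Expanding a nested word $\mathcal C=[H_{\boldsymbol m_1},[\dots,H_{\boldsymbol m_k}]]$ by the Jacobi identity expresses $[S,\mathcal C]$ as a finite linear combination of iterated commutators $[[\dots[S,H_{\boldsymbol m_{\sigma(1)}}],\dots],H_{\boldsymbol m_{\sigma(k)}}]$. Each of these is reached by $k$ applications of steps 1–5, and linearity of $S\mapsto F_S$ gives \eqref{rev:eq-finite-propagation}.

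\emph{Part (ii).} Since $R_1=\Pi_\Lambda R_1\Pi_\Lambda$ is bounded $H^{-1}\to H^1$ (finitely many modes),
\begin{align*}
 [R_1,\mathcal C_n]-[R_1,R_2]=R_1(\mathcal C_n-R_2)-(\mathcal C_n-R_2)R_1 .
\end{align*}
For fixed $t\in I$ we have $\rho_t,\beta_t\in H^s\subset H^1$. This gives
\begin{align*}
 |F_{[R_1,\mathcal C_n]}(t)-F_{[R_1,R_2]}(t)|
 \le \|\mathcal C_n-R_2\|_{\mathcal B(H^1,H)}
 \bigl(\|R_1^*\beta_t\|_2\|\rho_t\|_{H^1}+\|\beta_t\|_2\|R_1\rho_t\|_{H^1}\bigr)\to0 .
\end{align*}
So $F_{[R_1,R_2]}(t)=0$ pointwise. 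Only convergence applied to fixed vectors is used, never products of limits, which is why no algebra property of the closure is needed.

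\textbf{Main obstacle.} The delicate point is step 3 of (i): checking that $\ell$ is continuous and of bounded variation in exactly the dual space $E^*$ in which \Cref{lem:anticipative-qv} is applied, with $V$ of bounded variation in $E$. The anticipative nature of $\beta$ is harmless because the lemma's exceptional set is universal. With $S$ finite the regularity is easy, but the absolute continuity of $\langle\beta,[S,\Delta]\rho\rangle$ must be justified through \Cref{cor:interior-forward-adjoint}.
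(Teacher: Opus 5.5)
Your proposal is correct and follows the paper's proof essentially step for step: you differentiate $F_S$, read off bounded variation of $t\mapsto\ell_t(w_t)$, remove each generator with \Cref{lem:anticipative-qv}, and iterate through the Jacobi identity, since every intermediate commutator remains a finite Fourier matrix. Part (ii) is likewise the paper's argument: you use the convergence $\mathcal C_n\to R_2$ only on the fixed vectors $\rho_t$ and $R_1\rho_t$, then pair with $\beta_t$.
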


\begin{proof}
If $S$ is a finite Fourier matrix, then $[S,H_{\boldsymbol m}]$ is again finite
Fourier rank: $H_{\boldsymbol m}S$ has finite dimensional range, while
$SH_{\boldsymbol m}$ sees
only the finitely many frequencies mapped into the domain support of $S$.
Differentiate $F_S=0$, use \eqref{rev:eq-ell-AC}, and apply \Cref{lem:anticipative-qv} giving 
$F_{[S,H_{\boldsymbol m}]}=0$ for each generator. Iteration is legitimate because all
intervening operators remain finite Fourier matrices; Jacobi's identity
gives \eqref{rev:eq-finite-propagation}.

For \textup{(ii)}, $\rho_t\in H^1$ implies
$\mathcal{C}_n\rho_t\to R_2\rho_t$ in $H$. Since
$R_1:H\to H^r$ is bounded for every $r$,
\begin{align*}
 R_1\mathcal{C}_n\rho_t\to R_1R_2\rho_t\quad\text{in }H.
\end{align*}
Also $R_1\rho_t\in H^1$, whence
\begin{align*}
 \mathcal{C}_nR_1\rho_t\to R_2R_1\rho_t\quad\text{in }H.
\end{align*}
Pairing with $\beta_t\in H$ proves the limit.
\end{proof}

The next algebraic step extracts finite rank matrix units from the closed
Fourier Lie algebra, replacing a finite H\"ormander matrix.  The reversal
symmetry leaves paired units first; a later commutator separates the pair.
Write $E_{\boldsymbol i,\boldsymbol j}e_{\boldsymbol j}=e_{\boldsymbol i}$ and let
$E_{\boldsymbol i,\boldsymbol j}$ vanish on all other Fourier vectors.

\begin{lemma}
\label{lem:paired-units}
Let $\mathfrak L$ be the complex Lie algebra generated by all
$H_{\boldsymbol m}$ as in \eqref{eq:Hm-shift}.  If
$\rho\in H^1_{\mathbb C}$ and $\beta\in L^2_{\mathbb C}$ satisfy
\begin{align}\label{eq:Lie-annihilation}
\langle\beta,\mathcal C\rho\rangle=0
\qquad\text{for every }\mathcal C\in\mathfrak L,
\end{align}
then, whenever $\boldsymbol i,\boldsymbol j\ne\boldsymbol0$,
$\boldsymbol i\ne\boldsymbol j$, and the coefficients
\begin{align*}
a_{\boldsymbol i\boldsymbol j}
&=\det(\boldsymbol i,\boldsymbol j)
\bigl(|\boldsymbol i-\boldsymbol j|^{-2}-|\boldsymbol j|^{-2}\bigr),\\
b_{\boldsymbol i\boldsymbol j}
&=\det(\boldsymbol i,\boldsymbol j)
\bigl(|\boldsymbol i-\boldsymbol j|^{-2}-|\boldsymbol i|^{-2}\bigr)
\end{align*}
are both nonzero, the operator
\begin{align*}
R_{\boldsymbol i,\boldsymbol j}
=a_{\boldsymbol i\boldsymbol j}E_{\boldsymbol i,\boldsymbol j}
-b_{\boldsymbol i\boldsymbol j}E_{-\boldsymbol j,-\boldsymbol i}
\end{align*}
is a limit of elements of $\mathfrak L$ in
$\mathcal B(H^1,L^2)$.  Consequently,
\begin{align}\label{eq:paired-unit-annihilation}
\langle\beta,R_{\boldsymbol i,\boldsymbol j}\rho\rangle=0.
\end{align}
\end{lemma}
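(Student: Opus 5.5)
The plan is to realise $R_{\boldsymbol i,\boldsymbol j}$ as a spectral localisation of the single generator $H_{\boldsymbol m}$ with $\boldsymbol m=\boldsymbol i-\boldsymbol j$. First check that both matrix units in $R_{\boldsymbol i,\boldsymbol j}$ are shifts by $\boldsymbol m$: indeed $E_{\boldsymbol i,\boldsymbol j}$ sends $e_{\boldsymbol j}\mapsto e_{\boldsymbol j+\boldsymbol m}$, and $E_{-\boldsymbol j,-\boldsymbol i}$ sends $e_{-\boldsymbol i}\mapsto e_{-\boldsymbol i+\boldsymbol m}$. Then evaluate \eqref{eq:Hm-shift} at the two relevant inputs:
\begin{align*}
c_{\boldsymbol m}(\boldsymbol j)=\det(\boldsymbol i,\boldsymbol j)\bigl(|\boldsymbol i-\boldsymbol j|^{-2}-|\boldsymbol j|^{-2}\bigr)=a_{\boldsymbol i\boldsymbol j},
\qquad
c_{\boldsymbol m}(-\boldsymbol i)=-b_{\boldsymbol i\boldsymbol j}.
\end{align*}
Hence $R_{\boldsymbol i,\boldsymbol j}=H_{\boldsymbol m}\Pi$, where $\Pi$ is the Fourier projection onto $\mathrm{span}\{e_{\boldsymbol j},e_{-\boldsymbol i}\}$. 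The pair $\{\boldsymbol j,-\boldsymbol i\}=\{\boldsymbol j,-\boldsymbol j-\boldsymbol m\}$ is exactly the orbit of the reversal symmetry $\boldsymbol k\mapsto-\boldsymbol k-\boldsymbol m$. This symmetry is respected by every word built from the $H_{\boldsymbol n}$, which is why a single unit cannot be isolated at this stage. So the task reduces to cutting $H_{\boldsymbol m}$ down to this orbit inside the $\mathcal B(H^1,L^2)$ closure of $\mathfrak L$.

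For the cut-down I will use diagonal elements of $\mathfrak L$. For $\boldsymbol p\ne\boldsymbol0$, \Cref{rev:lem-symbol-closure} gives that $D_{\boldsymbol p}=[H_{\boldsymbol p},H_{-\boldsymbol p}]$ is a multiplier $T_{\boldsymbol0,d_{\boldsymbol p}}$ with $d_{\boldsymbol p}\in\mathfrak A$ and explicit affine part. Iterating \eqref{rev:eq-weighted-shift-commutator}, the element $P(\operatorname{ad}_{D})H_{\boldsymbol m}$, for a polynomial $P$ and a diagonal $D=T_{\boldsymbol0,d}$, is the weighted shift
\begin{align*}
T_{\boldsymbol m,\;P(\delta)c_{\boldsymbol m}},\qquad \delta(\boldsymbol k)=d(\boldsymbol k+\boldsymbol m)-d(\boldsymbol k).
\end{align*}
I will choose $D$ as a finite linear combination of several $D_{\boldsymbol p}$, possibly also using nested brackets of such diagonals with shifts to create further diagonals. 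The aim is that the difference symbol $\delta$ has three properties:
\begin{enumerate}[label=\textup{(\alpha*)}]
\item $\delta$ is constant on the reversal orbit $\{\boldsymbol j,-\boldsymbol i\}$, up to the sign forced by the symmetry;
\item $\delta$ takes a value $\lambda_0$ on this orbit that it attains nowhere else;
\item $|\delta(\boldsymbol k)|\to\infty$ as $|\boldsymbol k|\to\infty$.
\end{enumerate}
Given this, take polynomials $P_n$ with $P_n(\lambda_0)=1$ and $P_n=0$ at the finitely many other values of $\delta$ in a large ball. The operators $P_n(\operatorname{ad}_D)H_{\boldsymbol m}$ then equal $H_{\boldsymbol m}\Pi$ on all low modes, and only the tail has to be controlled. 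For the constraint at $\lambda_0$, I will use even or odd polynomials in $\delta$ according to the sign in property (a); in either case both entries $a_{\boldsymbol i\boldsymbol j}$ and $-b_{\boldsymbol i\boldsymbol j}$ are retained, and their nonvanishing is exactly what prevents the localisation from degenerating.

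The hard step is the tail control. Polynomials do not decay, so plain $P_n(\operatorname{ad}_D)$ cannot converge in $\mathcal B(H^1,L^2)$. Moreover, since $\delta$ is affine plus an $S^{-1}_\Delta$ remainder, it has order at most one, and property (c) alone gives no smallness. My first attempt will be a resolvent-type normalisation. I will use combinations $\varepsilon^{k}\prod_{\ell}(\operatorname{ad}_D-\lambda_\ell)H_{\boldsymbol m}$ in which the nodes $\lambda_\ell$ are placed along the range of $\delta$ on $\{|\boldsymbol k|\le N\}$. The principal affine parts should then cancel identically, as in \eqref{rev:eq-affine-commutator-part}, so that the surviving symbol on $|\boldsymbol k|>N$ lies in $S^{-1}_\Delta$ with an $N$-dependent small constant. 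If this approach is too lossy, the fallback is to replace polynomial localisation by commutators with very high frequency shifts $H_{\boldsymbol p}$, $|\boldsymbol p|\to\infty$. For these the relevant ratio $c_{\boldsymbol p}(\boldsymbol k)/|\boldsymbol p|$ converges, and the resulting operators converge in $\mathcal B(H^1,L^2)$ with an $O(|\boldsymbol k|/|\boldsymbol p|)$ error that is absorbed by the $H^1$ norm.

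Once $R_{\boldsymbol i,\boldsymbol j}=\lim_n\mathcal C_n$ in $\mathcal B(H^1,L^2)$ with $\mathcal C_n\in\mathfrak L$, the conclusion \eqref{eq:paired-unit-annihilation} is immediate. Since $\rho\in H^1_{\mathbb C}$, we have $\mathcal C_n\rho\to R_{\boldsymbol i,\boldsymbol j}\rho$ in $L^2$, and pairing with $\beta\in L^2$ together with \eqref{eq:Lie-annihilation} passes to the limit, exactly as in the proof of \Cref{rev:lem-finite-rank-propagation}(ii).
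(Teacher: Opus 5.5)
Your reduction is correct and matches the paper. With $\boldsymbol m=\boldsymbol i-\boldsymbol j$ one has $c_{\boldsymbol m}(\boldsymbol j)=a_{\boldsymbol i\boldsymbol j}$ and $c_{\boldsymbol m}(-\boldsymbol i)=-b_{\boldsymbol i\boldsymbol j}$, so $R_{\boldsymbol i,\boldsymbol j}$ is $H_{\boldsymbol m}$ cut down to the reversal orbit, and your final passage to the limit is fine. The gap is in the localisation step, and it starts with property (c). That property cannot hold for any diagonal element of $\mathfrak L$. By \eqref{rev:eq-affine-commutator-part} and induction, the affine part of a Lie word shifting by $\boldsymbol\ell$ is a multiple of $\det(\boldsymbol\ell,\cdot)$, so diagonal words ($\boldsymbol\ell=\boldsymbol0$) have symbols in $S^{-1}_\Delta$. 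Concretely, $D^{\boldsymbol q}=[H_{\boldsymbol q},H_{-\boldsymbol q}]$ has symbol $d^{\boldsymbol q}_{\boldsymbol k}=O(|\boldsymbol k|^{-1})$. Hence $\delta(\boldsymbol k)\to0$ rather than $\infty$, and your ``hard step'' targets the wrong regime. The resolvent normalisation and the high-frequency fallback are also not concrete constructions, and as stated they do not yield convergence.

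What you miss is that this decay is exactly what makes polynomial localisation work, with no separate tail control. The set $K_{\boldsymbol m}=\{0\}\cup\{\lambda_{\boldsymbol m,\boldsymbol k}\}$ is compact with $0$ its only accumulation point. Suppose the target gap $\lambda_0$ is nonzero and attained only on the reversal orbit. Then the function $f$ equal to $1$ at $\lambda_0$ and $0$ elsewhere on $K_{\boldsymbol m}$ is continuous, and Weierstrass gives polynomials $p_n\to f$ uniformly on $K_{\boldsymbol m}$. Since $|c_{\boldsymbol m}(\boldsymbol k)|\lesssim1+|\boldsymbol k|$, this yields $\|p_n(\operatorname{ad}_D)H_{\boldsymbol m}-R_{\boldsymbol i,\boldsymbol j}\|_{\mathcal B(H^1,L^2)}\le C_{\boldsymbol m}\|p_n-f\|_{C(K_{\boldsymbol m})}\to0$. (Also, $d^{\boldsymbol q}$ is odd, so the two reversed gaps are exactly equal; there is no sign issue in (a).)

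The second, more substantive omission is (b) together with $\lambda_0\ne0$. You simply posit that some combination of diagonals isolates the target gap. You give no reason why, for $\boldsymbol k\notin\{\boldsymbol j,-\boldsymbol j-\boldsymbol m\}$, the gap functions $\boldsymbol q\mapsto d^{\boldsymbol q}_{\boldsymbol k+\boldsymbol m}-d^{\boldsymbol q}_{\boldsymbol k}$ differ from the one for $\boldsymbol j$, nor why the latter is not identically zero. This is the heart of the paper's proof, which proceeds in three steps.
\begin{enumerate}
\item Take $\boldsymbol q=N\boldsymbol v$. Then $N d^{N\boldsymbol v}_{\boldsymbol x}\to-4|\boldsymbol v|^{-4}P_{\boldsymbol x}(\boldsymbol v)$ with $P_{\boldsymbol x}$ a homogeneous cubic.
\item Split this cubic into $\tfrac14|\boldsymbol v|^2(\boldsymbol x\cdot\boldsymbol v)$ plus a harmonic cubic. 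This shows that active gaps never vanish identically. It also shows that two gaps agree identically only if the cubic chords $(\boldsymbol k+\boldsymbol m)^3/|\boldsymbol k+\boldsymbol m|^2-\boldsymbol k^3/|\boldsymbol k|^2$ agree. An explicit inversion shows this happens only for $\boldsymbol k\mapsto-\boldsymbol k-\boldsymbol m$.
\item Only then can a Baire argument, avoiding countably many proper hyperplanes of coefficient vectors, produce $D=\sum_n\alpha_nD^{\boldsymbol q_n}$ with the required separation. $D$ is then replaced by its partial sums $D_N\in\mathfrak L$ at no cost, since $p(\operatorname{ad}_{D_N})H_{\boldsymbol m}\to p(\operatorname{ad}_D)H_{\boldsymbol m}$ in $\mathcal B(H^1,L^2)$ for each fixed polynomial $p$.
\end{enumerate}
Without this separation argument, the existence of a diagonal with properties (a)--(b) remains unproved.
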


\begin{proof}
For $\boldsymbol q\ne\boldsymbol0$, set
$D^{\boldsymbol q}=[H_{\boldsymbol q},H_{-\boldsymbol q}]$.  A direct
calculation using \eqref{eq:Hm-shift} shows that $D^{\boldsymbol q}$ is
diagonal in the complex Fourier basis:
\begin{align}\label{eq:Dq}
D^{\boldsymbol q}e_{\boldsymbol j}
=d_{\boldsymbol j}^{\boldsymbol q}e_{\boldsymbol j},\qquad
d_{\boldsymbol j}^{\boldsymbol q}
=\det(\boldsymbol q,\boldsymbol j)^2
\bigl(|\boldsymbol q|^{-2}-|\boldsymbol j|^{-2}\bigr)
\bigl(|\boldsymbol j-\boldsymbol q|^{-2}
-|\boldsymbol j+\boldsymbol q|^{-2}\bigr),
\end{align}
for $\boldsymbol j\notin\{\boldsymbol0,\pm\boldsymbol q\}$, while the original commutator gives
$d_{\pm\boldsymbol q}^{\boldsymbol q}=0$. Moreover,
\begin{align*}
d_{-\boldsymbol j}^{\boldsymbol q}
=-d_{\boldsymbol j}^{\boldsymbol q},
\qquad
d_{\boldsymbol j}^{\boldsymbol q}
=O_{\boldsymbol q}(|\boldsymbol j|^{-1}).
\end{align*}
Thus $D^{\boldsymbol q}$ is a compact diagonal operator on $L^2$.  

We first construct one diagonal operator whose spectral gaps separate the
Fourier indices relevant to $H_{\boldsymbol m}$.  For fixed
$\boldsymbol m\ne\boldsymbol0$, the relevant quantities are
\begin{align*}
d_{\boldsymbol j+\boldsymbol m}^{\boldsymbol q}
-d_{\boldsymbol j}^{\boldsymbol q},
\qquad
c_{\boldsymbol m}(\boldsymbol j)\ne0.
\end{align*}
We claim that, as functions of $\boldsymbol q$, two such gaps can agree
identically only for the reversal
$\boldsymbol j\mapsto-\boldsymbol j-\boldsymbol m$, and that no active gap
vanishes identically.

To see this, take $\boldsymbol q=N\boldsymbol v$ in \eqref{eq:Dq}.  For
fixed $\boldsymbol x\ne\boldsymbol0$,
\begin{align*}
N d_{\boldsymbol x}^{N\boldsymbol v}
\longrightarrow
-\frac4{|\boldsymbol v|^4}P_{\boldsymbol x}(\boldsymbol v),
\qquad
P_{\boldsymbol x}(\boldsymbol v)
=\frac{\det(\boldsymbol v,\boldsymbol x)^2
(\boldsymbol x\cdot\boldsymbol v)}{|\boldsymbol x|^2}.
\end{align*}
Suppose that two active gaps, corresponding to $\boldsymbol j$ and
$\boldsymbol k$, agree for every lattice $\boldsymbol q$.  Fix
$\boldsymbol v\in\mathbb Z^2\setminus\{\boldsymbol0\}$ and take
$\boldsymbol q=N\boldsymbol v$.  Multiplying the gap identity by $N$ and
letting $N\to\infty$ gives
\begin{align*}
P_{\boldsymbol j+\boldsymbol m}(\boldsymbol v)
-P_{\boldsymbol j}(\boldsymbol v)
=
P_{\boldsymbol k+\boldsymbol m}(\boldsymbol v)
-P_{\boldsymbol k}(\boldsymbol v).
\end{align*}
This holds for every $\boldsymbol v\in\mathbb Z^2$.  Since
$P_{\boldsymbol x}(\boldsymbol v)$ is a homogeneous cubic polynomial in
$\boldsymbol v$, the difference of the two sides is a polynomial vanishing
on the whole integer lattice and hence vanishes identically on
$\mathbb R^2$.

Identify $\mathbb R^2$ with $\mathbb C$ and write
$z=v_1+\mathrm i v_2$.  A direct calculation gives
\begin{align*}
P_{\boldsymbol x}(\boldsymbol v)
=\frac14|\boldsymbol v|^2(\boldsymbol x\cdot\boldsymbol v)
-\frac14\operatorname{Re}\left(
\overline{\frac{\boldsymbol x^3}{|\boldsymbol x|^2}}\,z^3\right).
\end{align*}
Consequently,
\begin{align*}
P_{\boldsymbol j+\boldsymbol m}(\boldsymbol v)
-P_{\boldsymbol j}(\boldsymbol v)
&=\frac14|\boldsymbol v|^2(\boldsymbol m\cdot\boldsymbol v)\\
&\quad-\frac14\operatorname{Re}\left(
\overline{
\frac{(\boldsymbol j+\boldsymbol m)^3}
{|\boldsymbol j+\boldsymbol m|^2}
-\frac{\boldsymbol j^3}{|\boldsymbol j|^2}}
\,z^3\right).
\end{align*}
The first term, which is $|\boldsymbol v|^2$ times a linear polynomial,
depends only on $\boldsymbol m$.  The second is a harmonic cubic.  Since
this decomposition of a homogeneous cubic is unique, equality of two gaps
with the same $\boldsymbol m$ is equivalent to equality of the corresponding
cubic chords
\begin{align*}
\frac{(\boldsymbol j+\boldsymbol m)^3}
{|\boldsymbol j+\boldsymbol m|^2}
-\frac{\boldsymbol j^3}{|\boldsymbol j|^2}.
\end{align*}
The same decomposition also shows that an active gap cannot vanish
identically, since $\boldsymbol m\ne\boldsymbol0$ makes the first term
nonzero.

It remains to determine when two chords agree.  Since the preceding
identity is now an identity on $\mathbb R^2$, we may rotate and scale
coordinates so that $\boldsymbol m=1$.  Put
$t=\boldsymbol j+\tfrac12=u+\mathrm i y$.  The chord becomes
\begin{align*}
F(t)
=\frac{(t+\tfrac12)^3}{|t+\tfrac12|^2}
-\frac{(t-\tfrac12)^3}{|t-\tfrac12|^2}
=\frac{2|t|^2-t^2-\tfrac14}{\bar t^2-\tfrac14}.
\end{align*}
The active condition $c_{\boldsymbol m}(\boldsymbol j)\ne0$ implies
$y\ne0$.  For such $t$,
\begin{align*}
\frac4{F(t)-1}
=\left(\frac uy\right)^2-1-\frac1{4y^2}
-2\mathrm i\frac uy.
\end{align*}
Since $F(t)$ is known, the imaginary part of
\begin{align*}
\frac4{F(t)-1}
=
\left(\frac uy\right)^2-1-\frac1{4y^2}
-2\mathrm i\frac uy
\end{align*}
determines $u/y$.  The real part then determines $y^2$, and hence
$|y|$.  Thus $F(t)$ determines $t=u+\mathrm i y$ up to the simultaneous
change $(u,y)\mapsto(-u,-y)$, namely up to $t\mapsto-t$.  Since
$t=\boldsymbol j+\tfrac12$ in the coordinates where
$\boldsymbol m=1$, the second possibility is $\boldsymbol j\longmapsto-\boldsymbol j-1$. 
Returning to the original coordinates gives the reversal
\begin{align*}
\boldsymbol j\longmapsto-\boldsymbol j-\boldsymbol m.
\end{align*}
Hence two active gaps can coincide only for this reversal pair.

We now choose one diagonal operator for which all these separations hold
simultaneously.  Enumerate one representative of every pair
$\{\boldsymbol q,-\boldsymbol q\}$ by $\boldsymbol q_n$ and set
\begin{align*}
\varpi_n=2^n\bigl(1+\|D^{\boldsymbol q_n}\|_{\mathcal L(L^2)}\bigr).
\end{align*}
On the Banach space
\begin{align*}
X=\left\{\alpha:\sum_{n\ge1}\varpi_n|\alpha_n|<\infty\right\},
\end{align*}
each condition that an active gap vanish, or that two nonreversed active
gaps coincide, is a proper closed hyperplane.  There are only countably
many such conditions.  Baire's theorem therefore gives
$\alpha\in X$ such that
\begin{align*}
D=\sum_{n\ge1}\alpha_nD^{\boldsymbol q_n},
\qquad
De_{\boldsymbol j}=d_{\boldsymbol j}e_{\boldsymbol j},
\end{align*}
has the following property: for every $\boldsymbol m\ne\boldsymbol0$, the corresponding 
active gaps $\lambda_{\boldsymbol m,\boldsymbol j}
=d_{\boldsymbol j+\boldsymbol m}-d_{\boldsymbol j}$
are nonzero and are distinct except for
$\boldsymbol j\leftrightarrow-\boldsymbol j-\boldsymbol m$.

The series defining $D$ converges in operator norm.  Since every
$D^{\boldsymbol q_n}$ is compact, $D$ is compact and therefore
$d_{\boldsymbol j}\to0$ as $|\boldsymbol j|\to\infty$.  If $D_N$ denotes
the corresponding finite partial sum, then $D_N\in\mathfrak L$ and, for
every fixed $n$ and $\boldsymbol m$,
\begin{align*}
\operatorname{ad}_{D_N}^{\,n}H_{\boldsymbol m}
\longrightarrow
\operatorname{ad}_{D}^{\,n}H_{\boldsymbol m}
\qquad\text{in }\mathcal B(H^1,L^2).
\end{align*}
Hence \eqref{eq:Lie-annihilation}, first applied to
$\operatorname{ad}_{D_N}^{\,n}H_{\boldsymbol m}$ and then passed to the
limit, gives
\begin{align}\label{eq:gap-moments}
0=\sum_{\boldsymbol j\ne\boldsymbol0,-\boldsymbol m}
\lambda_{\boldsymbol m,\boldsymbol j}^{\,n}
c_{\boldsymbol m}(\boldsymbol j)\rho_{\boldsymbol j}
\beta_{-(\boldsymbol j+\boldsymbol m)},
\qquad n\ge0.
\end{align}
For fixed
$\boldsymbol m$, define the finite complex atomic measure
\begin{align*}
\nu_{\boldsymbol m}
=\sum_{\boldsymbol j\ne\boldsymbol0,-\boldsymbol m}
c_{\boldsymbol m}(\boldsymbol j)\rho_{\boldsymbol j}
\beta_{-(\boldsymbol j+\boldsymbol m)}
\,\delta_{\lambda_{\boldsymbol m,\boldsymbol j}}.
\end{align*}
It is finite because $H_{\boldsymbol m}:H^1\to L^2$ and
$\beta\in L^2$, so Cauchy--Schwarz gives
\begin{align*}
\sum_{\boldsymbol j}
\left|c_{\boldsymbol m}(\boldsymbol j)\rho_{\boldsymbol j}
\beta_{-(\boldsymbol j+\boldsymbol m)}\right|
\le
\|H_{\boldsymbol m}\rho\|_2\|\beta\|_2.
\end{align*}
Moreover, compactness of $D$ implies
$\lambda_{\boldsymbol m,\boldsymbol j}\to0$ as
$|\boldsymbol j|\to\infty$.  Thus $\nu_{\boldsymbol m}$ is supported on
the compact set consisting of zero and the active gaps.

Equation \eqref{eq:gap-moments} says exactly that
\begin{align*}
\int x^n\,\nu_{\boldsymbol m}(\dd x)=0
\qquad\text{for every }n\ge0.
\end{align*}
Polynomials are dense in the continuous functions on this compact subset
of $\mathbb R$.  Hence
\begin{align*}
\int f\,\dd\nu_{\boldsymbol m}=0
\qquad\text{for every continuous }f,
\end{align*}
and therefore $\nu_{\boldsymbol m}=0$.

Now fix $\boldsymbol i,\boldsymbol j$ as in the statement and put
$\boldsymbol m=\boldsymbol i-\boldsymbol j$.  The gap
$\lambda_{\boldsymbol m,\boldsymbol j}$ is nonzero and hence isolated.
By the separation proved above, the only other index producing the same
gap is $-\boldsymbol j-\boldsymbol m=-\boldsymbol i.$ 
Therefore the mass of $\nu_{\boldsymbol m}$ at this single gap is
\begin{align*}
0=\nu_{\boldsymbol m}
\bigl(\{\lambda_{\boldsymbol m,\boldsymbol j}\}\bigr)=a_{\boldsymbol i\boldsymbol j}\rho_{\boldsymbol j}\beta_{-\boldsymbol i}
-b_{\boldsymbol i\boldsymbol j}\rho_{-\boldsymbol i}\beta_{\boldsymbol j}
=\langle\beta,R_{\boldsymbol i,\boldsymbol j}\rho\rangle.
\end{align*}
This proves \eqref{eq:paired-unit-annihilation}.  The reason that one obtains
a paired matrix unit rather than a single matrix unit is precisely the
unavoidable equality of the two reversed gaps.

It remains to prove the operator closure assertion.  For fixed
$\boldsymbol m=\boldsymbol i-\boldsymbol j$, let
\begin{align*}
K_{\boldsymbol m}
=\{0\}\cup
\{\lambda_{\boldsymbol m,\boldsymbol k}:
c_{\boldsymbol m}(\boldsymbol k)\ne0\}.
\end{align*}
Every nonzero point of $K_{\boldsymbol m}$ is isolated, and the only
repetition is the reversal pair.  Hence there is
$f\in C(K_{\boldsymbol m})$ such that
\begin{align*}
f(\lambda_{\boldsymbol m,\boldsymbol j})=1,
\qquad
f(\lambda)=0
\quad\text{for every other }\lambda\in K_{\boldsymbol m}.
\end{align*}
Choose polynomials $p_n$ converging uniformly to $f$ on
$K_{\boldsymbol m}$.  Since
\begin{align*}
p_n(\operatorname{ad}_D)H_{\boldsymbol m}e_{\boldsymbol k}
=p_n(\lambda_{\boldsymbol m,\boldsymbol k})
c_{\boldsymbol m}(\boldsymbol k)e_{\boldsymbol k+\boldsymbol m},
\end{align*}
the weighted shift estimate gives
\begin{align*}
\|p_n(\operatorname{ad}_D)H_{\boldsymbol m}
-R_{\boldsymbol i,\boldsymbol j}\|_{\mathcal B(H^1,L^2)}
\le
C_{\boldsymbol m}\|p_n-f\|_{C(K_{\boldsymbol m})}
\longrightarrow0.
\end{align*}
For each fixed polynomial $p_n$, replacing $D$ by $D_N$ gives
\begin{align*}
p_n(\operatorname{ad}_{D_N})H_{\boldsymbol m}
\longrightarrow
p_n(\operatorname{ad}_D)H_{\boldsymbol m}
\qquad\text{in }\mathcal B(H^1,L^2),
\end{align*}
while
$p_n(\operatorname{ad}_{D_N})H_{\boldsymbol m}\in\mathfrak L$ because
$D_N,H_{\boldsymbol m}\in\mathfrak L$.  A diagonal choice of $n$ and $N$
therefore yields
\begin{align*}
R_{\boldsymbol i,\boldsymbol j}
\in\overline{\mathfrak L}^{\,\mathcal B(H^1,L^2)}.
\end{align*}
The proof is complete. 
\end{proof}

The preceding stochastic and algebraic inputs now eliminate the vertical
annihilator in \eqref{eq:kernel}.  We obtain one full measure event on which
the joint Malliavin endpoint has dense range simultaneously for all initial
data and all positive times.
\begin{theorem}
\label{thm:dense-joint-range}
Under \eqref{eq:q}--\eqref{eq:b}, there is one probability one set of
Brownian paths such that, simultaneously for every $T>0$, every initial
base state in $H^b$, and every nonzero initial fibre in $H$,
\begin{align}\label{eq:dense-joint-range}
 \overline{\operatorname{Ran}\widehat A_T}
   =H^b\oplus \mathsf p_T^\perp.
\end{align}
Equivalently, the vertical set in \eqref{eq:kernel} is trivial on that
same probability one set.
\end{theorem}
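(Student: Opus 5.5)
By \Cref{prop:annihilator}, density of the range is equivalent to triviality of the kernel of the transpose $\widehat A_T'$. So it suffices to show the following. On one universal full measure set, if $g\in\mathsf p_T^\perp$, $\beta_t=U_w(T,t)^*(g/d_T)$, and $\cS_{\rho_t}^*\beta_t=0$ for a.e.\ $t$, then $g=0$.

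\textbf{Choice of the full measure set.} We take the set supplied by \Cref{lem:anticipative-qv}. We apply it with $E=H^{s-2}$ for some fixed $5<s<b$, to the Hilbert--Schmidt operator $Q:H\to E$, over all dyadic grids and all Brownian coordinates. We intersect this with the full measure set on which the base solution is globally defined in $H^b$ for all initial data (\Cref{lem:compatible-driver-global}). This set depends on neither $T$ nor the initial data.

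\textbf{Step 1: reformulate the vertical condition.} The condition $\cS^*_{\rho_t}\beta_t=0$ in $H^{-b}$ means $\langle\beta_t,\cS(\rho_t,h)\rangle=0$ for all $h\in H^b$. Since $\cS$ is symmetric, this reads $\langle\beta_t,H_h\rho_t\rangle=0$. By continuity in $t$ and density, we obtain
\begin{align*}
 \ell_0(t)(h):=\langle\beta_t,H_h\rho_t\rangle=0
\end{align*}
for all $t$ in a compact dyadic interval $I\Subset(0,T)$ and all $h\in E$.

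\textbf{Step 2: generate the Lie algebra.} We now write $h=w_t$. The base path is $w_t=Z_t=Z_0+V_t+QW_t$, where $V$ has bounded variation in $E$ on $I$ by interior regularity. Hence
\begin{align*}
 t\mapsto\langle\beta_t,[\mathcal C,H_{w_t}]\rho_t\rangle=\ell_{\mathcal C}(t)(w_t).
\end{align*}
We argue by induction on Lie words. Suppose $\langle\beta_t,\mathcal C\rho_t\rangle\equiv0$ on $I$. Differentiating with the forward equation $\rho'=\nu\Delta\rho+H_{w}\rho$ and the adjoint equation gives
\begin{align*}
 0=\langle\beta_t,[\mathcal C,\nu\Delta]\rho_t\rangle+\ell_{\mathcal C}(t)(w_t).
\end{align*}
The first term is absolutely continuous by \Cref{rev:cor-lie-word-mapping}. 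Hence $\ell_{\mathcal C}(w)$ has bounded variation, and \eqref{rev:eq-ell-AC} makes $\ell_{\mathcal C}$ a bounded variation path in $E^*$. \Cref{lem:anticipative-qv} (complexified) then yields $\ell_{\mathcal C}(t)(Qe_r)=0$ for every forced phase. Since every phase is forced and $q^\iota_{\boldsymbol k}\ne0$, this gives
\begin{align*}
 \langle\beta_t,[\mathcal C,H_{\boldsymbol m}]\rho_t\rangle=0
\end{align*}
for all $\boldsymbol m$. The induction starts from the base case $\langle\beta,H_{\boldsymbol m}\rho\rangle=0$ given by Step 1. It therefore yields \eqref{eq:Lie-annihilation} for every $t\in I$.

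\textbf{Step 3: finite matrix units and the conclusion.} \Cref{lem:paired-units} gives $\langle\beta_t,R_{\boldsymbol i,\boldsymbol j}\rho_t\rangle=0$. These $R_{\boldsymbol i,\boldsymbol j}$ are finite Fourier matrices. \Cref{rev:lem-finite-rank-propagation}(i) lets us commute them with further words, and part (ii) lets us pass to limits, so we can form commutators $[R_{\boldsymbol i,\boldsymbol j},R_{\boldsymbol k,\boldsymbol l}]$. A suitable commutator of two paired units separates the pair and produces a single unit, plausibly
\begin{align*}
 [R_{\boldsymbol i,\boldsymbol j},R_{\boldsymbol j,\boldsymbol k}]=a_{\boldsymbol i\boldsymbol j}a_{\boldsymbol j\boldsymbol k}E_{\boldsymbol i,\boldsymbol k}+(\text{reversed terms}),
\end{align*}
where the reversed terms vanish for generic index choices. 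This gives $\beta_{t,-\boldsymbol i}\rho_{t,\boldsymbol k}=0$ for all admissible $\boldsymbol i,\boldsymbol k$. By backward uniqueness $\rho_t\ne0$, so some $\rho_{t,\boldsymbol k}\ne0$, and then every coefficient of $\beta_t$ vanishes. The finitely many degenerate pairs (where $a$ or $b$ vanishes) are handled by routing through an intermediate index. Hence $\beta_t=0$ for some $t<T$. Since $\beta_t=U_w(T,t)^*(g/d_T)$, backward uniqueness for the adjoint equation (equivalently, density of the range of $U_w(T,t)$) gives $g=0$. Here I expect to use the same Temam-type argument as for injectivity of the forward cocycle.

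\textbf{Main obstacle.} I expect the main difficulty in the final algebraic separation: extracting single matrix units from the paired units $R_{\boldsymbol i,\boldsymbol j}$. This requires checking that the coefficient of the target unit in some commutator is nonzero while the reversal contributions cancel or are absent. It also requires handling the exceptional indices where $a_{\boldsymbol i\boldsymbol j}$ or $b_{\boldsymbol i\boldsymbol j}$ vanishes, for instance when $\boldsymbol i\parallel\boldsymbol j$. I would first try commutators in which the middle index is chosen so that $\boldsymbol i,\boldsymbol j,\boldsymbol k$ are pairwise non-collinear with distinct norms; all reversed products then act on disjoint frequency supports.
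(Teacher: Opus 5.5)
Your Steps 1--2 match the paper's argument: the reduction to the vertical set, the universal dyadic event, and the bracket induction through \Cref{lem:anticipative-qv}. You also invoke \Cref{lem:paired-units} and \Cref{rev:lem-finite-rank-propagation} correctly. The gap is in Step 3, at the point you flagged yourself: a single commutator of two paired units does not separate the pair. Expand with $E_{\boldsymbol a,\boldsymbol b}E_{\boldsymbol c,\boldsymbol d}=\delta_{\boldsymbol b,\boldsymbol c}E_{\boldsymbol a,\boldsymbol d}$. The cross terms require $\boldsymbol i=-\boldsymbol j$, $\boldsymbol j=-\boldsymbol k$ or $\boldsymbol i=\boldsymbol k$, so they do vanish generically. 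The product of the two reversed units, however, always chains through $-\boldsymbol j$:
\begin{align*}
[R_{\boldsymbol i,\boldsymbol j},R_{\boldsymbol j,\boldsymbol k}]
=a_{\boldsymbol i\boldsymbol j}a_{\boldsymbol j\boldsymbol k}E_{\boldsymbol i,\boldsymbol k}
-b_{\boldsymbol i\boldsymbol j}b_{\boldsymbol j\boldsymbol k}E_{-\boldsymbol k,-\boldsymbol i}.
\end{align*}
Whenever both factors are genuine paired units from \Cref{lem:paired-units}, so that $b_{\boldsymbol i\boldsymbol j}b_{\boldsymbol j\boldsymbol k}\ne0$, the reversed term is present. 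This holds however you choose non-collinear indices with distinct norms. The Lie bracket reproduces the pairing rather than breaking it, and your claim that ``reversed products act on disjoint frequency supports'' is exactly what fails. What one commutator actually gives is a single relation $AX-BY=0$, with $X=\beta_{-\boldsymbol i}\rho_{\boldsymbol k}$ and $Y=\beta_{\boldsymbol k}\rho_{-\boldsymbol i}$. That relation does not force $X=0$.

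The missing idea is to treat the intermediate index as a free parameter and use the fact that the coefficient ratio depends on it. In the paper's labelling, the target is $(\boldsymbol i,\boldsymbol j)$ and the intermediate is $\boldsymbol k$. One takes arbitrarily large admissible $\boldsymbol k$ (all four coefficients nonzero, no index collisions) and gets $A(\boldsymbol k)X-B(\boldsymbol k)Y=0$, with $A/B$ given by \eqref{eq:AB-ratio}. Two admissible $\boldsymbol k$ with different ratios then give a nonsingular $2\times2$ system, so $X=Y=0$.

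This requires a case analysis that your proposal lacks:
\begin{itemize}
\item Along $\boldsymbol k=t\boldsymbol z$ the ratio tends to $\frac{|\boldsymbol i|^2}{|\boldsymbol j|^2}\frac{\boldsymbol z\cdot\boldsymbol i}{\boldsymbol z\cdot\boldsymbol j}$, which varies with $\boldsymbol z$ only when $\boldsymbol i,\boldsymbol j$ are not collinear.
\item For $\boldsymbol j=c\boldsymbol i$ with $c\ne\pm1$, compare the value $c^{-4}$ along $\boldsymbol k\perp\boldsymbol i$ with the generic limit $c^{-3}$.
\item For $\boldsymbol j=-\boldsymbol i$ the two units coincide, so you need $A(\boldsymbol k)\ne B(\boldsymbol k)$; this follows from the generic limit $-1$.
\end{itemize}

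Two smaller points remain. First, $\beta_{-\boldsymbol i}\rho_{\boldsymbol k_0}=0$ for $\boldsymbol i\ne\boldsymbol k_0$ does not yet give the coefficient $\beta_{-\boldsymbol k_0}$. The paper kills it with $\langle\beta_t,\rho_t\rangle=0$ from \Cref{prop:annihilator}; reality, $\rho_{-\boldsymbol k_0}=\overline{\rho_{\boldsymbol k_0}}\ne0$, would also work. Second, your final appeal to backward uniqueness for the adjoint is valid but unnecessary. The dyadic intervals cover $(0,T)$, so $\beta\equiv0$ there, and continuity at $t=T$ gives $g=d_T\beta_T=0$.
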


\begin{proof}
Fix once and for all the probability one event supplied by
\Cref{lem:anticipative-qv}, intersected with the event on which
$QW\in C([0,\infty);H^b)$.  This event is chosen before $T$, the initial
data, and the terminal annihilator are chosen.  Fix these objects
arbitrarily; all subsequent arguments are deterministic on this event.

Let $(f,g)\in\Ker\widehat A_T'$, and let $\rho,\beta$ be the forward and
backward fibre solutions in \Cref{prop:annihilator}.  Since the higher
Sobolev regularity needed below is available only away from the initial and
terminal times, fix an interval $I\Subset(0,T)$ whose endpoints belong to
the global dyadic grid.  Such intervals form a countable cover of $(0,T)$.
Fix $5<s<b$ and set $E=H^{s-2}$.  By
\Cref{cor:interior-forward-adjoint},
\begin{align*}
\rho,\beta\in C(I;H^s)\cap AC(I;E),\qquad
\rho',\beta'\in L^1(I;E).
\end{align*}

Write the base equation as
\begin{align*}
w_t=w_0+V_t+QW_t,\qquad
V_t=\int_0^t\bigl(\nu\Delta w_r-B(w_r,w_r)\bigr)\,\dd r.
\end{align*}
Since $w\in C([0,T];H^b)$, the product estimate and $s<b$ give
\begin{align*}
\nu\Delta w-B(w,w)
\in C(I;H^{b-2})+C(I;H^{b-1})
\hookrightarrow L^1(I;E).
\end{align*}
Hence $V\in AC(I;E)$.  Thus \Cref{lem:anticipative-qv} applies to the
decomposition of $w$ on $I$, with $Q$ viewed as an operator from $H$ to
$E$.

For a finite Lie word $\mathcal C$, set
$F_{\mathcal C}(t)=\langle\beta_t,\mathcal C\rho_t\rangle$.  The kernel
identity \eqref{eq:kernel} and symmetry of $\mathcal S$ give
\begin{align*}
F_{H_{\boldsymbol m}}(t)=0
\qquad\text{for almost every }t\in I
\end{align*}
for every real sine or cosine generator.  Since
$H_{\boldsymbol m}:H^s\to H^{s-1}$ and $\rho,\beta\in C(I;H^s)$, these
functions are continuous and therefore vanish everywhere.  After
complexification,
\begin{align}\label{eq:F-Hzero}
F_{H_{\boldsymbol m}}(t)=0
\qquad(t\in I,\ \boldsymbol m\ne\boldsymbol0).
\end{align}

Suppose now that $F_{\mathcal C}\equiv0$.  Differentiating and using the
forward and backward equations gives in $L^1(I)$
\begin{align}\label{eq:FC-derivative}
0=F_{\mathcal C}'
=\nu\langle\beta,[\mathcal C,\Delta]\rho\rangle+\ell_t(w_t),
\qquad
\ell_t(h)=\langle\beta_t,[\mathcal C,H_h]\rho_t\rangle.
\end{align}
By \eqref{rev:eq-rough-commutator-estimate},
$\ell\in AC(I;E^*)$.  Indeed, its derivative is the sum of the two
pairings
\begin{align*}
\langle\beta_t',[\mathcal C,H_h]\rho_t\rangle,\qquad
\langle\beta_t,[\mathcal C,H_h]\rho_t'\rangle,
\end{align*}
which are bounded in $E^*$ by
\begin{align*}
C_{\mathcal C}
\bigl(
\|\beta_t'\|_{H^{s-2}}\|\rho_t\|_{H^s}
+\|\beta_t\|_{H^s}\|\rho_t'\|_{H^{s-2}}
\bigr).
\end{align*}
Likewise
$G_{\mathcal C}(t)=\langle\beta_t,[\mathcal C,\Delta]\rho_t\rangle$
is absolutely continuous, since after differentiation the two pairings are
between $H^{s-2}$ and $H^{s-2}$, and between $H^s$ and $H^{s-4}$.
Thus \eqref{eq:FC-derivative} makes $\ell(w)=-\nu G_{\mathcal C}$ a
bounded variation path.

Applying \Cref{lem:anticipative-qv} to each real noise coordinate gives
\begin{align*}
\langle\beta_t,[\mathcal C,H_{\boldsymbol m}]\rho_t\rangle=0
\qquad(t\in I,\ \boldsymbol m\ne\boldsymbol0),
\end{align*}
because every Fourier coefficient $q_{\boldsymbol m}^\iota$ is nonzero.
Starting from \eqref{eq:F-Hzero} and iterating, we obtain
\begin{align*}
\langle\beta_t,\mathcal C\rho_t\rangle=0
\qquad\text{for every }\mathcal C\in\mathfrak L,\quad t\in I.
\end{align*}

Apply \Cref{lem:paired-units} at each $t\in I$.  Every active paired
operator therefore satisfies
\begin{align*}
F_{R_{\boldsymbol i,\boldsymbol j}}(t)=0,\qquad t\in I.
\end{align*}
The next step separates the two matrix units in each pair.  This must be
done without treating the $\mathcal B(H^1,L^2)$ closure of
$\mathfrak L$ as an algebra.

Let $R_1,R_2$ be two active paired operators.  By
\Cref{lem:paired-units}, choose $\mathcal C_n\in\mathfrak L$ with
$\mathcal C_n\to R_2$ in $\mathcal B(H^1,L^2)$.  Since
$F_{R_1}\equiv0$, \Cref{rev:lem-finite-rank-propagation}\textup{(i)}
gives $F_{[R_1,\mathcal C_n]}\equiv0$, and
\Cref{rev:lem-finite-rank-propagation}\textup{(ii)} gives
\begin{align}\label{eq:paired-commutator-zero}
F_{[R_1,R_2]}(t)=0,\qquad t\in I.
\end{align}

Fix now $\boldsymbol i,\boldsymbol j\ne\boldsymbol0$ with
$\boldsymbol i\ne\boldsymbol j$.  Call an intermediate lattice point
$\boldsymbol k$ admissible if all coefficients in
$R_{\boldsymbol i,\boldsymbol k}$ and
$R_{\boldsymbol k,\boldsymbol j}$ are nonzero and no unintended Fourier
indices coincide.  The conditions that a coefficient vanish have a simple
geometric form.  For example,
\begin{align*}
a_{\boldsymbol i\boldsymbol k}=0
&\quad\Longleftrightarrow\quad
\det(\boldsymbol i,\boldsymbol k)=0
\quad\text{or}\quad
|\boldsymbol i-\boldsymbol k|=|\boldsymbol k|,\\
b_{\boldsymbol i\boldsymbol k}=0
&\quad\Longleftrightarrow\quad
\det(\boldsymbol i,\boldsymbol k)=0
\quad\text{or}\quad
|\boldsymbol i-\boldsymbol k|=|\boldsymbol i|,
\end{align*}
and the analogous conditions hold for
$R_{\boldsymbol k,\boldsymbol j}$.  Thus the excluded points lie on
finitely many lines and circles, together with finitely many points coming
from index collisions.  Choose an integer direction outside the finitely
many excluded directions.  Along a sufficiently large integer multiple of
that direction only finitely many values can meet the remaining affine
lines or circles.  Hence there are arbitrarily large admissible
$\boldsymbol k\in\mathbb Z^2$.

For such $\boldsymbol k$, the absence of index collisions and the identity
$E_{\boldsymbol a,\boldsymbol b}E_{\boldsymbol c,\boldsymbol d}
=\delta_{\boldsymbol b,\boldsymbol c}E_{\boldsymbol a,\boldsymbol d}$
give
\begin{align*}
[R_{\boldsymbol i,\boldsymbol k},R_{\boldsymbol k,\boldsymbol j}]
=A(\boldsymbol k)E_{\boldsymbol i,\boldsymbol j}
-B(\boldsymbol k)E_{-\boldsymbol j,-\boldsymbol i},
\end{align*}
where $A(\boldsymbol k)
=a_{\boldsymbol i\boldsymbol k}a_{\boldsymbol k\boldsymbol j},\, 
B(\boldsymbol k)
=b_{\boldsymbol i\boldsymbol k}b_{\boldsymbol k\boldsymbol j}$. 
In particular,
\begin{align}\label{eq:AB-ratio}
\frac{A(\boldsymbol k)}{B(\boldsymbol k)}
=\frac{|\boldsymbol i|^2}{|\boldsymbol j|^2}
\frac{(|\boldsymbol i|^2-2\boldsymbol k\cdot\boldsymbol i)
(|\boldsymbol k|^2-2\boldsymbol k\cdot\boldsymbol j)}
{(|\boldsymbol k|^2-2\boldsymbol k\cdot\boldsymbol i)
(|\boldsymbol j|^2-2\boldsymbol k\cdot\boldsymbol j)}.
\end{align}
By \eqref{eq:paired-commutator-zero}, every admissible
$\boldsymbol k$ therefore gives the scalar relation
\begin{align}\label{eq:two-unit-relation}
A(\boldsymbol k)X-B(\boldsymbol k)Y=0,
\end{align}
where $X=\langle\beta_t,E_{\boldsymbol i,\boldsymbol j}\rho_t\rangle,\,
Y=\langle\beta_t,E_{-\boldsymbol j,-\boldsymbol i}\rho_t\rangle$. 
Thus it suffices to find two admissible points
$\boldsymbol k_1,\boldsymbol k_2$ for which
\begin{align*}
\frac{A(\boldsymbol k_1)}{B(\boldsymbol k_1)}
\ne
\frac{A(\boldsymbol k_2)}{B(\boldsymbol k_2)}.
\end{align*}
Indeed, the two relations \eqref{eq:two-unit-relation} then form a
nonsingular $2\times2$ linear system for $(X,Y)$.

Suppose first that $\boldsymbol i$ and $\boldsymbol j$ are not collinear.
Along $\boldsymbol k=t\boldsymbol z$, where
$\boldsymbol z\cdot\boldsymbol i$ and
$\boldsymbol z\cdot\boldsymbol j$ are nonzero, \eqref{eq:AB-ratio} gives
\begin{align*}
\frac{A(t\boldsymbol z)}{B(t\boldsymbol z)}
\longrightarrow
\frac{|\boldsymbol i|^2}{|\boldsymbol j|^2}
\frac{\boldsymbol z\cdot\boldsymbol i}
{\boldsymbol z\cdot\boldsymbol j}.
\end{align*}
Since $\boldsymbol i$ and $\boldsymbol j$ are not collinear, the last
ratio is not constant as a function of $\boldsymbol z$.  Choose two
integer directions with distinct values and then take sufficiently large
admissible multiples along these directions.  Their ratios
$A(\boldsymbol k)/B(\boldsymbol k)$ are distinct, and hence
$X=Y=0$.

Suppose next that $\boldsymbol j=c\boldsymbol i$.  Since
$\boldsymbol i\ne\boldsymbol j$, we have $c\ne1$.  Along an admissible
sequence perpendicular to $\boldsymbol i$ and along a generic admissible
sequence, \eqref{eq:AB-ratio} has distinct limits $c^{-4}, \, c^{-3}$.
If $c\ne-1$, these limits are distinct, so two admissible choices give a
nonsingular system for $(X,Y)$ and hence $X=Y=0$.  If $c=-1$, then
$\boldsymbol j=-\boldsymbol i$ and
$E_{-\boldsymbol j,-\boldsymbol i}=E_{\boldsymbol i,\boldsymbol j}$, so
$X=Y$.  In this case \eqref{eq:two-unit-relation} becomes
\begin{align*}
\bigl(A(\boldsymbol k)-B(\boldsymbol k)\bigr)X=0.
\end{align*}
Along a generic admissible sequence,
$A(\boldsymbol k)/B(\boldsymbol k)\to-1$, so for some admissible
$\boldsymbol k$ one has $A(\boldsymbol k)\ne B(\boldsymbol k)$ and
therefore $X=0$.

We have therefore proved
\begin{align}\label{eq:matrix-unit-annihilation}
\langle\beta_t,E_{\boldsymbol i,\boldsymbol j}\rho_t\rangle=0
\qquad
(\boldsymbol i,\boldsymbol j\ne\boldsymbol0,\ 
\boldsymbol i\ne\boldsymbol j,\ t\in I).
\end{align}

By backward uniqueness, $\rho_t\ne0$ for every $t\in I$.  Fix such a
$t$ and choose $\boldsymbol j_0$ with
$(\rho_t)_{\boldsymbol j_0}\ne0$.  Taking
$\boldsymbol j=\boldsymbol j_0$ in
\eqref{eq:matrix-unit-annihilation} shows that every Fourier coordinate of
$\beta_t$ except possibly the one paired with
$\boldsymbol j_0$ vanishes.  The remaining coordinate vanishes because
$\langle\beta_t,\rho_t\rangle=0$.  Thus $\beta_t=0$ on $I$.

Since the dyadic interior intervals cover $(0,T)$, we have
$\beta_t=0$ for every $t\in(0,T)$.  Continuity as $t\uparrow T$ gives
$g=d_T\beta_T=0$, and \Cref{prop:annihilator} then gives $f=0$.
Therefore
\begin{align*}
\Ker\widehat A_T'=\{0\},
\end{align*}
which is equivalent to \eqref{eq:dense-joint-range}.  The probability one
event was fixed before $T$, the initial state, and the terminal covector
were chosen.  Hence the conclusion holds simultaneously for every
$T>0$ and every initial state.
\end{proof}

\section{A random local stable ball}
\label{sec:random-stable-ball}
This section develops the compact dense compensation mechanism without
requiring a quantitative inverse Malliavin estimate.  We choose one
finite rank selector of fixed accuracy, whose norm may depend strongly on
that accuracy, and repeat it on the sampled blocks.  A logarithmic
contraction estimate gives a random local stable ball, while blockwise Ramer
transformations turn the compensating shifts into an absolutely continuous
generalized coupling on the infinite path space.

\subsection{The block endpoint and a Hilbert driver coordinate}

Fix $p_{\rm occ}\in(1,2]$, and let $t_0=t_*>0$ be the reset time in
\Cref{lem:robust-HPSRY-hierarchy}.  All choices used in this section are
made before constructing the selector.  For every $T\ge t_*$, the global
two jet estimate \Cref{lem:projective-log-moment} gives exponents $d_r,R_r$
independent of the block length.  We use it at the two orders $1$ and
$p_{\rm occ}^2$: the first gives the logarithmic moment
\eqref{eq:uniform-logJ}, while the second supplies the augmented Lyapunov
function and, by Jensen's inequality, the required $p_{\rm occ}$ moments.

Apply \Cref{cor:revision-common-block} simultaneously to these two orders,
enlarging its polynomial member once so that it dominates the associated
base weights, and choose the single block
\begin{align}\label{eq:revision-single-block-choice}
\tau=N_0t_0,\qquad N_0\ge N_*.
\end{align}
Here $N_*$ is any common threshold furnished by that corollary for these
two orders.  This fixes the base Lyapunov weight $V_*$, the common moment
exponent $M_*$, the joint Lyapunov function $W_0$, and all constants
attached to the sampled kernel $P_\tau$. After $\tau$ has been fixed, we construct the endpoint map and the finite
patch selector for this block.  The later choice of selector accuracy $q$
changes only finite constants and the additive constant in the $a_q$
moment bound; it changes neither the two moment orders nor the block length
$\tau$.

With the projective metric \eqref{eq:projective-metric}, set
\begin{align*}
 d_{\mathsf X}\bigl((w,[\mathsf p]),(w',[\mathsf p'])\bigr)
 =\|w-w'\|_{H^b}+d_{\PP}([\mathsf p],[\mathsf p']).
\end{align*}
This is a complete separable metric generating the product Borel
$\sigma$-field.  On every fixed projective chart it is locally
bi-Lipschitz equivalent to the norm on
$H^b\oplus \mathsf p^\perp$;
see \eqref{eq:state-chart-metric-comparison} below.  The sampled path distance used in the generalized
coupling criterion is $d_{\mathsf X}$ (or its harmless bounded version
$1\wedge d_{\mathsf X}$).
The projective tangent space carries its quotient Hilbert metric.  For
$z=(w,[\mathsf p])$ with $\|\mathsf p\|_2=1$, write
\begin{align*}
 E_z=H^b\oplus\mathsf p^\perp.
\end{align*}
This is the Hilbert model used for the tangent space $T_z\mathsf X$; it is
independent of the choice between $\mathsf p$ and $-\mathsf p$.  The
differential of the sphere quotient identifies the projective tangent space
isometrically with $\mathsf p^\perp$.  To compare tangent operators at
different projective points, fix an orthonormal basis
$(f_j)_{j\ge1}$ of $H$.  On
\begin{align*}
 \mathcal V_j=\{[\mathsf p]\in\PP(H):\langle \mathsf p,f_j\rangle\ne0\}
\end{align*}
let $s_j([\mathsf p])$ be the unique unit representative satisfying
$c=\langle s_j([\mathsf p]),f_j\rangle>0$.  With $\mathsf p=s_j([\mathsf p])$ and
$\mathbb K_{\mathsf p}=\mathsf p\otimes f_j-f_j\otimes \mathsf p$, define
\begin{align}\label{eq:chart-rotation}
 \mathcal R_j(\mathsf p)=I+\mathbb K_{\mathsf p}+\frac{\mathbb K_{\mathsf p}^2}{1+c}.
\end{align}
This is an orthogonal operator, depends smoothly on $[\mathsf p]\in\mathcal V_j$,
sends $f_j$ to $\mathsf p$, and maps $f_j^\perp$ onto $\mathsf p^\perp$.  Hence
\begin{align*}
 \mathfrak T_j(w,[\mathsf p]):H^b\oplus f_j^\perp\longrightarrow
 E_{(w,[\mathsf p])},\qquad
 \mathfrak T_j(w,[\mathsf p])(h,\eta)=(h,\mathcal R_j(\mathsf p)\eta),
\end{align*}
is a smooth fibrewise isometric trivialisation.  On overlaps the pointwise transition maps are
orthogonal, so intrinsic operator norms are unchanged.  Their first two
derivatives are not asserted to be orthogonal; they are bounded on the
finitely many subcharts used below, where $|\langle \mathsf p,f_j\rangle|$ is
uniformly separated from zero, and are absorbed into the corresponding
finite chart constants.  
The maps $\mathfrak T_j$ are tangent bundle trivialisations, not nonlinear
state coordinates.  All nonlinear state differences, Taylor expansions, and
feedback displacements below use one affine projective coordinate.  For
$z=(w,[\mathsf p])$ and $z'=(w',[q])$, let $\mathscr U_\Delta$ denote
the common affine chart domain for the pair:
\begin{align*}
 \mathscr U_\Delta
 =\left\{(z,z')\in\mathsf X^2:
 d_{\PP}([\mathsf p],[q])<\sqrt2\right\}.
\end{align*}
If $(z,z')\in\mathscr U_\Delta$, choose an $L^2$ unit representative $\mathsf p$
of $[\mathsf p]$ and any nonzero representative $q$ of $[q]$, and define
\begin{align}\label{eq:state-difference-chart}
 \chi_{\mathsf p}([q])
 &=\frac{\Pi_{\mathsf p^\perp}q}{\langle q,\mathsf p\rangle},\qquad
 \Delta_z(z')
 =\left(w'-w,\chi_{\mathsf p}([q])\right)\in E_z.
\end{align}
The quotient in \eqref{eq:state-difference-chart} is independent of the
nonzero representative of $[q]$.  Replacing $\mathsf p$ by $-\mathsf p$ changes the second
component by $-1$, exactly as does the natural identification of
$T_{[\mathsf p]}\PP(H)$ with $\mathsf p^\perp$.  Thus $\Delta_z(z')$ is a well defined
tangent bundle element.  Its inverse coordinate is
\begin{align*}
 \Delta_z^{-1}(h,\eta)
 =\left(w+h,[\mathsf p+\eta]\right),
 \qquad (h,\eta)\in H^b\oplus \mathsf p^\perp.
\end{align*}
In the natural quotient metric tangent identifications,
\begin{align*}
 D_{z'}\Delta_z(z)=\Id_{E_z},
 \qquad D\Delta_z^{-1}(0)=\Id_{E_z}.
\end{align*}
Moreover, for $\eta\in \mathsf p^\perp$,
\begin{align*}
 d_{\PP}([\mathsf p],[\mathsf p+\eta])^2
 =2-\frac{2}{\sqrt{1+\|\eta\|_2^2}}.
\end{align*}
Consequently, on $\|\Delta_z(z')\|\le1$, there are universal constants
$c,C>0$ such that
\begin{align}\label{eq:state-chart-metric-comparison}
 c\|\Delta_z(z')\|_{E_z}
 \le d_{\mathsf X}(z,z')
 \le C\|\Delta_z(z')\|_{E_z}.
\end{align}
The domain $\mathscr U_\Delta$ is open, and the coordinate map is smooth
there in the orthogonal frames above.  When a total map is needed for a
measurability statement, we extend $\Delta_z(z')$ by zero off
$\mathscr U_\Delta$; this extension is Borel, while every stopping rule below
still tests membership in $\mathscr U_\Delta$ before using the coordinate.

We next introduce the Hilbert driver. Recall that $W$ is the cylindrical Wiener process and 
$\mathcal H=L^2(0,\tau; H)$ is the Cameron--Martin  space.
Fix once and for all $\frac14<\sigma<\frac12$. 
For $g\in\mathcal H$, write
\begin{align*}
 (\mathcal Z_Qg)(t)
 =\int_0^t e^{\nu(t-s)\Delta}Qg_s\,\dd s.
\end{align*}
Since $\sigma<1/2$, the path space $H^\sigma(0,\tau;H^b)$ has no continuous endpoint trace.  We therefore retain the terminal value as a separate coordinate and set
\begin{align}\label{eq:driver-Hilbert}
 \mathsf Y=H^\sigma(0,\tau;H^b)\oplus H^b,
 \qquad
 \Gamma g=(\mathcal Z_Qg,\mathcal Z_Qg(\tau)).
\end{align}
Thus $\Gamma:\mathcal H\to\mathsf Y$ is the linear map that sends a
Cameron--Martin control to the corresponding displacement of the Hilbert
driver.  Let
\begin{align*}
 Z_t(W)=\int_0^t e^{\nu(t-s)\Delta}Q\,\dd W_s,
 \qquad Y(W)=(Z(W),Z_\tau(W)),
\end{align*}
where $Y$ is the one block Gaussian driver.
Standard stochastic convolution estimates give, for every $\sigma<1/2$,
\begin{align*}
\mathbf E\|Z\|_{H^\sigma(0,\tau;H^b)}^2+\mathbf E\|Z_\tau\|_{H^b}^2
\le C_{\sigma,\tau,\nu}\|Q\|_{\HS( H,H^b)}^2<\infty.
\end{align*}
Hence $Y=(Z,Z_\tau)$ is a centred Radon Gaussian variable in $\mathsf Y$.
Denote its law on $\mathsf Y$ by $\mathbf P_Y$.

Fix $\sigma<\vartheta<1/2$ and let $c_0^\vartheta([0,\tau];H^b)$ be the separable little H\"older space. Standard stochastic convolution estimates and Kolmogorov's criterion give a $c_0^\vartheta([0,\tau];H^b)$ valued Gaussian version of $Z$. Hence its law on this separable Banach space is Radon Gaussian.

Define the canonical embedding of compatible H\"older paths into the
Hilbert driver space by
\begin{align*}
\iota_\vartheta:c_0^\vartheta([0,\tau];H^b)\longrightarrow\mathsf Y,\qquad \iota_\vartheta\zeta=(\zeta,\zeta(\tau)).
\end{align*}
Since $\vartheta>\sigma$, $\iota_\vartheta$ is continuous, and
\begin{align*}
Y(W)=\iota_\vartheta Z(W)\quad\text{almost surely}.
\end{align*}
Thus the compact driver cores used below may be chosen in this compatible subspace.

If $y=(y^\circ,y^\tau)\in\mathsf Y$, subtract
the stochastic convolution and solve
\begin{align*}
 v_t=e^{\nu t\Delta}w
 -\int_0^t e^{\nu(t-s)\Delta}
 B(v_s+y_s^\circ,v_s+y_s^\circ)\,\dd s,
 \qquad w_\tau=v_\tau+y^\tau.
\end{align*}
The raw fibre equation is driven by the coefficient
$w_s=v_s+y_s^\circ$.  By \Cref{lem:revision-base-two-jet}, projective normalisation of the locally
$C^2$ raw endpoint defines the one block endpoint map
\begin{align*}
 \Phi(z,y),\qquad z\in\mathsf X,\quad y\in\mathsf Y,
\end{align*}
whenever the raw fibre endpoint is nonzero. Backward uniqueness guarantees this nonvanishing for every compatible driver $y=\iota_\vartheta\zeta$, including $y=Y(W)$ almost surely. At $y=Y(W)$, $\Phi$ agrees with the genuine stochastic endpoint. 

Its derivative with respect to the initial state is denoted by
\begin{align*}
 J(z,y)=D_z\Phi(z,y):E_z\longrightarrow E_{\Phi(z,y)}.
\end{align*}
At a random driver $y=Y(W)$, the Malliavin derivative of the same one block
endpoint is represented by
\begin{align*}
 A(z,Y)=D_y\Phi(z,Y)\Gamma:
 \mathcal H\longrightarrow E_{\Phi(z,Y)}.
\end{align*}
We use the same notation $A(z,y)=D_y\Phi(z,y)\Gamma$ for compatible
deterministic drivers.
For $Y=Y(W)$ and $g\in\mathcal H$, set
$W_t^\varepsilon=W_t+\varepsilon\int_0^t g_s\,\dd s$.  Then
$Y(W^\varepsilon)=Y(W)+\varepsilon\Gamma g$.  Differentiating the base
and normalized fibre endpoints at $\varepsilon=0$ gives
\eqref{rev:eq-base-control-response}--\eqref{rev:eq-cross-control-response};
under the natural identification
$E_{\Phi(z,Y)}=H^b\oplus \mathsf p_\tau^\perp$,
\begin{align}\label{eq:augmented-chain-identification}
 A(z,Y)g=D_y\Phi(z,Y)\Gamma g=\widehat A_\tau g.
\end{align}
The same identity holds at every compatible deterministic driver, since it is the Fr\'echet derivative in the direction $\Gamma g$.

The compact core selector will use smooth Hilbert cutoffs in the driver
variable.  For the later Ramer argument we therefore need the linear map from the
Cameron--Martin space into the Hilbert driver to be Hilbert--Schmidt.
\begin{lemma}
\label{lem:driver-HS}
The map $\Gamma:\mathcal H\to\mathsf Y$ in
\eqref{eq:driver-Hilbert} is Hilbert--Schmidt and
\begin{align*}
 \|\Gamma\|_{\HS}^2
 \le C_{\sigma,\tau,\nu}
       \|Q\|_{\HS( H,H^b)}^2<\infty.
\end{align*}
\end{lemma}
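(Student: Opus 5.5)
The plan is to use the standard identification of the Hilbert--Schmidt norm with a Gaussian second moment. Let $(g_n)$ be an orthonormal basis of $\mathcal H=L^2(0,\tau;H)$. Then
\begin{align*}
\|\Gamma\|_{\HS}^2=\sum_n\|\Gamma g_n\|_{\mathsf Y}^2 .
\end{align*}
The isonormal Gaussian process on $\mathcal H$ is $\xi_n=\int_0^\tau\langle g_n(s),\dd W_s\rangle$, a family of i.i.d.\ standard normals. The driver $Y(W)$ admits the Wiener chaos expansion
\begin{align*}
Y(W)=\sum_n\xi_n\,\Gamma g_n
\end{align*}
in $L^2(\Omega;\mathsf Y)$. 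This is the Karhunen--Lo\`eve representation, and it holds because $\Gamma$ is exactly the Cameron--Martin map of $Y$: we already checked that $Y(W+\varepsilon\int g)=Y(W)+\varepsilon\Gamma g$. Orthogonality of the $\xi_n$ then gives
\begin{align*}
\mathbf E\|Y\|_{\mathsf Y}^2=\sum_n\|\Gamma g_n\|_{\mathsf Y}^2=\|\Gamma\|_{\HS}^2 .
\end{align*}
The stochastic convolution estimate quoted before the lemma then yields the bound
\begin{align*}
\|\Gamma\|_{\HS}^2\le C_{\sigma,\tau,\nu}\|Q\|_{\HS(H,H^b)}^2 .
\end{align*}

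To make the identity rigorous without invoking abstract Gaussian measure theory, I would verify it coordinatewise. First treat the endpoint component. We have $\mathcal Z_Qg(\tau)=\int_0^\tau e^{\nu(\tau-s)\Delta}Qg_s\,\dd s$, and for a fixed orthonormal basis $(h_k)$ of $H^b$ the functional $g\mapsto\langle\mathcal Z_Qg(\tau),h_k\rangle$ is represented by the kernel $s\mapsto Q^*e^{\nu(\tau-s)\Delta}h_k$ (adjoint taken in $H^b$). Parseval in $\mathcal H$ then gives
\begin{align*}
\sum_n\|\mathcal Z_Qg_n(\tau)\|_{H^b}^2=\sum_k\int_0^\tau\|Q^*e^{\nu(\tau-s)\Delta}h_k\|^2\,\dd s\le\tau\|Q\|_{\HS(H,H^b)}^2 ,
\end{align*}
using contractivity of the heat semigroup on $H^b$.

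For the path component I would use the Sobolev--Slobodeckij norm
\begin{align*}
\|f\|_{H^\sigma}^2=\|f\|_{L^2}^2+\iint\frac{\|f(t)-f(t')\|^2}{|t-t'|^{1+2\sigma}}\,\dd t\,\dd t' .
\end{align*}
Exchanging sums and integrals by Tonelli, I need, for fixed $t>t'$,
\begin{align*}
\sum_n\|\mathcal Z_Qg_n(t)-\mathcal Z_Qg_n(t')\|_{H^b}^2\lesssim|t-t'|^{2\gamma}\|Q\|_{\HS(H,H^b)}^2\quad\text{for some }\gamma>\sigma .
\end{align*}
Again by Parseval, this sum equals
\begin{align*}
\int_{t'}^t\|e^{\nu(t-s)\Delta}Q\|_{\HS}^2\,\dd s+\int_0^{t'}\|(e^{\nu(t-s)\Delta}-e^{\nu(t'-s)\Delta})Q\|_{\HS}^2\,\dd s .
\end{align*}
The first term is at most $|t-t'|\,\|Q\|_{\HS}^2$. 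For the second term, the key step is the bound
\begin{align*}
\|(e^{\nu h\Delta}-I)e^{\nu r\Delta}\|\lesssim (h/r)^{\gamma'}\wedge1 .
\end{align*}
Integrating in $r$ gives a factor $h^{2\gamma'}$ with $\gamma'<1/2$, which is integrable against $|t-t'|^{-1-2\sigma}$ since $\sigma<1/2$.

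The only delicate point is this last step: obtaining Hölder exponent $>\sigma$ uniformly, without using extra smoothness of $Q$. It works precisely because $\sigma<1/2$, so the loss near $r=0$ is integrable. These computations are identical to the standard proof of the moment bound already quoted, so I would cite that estimate and just note the Parseval identity that transfers it to $\Gamma$.
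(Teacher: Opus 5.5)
Your proof is correct and follows the route the paper implicitly takes. The paper gives no separate argument: it states the lemma right after the moment bound $\mathbf E\|Z\|_{H^\sigma(0,\tau;H^b)}^2+\mathbf E\|Z_\tau\|_{H^b}^2\le C_{\sigma,\tau,\nu}\|Q\|_{\HS(H,H^b)}^2$, with the same constant, i.e.\ via $\|\Gamma\|_{\HS}^2=\mathbf E\|Y\|_{\mathsf Y}^2$. Your deterministic Parseval computation is self-contained: it uses the Slobodeckij norm, the kernel identity $\sum_n\|\cdot\|^2=\int\|K(s)\|_{\HS}^2\,\dd s$, and the bound $\|(e^{\nu h\Delta}-I)e^{\nu r\Delta}\|\lesssim (h/r)^{\gamma'}\wedge1$ with $\sigma<\gamma'<1/2$. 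It therefore makes the Karhunen--Lo\`eve identification, whose justification from the shift identity alone is a bit quick, unnecessary.
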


The selector must work on one driver event simultaneously for every initial
state, rather than on state dependent full measure events.  The next lemma
constructs such a good driver set, denoted by $\mathsf Y_*$, and transfers
the dense range theorem to the convolution driver space with exactly this
order of quantifiers.

\begin{lemma}
\label{lem:common-endpoint-event}
There is a Borel set
$\mathsf Y_*\subset\mathsf G_\vartheta$, where
\begin{align*}
 \mathsf G_\vartheta
 =\{(\zeta,\zeta(\tau)):
       \zeta\in c_0^\vartheta([0,\tau];H^b)\}\subset\mathsf Y,
\end{align*}
such that $\mathbf P_Y(\mathsf Y_*)=1$ and the following statements hold
simultaneously for every $Y\in\mathsf Y_*$ and every $z\in\mathsf X$:
the endpoint $\Phi(z,Y)$ is globally defined, and
\begin{align}\label{eq:DR}
 \overline{\Ran A(z,Y)}=E_{\Phi(z,Y)}.
\end{align}
If $z\in H^b\times\mathcal V_j$ and
$\Phi(z,Y)\in H^b\times\mathcal V_k$, the fixed space representatives
\begin{align*}
 J_{kj}(z,Y)&=\mathfrak T_k(\Phi(z,Y))^{-1}
               J(z,Y)\mathfrak T_j(z),\\
 A_k(z,Y)&=\mathfrak T_k(\Phi(z,Y))^{-1}A(z,Y)
\end{align*}
are continuous in operator norm on their common domain.
\end{lemma}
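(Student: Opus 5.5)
We will build $\mathsf Y_*$ as the image under $\iota_\vartheta$ of the stochastic convolution paths $Z(W)$, with $W$ ranging over the probability one Brownian event of \Cref{thm:dense-joint-range}. That event is intersected with the full measure events on which $QW\in C([0,\infty);H^b)$ and $Z(W)\in c_0^\vartheta([0,\tau];H^b)$. The difficulty is that this image need not be Borel, and the Brownian path is not a function of $y$. We therefore do not take the image directly. Instead we take a Borel subset of $\mathsf G_\vartheta$ of full $\mathbf P_Y$ measure contained in that image.

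Concretely, the map $W\mapsto\iota_\vartheta Z(W)$ is Borel from path space to $\mathsf Y$. The push forward of a full measure set is universally measurable (analytic), so it contains a Borel ($\sigma$-compact) subset of full $\mathbf P_Y$ measure. Intersect this subset with $\mathsf G_\vartheta$, which is Borel as the continuous injective image of a Polish space (Lusin--Souslin).

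Fix $y=\iota_\vartheta\zeta\in\mathsf Y_*$ and pick any good Brownian path $W$ with $Z(W)=\zeta$. The base and fibre solutions driven by the mild formulation with driver $y$ coincide with the genuine solutions driven by $W$. This holds because the equation only sees $W$ through $\zeta$: indeed $w_t=v_t+\zeta_t$. Hence global definition of $\Phi(z,y)$ follows from global well posedness (\Cref{lem:compatible-driver-global}) together with backward uniqueness. Identity \eqref{eq:augmented-chain-identification} then gives $A(z,y)=\widehat A_\tau$ computed along $W$. \Cref{thm:dense-joint-range}, valid simultaneously for all initial states on the fixed Brownian event, yields \eqref{eq:DR} for every $z$.

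One subtlety deserves a check. The dense range proof uses the quadratic variation of $W$ itself (\Cref{lem:anticipative-qv}), not just $\zeta$. This is harmless, since we have chosen a good preimage $W$. Alternatively, one can verify that the dyadic covariation statement can be phrased in terms of $QW=\zeta-\int\nu\Delta\zeta$, which is determined by $\zeta$.

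For the continuity of $J_{kj}$ and $A_k$, we invoke \Cref{lem:revision-base-two-jet}. The raw endpoint is locally $C^2$, in particular $C^1$, jointly in $(z,y)$ on the open set where the fibre endpoint is nonzero. Projective normalisation is smooth away from zero. So $D_z\Phi$ and $D_y\Phi$ are continuous in operator norm there. Composing with the smooth orthogonal frames $\mathfrak T_j,\mathfrak T_k^{-1}$ on the open chart domains, and with the fixed bounded $\Gamma$ (\Cref{lem:driver-HS}), preserves this continuity.

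The main obstacle is the measurability and quantifier step: producing a Borel full measure set of drivers on which the pathwise, Brownian-indexed dense range statement transfers, uniformly in $z$. I would handle it through the analytic image/inner regularity argument above rather than by trying to show the exceptional set is itself defined in terms of $y$.
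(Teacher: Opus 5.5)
Your argument is correct, but the transfer and measurability step takes a different route from the paper's.

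You say the Brownian path is not determined by $y$. So you pass through an analytic image, inner regularity, and an arbitrary good preimage $W$ for each driver. Under \eqref{eq:q}, however, every $q_r\ne0$, so the Wiener coordinates are recovered from $\zeta$ by the continuous map $(\mathcal R_\tau\zeta)^r_t=q_r^{-1}\bigl(\langle\zeta_t,e_r\rangle+\nu\lambda_r\int_0^t\langle\zeta_s,e_r\rangle\dd s\bigr)$. The paper builds on exactly this:
\begin{itemize}
\item It defines an explicit Borel set $\mathscr B_\tau$ of coordinate paths. These are the paths on which the partial sums $S_N$ converge in $C([0,\tau];H^b)$ and the dyadic covariation limits of \Cref{lem:anticipative-qv} hold.
\item It puts $\mathsf Y_*=\iota_\vartheta(\mathcal R_\tau^{-1}(\mathscr B_\tau))$ and gets Borelness from Lusin--Souslin.
\item It checks that $w=v+\zeta$ satisfies $w_t=w_0+\int_0^t(\nu\Delta w_s-B(w_s,w_s))\dd s+S(\omega)_t$ with $S(\omega)_t=\zeta_t-\nu\int_0^t\Delta\zeta_s\dd s$.
\end{itemize}
Hence the proof of \Cref{thm:dense-joint-range} runs on data determined by $y$ alone. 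This is the ``alternative'' you mention at the end.

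The same injectivity shows that your image is already Borel, again by Lusin--Souslin, so the analytic-set detour is not needed here.

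Your route is softer and does not need the explicit reconstruction. It pays for this with a non-explicit $\mathsf Y_*$. It also leaves one point implicit that you should state: the good Brownian event must include that your Borel version $Z(W)$ is the pathwise (integration by parts) convolution of $W$. That is what makes the solution driven by $W$ coincide with $v+\zeta$ in the form $w_0+V_t+QW_t$ used in the dense range proof.

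The paper's route buys an intrinsic description of $\mathsf Y_*$, whose membership is certified by the driver itself. Global definition of $\Phi$ and operator norm continuity of $J_{kj}$ and $A_k$ are handled exactly as in the paper.
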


\begin{proof}
Since $q_r\ne0$ for every $r$, the convolution path determines all
Wiener coordinates. Enumerate the real Fourier phases by $r\ge1$ and
write $Qe_r=q_re_r$ and $-\Delta e_r=\lambda_re_r$. Set
\begin{align*}
\mathscr W_\tau=\prod_{r\ge1}C_0([0,\tau];\mathbb R),\qquad
\mathscr C_\tau^0=\{\zeta\in c_0^\vartheta([0,\tau];H^b):\zeta_0=0\}.
\end{align*}
For $\zeta\in\mathscr C_\tau^0$, define
\begin{align}\label{eq:driver-coordinate-reconstruction}
(\mathcal R_\tau\zeta)^r_t
=q_r^{-1}\left(\langle\zeta_t,e_r\rangle
+\nu\lambda_r\int_0^t\langle\zeta_s,e_r\rangle\,\dd s\right).
\end{align}
Then $\mathcal R_\tau:\mathscr C_\tau^0\to\mathscr W_\tau$ is continuous.
If $Z$ is the stochastic convolution, its $r$-th coordinate satisfies
\begin{align*}
\langle Z_t,e_r\rangle
=-\nu\lambda_r\int_0^t\langle Z_s,e_r\rangle\,\dd s+q_rW_t^r,
\end{align*}
and hence $\mathcal R_\tau Z=(W^r)_{r\ge1}$ almost surely. Thus, on the compatible driver space, $\mathcal R_\tau$
is precisely the inverse reconstruction of the Wiener coordinate path
from the convolution path.

The common full measure event from \Cref{thm:dense-joint-range} is
transferred to the driver space as follows. For
$\omega\in\mathscr W_\tau$, set
\begin{align*}
S_N(\omega)_t=\sum_{r\le N}q_r\omega_t^re_r.
\end{align*}
Let $\mathscr B_\tau\subset\mathscr W_\tau$ be the set on which
$S_N(\omega)$ converges in $C([0,\tau];H^b)$, with limit $S(\omega)$,
and on which, for every $r$ and every nondegenerate interval
$I=[a,b]\subset[0,\tau]$ with dyadic endpoints,
\begin{align}\label{eq:explicit-driver-qv-event}
\sup_{t\in I}\left\|
\sum_{\substack{[u,v]\in\pi_n(I)\\ v\le t}}
\bigl(S(\omega)_v-S(\omega)_u\bigr)
\bigl(\omega_v^r-\omega_u^r\bigr)
-(t-a)q_re_r
\right\|_{H^b}\longrightarrow0.
\end{align}
This is a Borel set: convergence of $S_N$ is a countable Cauchy
condition in the separable space $C([0,\tau];H^b)$, and
\eqref{eq:explicit-driver-qv-event} is a countable family of uniform
limit conditions. Thus by  \Cref{lem:anticipative-qv},
\begin{align*}
\mathbf P\{(W^r)_{r\ge1}\in\mathscr B_\tau\}=1.
\end{align*}

Define
\begin{align*}
\mathsf Y_*=\iota_\vartheta\bigl(\mathcal R_\tau^{-1}(\mathscr B_\tau)\bigr).
\end{align*}
Here $\mathcal R_\tau^{-1}(\mathscr B_\tau)$ is understood inside
$\mathscr C_\tau^0$. Since $\mathcal R_\tau$ is continuous,
$\mathcal R_\tau^{-1}(\mathscr B_\tau)$ is Borel. Moreover,
$\iota_\vartheta$ is continuous and injective between Polish spaces, so
the Lusin--Souslin theorem implies that $\mathsf Y_*$ is Borel in
$\mathsf Y$. Since $\mathcal R_\tau Z=(W^r)_{r\ge1}$ almost surely, $\mathbf P_Y(\mathsf Y_*)=1$. 

Fix now $Y=\iota_\vartheta\zeta\in\mathsf Y_*$ and put
$\omega=\mathcal R_\tau\zeta$. By
\eqref{eq:driver-coordinate-reconstruction}, coordinatewise and hence in
$H^{b-2}$,
\begin{align*}
S(\omega)_t=\zeta_t-\nu\int_0^t\Delta\zeta_s\,\dd s.
\end{align*}
Consequently, the compatible solution $w=v+\zeta$ satisfies
\begin{align*}
w_t=w_0+\int_0^t\bigl(\nu\Delta w_s-B(w_s,w_s)\bigr)\,\dd s+S(\omega)_t.
\end{align*}
Since $\omega\in\mathscr B_\tau$, it satisfies exactly the convergence
and dyadic quadratic variation properties used in the proof of
\Cref{thm:dense-joint-range}. That proof is therefore pathwise for this
fixed driver and applies simultaneously to every initial state and every
terminal covector. Using \eqref{eq:augmented-chain-identification}, we
obtain
\begin{align*}
\overline{\Ran A(z,Y)}=E_{\Phi(z,Y)}
\end{align*}
for every $z\in\mathsf X$.

The global existence of $\Phi(z,Y)$ follows from
\Cref{lem:compatible-driver-global}; backward uniqueness gives
nonvanishing of the raw fibre endpoint, and the operator norm continuity
of the fixed space representatives $J_{kj}$ and $A_k$ on their common
chart domains follows from \Cref{lem:revision-base-two-jet}. This
completes the proof.
\end{proof}

Here and below, operator continuity in variable tangent spaces means
continuity of the representatives $J_{kj}$ and $A_k$ in the orthogonal
frames \eqref{eq:chart-rotation}.  Intrinsic operator norms do not depend
on the selected frame.  \Cref{cor:rough-driver-state-compact} and \eqref{eq:DR} imply that,
pointwise in $(z,Y)$, $J$ can be approximated
in operator norm by $AK$ with $K$ finite rank.  Indeed, first
approximate the compact operator $J$ in operator norm by a finite rank
operator, and then approximate each of its finitely many output vectors by
a vector in $\Ran A$.  This is the operator theoretic use of compactness of the state
derivative in the selector construction.

\subsection{A universal compact core selector}
\label{subsec:universal-compact-core-selector}

All objects in this subsection refer to the single block $\tau$ chosen in
\eqref{eq:revision-single-block-choice}.  Let $\mathfrak I$ denote the
family of invariant laws of the sampled kernel $P_\tau$.  This family is
nonempty.  Indeed, \Cref{cor:revision-common-block} gives an inf-compact
function $W_0$ such that
\begin{align*}
 P_\tau W_0\le\varrho_0W_0+C_\tau,
 \qquad
 P_\tau W_0(z)<\infty\quad\text{for every }z\in\mathsf X.
\end{align*}
The projective semigroup is Feller by the continuous dependence in
\Cref{lem:revision-base-two-jet}, together with standard backward
uniqueness.  Hence the Krylov--Bogoliubov averages of the sampled chain are
tight after the first step and have a $P_\tau$ invariant weak limit.  Thus
$\mathfrak I\ne\varnothing$.  Since the base dynamics is uniquely ergodic
by \cite{HM06,KS12}, the first marginal of every $\widehat\mu\in\mathfrak I$
is $\mu$.  By \Cref{cor:revision-common-block}, $W_0$ is inf-compact.  Since
every $\widehat\mu\in\mathfrak I$ has first marginal equal to the base
invariant law $\mu$, invariance and
\eqref{eq:revision-W0-pointwise-reset} give
\begin{align*}
 \sup_{\widehat\mu\in\mathfrak I}
 \int_{\mathsf X}W_0\,\widehat\mu(\dd z)
 \le C_\tau\left(1+\int_{H^b}V_*(w)\,\mu(\dd w)\right)<\infty.
\end{align*}
Hence $\mathfrak I$ is uniformly tight.  Moreover,
\Cref{lem:projective-log-moment} with $r=1$, together with the
positive time projective reset and invariance, gives
\begin{align}\label{eq:uniform-logJ}
 B_J:=\sup_{\widehat\mu\in\mathfrak I}
 \int_{\mathsf X}\mathbf E\log\bigl(1+\|J(z,Y)\|\bigr)
 \,\widehat\mu(\dd z)<\infty.
\end{align}

We can now combine compactness of the state derivative with the dense
range in \eqref{eq:DR}.  The next proposition makes this approximation
uniform on a positive mass compact core and patches finitely many controls
without introducing an uncontrolled $\|A\|\,\|K_i\|$ term off the core.
\begin{proposition}
\label{prop:universal-selector}
On the common full measure driver event $\mathsf Y_*$ of
\Cref{lem:common-endpoint-event}, for every
$q\in(0,1/4)$ there is a measurable
finite rank field
\begin{align*}
 K_q(z,Y):E_z\longrightarrow\mathcal H
\end{align*}
with the following properties.
\begin{enumerate}[label=\textup{(\roman*)}]
\item In finitely many state tangent charts,
$K_q$ has the form
\begin{align*}
 K_q(z,Y)=\sum_{i=1}^{N_q}\chi_i(z,Y)K_i(z),
\end{align*}
where $K_i(z)$ is the smooth transport, in the corresponding tangent
trivialisation, of a fixed deterministic finite rank operator, and the
$\chi_i$ are smooth Hilbert coordinate cutoffs satisfying
$0\le\sum_i\chi_i\le1$.
\item There is a Borel good core $G_q\subset\mathsf X\times\mathsf Y$ and a
number $p_*>0$, independent of $q$, such that
\begin{align}\label{eq:uniform-core-mass}
 \inf_{\widehat\mu\in\mathfrak I}
   (\widehat\mu\otimes\mathbf{P}_Y)(G_q)\ge p_*,
\end{align}
and, with the compensated linear residual
\begin{align*}
 R_q(z,Y)=J(z,Y)-A(z,Y)K_q(z,Y),
\end{align*}
one has
\begin{align}
 \|R_q(z,Y)\|&\le q &&\text{on }G_q,
 \notag\\
 \|R_q(z,Y)\|&\le \|J(z,Y)\|+q
 &&\text{everywhere}.                         \label{eq:R-global}
\end{align}
\item There is a finite constant $C_q\ge1$ (not asserted to be uniform as $q\to0$) such that
\begin{align}
 \|K_q(z,Y)h\|_{\mathcal H}
 &\le C_q\|h\|,\label{eq:Kq-bound}\\
 \|D_{\mathcal H}(K_q(z,Y)h)\|_{\HS(\mathcal H)}
 &\le C_q\|h\|.\label{eq:DKq-bound}
\end{align}
Here $D_{\mathcal H}$ denotes the Malliavin derivative with respect to the
fresh Wiener noise on the block.
\end{enumerate}
\end{proposition}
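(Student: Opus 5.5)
The plan is to build $K_q$ by a compactness-plus-partition-of-unity argument on a compact core of $\mathsf X\times\mathsf Y$, in four steps.

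\emph{Step 1: a compact core of uniform mass.} By uniform tightness of $\mathfrak I$ (the bound on $\int W_0\,\dd\widehat\mu$) and tightness of the Radon Gaussian law $\mathbf P_Y$, choose compact sets $\mathcal K_{\mathsf X}\subset\mathsf X$ and $\mathcal K_{\mathsf Y}\subset\mathsf Y$ such that
\[
\inf_{\widehat\mu\in\mathfrak I}\widehat\mu(\mathcal K_{\mathsf X})\ge 3/4,\qquad \mathbf P_Y(\mathcal K_{\mathsf Y}\cap\mathsf Y_*)\ge 3/4 .
\]
Take $\mathcal K_{\mathsf Y}$ inside $\iota_\vartheta$ of a compact subset of $c_0^\vartheta$, so that it lies in $\mathsf G_\vartheta$; inner regularity makes this possible. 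Then $p_*=1/2$ works, and $p_*$ does not depend on $q$. Cover $\mathcal K_{\mathsf X}$ by finitely many subcharts $H^b\times\mathcal V_j$ on which $|\langle\mathsf p,f_j\rangle|$ is bounded below. The set $\mathcal K=\mathcal K_{\mathsf X}\times(\mathcal K_{\mathsf Y}\cap\mathsf Y_*)$ is not compact, so I would instead use the compact set $\mathcal K_{\mathsf X}\times\mathcal K_{\mathsf Y}$. Continuity of $J_{kj}$ and $A_k$ is asserted only on $\mathsf Y_*$, and I expect this to be the main technical obstacle. To get around it, I would either shrink $\mathcal K_{\mathsf Y}$ by Lusin's theorem to a compact subset of $\mathsf Y_*$ (Borel with full measure), or note that \Cref{lem:revision-base-two-jet} gives continuity on all compatible drivers. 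Dense range is needed only at points of $\mathsf Y_*$, and pointwise approximation at those points is enough.

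\emph{Step 2: pointwise approximation.} Fix $(z_0,y_0)$ in the core. By \Cref{cor:rough-driver-state-compact}, $J(z_0,y_0)$ is compact. By \eqref{eq:DR} and the remark after \Cref{lem:common-endpoint-event}, there is a deterministic finite rank $K^0:H^b\oplus f_j^\perp\to\mathcal H$ with $\|J_{kj}(z_0,y_0)-A_k(z_0,y_0)K^0\|\le q/2$, where $K^0$ is written in the fixed frame and transported by $\mathfrak T_j(z)^{-1}$. Continuity of $J_{kj}$ and $A_k$ in operator norm then gives an open neighbourhood $O_0$ on which the error is still at most $q$. Compactness yields finitely many neighbourhoods $O_1,\dots,O_{N_q}$ with controls $K_i(z)=\widetilde K^i\,\mathfrak T_{j_i}(z)^{-1}$.

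\emph{Step 3: patching, and the global bound \eqref{eq:R-global}.} Choose smooth cutoffs $\chi_i$ in Hilbert coordinates on $\mathsf X\times\mathsf Y$, with $\operatorname{supp}\chi_i\subset O_i$, $\sum_i\chi_i\le1$ everywhere, and $\sum_i\chi_i=1$ on a slightly smaller compact core. Smooth bump functions of Hilbert norms of finitely many coordinate differences work, since a Hilbert space admits smooth bumps. The key point is convexity: where $\sum_i\chi_i=1$, the residual is $\sum_i\chi_i(J-AK_i)$, a convex combination of operators each of norm at most $q$, so $\|R_q\|\le q$. Where $\sum_i\chi_i=s<1$, write $R_q=(1-s)J+\sum_i\chi_i(J-AK_i)$, which gives $\|R_q\|\le(1-s)\|J\|+sq\le\|J\|+q$. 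This avoids any $\|A\|\,\|K_i\|$ term. Set $G_q$ equal to the set where $\sum_i\chi_i=1$, intersected with $\mathsf X\times\mathsf Y_*$. I would arrange $G_q\supset\mathcal K$ up to null sets, so that \eqref{eq:uniform-core-mass} holds with $p_*=1/2$.

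\emph{Step 4: the bounds \eqref{eq:Kq-bound} and \eqref{eq:DKq-bound}.} Estimate \eqref{eq:Kq-bound} is immediate, with $C_q=\max_i\|\widetilde K^i\|$. For \eqref{eq:DKq-bound}, $K_i(z)$ does not depend on the noise, so the only Malliavin dependence is through $\chi_i(z,Y)$. By the chain rule,
\[
D_{\mathcal H}\chi_i(z,Y(W))=\Gamma^*\,D_y\chi_i(z,Y).
\]
Hence $D_{\mathcal H}(K_q h)=\sum_i(K_ih)\otimes\Gamma^*D_y\chi_i$. This is a finite sum of rank-one operators whose norms are bounded by $\|\Gamma\|\,\sup\|D_y\chi_i\|\,\|K_i\|\,\|h\|$, and \Cref{lem:driver-HS} makes $\Gamma$ Hilbert--Schmidt. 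Measurability of $K_q$ follows because it is a finite sum of continuous operator fields multiplied by Borel cutoffs.
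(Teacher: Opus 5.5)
Your argument is correct, but it produces the good core by a different route from the paper. Your patching and Malliavin steps coincide with the paper's: the identity $R_q=(1-s)J+\sum_i\chi_i(J-AK_i)$, bumps in Hilbert coordinates on state and driver, and $D_{\mathcal H}\chi_i(z,Y)=\Gamma^*D_y\chi_i(z,Y)$.

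\emph{Your route.} You take a $q$-independent compact product $\mathcal K_{\mathsf X}\times\mathcal K'$ with $\mathcal K'\subset\mathsf Y_*$. At every point of it, compactness of $J$ and \eqref{eq:DR} give a good finite-rank candidate, and you cover the product by finitely many neighbourhoods. A few points need tightening:
\begin{itemize}
\item The compact set $\mathcal K'$ comes from inner regularity of the Borel probability $\mathbf P_Y$ on the Polish space $\mathsf Y$, not from Lusin's theorem.
\item Of your two proposed fixes, only this one works. Continuity on compatible drivers alone cannot give a finite cover of the noncompact set $\mathcal K_{\mathsf X}\times(\mathcal K_{\mathsf Y}\cap\mathsf Y_*)$.
\item You need $\sum_i\chi_i=1$ on the whole product, not on a smaller set. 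The paper's construction gives this: bumps equal to one on the radius-$r_i$ balls of a finite subcover, followed by $\Theta(S)\psi_i/S$.
\item The continuity you flag as the main obstacle is not restricted to $\mathsf Y_*$. \Cref{lem:revision-base-two-jet} gives operator-norm continuity on the open set of all $(z,y)$ with nonzero raw endpoint, and both routes use this to obtain open neighbourhoods.
\end{itemize}

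\emph{The paper's route.} The paper never places its driver core inside $\mathsf Y_*$.
\begin{itemize}
\item It fixes a countable dense family of rational finite-rank candidates $K_{\ell i}$ and forms the continuous errors $r_{\ell,N}=1\wedge\min_{i\le N}\|J-AK_{\ell i}\|$.
\item Dense range enters only through the pointwise limit $r_{\ell,N}\downarrow0$ on $C_\ell\times\mathsf Y_*$.
\item It applies Dini's theorem to $z\mapsto\mathbf E\,r_{\ell,N}(z,Y)$, then Markov's inequality.
\item The core is the compact sublevel set $\{r_{\ell,N_\ell}\le q/8\}$ inside $C_\ell\times L_q$, where $L_q$ is any compact set of compatible drivers.
\end{itemize}

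\emph{What each buys.} Your route is shorter, avoids Dini and the rational family, and gives a core independent of $q$. The paper's route certifies each core point by an explicit closed inequality. Compactness and goodness of the core are then automatic, and no compact subset of the dense-range event has to be selected.
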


\begin{proof}
By the uniform tightness of $\mathfrak I$, there is a compact set
$C_0\subset\mathsf X$ such that
\begin{align*}
\inf_{\widehat\mu\in\mathfrak I}\widehat\mu(C_0)\ge\frac34.
\end{align*}
Choose finitely many compact sets
$C_\ell\subset C_0\cap(H^b\times\mathcal V_{j(\ell)})$ whose union is
$C_0$, shrinking the corresponding projective subcharts if necessary so
that their denominators are uniformly separated from zero. In the
$\ell$-th fixed model tangent space choose a sequence
\begin{align*}
K_{\ell i}:H^b\oplus f_{j(\ell)}^\perp\longrightarrow\mathcal H,
\qquad i\ge1,
\end{align*}
of finite rank operators with rational finite matrices in fixed
orthonormal bases which is operator norm dense in
$\mathcal K(H^b\oplus f_{j(\ell)}^\perp,\mathcal H)$. Such a sequence
exists because finite rational matrices form a countable
operator norm dense subset of the compact operators between separable
Hilbert spaces. For $z\in H^b\times\mathcal V_{j(\ell)}$, set
\begin{align*}
K_{\ell i}(z)=K_{\ell i}\mathfrak T_{j(\ell)}(z)^{-1}.
\end{align*}

For $z\in C_\ell$ and compatible $Y$, define
\begin{align*}
r_{\ell,N}(z,Y)
=1\wedge\min_{1\le i\le N}
\|J(z,Y)-A(z,Y)K_{\ell i}(z)\|_{\mathcal L},
\end{align*}
and extend $r_{\ell,N}$ by one off the Borel compatible driver space.
By \Cref{lem:revision-base-two-jet} and \Cref{lem:common-endpoint-event}, the fixed space representatives of $J$ and $A$ are locally continuous in
operator norm in $(z,Y)$, while the orthogonal frame maps and
$K_{\ell i}(z)$ are smooth in $z$. Since the transition maps between
orthogonal frames preserve operator norms, $r_{\ell,N}$ is continuous
on the compatible driver space and its extension is Borel on the full
driver space.

The pointwise compact dense approximation and the density of the
rational candidates imply
\begin{align}\label{eq:selector-pointwise-universal}
r_{\ell,N}(z,Y)\downarrow0
\qquad\text{for every }(z,Y)\in C_\ell\times\mathsf Y_*.
\end{align}
The important point is that the same full measure set $\mathsf Y_*$ works simultaneously for every $z$.

We next make this approximation uniform in probability over $C_\ell$.
For fixed $N$, let $z_n\to z$ in $C_\ell$. Fix $Y\in\mathsf Y_*$. Choose
an output chart containing $\Phi(z,Y)$. By continuity of $\Phi$, the same
output chart contains $\Phi(z_n,Y)$ for all sufficiently large $n$, and
\Cref{lem:common-endpoint-event} gives
\begin{align*}
\|A_k(z_n,Y)-A_k(z,Y)\|_{\mathcal L}
+\|J_{k,j(\ell)}(z_n,Y)-J_{k,j(\ell)}(z,Y)\|_{\mathcal L}
\longrightarrow0.
\end{align*}
The corresponding transports of the finitely many $K_{\ell i}$,
$1\le i\le N$, also converge in operator norm. Hence
\begin{align*}
r_{\ell,N}(z_n,Y)\longrightarrow r_{\ell,N}(z,Y)
\qquad\text{for every }Y\in\mathsf Y_*.
\end{align*}
Since $0\le r_{\ell,N}\le1$ and $\mathbf P_Y(\mathsf Y_*)=1$, dominated
convergence shows that
\begin{align*}
\varphi_{\ell,N}(z)=\mathbf E\,r_{\ell,N}(z,Y)
\end{align*}
is continuous on $C_\ell$. Moreover,
\eqref{eq:selector-pointwise-universal} and dominated convergence give
\begin{align*}
\varphi_{\ell,N}(z)\downarrow0
\qquad\text{for every }z\in C_\ell.
\end{align*}
Thus $\varphi_{\ell,N}$ is a decreasing sequence of continuous functions
on the compact set $C_\ell$ with continuous limit zero. By Dini's theorem, $\sup_{z\in C_\ell}\varphi_{\ell,N}(z)\longrightarrow0$. 
Choose $N_\ell=N_\ell(q)$ so large that
\begin{align*}
\sup_{z\in C_\ell}\mathbf E\,r_{\ell,N_\ell}(z,Y)\le\frac{q}{128}.
\end{align*}
Markov's inequality then gives
\begin{align}\label{eq:Dini-good}
\inf_{z\in C_\ell}
\mathbf P_Y\{r_{\ell,N_\ell}(z,Y)<q/16\}\ge\frac78.
\end{align}

Since the law of $Z$ is Radon on the separable
space $c_0^\vartheta([0,\tau];H^b)$, we can choose a compact set
$\widetilde L_q$ contained in the zero initial compatibility space such
that its probability is at least $7/8$, and set
\begin{align*}
L_q=\iota_\vartheta(\widetilde L_q)\subset\mathsf Y.
\end{align*}
Then $L_q$ is compact in $\mathsf Y$,
$\mathbf P_Y(L_q)\ge7/8$, and every $Y\in L_q$ is compatible. By \Cref{lem:compatible-driver-global}, the raw endpoint is defined on
$C_0\times L_q$. Backward uniqueness makes every raw fibre endpoint on
$C_0\times L_q$ nonzero. Since the nonvanishing set is open, there is an
open neighborhood of $C_0\times L_q$ on which all normalized endpoint
quantities are defined.

Define
\begin{align}\label{eq:compact-selector-core}
\mathcal G_q=
\bigcup_\ell
\{(z,Y)\in C_\ell\times L_q:
r_{\ell,N_\ell}(z,Y)\le q/8\}.
\end{align}
Each member of this finite union is compact, since
$r_{\ell,N_\ell}$ is continuous on $C_\ell\times L_q$. Hence
$\mathcal G_q$ is compact. For every $z\in C_0$, choose $\ell$ with
$z\in C_\ell$. By \eqref{eq:Dini-good},
\begin{align*}
\mathbf P_Y\{r_{\ell,N_\ell}(z,Y)<q/16\}\ge\frac78,
\qquad
\mathbf P_Y(L_q)\ge\frac78.
\end{align*}
If both events occur, then $(z,Y)\in\mathcal G_q$. Therefore the union
bound gives
\begin{align*}
\mathbf P_Y\{Y:(z,Y)\in\mathcal G_q\}
\ge1-\frac18-\frac18=\frac34.
\end{align*}
Consequently, for every $\widehat\mu\in\mathfrak I$,
\begin{align*}
(\widehat\mu\otimes\mathbf P_Y)(\mathcal G_q)
&=\int_{\mathsf X}
\mathbf P_Y\{Y:(z,Y)\in\mathcal G_q\}\,\widehat\mu(\dd z)\\
&\ge\frac34\,\widehat\mu(C_0)\ge\frac9{16}.
\end{align*}
Thus we may fix $p_*=1/2$ and set $G_q=\mathcal G_q$.

The finite patch selector is constructed locally and then patched by a
finite partition.  Fix $x=(z,Y)\in\mathcal G_q$.
By \eqref{eq:compact-selector-core}, there are $\ell$ with $z\in C_\ell$
and $1\le j\le N_\ell$ such that
\begin{align*}
\|J(z,Y)-A(z,Y)K_{\ell j}(z)\|_{\mathcal L}\le\frac q8.
\end{align*}
Choose an output projective chart containing $\Phi(z,Y)$. By continuity
of $\Phi$, $J$, $A$, the orthogonal frames, and $K_{\ell j}(\cdot)$ in
the corresponding fixed Hilbert coordinates, there is an open
neighborhood $\mathcal U_x$ of $x$ such that the same candidate satisfies
\begin{align*}
\|J(z',Y')-A(z',Y')K_{\ell j}(z')\|_{\mathcal L}<q
\end{align*}
for every $(z',Y')\in\mathcal U_x$. Shrinking $\mathcal U_x$ if
necessary, we may also assume that the same input and output projective
charts are valid throughout $\mathcal U_x$ and that their denominators
are bounded below by a positive constant.  Shrink it once more so that,
writing $x=(z,Y)$, every state component $z'$ occurring in
$\mathcal U_x$ satisfies $(z,z')\in\mathscr U_\Delta$.

Use the unified state coordinate together with the
identity driver coordinate,
\begin{align*}
 \kappa_x(z',Y')=\bigl(\Delta_z(z'),Y'-Y\bigr),
\end{align*}
to identify $\mathcal U_x$ with an open subset of a Hilbert space.  Since
$\mathcal U_x$ is open, choose $r_x>0$ such that
\begin{align*}
\kappa_x^{-1}\left(
\overline{B(\kappa_x(x),2r_x)}
\right)\subset\mathcal U_x.
\end{align*}
The corresponding radius $r_x$ coordinate balls form an open cover of
the compact set $\mathcal G_q$. Hence there are finitely many
$x_1,\ldots,x_m\in\mathcal G_q$ and radii $r_1,\ldots,r_m>0$ whose
radius $r_i$ coordinate balls cover $\mathcal G_q$, while the closed
radius $2r_i$ balls remain inside the corresponding neighborhoods
$\mathcal U_{x_i}$.

For each $i$, let $\ell_i$ and $a_i$ be the indices selected at $x_i$,
and write
\begin{align*}
K_i(z)=K_{\ell_i a_i}\mathfrak T_{j(\ell_i)}(z)^{-1}
\end{align*}
on the corresponding input chart. Then
\begin{align*}
\|J(z,Y)-A(z,Y)K_i(z)\|_{\mathcal L}<q
\end{align*}
throughout the closed doubled coordinate ball associated with $x_i$.
The relevant projective denominators are uniformly separated from zero
there.

Let $\kappa_i$ be the Hilbert coordinate map of the $i$-th patch. Choose
$\theta\in C^\infty(\mathbb R;[0,1])$ equal to one on $(-\infty,1]$ and
zero on $[4,\infty)$, and define
\begin{align*}
\psi_i(x)
=\theta\left(
\frac{\|\kappa_i(x)-\kappa_i(x_i)\|^2}{r_i^2}
\right)
\end{align*}
on that chart, extending $\psi_i$ by zero outside it. Since
$\operatorname{supp}\psi_i$ lies in the closed doubled ball, this
extension is smooth. Put
\begin{align*}
S=\sum_{i=1}^m\psi_i.
\end{align*}
The radius $r_i$ balls cover $\mathcal G_q$, and $\psi_i=1$ on the
corresponding radius $r_i$ ball. Hence
\begin{align*}
S\ge1\qquad\text{on }\mathcal G_q.
\end{align*}
Choose $\Theta\in C^\infty([0,\infty);[0,1])$ which is zero on
$[0,1/4]$ and one on $[1/2,\infty)$, and define
\begin{align*}
\chi_i=\Theta(S)\frac{\psi_i}{S}\quad\text{on }\{S>0\},
\qquad
\chi_i=0\quad\text{on }\{S=0\}.
\end{align*}
Since $\Theta(S)=0$ whenever $S\le1/4$, the quotient causes no
singularity at $S=0$, and each $\chi_i$ is globally smooth. Moreover,
\begin{align*}
0\le\sum_{i=1}^m\chi_i=\Theta(S)\le1,\qquad
\sum_{i=1}^m\chi_i=1\quad\text{on }\mathcal G_q,
\end{align*}
and $\operatorname{supp}\chi_i$ is contained in the doubled ball on
which $K_i$ has residual below $q$.

There are finitely many patches and radii. Hence the driver
derivatives of the $\chi_i$ needed below are uniformly bounded. Moreover,
each $K_i(z)$ is obtained by composing a fixed finite rank operator with the
smooth orthogonal frame $\mathfrak T_{j(\ell_i)}(z)^{-1}$, and is therefore
continuous and uniformly bounded on the support of $\chi_i$.

Define
\begin{align*}
K_q(z,Y)=\sum_{i=1}^m\chi_i(z,Y)K_i(z),
\end{align*}
where each summand is extended by zero outside its patch. Then $K_q$ is a finite rank operator field on the union of these
patches, vanishes outside that union, and is continuous there. Its range is
contained in the finite dimensional space
generated by the ranges of the finitely many $K_{\ell_i a_i}$.

Let
\begin{align*}
s=\sum_{i=1}^m\chi_i.
\end{align*}
Since $K_q=\sum_i\chi_iK_i$,
\begin{align*}
J-AK_q=(1-s)J+\sum_{i=1}^m\chi_i(J-AK_i).
\end{align*}
On $\mathcal G_q$ one has $s=1$, and every active candidate satisfies
$\|J-AK_i\|_{\mathcal L}<q$. Hence
\begin{align*}
\|J-AK_q\|_{\mathcal L}<q
\qquad\text{on }\mathcal G_q.
\end{align*}
For arbitrary points in the support of the selector,
\begin{align*}
\|J-AK_q\|_{\mathcal L}
\le(1-s)\|J\|_{\mathcal L}+sq
\le\|J\|_{\mathcal L}+q.
\end{align*}
Thus the large norms of the individual $K_i$ do not enter the residual
estimate.

Finally, $K_i(z)$ is independent of the driver variable and
$D_{\mathcal H}Y=\Gamma$. Therefore, for fixed $h$ and $v\in\mathcal H$,
\begin{align*}
D_{\mathcal H}(K_qh)[v]
=\sum_{i=1}^m
\bigl(D_{\mathsf Y}\chi_i\,\Gamma v\bigr)K_i h.
\end{align*}
This is a finite rank operator on $\mathcal H$. Since $\Gamma$ is
Hilbert--Schmidt and the driver derivatives of the finitely many cutoffs
are bounded,
\begin{align*}
\|D_{\mathcal H}(K_qh)\|_{\mathcal L_2(\mathcal H)}
\le C_q\|h\|.
\end{align*}
This proves the Malliavin derivative bound required in
\eqref{eq:DKq-bound} and completes the construction.
\end{proof}

For a selector $K_q$ supplied by \Cref{prop:universal-selector}, define the
logarithmic residual
\begin{align}\label{eq:adef}
 a_q(z,Y)=\log\bigl(\|R_q(z,Y)\|+q\bigr).
\end{align}

The residual in \eqref{eq:adef} becomes a logarithmic contraction
observable.  Choosing the accuracy once yields a uniform negative stationary
mean over all sampled invariant laws.
\begin{corollary}
\label{cor:uniform-invariant-negative}
There are a fixed $q>0$, a corresponding selector $K_q$, and
$\lambda>0$ such that
\begin{align}\label{eq:all-invariant-negative}
 \sup_{\widehat\mu\in\mathfrak I}
 \int_{\mathsf X}\mathbf{E} a_q(z,Y)\,
       \widehat\mu(\dd z)\le-5\lambda.
\end{align}
\end{corollary}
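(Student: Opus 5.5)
The plan is to split the stationary integral according to the good core $G_q$ of \Cref{prop:universal-selector} and its complement, and to use the uniform log moment \eqref{eq:uniform-logJ}. Only two features of the selector matter: the residual is small on a set of uniformly positive mass, and it is controlled by $\|J\|$ everywhere else.

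On $G_q$ we have $\|R_q\|\le q$, so $a_q\le\log(2q)$. Off $G_q$, \eqref{eq:R-global} gives $\|R_q\|+q\le\|J\|+2q\le\|J\|+1$ because $q<1/4$. Hence $a_q\le\log(1+\|J(z,Y)\|)$ there. Let $\widehat\mu\in\mathfrak I$. Writing $\mathbb P=\widehat\mu\otimes\mathbf P_Y$ and using $\log(1+\|J\|)\ge0$, we obtain
\begin{align*}
\int_{\mathsf X}\mathbf E\,a_q\,\dd\widehat\mu
\le\log(2q)\,\mathbb P(G_q)
+\int_{\mathsf X}\mathbf E\log\bigl(1+\|J(z,Y)\|\bigr)\,\widehat\mu(\dd z).
\end{align*}
Since $\log(2q)<0$ and $\mathbb P(G_q)\ge p_*$ by \eqref{eq:uniform-core-mass}, the right side is at most $p_*\log(2q)+B_J$. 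The key structural fact here is that $p_*=1/2$ does not depend on $q$. The constant $B_J$ from \eqref{eq:uniform-logJ} is also independent of $q$, because it involves only $J$ and the family $\mathfrak I$, and $\mathfrak I$ is determined by the already fixed block $\tau$.

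It remains to choose $q$. Take $q\in(0,1/4)$ so small that $p_*\log(2q)+B_J\le-1$, for instance $\log(2q)\le-2(B_J+1)$. Then set $\lambda=1/5$ and take $K_q$ to be the selector furnished by \Cref{prop:universal-selector} for this $q$. The bound holds for every $\widehat\mu\in\mathfrak I$ with constants independent of $\widehat\mu$, so taking the supremum gives \eqref{eq:all-invariant-negative}.

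The main point requiring care is measurability and integrability of $a_q$. We need $\mathbf E a_q$ to be well defined, possibly equal to $-\infty$, but it must not be an undefined $\infty-\infty$. This holds because $a_q\ge\log q>-\infty$ and $a_q^+\le\log(1+\|J\|)$, which is integrable by \eqref{eq:uniform-logJ}. Measurability of $(z,Y)\mapsto\|R_q\|$ follows from the Borel measurability of $K_q$ and the continuity of the chart representatives $J_{kj}$ and $A_k$ from \Cref{lem:common-endpoint-event}. No compactness or Dini argument is needed beyond what is already built into the proposition.
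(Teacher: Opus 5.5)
Your proof is correct and follows the paper's argument exactly: split along the good core $G_q$, bound $a_q$ by $\log(2q)$ there and by $\log(1+\|J\|)$ off it via \eqref{eq:R-global}, and use the $q$-independence of $p_*$ and $B_J$ to conclude that $p_*\log(2q)+B_J\to-\infty$ as $q\downarrow0$. Your explicit choice of $q$ and $\lambda$, and your remark that $\log q\le a_q$ with $a_q^+\le\log(1+\|J\|)$ makes $\mathbf E a_q$ well defined and finite, are harmless additions that the paper leaves implicit.
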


\begin{proof}
On $G_q$, one has $a_q\leq \log(2q)$. 
On the complement, for $q<1/4$, \eqref{eq:R-global}  gives 
\begin{align*}
a_q\leq \log\bigl(\|J\|+2q\bigr)\leq \log(1+\|J\|).
\end{align*}
Consequently, by \eqref{eq:uniform-core-mass} and
\eqref{eq:uniform-logJ},
\begin{align*}
\sup_{\widehat\mu\in\mathfrak I}
\int_{\mathsf X}\mathbf E a_q(z,Y)\,\widehat\mu(\dd z)
\leq p_*\log(2q)+B_J.
\end{align*}
The right hand side tends to $-\infty$ as $q\downarrow0$.
\end{proof}

\subsection{Uniform occupation negativity}

We first construct a single Lyapunov function that simultaneously controls the occupation and temperedness quantities needed below. Unlike the base Lyapunov hierarchy in \Cref{sec:foundations}, this augmented function also incorporates the logarithmic costs of the endpoint two jet and chart radius, while avoiding inadmissible powers of the exponential base weight.

Keep $p=p_{\rm occ}$, the block $\tau$, and the common weight $W_0$ fixed in \eqref{eq:revision-single-block-choice}.  The common block
choice already includes the two orders $1$ and $p^2$: the first gives
\eqref{eq:uniform-logJ}, while the second supplies the moments needed for
the augmented Lyapunov function below.  No new block is selected in the
following proposition. Define the one block logarithmic cost $L$ and its   moments
$F_s$ by
\begin{align}
 L(z,Y)
 &=\log^+\left(
  1+\|J(z,Y)\|+\mathsf C_\tau(z,Y)+\mathsf r_\tau(z,Y)^{-1}
 \right),                                             \label{eq:block-log-cost}\\
 F_s(z)&=\mathbf{E}_z L(z,Y)^s. \notag
\end{align}
Here $\mathsf C_\tau$ is the two jet bounds in \eqref{eq:revision-C-def} and $\mathsf r_\tau$ is the base tube and chart radius given in  \eqref{eq:revision-chart-radius}.

To upgrade stationary negativity to a pathwise last exit estimate, we
need one Lyapunov function controlling both the residual and the local
two jet cost.  The next proposition constructs such a function without
changing the previously fixed block.
\begin{proposition}
\label{cor:revision-conditional-Lyapunov}
There is a lower semicontinuous inf-compact function
$\mathcal W:\mathsf X\to[1,\infty]$, and
constants $0<\rho<1$, $C<\infty$ such that
\begin{align}
 P_\tau\mathcal W&\le\rho\mathcal W+C,
 \label{eq:joint-V-drift}\\
 \mathbf{E}_z\left(L(z,Y)^p+\left|a_q(z, Y)\right|^p\right)&\le C\mathcal W(z)^{1/p},
 \label{eq:conditional-block-moments}
\end{align}
where the constant $C$ in the second inequality may depend on $q$ through $a_q$. Moreover, 
\begin{align*}
      \sup _{\widehat{\mu} \in \mathfrak{I}} \int_{\mathsf X} \mathbf{E}_z\left[L(z, Y)^p+\left|a_q(z, Y)\right|^p\right] \widehat{\mu}(\dd z)<\infty.
\end{align*}
\end{proposition}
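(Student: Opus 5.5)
The plan is to build $\mathcal W$ from the common block weight $W_0$ together with the order $p^2$ two jet moment bound of \Cref{lem:projective-log-moment}, which is already contained in the block choice \eqref{eq:revision-single-block-choice}. Concretely, I would set
\begin{align*}
 \mathcal W(z)=W_0(z)+F_{p^2}(z)^{p}+1,
\end{align*}
possibly with a constant multiple in front of $W_0$. By \Cref{cor:revision-common-block}, applied at order $p^2$ after its polynomial member was enlarged to dominate the base weights, one gets $F_{p^2}(z)\le C\,W_0(z)^{1/p^{2}}$. Hence $F_{p^2}^p\le C W_0^{1/p}\le C W_0+C$. Two consequences follow. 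First, $\mathcal W$ is comparable to $W_0$ up to additive constants, so the drift $P_\tau W_0\le\varrho_0W_0+C_\tau$ transfers to \eqref{eq:joint-V-drift} with any $\rho\in(\varrho_0,1)$. Second, $\mathcal W$ inherits inf-compactness from $W_0$. If the comparison is instead only pointwise, I would simply take $\mathcal W=C W_0$ and use the inequality $F_{p^2}\le C W_0^{1/p^2}$ directly. Lower semicontinuity holds because $W_0$ is lower semicontinuous, and $F_{p^2}$ is lower semicontinuous by Fatou together with the continuity of $J$, $\mathsf C_\tau$ and $\mathsf r_\tau$ in $z$ from \Cref{lem:revision-base-two-jet}.

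For \eqref{eq:conditional-block-moments}, Jensen's inequality gives
\begin{align*}
\mathbf E_zL^p\le(\mathbf E_zL^{p^2})^{1/p}=F_{p^2}^{1/p}\le C\mathcal W^{1/p^2}\le C\mathcal W^{1/p},
\end{align*}
using $\mathcal W\ge1$. For the residual term I would bound $a_q$ separately from above and below. From above, \eqref{eq:R-global} gives $a_q\le\log(\|J\|+2q)\le\log(1+\|J\|)\le L$. From below, $a_q\ge\log q$ because $\|R_q\|\ge0$. Together these yield $|a_q|\le L+|\log q|$, so $\mathbf E_z|a_q|^p\le 2^{p-1}(\mathbf E_zL^p+|\log q|^p)$. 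This is the only place where $q$ enters the constant, which matches the remark in the statement.

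For the uniform stationary bound, every $\widehat\mu\in\mathfrak I$ is $P_\tau$-invariant. Integrating \eqref{eq:joint-V-drift} gives $\int\mathcal W\,\dd\widehat\mu\le C/(1-\rho)$, after a truncation argument: one applies the drift to $\mathcal W\wedge n$ via $P_\tau(\mathcal W\wedge n)\le(\rho\mathcal W+C)\wedge n$ and lets $n\to\infty$ by monotone convergence. This yields a uniform bound on $\int\mathcal W^{1/p}\dd\widehat\mu$, and therefore the final supremum. Alternatively, the paper's previous bound $\sup_{\mathfrak I}\int W_0\dd\widehat\mu\le C_\tau(1+\int V_*\dd\mu)$ gives the same conclusion directly.

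The main obstacle is the comparison $F_{p^2}\lesssim W_0^{1/p^2}$. The two jet cost $\mathsf C_\tau$ and the inverse chart radius $\mathsf r_\tau^{-1}$ depend on the base state through exponential type weights. One must check that \Cref{lem:projective-log-moment} controls their logarithms by a polynomial power of the base weight, which the enlarged polynomial member of $W_0$ dominates, rather than by the exponential weight itself, which would be inadmissible. I would first try to read this bound off directly from the exponents $d_r,R_r$ at $r=p^2$, using that the log of an exponential weight is polynomial in $\|w\|_{H^b}$. The projective contribution $\mathsf r_\tau^{-1}$ would be handled using the positive time reset so that no dependence on $[\mathsf p]$ survives.
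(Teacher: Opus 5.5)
There is a genuine gap, and it sits exactly at the comparison you single out as the main obstacle. The sublinear bound $F_{p^2}\le C\,W_0^{1/p^2}$ is not supplied by \Cref{lem:projective-log-moment}, and it cannot be extracted from the paper's estimates. At order $r=p^2$ that theorem gives $F_{p^2}(z)\le C\bigl(V_{p^2}(w)+1+\|\mathsf p\|_{H^1}^{d_{p^2}}\bigr)$. This is \emph{linear} in the base weight, and every member of the hierarchy \eqref{eq:revision-base-weight} carries the full factor $\exp(\eta_0\|w\|_2^2)$ with a single fixed $\eta_0$. Your hope that the logarithm makes this polynomial does not work out. The pathwise bound \eqref{eq:C2-log-bound} is already polynomial in $\mathcal A_\tau$ and the fibre energy, and it is the polynomial moments of the base path norm that cost the exponential weight; see \eqref{eq:revision-maximal-base-moment} and \eqref{eq:revision-base-path-moments}. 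Since $W_0^{1/p^2}$ contains only $\exp(\eta_0\|w\|_2^2/p^2)$, no enlargement of the polynomial orders gives $V_{p^2}\lesssim W_0^{1/p^2}$. With the available bound, the term $F_{p^2}^p$ in your $\mathcal W$ is of size $V_{p^2}^p$. Its drift would therefore require control of $\exp(p\eta_0\|w\|_2^2)$, which is precisely the inadmissible power of the exponential base weight the construction must avoid. Your drift inequality, the comparability of $\mathcal W$ with $W_0$, and the finiteness of $\int F_{p^2}^p\,\dd\widehat\mu$ all rest on this step. Your fallback $\mathcal W=CW_0$ invokes the same inequality.

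What the estimates do give is the linear comparison $F_{p^2}\le C\,W_0$, because $V_*$ dominates $V_{p^2}$, $M_*\ge d_{p^2}$, and $\|\mathsf p\|_{H^1}\ge\|\mathsf p\|_2=1$. They even give the reset bound $P_\tau F_{p^2}\le C\bigl(1+V_*(w)\bigr)$, uniform in the projective direction. This follows by applying \Cref{lem:projective-log-moment} at $X_1=\Phi(z,Y_0)$ and then \eqref{eq:revision-common-reset} together with the base Foster estimate. The factor $\mathcal W^{1/p}$ in \eqref{eq:conditional-block-moments} comes entirely from Jensen, $\mathbf E_zL^p\le F_{p^2}^{1/p}$, so $F_{p^2}$ only needs to enter $\mathcal W$ linearly. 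The paper takes $\mathcal W=W_0+\varepsilon_1F_{p^2}$ with $\varrho_0+\varepsilon_1C<\rho<1$. Your fallback $\mathcal W=CW_0$ also works once the linear comparison replaces the sublinear one. Your treatment of $a_q$ via $|a_q|\le L+|\log q|$ and the Jensen step are fine.

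Two smaller points. First, $\mathsf C_\tau$ and $\mathsf r_\tau^{-1}$ are only lower semicontinuous, not continuous; this still suffices for Fatou. Second, your truncation argument only yields $\int\mathcal W\,\dd\widehat\mu\le\rho\int\mathcal W\,\dd\widehat\mu+C$, which is vacuous unless finiteness is known a priori. The paper gets finiteness from invariance, the pointwise reset bounds $P_\tau W_0,\ P_\tau F_{p^2}\le C(1+V_*)$, and $\int V_*\,\dd\mu<\infty$.
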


\begin{proof}
Write
\begin{align*}
X_1=\Phi(z,Y_0)=(w_\tau,[\mathsf p_\tau]),
\end{align*}
where $Y_0$ is the driver on the first block and $\mathsf p_\tau$ is the unit representative of the projective component. By \Cref{cor:revision-common-block}, $\mathsf p_\tau\in H^1$ almost surely. Moreover, the common hierarchy was chosen so that $M_*\ge d_{p^2}$ and $V_*$ dominates the base weight associated with the order $p^2$.

Conditionally on $X_1$, applying \Cref{lem:projective-log-moment} with $T=\tau$ and $r=p^2$  gives
\begin{align*}
F_{p^2}(X_1)\le C\left(V_*(w_\tau)+1+\|\mathsf p_\tau\|_{H^1}^{d_{p^2}}\right).
\end{align*}
Taking expectation over the first block yields
\begin{align*}
P_\tau F_{p^2}(z)=\mathbf E_zF_{p^2}(X_1)\le C\mathbf E_z\left[V_*(w_\tau)+1+\|\mathsf p_\tau\|_{H^1}^{d_{p^2}}\right].
\end{align*}
The base Foster estimate and common reset \eqref{eq:revision-common-reset} in
\Cref{cor:revision-common-block} give
\begin{align*}
\mathbf E_zV_*(w_\tau)\le \theta V_*(w)+C_\tau,
\qquad
\mathbf E_z\|\mathsf p_\tau\|_{H^1}^{d_{p^2}}
\le 1+\mathbf E_z\|\mathsf p_\tau\|_{H^1}^{M_*}
\le C_\tau(1+V_*(w)).
\end{align*}
Consequently,
\begin{align}\label{eq:revision-two-block-reset}
P_\tau F_{p^2}(z)\le C\bigl(1+V_*(w)\bigr)<\infty.
\end{align}
The bound is uniform in the initial projective direction.

Set
\begin{align}\label{eq:revision-W-augmentation}
\mathcal W=W_0+\varepsilon_1F_{p^2},
\end{align}
where $\varepsilon_1>0$ will be chosen below. Since $V_*\le W_0$, \eqref{eq:robust-joint-drift} and \eqref{eq:revision-two-block-reset} give
\begin{align*}
P_\tau\mathcal W  \le(\varrho_0+\varepsilon_1C)W_0+C
\le\rho\mathcal W+C
\end{align*}
after choosing $\varepsilon_1$ so that $\varrho_0+\varepsilon_1C<\rho<1$. This proves \eqref{eq:joint-V-drift}.

Let
\begin{align*}
\mathsf D_{\mathcal W}=\{z:\mathcal W(z)<\infty\}.
\end{align*}
By \Cref{lem:projective-log-moment} and $C^\beta \hookrightarrow H$, $F_{p^2}$ is finite on $\mathsf D_0=C^\beta\times\PP(H^1)$, while $W_0=+\infty$ on $\mathsf X\setminus\mathsf D_0$ by  \eqref{eq:robust-joint-drift}. Hence $\mathsf D_{\mathcal W}=\mathsf D_0$. Moreover, \eqref{eq:revision-W0-pointwise-reset} and \eqref{eq:revision-two-block-reset} give
\begin{align}\label{eq:revision-W-pointwise-reset}
P_\tau\mathcal W(w,[\mathsf p])\le C_\tau\bigl(1+V_*(w)\bigr)<\infty,\qquad P_\tau\bigl((w,[\mathsf p]),\mathsf D_{\mathcal W}\bigr)=1.
\end{align}
Thus the sampled chain enters $\mathsf D_{\mathcal W}$ almost surely after one block, independently of the regularity of the initial projective direction.

As $(z,y)\mapsto L(z,y)$ is lower semicontinuous when extended by $+\infty$ at a zero raw endpoint, Fatou's lemma  gives
\begin{align*}
F_{p^2}(z)\le\liminf_{n\to\infty}F_{p^2}(z_n)
\end{align*}
whenever $z_n\to z$. Hence $F_{p^2}$ and $\mathcal W$ are lower semicontinuous. Since $\mathcal W\ge W_0$ and $W_0$ is inf-compact, every sublevel $\{\mathcal W\le M\}$ is a closed subset of the compact set $\{W_0\le M\}$. Thus $\mathcal W$ is inf-compact.

For the residual $R_q=J-AK_q$, \eqref{eq:R-global} and $q<1/4$ give
\begin{align*}
q\le\|R_q\|+q\le\|J\|+2q\le1+\|J\|.
\end{align*}
Consequently,
\begin{align*}
|a_q(z,Y)|^p\le C_{p,q}\left(1+\bigl[\log^+(1+\|J(z,Y)\|)\bigr]^p\right)\le C_{p,q}\bigl(1+L(z,Y)^p\bigr).
\end{align*}
Since $\mathcal W\ge1$, the preceding estimates yield
\begin{align*}
\mathbf E_z\left[L(z,Y)^p+|a_q(z,Y)|^p\right]\le C\mathcal W(z)^{1/p},
\end{align*}
which proves \eqref{eq:conditional-block-moments}.

It remains to deduce the uniform stationary integrability. Let $\widehat\mu\in\mathfrak I$. Since its base marginal is the unique base invariant law $\mu$, invariance and \eqref{eq:revision-W0-pointwise-reset} give
\begin{align*}
\int_{\mathsf X}W_0\,\dd\widehat\mu
=\int_{\mathsf X}P_\tau W_0\,\dd\widehat\mu
\le C_\tau\left(1+\int_{H^b}V_*(w)\,\mu(\dd w)\right)<\infty.
\end{align*}
Likewise, by \eqref{eq:revision-two-block-reset},
\begin{align*}
\int_{\mathsf X}F_{p^2}\,\dd\widehat\mu
=\int_{\mathsf X}P_\tau F_{p^2}\,\dd\widehat\mu
\le C\left(1+\int_{H^b}V_*(w)\,\mu(\dd w)\right)<\infty.
\end{align*}
The bounds are independent of $\widehat\mu$, and hence
\begin{align*}
\sup_{\widehat\mu\in\mathfrak I}\int_{\mathsf X}\mathcal W\,\dd\widehat\mu<\infty.
\end{align*}
Integrating \eqref{eq:conditional-block-moments} and Jensen's inequality therefore prove
\begin{align*}
\sup_{\widehat\mu\in\mathfrak I}\int_{\mathsf X}\mathbf E_z\left[L(z,Y)^p+|a_q(z,Y)|^p\right]\,\widehat\mu(\dd z)<\infty.
\end{align*}
\end{proof}

Let $(Y_n)_{n\geq 0}$ be independent copies of the one block
Gaussian driver $Y$. For $x\in\mathsf X$, set $X_0=x$ and
\begin{align}\label{e.XnYn}
X_{n+1}=\Phi(X_n,Y_n), \qquad n\geq 0.
\end{align}
Then $(X_n)_{n\geq 0}$ is the Markov chain with transition kernel
$P_\tau$; equivalently, $X_n$ has the law of the projective
process sampled at time $n\tau$. Fix $q$, $K_q$, and $\lambda>0$ as in
\Cref{cor:uniform-invariant-negative}, let $a_q$ be defined by
\eqref{eq:adef}, and let $\mathcal W$ be the inf-compact Lyapunov function
from \Cref{cor:revision-conditional-Lyapunov}.  Put
$p=p_{\rm occ}$ and use the concave Lyapunov weight
$\mathcal V=\mathcal W^{1/p}$ on the finite domain
$\mathsf D_{\mathcal W}=\{\mathcal W<\infty\}$.

The negative invariant mean must persist uniformly along long trajectories
started from bounded Lyapunov sets.  The next proposition gives precisely
this last exit form. 
\begin{proposition}
\label{prop:uniform-occupation}
For every set $D$
on which $\mathcal W$ is bounded,
\begin{align}\label{eq:uniform-last-exit}
 \lim_{N\to\infty}\sup_{x\in D}
 \mathbf{P}_x\left\{\exists n\ge N:
   \frac1n\sum_{k=0}^{n-1}a_q(X_k,Y_k)>-4\lambda\right\}=0.
\end{align}
\end{proposition}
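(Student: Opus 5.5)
The plan is to split the additive functional into a martingale part and a part seen by empirical occupation measures. The martingale part will be controlled by $p$-th moment inequalities. The occupation part will be controlled by the uniform negativity \eqref{eq:all-invariant-negative} over $\mathfrak I$, with the Lyapunov structure of \Cref{cor:revision-conditional-Lyapunov} supplying both tightness and uniform integrability. Throughout I fix $x\in D$ with $\sup_D\mathcal W\le M_D$. The drift \eqref{eq:joint-V-drift} gives $\sup_{k}\mathbf E_x\mathcal W(X_k)\le M_D+C/(1-\rho)$. Combined with \eqref{eq:conditional-block-moments} and Jensen, this gives $\sup_k\mathbf E_x[L(X_k,Y_k)^p+|a_q(X_k,Y_k)|^p]\le C_D$ uniformly in $x\in D$.

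\emph{Step 1 (martingale part).} Let $\mathcal F_k=\sigma(X_0,Y_0,\dots,Y_{k-1})$ and $d_k=a_q(X_k,Y_k)-\mathbf E[a_q(X_k,Y_k)\mid\mathcal F_k]$. Since $Y_k$ is independent of $\mathcal F_k$, the conditional expectation is $f(X_k)$ with $f(z)=\mathbf E a_q(z,Y)$. For $M_n=\sum_{k<n}d_k$, the von Bahr--Esseen (or Burkholder) inequality for $p\in(1,2]$ gives $\mathbf E_x|M_n|^p\le 2\sum_{k<n}\mathbf E_x|d_k|^p\le C_Dn$. Doob's maximal inequality on the dyadic blocks $[2^j,2^{j+1})$ then gives
\[
\mathbf P_x\Bigl\{\sup_{2^j\le n<2^{j+1}}|M_n|/n>\lambda/2\Bigr\}\le C_D\lambda^{-p}2^{j(1-p)}.
\]
This is summable in $j$, so $\sup_{x\in D}\mathbf P_x\{\exists n\ge N:|M_n|/n>\lambda/2\}\to0$. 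The same dyadic argument, applied to the centred sequence $(a_q-\Lambda)^+(X_k,Y_k)$ minus its conditional mean, handles the large-value tail part in Step 2.

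\emph{Step 2 (occupation part, by contradiction).} It remains to show $\sup_{x\in D}\mathbf P_x\{\exists n\ge N:\frac1n\sum_{k<n}a_q(X_k,Y_k)>-4\lambda\}\to0$. I will split $a_q=(a_q\wedge\Lambda)+(a_q-\Lambda)^+$ with a large truncation level $\Lambda$.

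For the tail term I use the bound $(a_q-\Lambda)^+\le \Lambda^{1-p}|a_q|^p$. Its conditional mean is therefore at most $\Lambda^{1-p}C\mathcal W(X_k)^{1/p}$. The time averages of $\mathcal W(X_k)^{1/p}$ are controlled, uniformly on $D$ and for all large $n$ with high probability, by the drift inequality: the pathwise bound $\sum_{k<n}\mathcal W(X_k)\le (1-\rho)^{-1}(\mathcal W(x)+Cn+\text{martingale})$ together with Jensen. Choosing $\Lambda$ large then makes this term contribute at most $\lambda/4$.

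For the truncated term, suppose the claim fails. Then there are $\varepsilon>0$, points $x_N\in D$, and events of probability at least $\varepsilon$ on which some $n_N\ge N$ has average of $a_q\wedge\Lambda$ above $-4.5\lambda$. I consider the empirical measures $\Xi_n=\frac1n\sum_{k<n}\delta_{(X_k,Y_k)}$ on $\mathsf X\times\mathsf Y$. These are tight: the $\mathsf X$-marginal is tight by inf-compactness of $\mathcal W$ and the averaged drift, and the $\mathsf Y$-marginal is tight because the $Y_k$ are i.i.d.\ Radon. A Skorokhod/weak-limit argument along $n_N$ (taking the expected empirical measure restricted to the bad event, or a random limit point) produces limits of the form $\widehat\mu\otimes\mathbf P_Y$ with $\widehat\mu\in\mathfrak I$. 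Invariance of $\widehat\mu$ follows from the Feller property of $P_\tau$ and the usual martingale argument; the product structure follows because $Y_k$ is independent of $X_k$.

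Now $a_q\wedge\Lambda$ is bounded and bounded below by $\log q$. It is continuous on the compatible driver set: $J$, $A$ and $K_q$ are continuous there by \Cref{lem:common-endpoint-event} and \Cref{prop:universal-selector}, and the cutoffs are smooth. Its discontinuity set is therefore $\mathbf P_Y$-null in the driver variable, so by the portmanteau theorem the averages converge to $\int\mathbf E(a_q\wedge\Lambda)\,\dd\widehat\mu\le-5\lambda$. This contradicts the bound above $-4.5\lambda$. Combining the three pieces (martingale, tail, truncated part), with thresholds $\lambda/2$, $\lambda/4$ and $\lambda/4$, yields \eqref{eq:uniform-last-exit}.

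\emph{Main obstacle.} I expect the delicate point to be Step 2's passage from ``probability at least $\varepsilon$ that some $n\ge N$ is bad'' to a weak limit identified as an invariant product measure. This has to be done uniformly in the starting point $x_N$. It also requires care in extracting a pathwise limit point from a bad event of positive probability, rather than from expected occupation measures. My first attempt would be to work with random empirical measures viewed as random elements of the space of probability measures on $\mathsf X\times\mathsf Y$, which is Polish. Tightness of their laws comes from the Lyapunov bounds, and invariance of almost every limit comes from the martingale bound $\frac1n\sum(g(X_{k+1})-P_\tau g(X_k))\to0$ for countably many bounded continuous $g$.
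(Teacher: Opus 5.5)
Your overall architecture matches the paper's: remove the martingale part $a_q(X_k,Y_k)-\bar a_q(X_k)$ with $p$-th moment maximal inequalities on dyadic blocks (your Step 1 is exactly the paper's argument), get tightness from the Lyapunov structure, and argue by contradiction that empirical measures at bad times have $P_\tau$-invariant limits, contradicting \eqref{eq:all-invariant-negative}.

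\textbf{The step that fails as written.} You control $\sup_{n\ge N}\frac1n\sum_{k<n}\mathcal W(X_k)^{1/p}$ through the Dynkin martingale of $\mathcal W$ and then apply Jensen to the averages. This does not work.
\begin{itemize}
\item The increments $\mathcal W(X_{k+1})-P_\tau\mathcal W(X_k)$ are only controlled in $L^1$; all you know is $\sup_k\mathbf E_x\mathcal W(X_k)<\infty$.
\item So the Doob--Markov bound on the block $[2^jN,2^{j+1}N)$ is only $O(K^{-1})$, which does not sum over $j$.
\item No argument based on first moments alone can succeed. Independent $Z_k\ge0$ equal to $k\log k$ with probability $1/(k\log k)$, and $0$ otherwise, have mean one but $\limsup_n\frac1n\sum_{k<n}Z_k=\infty$ almost surely.
\end{itemize}
The repair is to apply Jensen to the drift, not to the averages; this is why the paper introduces the concave weight $\mathcal V=\mathcal W^{1/p}$.
\begin{itemize}
\item $\mathcal V$ satisfies $P_\tau\mathcal V\le\rho^{1/p}\mathcal V+C_0$.
\item Since $\mathcal V^p=\mathcal W$, its Dynkin increments satisfy $\mathbf E[|\Delta_{k+1}|^p\mid\mathcal F_k]\le C(1+\mathcal W(X_k))$.
\item The pathwise bound $(1-\rho^{1/p})\sum_{k<n}\mathcal V(X_k)\le\mathcal V(x)+C_0n+M_n$, combined with the same dyadic argument, gives \eqref{eq:empirical-V-tail}.
\end{itemize}
That single estimate is what both of your uses actually need: the tail term, whose conditional mean is at most $C\Lambda^{1-p}\mathcal V(X_k)$ by \eqref{eq:conditional-block-moments}, and tightness at a random time.

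\textbf{Your main obstacle.} The paper lets $\tau_j$ be the first $n\ge N_j$ at which the average exceeds the threshold, and works under $\mathbf Q_j=\mathbf P_{x_j}(\cdot\mid\tau_j<\infty)$. Since $\mathbf Q_j\le\varepsilon^{-1}\mathbf P_{x_j}$ and each of your bounds is uniform over $n\ge N$, they all transfer to the random time $\tau_j$. Your averaged variant $\mathbf E_{\mathbf Q_j}L_j$ would also work, because the contradiction is a linear inequality and mixtures of invariant laws are invariant.

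\textbf{Where your route genuinely differs: unboundedness.} The paper stays on $\mathsf X$. It proves continuity of $\bar a_q$ on $\{\mathcal V\le M\}$ by Vitali, then localises with a Tietze extension and $|\bar a_q|^p\le C\mathcal V$. Your truncation $a_q\wedge\Lambda\in[\log q,\Lambda]$ on $\mathsf X\times\mathsf Y$ has a $\widehat\mu\otimes\mathbf P_Y$-null discontinuity set, so it avoids needing continuity of $\bar a_q$. That is a legitimate alternative, but you then need the limit to have product form $\widehat\mu\otimes\mathbf P_Y$. This does not follow from independence of $Y_k$ and $X_k$ alone. You need $\frac1n\sum_{k<n}\bigl(g(X_k,Y_k)-\bar g(X_k)\bigr)\to0$ uniformly in $n\ge N$, where:
\begin{itemize}
\item these are bounded martingale differences, using that $Y_k$ is independent of $\mathcal F_k$;
\item $g$ ranges over a countable convergence-determining family;
\item $\bar g(z)=\int g(z,y)\,\mathbf P_Y(\dd y)$ is continuous.
\end{itemize}

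\textbf{Bookkeeping.} In your version, Step 1's martingale for $a_q$ itself is never used. The two pieces to combine are the truncated average, at most $-4.5\lambda$, and the tail, at most $\lambda/4$.
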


\begin{proof}
By \eqref{eq:revision-W-pointwise-reset}, the chain enters
$\mathsf D_{\mathcal W}$ after one block and remains there almost surely.
Equations \eqref{eq:joint-V-drift} and
\eqref{eq:conditional-block-moments} give
\begin{align*}
 P_\tau\mathcal W\le\rho\mathcal W+C,
 \qquad
 \mathbf E|a_q(x,Y)|^p\le C\mathcal V(x)
 \quad (x\in\mathsf D_{\mathcal W}),
\end{align*}
for some $0<\rho<1$.  Set
$\bar a_q(x)=\mathbf E a_q(x,Y)$.  The finite patch construction and
operator norm flow continuity make $(z,Y)\mapsto a_q(z,Y)$ pointwise
continuous at every compatible driver.  On each compact set
$K_M=\{\mathcal V\le M\}$,
\eqref{eq:conditional-block-moments} bounds
$\mathbf E|a_q(z,Y)|^p$ uniformly.  Almost sure continuity at the compatible
random driver, followed by Vitali's theorem, therefore makes
$z\mapsto\bar a_q(z)$ continuous on $K_M$.  Finally,
\eqref{eq:all-invariant-negative} gives
\begin{align*}
 \int\bar a_q\,\dd\widehat\mu\le-5\lambda
\end{align*}
for every invariant law $\widehat\mu$.  For the remainder of the proof write
$a=a_q$ and $\bar a=\bar a_q$.

If $x\in\mathsf D_{\mathcal W}$, the drift gives
$P_\tau\mathcal W(x)<\infty$, and hence
$P_\tau(x,\mathsf D_{\mathcal W})=1$.  Starting from a set on which
$\mathcal W$ is bounded, induction therefore keeps the chain in
$\mathsf D_{\mathcal W}$ almost surely.  Every variable in the following
martingale argument is consequently finite.  On this invariant finite
domain, Jensen's inequality gives
\begin{align}\label{eq:V-concave-drift}
 P_\tau\mathcal V
 \le(P_\tau\mathcal W)^{1/p}
 \le\theta\mathcal V+C_0,\qquad \theta=\rho^{1/p}<1.
\end{align}
Put $\Delta_{k+1} =\mathcal V(X_{k+1})-P_\tau\mathcal V(X_k)$ for $k\geq 0$, which is a martingale difference. Its sum sequence
\begin{align*}
 \qquad M_n=\sum_{k=0}^{n-1}\Delta_{k+1}, \, n\geq 1, \, M_0=0
\end{align*}
forms a Dynkin martingale.  Since $\mathcal V^p=\mathcal W$,
\begin{align*}
 \mathbf{E}\bigl[|\Delta_{k+1}|^p\mid \mathcal F_k\bigr]
 \le C\big(P_\tau\mathcal W(X_k)
        +(P_\tau\mathcal V(X_k))^p\big)\le C(1+\mathcal W(X_k)).
\end{align*}
The drift \eqref{eq:joint-V-drift} and boundedness of $\mathcal W$ on $D$
give
\begin{align*}
 \sup_{x\in D}\sup_{k\ge0}\mathbf{E}_x\mathcal W(X_k)<\infty.
\end{align*}
The maximal inequality for $1<p\le2$ now gives
\begin{align}\label{eq:V-mart-maximal}
 \sup_{x\in D}\mathbf{E}_x\max_{\ell\le n}|M_\ell|^p\le C_Dn.
\end{align}
By the definition of $\Delta_{k+1}$ and \eqref{eq:V-concave-drift},
\begin{align*}
\mathcal V(X_{k+1})\le\theta\mathcal V(X_k)+C_0+\Delta_{k+1}.
\end{align*}
Summing over $0\le k<n$ gives
\begin{align*}
(1-\theta)\sum_{k<n}\mathcal V(X_k)\le\mathcal V(X_0)+C_0n+M_n.
\end{align*}
Since $\mathcal W$, and hence $\mathcal V$, is bounded on $D$, for $K$
sufficiently large depending only on $D$, the event
\begin{align*}
\frac1n\sum_{k<n}\mathcal V(X_k)>K
\end{align*}
implies $M_n\ge cKn$, with $c>0$ independent of $n$ and $x\in D$.

Decompose $\{n\ge N\}$ into the intervals
$[2^jN,2^{j+1}N)$, $j\ge0$.  If the preceding event occurs for some
$n$ in the $j$-th interval, then
\begin{align*}
\max_{\ell\le2^{j+1}N}|M_\ell|\ge cK2^jN.
\end{align*}
Hence \eqref{eq:V-mart-maximal} and Markov's inequality give
\begin{align*}
\sup_{x\in D}\mathbf P_x\left\{\exists\,2^jN\le n<2^{j+1}N:
\frac1n\sum_{k<n}\mathcal V(X_k)>K\right\}
\le C_DK^{-p}(2^jN)^{1-p}.
\end{align*}
Since $p>1$, summing over $j\ge0$ yields
\begin{align}\label{eq:empirical-V-tail}
\sup_{x\in D}\mathbf P_x\left\{\exists n\ge N:
\frac1n\sum_{k<n}\mathcal V(X_k)>K\right\}
\le C_DK^{-p}N^{1-p}.
\end{align}

It remains to exclude long empirical trajectories whose residual average
violates the invariant law gap. We show that any such trajectories would
generate, after passage to a subsequence, an invariant probability measure
with an inadmissibly large $\bar a$ average.

Suppose \eqref{eq:uniform-last-exit} were false. Then there are
$\varepsilon>0$, $N_j\uparrow\infty$, and $x_j\in D$ such that
\begin{align*}
\mathbf P_{x_j}\left\{\exists n\ge N_j:\frac1n\sum_{k<n}a(X_k,Y_k)>-4\lambda\right\}\ge\varepsilon.
\end{align*}
Define
\begin{align*}
\tau_j=\inf\left\{n\ge N_j:\frac1n\sum_{k<n}a(X_k,Y_k)>-4\lambda\right\},
\end{align*}
with $\inf\varnothing=\infty$. Thus
$\mathbf P_{x_j}\{\tau_j<\infty\}\ge\varepsilon$. On
$\{\tau_j<\infty\}$ set
\begin{align*}
L_j=\frac1{\tau_j}\sum_{k<\tau_j}\delta_{X_k},
\end{align*}
and introduce the conditional probability $\mathbf Q_j(A)=\mathbf P_{x_j}(A\mid\tau_j<\infty)$. In particular,
\begin{align}\label{eq:conditional-cost}
\mathbf Q_j(A)\le\varepsilon^{-1}\mathbf P_{x_j}(A)
\end{align}
for every event $A$. Under $\mathbf Q_j$, $\tau_j$ is finite and
$\tau_j\ge N_j$ almost surely.

We first prove tightness of the empirical measures. By
\eqref{eq:empirical-V-tail} and \eqref{eq:conditional-cost}, for all
sufficiently large $K$,
\begin{align*}
\mathbf Q_j\left\{\int\mathcal V\,\dd L_j>K\right\}
 \le\varepsilon^{-1}\mathbf P_{x_j}\left\{\exists n\ge N_j:\frac1n\sum_{k<n}\mathcal V(X_k)>K\right\}\le C_D\varepsilon^{-1}K^{-p}N_j^{1-p}.
\end{align*}
Since $p>1$, the right hand side is uniformly small for large $K$.
Moreover, $\mathcal V$ is lower semicontinuous and inf-compact. Hence, for
every $K<\infty$, the set
\begin{align*}
\left\{\ell\in\mathcal P(\mathsf X):\int\mathcal V\,\dd\ell\le K\right\}
\end{align*}
is compact in $\mathcal P(\mathsf X)$. Indeed,
$\ell\{\mathcal V>R\}\le K/R$ and
$\{\mathcal V\le R\}$ is compact, while lower semicontinuity of
$\mathcal V$ makes the displayed set closed. It follows that the laws of
$L_j$ under $\mathbf Q_j$ are tight in $\mathcal P(\mathcal P(\mathsf X))$.
Passing to a subsequence, write
\begin{align*}
\mathcal L_{\mathbf Q_j}(L_j)\Longrightarrow\mathscr P
\end{align*}
for some probability measure $\mathscr P$ on $\mathcal P(\mathsf X)$.

We next show that $\mathscr P$ is concentrated on $P_\tau$ invariant
probability measures. Let $f$ be bounded and Lipschitz. Since
\begin{align*}
\int(P_\tau f-f)\,\dd L_j
=\frac{f(X_{\tau_j})-f(X_0)}{\tau_j}
-\frac1{\tau_j}\sum_{k<\tau_j}\bigl(f(X_{k+1})-P_\tau f(X_k)\bigr),
\end{align*}
it is enough to control the second term. The summands
$f(X_{k+1})-P_\tau f(X_k)$ are bounded martingale differences. Doob's
inequality followed by the same dyadic decomposition as above gives
\begin{align*}
\mathbf P_x\left\{\sup_{n\ge N}\frac1n\left|\sum_{k<n}\bigl(f(X_{k+1})-P_\tau f(X_k)\bigr)\right|>\eta\right\}
\le C_f\eta^{-2}N^{-1}.
\end{align*}
Therefore, by \eqref{eq:conditional-cost} and $\tau_j\ge N_j$,
\begin{align*}
\int(P_\tau f-f)\,\dd L_j\longrightarrow0
\end{align*}
in $\mathbf Q_j$ probability. Since $P_\tau$ is Feller, the map
\begin{align*}
\ell\longmapsto\int(P_\tau f-f)\,\dd\ell
\end{align*}
is continuous on $\mathcal P(\mathsf X)$. Taking a countable
convergence determining family of bounded Lipschitz functions, we conclude
that
\begin{align}\label{eq:limit-measure-invariant}
\ell P_\tau=\ell
\qquad\text{for $\mathscr P$ almost every }\ell\in\mathcal P(\mathsf X).
\end{align}

The violation inequality is transferred to the limiting empirical
measures. Put
\begin{align*}
\Delta^a_{k+1}=a(X_k,Y_k)-\bar a(X_k).
\end{align*}
Then $(\Delta^a_{k+1})$ is a martingale difference sequence and
\eqref{eq:conditional-block-moments}, Jensen's inequality, and the drift
of $\mathcal W$ give
\begin{align*}
\sup_{x\in D}\mathbf E_x\max_{\ell\le n}\left|\sum_{k<\ell}\Delta^a_{k+1}\right|^p\le C_Dn.
\end{align*}
The same dyadic decomposition therefore yields, for every $\eta>0$,
\begin{align*}
\sup_{x\in D}\mathbf P_x\left\{\sup_{n\ge N}\frac1n\left|\sum_{k<n}\Delta^a_{k+1}\right|>\eta\right\}
\le C_{D,\eta}N^{1-p}.
\end{align*}
Using \eqref{eq:conditional-cost} with $N=N_j$ and recalling that
$\tau_j\ge N_j$, we obtain
\begin{align*}
\frac1{\tau_j}\sum_{k<\tau_j}\bigl(a(X_k,Y_k)-\bar a(X_k)\bigr)
\longrightarrow0
\end{align*}
in $\mathbf Q_j$ probability. On the other hand, by the definition of
$\tau_j$,
\begin{align*}
\frac1{\tau_j}\sum_{k<\tau_j}a(X_k,Y_k)>-4\lambda
\end{align*}
$\mathbf Q_j$ almost surely. Consequently, for every $\eta>0$,
\begin{align}\label{eq:bad-empirical-a}
\mathbf Q_j\left\{\int\bar a\,\dd L_j\ge-4\lambda-\eta\right\}\longrightarrow1.
\end{align}

It remains to pass to the limit in the unbounded observable $\bar a$.
By Jensen's inequality and \eqref{eq:conditional-block-moments},
\begin{align*}
|\bar a(x)|^p\le\mathbf E|a(x,Y)|^p\le C\mathcal V(x).
\end{align*}
For $M<\infty$, let $K_M=\{\mathcal V\le M\}$. Since $\bar a$ is
continuous on $K_M$, the Tietze extension theorem gives
$g_M\in C_b(\mathsf X)$ such that
\begin{align*}
g_M=\bar a\quad\text{on }K_M,\qquad \|g_M\|_\infty\le CM^{1/p}.
\end{align*}
Hence, for every probability measure $\ell$ with
$\int\mathcal V\,\dd\ell<\infty$,
\begin{align}
\left|\int\bar a\,\dd\ell-\int g_M\,\dd\ell\right|
\le CM^{-(p-1)/p}\int\mathcal V\,\dd\ell.
\label{eq:a-sublevel-localization}
\end{align}

Fix $\eta,\delta>0$. By tightness, choose $K<\infty$ so that
\begin{align*}
\sup_j\mathbf Q_j\left\{\int\mathcal V\,\dd L_j>K\right\}<\delta,
\end{align*}
and then choose $M$ so large that
$CKM^{-(p-1)/p}<\eta$. By \eqref{eq:bad-empirical-a}, for all sufficiently
large $j$,
\begin{align*}
\mathbf Q_j\left\{\int\bar a\,\dd L_j\ge-4\lambda-\eta\right\}>1-\delta.
\end{align*}
Thus, with $\mathbf Q_j$ probability at least $1-2\delta$,
\eqref{eq:a-sublevel-localization} gives
\begin{align*}
\int g_M\,\dd L_j\ge-4\lambda-2\eta,\qquad
\int\mathcal V\,\dd L_j\le K.
\end{align*}
The corresponding subset of $\mathcal P(\mathsf X)$ is closed, since
$g_M$ is bounded and continuous and
$\ell\mapsto\int\mathcal V\,\dd\ell$ is lower semicontinuous.
Therefore, by Portmanteau,
\begin{align*}
\mathscr P\left\{\ell:\int g_M\,\dd\ell\ge-4\lambda-2\eta,\
\int\mathcal V\,\dd\ell\le K\right\}\ge1-2\delta.
\end{align*}
Applying \eqref{eq:a-sublevel-localization} once more shows that
\begin{align*}
\mathscr P\left\{\ell:\int\bar a\,\dd\ell\ge-4\lambda-3\eta\right\}\ge1-2\delta.
\end{align*}
Letting first $\delta\downarrow0$ and then $\eta\downarrow0$ gives
\begin{align}\label{eq:bad-invariant-limit}
\int\bar a\,\dd\ell\ge-4\lambda
\qquad\text{for $\mathscr P$ almost every }\ell.
\end{align}

By \eqref{eq:limit-measure-invariant}, $\mathscr P$ is concentrated on
$\mathfrak I$, while \eqref{eq:all-invariant-negative} gives
\begin{align*}
\int\bar a\,\dd\ell\le-5\lambda
\qquad\text{for every }\ell\in\mathfrak I.
\end{align*}
This contradicts \eqref{eq:bad-invariant-limit} and proves
\eqref{eq:uniform-last-exit}.
\end{proof}

Let the local geometry cost on the $n$-th block be
\begin{align}\label{e.cn}
 c_n=\log^+\left(
  1+\mathsf C_\tau(X_n,Y_n)+\mathsf r_\tau(X_n,Y_n)^{-1}\right).
\end{align}
For every $\mathcal W$ bounded initial set $D$, the moment bound \eqref{eq:conditional-block-moments}, the drift
\eqref{eq:joint-V-drift}, and a union bound give
\begin{align}\label{eq:uniform-tempered}
 \sup_{x\in D}
 \mathbf{P}_x\{\exists n\ge N:c_n>\varepsilon n\}
 \le C_{D,\varepsilon}\sum_{n\ge N}n^{-p}
 \xrightarrow[N\to\infty]{}0,
\end{align}
for every $\varepsilon>0$.  Applying Borel--Cantelli to a countable
sequence $\varepsilon\downarrow0$ proves $c_n/n\to0$ almost surely; for
the uniform stable radius quantile below we use
\eqref{eq:uniform-tempered} with $\varepsilon=\lambda$.  The finite
prefixes of the products and local $C^2$ constants are tight uniformly on
such an initial set.

\subsection{A random local stable ball}

Suppose $(z,z')\in\mathscr U_\Delta$ and put
$h=\Delta_z(z')\in E_z$.  For fixed compatible $(z,y)$, define the
one block compensated endpoint difference map by
\begin{align*}
\Psi_{z,y}(h)
=\Delta_{\Phi(z,y)}
\left(
\Phi\bigl(\Delta_z^{-1}(h),
y-\Gamma K_q(z,y)h\bigr)
\right)
\end{align*}
whenever
\begin{align*}
\|(h,-\Gamma K_q(z,y)h)\|_{E_z\oplus\mathsf Y}
<\mathsf r_\tau(z,y),
\end{align*}
where $\mathsf r_\tau$ is the base tube and chart radius given in  \eqref{eq:revision-chart-radius}. 
Here and below $E_z\oplus\mathsf Y$ carries its Hilbert product norm,
$\|h\|$ for $h\in E_z$ means $\|h\|_{E_z}$, and all operator norms
between tangent spaces are taken with respect to the quotient Hilbert
norms.  Since $D\Delta_z^{-1}(0)$ and
$D_{z'}\Delta_{\Phi(z,y)}(\Phi(z,y))$ are the natural identity
identifications, the chain rule gives
\begin{align}\label{eq:compensated-difference-linearisation}
D\Psi_{z,y}(0)
=J(z,y)-A(z,y)K_q(z,y)
=R_q(z,y).
\end{align}

Thus $\Psi_{z,y}$ describes the endpoint difference obtained by
perturbing the initial state by $h$ and compensating the convolution driver
by $-\Gamma K_q(z,y)h$. 

We keep the sampled chain $(X_n)$ and the fresh drivers $(Y_n)$ from
\eqref{e.XnYn}. Set
\begin{align*}
R_n=R_q(X_n,Y_n):E_{X_n}\longrightarrow E_{X_{n+1}},
\end{align*}
and
\begin{align}
C_n&=\mathsf C_\tau(X_n,Y_n)
\bigl(1+\|\Gamma\|_{\mathcal L(\mathcal H,\mathsf Y)}C_q\bigr)^2,
\label{eq:feedback-Taylor-Cn}\\
r_n&=\min\left\{
\frac{\mathsf r_\tau(X_n,Y_n)}
{1+\|\Gamma\|_{\mathcal L(\mathcal H,\mathsf Y)}C_q},
\frac1{2C_q}\right\},
\notag
\end{align}
where $\mathsf C_\tau$ is the two jet bounds in \eqref{eq:revision-C-def}, and  $C_q\ge1$ is the constant in
\eqref{eq:Kq-bound}--\eqref{eq:DKq-bound}.  For later use, put
\begin{align*}
\kappa_q=
\bigl(1+\|\Gamma\|_{\mathcal L(\mathcal H,\mathsf Y)}C_q\bigr)^2
+\bigl(1+\|\Gamma\|_{\mathcal L(\mathcal H,\mathsf Y)}C_q\bigr)+2C_q.
\end{align*}
Then, with $c_n$ defined in \eqref{e.cn},
\begin{align}\label{eq:Cn-rn-by-cn}
C_n+r_n^{-1}
\le\kappa_q\left(1+\mathsf C_\tau(X_n,Y_n)
+\mathsf r_\tau(X_n,Y_n)^{-1}\right)
\le\kappa_q e^{c_n}.
\end{align}

The linear residual contracts only infinitesimally, so we next close the
nonlinear Taylor iteration.  The next lemma packages the negative logarithmic
growth and tempered two jet constants into a positive random stable radius.
\begin{lemma}
\label{lem:random-stable-radius}
There is a measurable random radius $r_*(x,\omega)>0$ with the following
property. If $X'_0=x'$ satisfies $(x,x')\in\mathscr U_\Delta$ and
\begin{align*}
\|\Delta_x(x')\|_{E_x}<r_*(x,\omega),
\end{align*}
define recursively
\begin{align*}
h_n=\Delta_{X_n}(X'_n),\qquad
X'_{n+1}=\Phi\bigl(X'_n,Y_n-\Gamma K_q(X_n,Y_n)h_n\bigr).
\end{align*}
Then $(X_n,X'_n)\in\mathscr U_\Delta$ for every $n$, the recursion is
well defined, and
\begin{align}\label{eq:stable-decay}
\|h_n\|_{E_{X_n}}\le C(x,\omega)e^{-\lambda n}\|h_0\|_{E_x},
\qquad
\sum_{n\ge0}\|h_n\|_{E_{X_n}}^2<\infty.
\end{align}
Consequently $d_{\mathsf X}(X_n,X'_n)\to0$ exponentially.

If $D\subset\mathsf X$ is a set on which $\mathcal W$ is bounded, then
there are deterministic $r_0,p_0>0$ such that
\begin{align}\label{eq:stable-quantile}
\inf_{x\in D}\mathbf P_x\{r_*(x,\omega)\ge r_0\}\ge p_0.
\end{align}
\end{lemma}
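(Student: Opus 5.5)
The plan is the standard Pesin-type tempered Taylor iteration along the sampled chain, driven by \Cref{prop:uniform-occupation} and the temperedness \eqref{eq:uniform-tempered}.

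\emph{One-step estimate.} For $\|h\|<r_n$, the compensated map $\Psi_n:=\Psi_{X_n,Y_n}$ is defined. The joint displacement $(h,-\Gamma K_q h)$ has norm at most $(1+\|\Gamma\|C_q)\|h\|<\mathsf r_\tau(X_n,Y_n)$, so it stays inside the tube and chart radius. The constraint $\|h\|\le 1/(2C_q)$ keeps the Cameron--Martin shift bounded. We then combine three facts: Taylor's formula with the two jet bound $\mathsf C_\tau$ from \eqref{eq:revision-C-def}, the identity $D\Psi_n(0)=R_n$ from \eqref{eq:compensated-difference-linearisation}, and the chain rule through the linear feedback. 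Together they give
\begin{align*}
\|\Psi_n(h)\|\le\|R_n\|\,\|h\|+C_n\|h\|^2\le\bigl(\|R_n\|+q\bigr)\|h\|\quad\text{whenever }\|h\|\le\min\{r_n,q/C_n\}.
\end{align*}
By construction $h_{n+1}=\Psi_n(h_n)$. Indeed, $X'_{n+1}=\Phi(\Delta_{X_n}^{-1}(h_n),Y_n-\Gamma K_q(X_n,Y_n)h_n)$, and membership in $\mathscr U_\Delta$ is guaranteed by \eqref{eq:state-chart-metric-comparison} as long as $\|h_{n+1}\|\le 1$. Iterating gives
\begin{align*}
\|h_n\|\le\exp\Bigl(\sum_{k<n}a_q(X_k,Y_k)\Bigr)\|h_0\|,
\end{align*}
provided that $\|h_k\|\le\rho_k:=\min\{r_k,q/C_k,1\}$ for all $k<n$. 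By \eqref{eq:Cn-rn-by-cn} we have $\rho_k\ge q\kappa_q^{-1}e^{-c_k}$.

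\emph{Defining $r_*$.} Set $S_n=\sum_{k<n}a_q(X_k,Y_k)$. By \Cref{prop:uniform-occupation} applied with a sequence $N\to\infty$, almost surely $S_n\le-4\lambda n$ for all $n\ge N_1(\omega)$. By the remark after \eqref{eq:uniform-tempered}, $c_n\le\lambda n$ for all $n\ge N_2(\omega)$. Define
\begin{align*}
r_*(x,\omega)=\inf_{n\ge0}\rho_n e^{-S_n}\wedge 1,
\end{align*}
with $S_0=0$; this is measurable. For $n\ge N_1\vee N_2$ we get $\rho_ne^{-S_n}\ge q\kappa_q^{-1}e^{3\lambda n}$. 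The finitely many earlier terms are positive, because each $\rho_k>0$ and each $S_k$ is finite, so $r_*>0$ almost surely. An induction now shows that $\|h_0\|<r_*$ gives $\|h_k\|\le e^{S_k}\|h_0\|<\rho_k$ for every $k$, so the recursion never leaves the domain. We also have $e^{S_n}\le C(x,\omega)e^{-4\lambda n}$ with
\begin{align*}
C(x,\omega)=\max_{n<N_1}e^{S_n+4\lambda n}<\infty.
\end{align*}
This gives \eqref{eq:stable-decay} with room to spare, and hence square summability and exponential convergence of $d_{\mathsf X}$ via \eqref{eq:state-chart-metric-comparison}.

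\emph{Uniform quantile \eqref{eq:stable-quantile}.} This is the step needing care, since $r_*$ depends on the random thresholds $N_1,N_2$ and on an initial prefix. First choose $N$ using \eqref{eq:uniform-last-exit} and \eqref{eq:uniform-tempered} with $\varepsilon=\lambda$, so that with $\mathbf P_x$-probability at least $3/4$, uniformly in $x\in D$, both $S_n\le-4\lambda n$ and $c_n\le\lambda n$ hold for all $n\ge N$. On that event the tail infimum is at least $q\kappa_q^{-1}$. For the prefix $n<N$ we need a lower bound on $\min_{n<N}\rho_ne^{-S_n}$. It suffices to bound $\max_{n<N}c_n$ and $\max_{n<N}S_n^+$ from above with probability at least $3/4$, uniformly in $D$. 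Since $S_n\le\sum_{k<n}|a_q(X_k,Y_k)|$, we can apply Markov's inequality to
\begin{align*}
\sum_{k<N}\bigl(|a_q(X_k,Y_k)|+c_k\bigr),
\end{align*}
whose $\mathbf P_x$-mean is bounded uniformly on $D$ by \eqref{eq:conditional-block-moments} together with
\begin{align*}
\sup_{x\in D,\,k}\mathbf E_x\mathcal W(X_k)<\infty,
\end{align*}
which follows from \eqref{eq:joint-V-drift}. This yields a deterministic bound $M$ on the prefix sum off a set of probability at most $1/4$. On the intersection, of probability at least $1/2=:p_0$, we get $r_*\ge r_0:=q\kappa_q^{-1}e^{-2M}\wedge 1$, with $r_0$ independent of $x\in D$.
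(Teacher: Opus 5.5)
Your proposal is correct and follows essentially the paper's argument: the one-step Taylor bound $\|\Psi_n(h)-R_nh\|\le C_n\|h\|^2$, a radius defined as an infimum of (local radius)$/A_n$ with $A_n=e^{S_n}$, and a uniform quantile obtained from the last-exit estimate, temperedness, and moment bounds on a finite prefix. The differences are cosmetic: you use a single product $\rho_ne^{-S_n}$ where the paper uses a two-term minimum with a factor $\tfrac12$. Also, membership in $\mathscr U_\Delta$ is better justified, as the paper does, by validity of the output chart on the $\mathsf r_\tau$ tube rather than by \eqref{eq:state-chart-metric-comparison}, since $\Delta$ is extended by zero off $\mathscr U_\Delta$.
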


\begin{proof}
If $\|h\|_{E_{X_n}}\le r_n$, then
\begin{align*}
\|(h,-\Gamma K_q(X_n,Y_n)h)\|_{E_{X_n}\oplus\mathsf Y}
\le\bigl(1+\|\Gamma\|_{\mathcal L(\mathcal H,\mathsf Y)}C_q\bigr)
\|h\|_{E_{X_n}}
\le \mathsf r_\tau(X_n,Y_n).
\end{align*}
Hence the whole segment
$s(h,-\Gamma K_q(X_n,Y_n)h)$, $0\le s\le1$, remains in the
state and driver $C^2$ tube of \Cref{subsec:global-two-jet-revision}. In particular,
\eqref{eq:projective-chart-denominator} shows that the raw fibre endpoint
along this segment satisfies
\begin{align*}
\langle\rho_\tau^e,\mathsf p_\tau\rangle\ge\frac{7d_\tau}{8}>0,
\end{align*}
so the fixed affine output chart centred at $X_{n+1}=\Phi(X_n,Y_n)$ is
valid throughout the segment. Taylor's theorem,
\eqref{eq:compensated-difference-linearisation}, \eqref{eq:feedback-Taylor-Cn} and
\eqref{eq:projective-chart-two-jet-bound} therefore give
\begin{align}\label{eq:nonlinear-residual}
\|\Psi_{X_n,Y_n}(h)-R_nh\|_{E_{X_{n+1}}}
\le C_n\|h\|_{E_{X_n}}^2,\qquad \|h\|_{E_{X_n}}\le r_n.
\end{align}

Define the accumulated linear residual product by
\begin{align*}
A_0=1,\qquad
A_n=\prod_{k<n}\bigl(\|R_k\|_{\mathcal L(E_{X_k},E_{X_{k+1}})}+q\bigr)
=\exp\left(\sum_{k<n}a_q(X_k,Y_k)\right).
\end{align*}
By \Cref{prop:uniform-occupation}, the last violation time is finite
almost surely.  Hence there is an almost surely finite random integer
$N_*=N_*(x,\omega)$ such that
\begin{align*}
\frac1n\sum_{k<n}a_q(X_k,Y_k)\le-4\lambda
\qquad\text{for every }n\ge N_*,
\end{align*}
and hence $A_n\le e^{-4\lambda n}$
eventually. Moreover, \eqref{eq:uniform-tempered} and
\eqref{eq:Cn-rn-by-cn} imply
\begin{align*}
C_n+r_n^{-1}\le\kappa_qe^{\lambda n}
\end{align*}
for all sufficiently large $n$. Consequently,
\begin{align*}
M(x,\omega):=
\max\left\{1,\kappa_q,\sup_{n\ge0}A_ne^{3\lambda n},
\sup_{n\ge0}(C_n+r_n^{-1})e^{-\lambda n}\right\}
\end{align*}
is finite and measurable almost surely, and
\begin{align}\label{eq:stable-global-bounds}
A_n\le Me^{-3\lambda n},\qquad
C_n+r_n^{-1}\le Me^{\lambda n},\qquad n\ge0.
\end{align}

Define measurable
\begin{align}\label{eq:rstar-formula}
r_*(x,\omega)=\frac12\inf_{n\ge0}
\min\left\{\frac{r_n}{A_n},
\frac{q}{(1+C_n)A_n}\right\}.
\end{align}
Since $M\ge1$, \eqref{eq:stable-global-bounds} gives
\begin{align*}
\frac{r_n}{A_n}\ge M^{-2}e^{2\lambda n},\qquad
\frac{q}{(1+C_n)A_n}\ge\frac{q}{2M^2}e^{2\lambda n},
\end{align*}
and hence $r_*(x,\omega)>0$ almost surely.

Let $\|h_0\|_{E_x}<r_*(x,\omega)$. We prove inductively that
\begin{align}\label{eq:hn-An}
\|h_n\|_{E_{X_n}}\le A_n\|h_0\|_{E_x}.
\end{align}
Suppose \eqref{eq:hn-An} holds at time $n$. By
\eqref{eq:rstar-formula},
\begin{align*}
\|h_n\|_{E_{X_n}}<\frac{r_n}{2},\qquad
C_n\|h_n\|_{E_{X_n}}<\frac q2.
\end{align*}
Thus the perturbed state and driver segment lies inside the $\mathsf r_\tau(X_n,Y_n)$ tube,
so \eqref{eq:nonlinear-residual} applies and the affine output chart at
$X_{n+1}$ is valid. Therefore
\begin{align*}
h_{n+1}
=\Delta_{X_{n+1}}(X'_{n+1})
=\Psi_{X_n,Y_n}(h_n)
\end{align*}
is well defined, and \eqref{eq:nonlinear-residual} followed by \eqref{eq:hn-An} implies
\begin{align*}
\|h_{n+1}\|_{E_{X_{n+1}}}
&\le\|R_n\|_{\mathcal L(E_{X_n},E_{X_{n+1}})}
\|h_n\|_{E_{X_n}}+C_n\|h_n\|_{E_{X_n}}^2\\
&\le\bigl(\|R_n\|_{\mathcal L(E_{X_n},E_{X_{n+1}})}+q\bigr)
\|h_n\|_{E_{X_n}}
\le A_{n+1}\|h_0\|_{E_x}.
\end{align*}
In particular, the validity of the affine output chart implies
$(X_{n+1},X'_{n+1})\in\mathscr U_\Delta$, while
\eqref{eq:rstar-formula} at the index $n+1$ gives
$\|h_{n+1}\|_{E_{X_{n+1}}}<r_{n+1}/2$. Thus the induction continues for
all $n$.

By \eqref{eq:stable-global-bounds},
\begin{align*}
\|h_n\|_{E_{X_n}}\le Me^{-3\lambda n}\|h_0\|_{E_x},
\end{align*}
which proves \eqref{eq:stable-decay}. Since $r_n\le \mathsf r_\tau(X_n,Y_n)\le1/8$,
the metric comparison \eqref{eq:state-chart-metric-comparison} converts
this coordinate decay into exponential decay in $d_{\mathsf X}$.

It remains to prove the uniform positive quantile. Let $D$ be a set on
which $\mathcal W$ is bounded. By \eqref{eq:uniform-last-exit}, choose
$N$ so large that, uniformly for $x\in D$,
\begin{align*}
\mathbf P_x\left\{
\frac1n\sum_{k<n}a_q(X_k,Y_k)\le-4\lambda
\text{ for every }n\ge N\right\}\ge\frac78.
\end{align*}
Increasing $N$ if necessary, \eqref{eq:uniform-tempered} with
$\varepsilon=\lambda$ gives
\begin{align*}
\mathbf P_x\left\{c_n\le\lambda n
\text{ for every }n\ge N\right\}\ge\frac78
\end{align*}
uniformly for $x\in D$.

For the finite prefix, \eqref{eq:conditional-block-moments}, the drift \eqref{eq:joint-V-drift}, and $c_n\le L(X_n,Y_n)$ give
\begin{align*}
\sup_{x\in D}\sup_{n\ge0}
\mathbf E_x\left[L(X_n,Y_n)^p+c_n^p\right]<\infty.
\end{align*}
Hence, since only $0\le n<N$ are involved, we may choose $B<\infty$ so
large that, uniformly for $x\in D$,
\begin{align*}
\mathbf P_x\left\{
\max_{0\le n<N}\bigl(L(X_n,Y_n)+c_n\bigr)\le B\right\}\ge\frac78.
\end{align*}
On this event, by \eqref{eq:R-global}, $a_q(X_n,Y_n)\le L(X_n,Y_n)\le B$ for $n<N$, and
\eqref{eq:Cn-rn-by-cn} gives
\begin{align*}
\max_{0\le n\le N}A_n\le e^{NB},\qquad
\max_{0\le n<N}(C_n+r_n^{-1})\le\kappa_qe^B.
\end{align*}
Set
\begin{align*}
M_0=\max\{1,e^{NB},\kappa_qe^B\}.
\end{align*}
On the intersection of the three preceding events, whose probability is at
least $5/8$ uniformly for $x\in D$, we therefore have
\begin{align*}
A_n\le M_0,\qquad C_n+r_n^{-1}\le M_0,\qquad 0\le n<N,
\end{align*}
while for $n\ge N$,
\begin{align*}
A_n\le e^{-4\lambda n},\qquad
C_n+r_n^{-1}\le\kappa_qe^{\lambda n}.
\end{align*}
For $n<N$ these bounds imply
\begin{align*}
\frac{r_n}{A_n}\ge M_0^{-2},\qquad
\frac{q}{(1+C_n)A_n}\ge\frac{q}{M_0(1+M_0)},
\end{align*}
whereas for $n\ge N$ they give
\begin{align*}
\frac{r_n}{A_n}\ge\kappa_q^{-1}e^{3\lambda n}\ge\kappa_q^{-1},\qquad
\frac{q}{(1+C_n)A_n}\ge\frac{q}{1+\kappa_q}e^{3\lambda n}
\ge\frac{q}{1+\kappa_q}.
\end{align*}
Consequently, on this intersection,
\begin{align*}
r_*(x,\omega)\ge
\frac12\min\left\{M_0^{-2},
\frac{q}{M_0(1+M_0)},\kappa_q^{-1},\frac{q}{1+\kappa_q}\right\}
=:r_0>0.
\end{align*}
Thus
\begin{align*}
\inf_{x\in D}\mathbf P_x\{r_*(x,\omega)\ge r_0\}\ge\frac58,
\end{align*}
and \eqref{eq:stable-quantile} follows, for instance with $p_0=1/2$.
\end{proof}

\subsection{Triangular blockwise Ramer transformations}
To realise the feedback on a genuine Wiener space, fix any $s>1$ and put
\begin{align}\label{eq:canonical-block-AWS}
 \Omega_\tau=C_0([0,\tau];H^{-s}),
 \qquad
 \mathcal H_0
 =\left\{
 \mathbf g(t)=\int_0^t g_r\,\dd r:g\in\mathcal H
 \right\},
\end{align}
equipped with the canonical Wiener measure $\mathbf P$.
Since the embedding $H\hookrightarrow H^{-s}$ is Hilbert--Schmidt,
the cylindrical Wiener process has a genuine realisation on
$\Omega_\tau$, whose Cameron--Martin space is
$\mathcal H_0\simeq\mathcal H$.  Throughout this subsection,
$g\in\mathcal H$ denotes the Cameron--Martin control,
$\mathbf g(t)=\int_0^t g_r\,\dd r$ its Wiener path realization, and
$\Gamma g\in\mathsf Y$ the corresponding displacement of the convolution
driver.

The stochastic convolution admits a Borel realisation
$Y:\Omega_\tau\to\mathsf Y$ such that
\begin{align}\label{eq:canonical-block-translation}
 \mathbf P_Y=Y_\#\mathbf P,
 \qquad
 Y(\omega+\mathbf g)=Y(\omega)+\Gamma g.
\end{align}
More precisely, the second identity holds simultaneously for every
$g\in\mathcal H$ on one Cameron--Martin invariant Borel vector subspace
$\Omega_\tau^*\subset\Omega_\tau$ of full measure.  In what follows we
work on this fixed set and suppress the superscript whenever no ambiguity
can arise.

The selector is anticipative within each block, while the block
transformations are triangular in time.  Thus adapted Girsanov theory is not
available inside a block, and the correct replacement is Ramer's
change of variables theorem.  We use the formulation in
\cite[Theorem~3.3 and equation~(3.10)]{UZ97}, which supplies the strengthened
Wiener space form of \cite{Ramer} and gives the entropy estimate below.

\begin{lemma}
\label{lem:block-Ramer-entropy}
Let $u:\Omega_\tau\to\mathcal H$ be Borel and suppose that
\begin{align*}
 u\in\mathbb D^{1,p}(\mathcal H)
 \qquad\text{for every finite }p,
\end{align*}
where $\mathbb D^{1,p}(\mathcal H)$ is the usual Malliavin Sobolev space \cite{Bogachev10}.
Define
\begin{align*}
 \mathbf u(\omega)(t)=\int_0^t u(\omega)_r\,\dd r,
 \qquad
 T_u(\omega)=\omega+\mathbf u(\omega).
\end{align*}
Suppose moreover that, for some $d\ge0$,
\begin{align}\label{eq:Ramer-small}
 \|u\|_{\mathcal H}\le Cd,\qquad
 \|D_{\mathcal H}u\|_{\HS}\le Cd,\qquad
 \|D_{\mathcal H}u\|_{\rm op}\le\frac12
\end{align}
$\mathbf P$-almost surely. Then $(T_u)_\#\mathbf P\sim\mathbf P$
and
\begin{align*}
 \operatorname{Ent}\bigl((T_u)_\#\mathbf P\mid\mathbf P\bigr)
 \le C_{\rm Ram}d^2,
\end{align*}
where $\operatorname{Ent}(\cdot\mid\cdot)$ denotes relative entropy and
$C_{\rm Ram}$ depends only on the fixed constant $C$ in
\eqref{eq:Ramer-small}.
\end{lemma}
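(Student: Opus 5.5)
The plan is to invoke Ramer's theorem in the form of \cite[Theorem~3.3 and (3.10)]{UZ97} and then estimate the entropy through the explicit density. Since $D_{\mathcal H}u$ is Hilbert--Schmidt with operator norm at most $\tfrac12$, the operator $I_{\mathcal H}+D_{\mathcal H}u(\omega)$ is invertible for every $\omega$, with inverse norm at most $2$. Together with $u\in\mathbb D^{1,p}$ for all $p$, this is the standard hypothesis under which $T_u$ is almost surely bijective, or at least of degree one. In either case Ramer's formula gives
\begin{align*}
 \frac{\dd (T_u)_\#\mathbf P}{\dd\mathbf P}\circ T_u
 =\Lambda_u^{-1},\qquad
 \Lambda_u=\bigl|\det{}_2(I+D_{\mathcal H}u)\bigr|
 \exp\Bigl(-\delta u-\tfrac12\|u\|_{\mathcal H}^2\Bigr),
\end{align*}
where $\delta$ is the Skorokhod divergence and $\det_2$ the Carleman--Fredholm determinant. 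The lower bound $\Lambda_u>0$, which follows from invertibility, gives mutual absolute continuity. Equivalence of the two measures then comes from \cite[(3.10)]{UZ97}: $(T_u)_\#\mathbf P\ll\mathbf P$ holds, and conversely $\mathbf P\ll(T_u)_\#\mathbf P$ because the density is positive almost surely.

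For the entropy, change variables to get
\begin{align*}
 \operatorname{Ent}\bigl((T_u)_\#\mathbf P\mid\mathbf P\bigr)
 =\mathbf E\bigl[-\log\Lambda_u\bigr]
 =\mathbf E\Bigl[\delta u+\tfrac12\|u\|_{\mathcal H}^2
 -\log|\det{}_2(I+D_{\mathcal H}u)|\Bigr].
\end{align*}
If $T_u$ is only of degree one rather than injective, one gets an inequality in the same direction, which is all that is needed.

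The three terms are handled separately. First, $\mathbf E\,\delta u=0$, since $u\in\mathbb D^{1,2}$ and the Skorokhod integral has mean zero. Second, $\tfrac12\mathbf E\|u\|^2\le\tfrac12C^2d^2$. Third, for a Hilbert--Schmidt operator $K$ with $\|K\|_{\rm op}\le\tfrac12$, the eigenvalue product $\det_2(I+K)=\prod(1+\mu_i)e^{-\mu_i}$ yields
\begin{align*}
 -\log|\det{}_2(I+K)|\le c\|K\|_{\HS}^2 .
\end{align*}
This uses $|\log(1+\mu)-\mu|\le c|\mu|^2$ for $|\mu|\le\tfrac12$, together with $\sum|\mu_i|^2\le\|K\|_{\HS}^2$ (Weyl's inequality). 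Applied with $K=D_{\mathcal H}u$, the third term is at most $cC^2d^2$. Summing, the entropy is bounded by $C_{\rm Ram}d^2$ with $C_{\rm Ram}=(\tfrac12+c)C^2$, which depends only on $C$.

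The main obstacle is verifying that the hypotheses of the cited theorem are exactly what is available here. Ramer and \cite{UZ97} typically ask for $u$ to be $\mathcal H$-$C^1$, meaning Fr\'echet differentiable along Cameron--Martin directions with $D_{\mathcal H}u$ continuous into $\HS$, rather than merely in $\mathbb D^{1,p}$. I would first try the version in \cite[Theorem~3.3]{UZ97}, which works with $\mathbb D^{1,p}$ for all $p$ plus invertibility and yields the density inequality (3.10). If that version is not enough, the fallback is to approximate $u$ by smooth cylindrical $u_n$ that keep the bounds \eqref{eq:Ramer-small}, for instance by Ornstein--Uhlenbeck smoothing followed by finite dimensional projection; the operator-norm bound is preserved because the semigroup is a contraction. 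One would then apply the smooth case, which gives entropies at most $C_{\rm Ram}d^2$ uniformly in $n$, and pass to the limit using weak convergence $(T_{u_n})_\#\mathbf P\to(T_u)_\#\mathbf P$ and lower semicontinuity of relative entropy. Equivalence would follow from the positive density formula in the limit.

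The smallness $\|D_{\mathcal H}u\|_{\rm op}\le\tfrac12$ is what makes the determinant logarithm quadratic and the map a perturbation of the identity. The Hilbert--Schmidt bound is what makes $\det_2$ and the Skorokhod integral well defined.
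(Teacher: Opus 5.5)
Your main line is the paper's proof. You cite \cite[Theorem~3.3 and (3.10)]{UZ97} for a.s.\ bijectivity of $T_u$ and the identity $(T_u)_\#(\Lambda_u\mathbf P)=\mathbf P$, and you write $\operatorname{Ent}((T_u)_\#\mathbf P\mid\mathbf P)=\mathbf E[-\log\Lambda_u]$. You then bound the three pieces using $\mathbf E\,\delta u=0$, $\|u\|_{\mathcal H}\le Cd$, and $|\log|\det_2(I+B)||\le c\|B\|_{\HS}^2$ for $\|B\|_{\rm op}\le\frac12$. Your proof of this last bound, via Carleman's product, $|\mu_i|\le\|B\|_{\rm op}$ and Weyl's inequality, supplies a step the paper only asserts.

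\emph{The degree-one hedge is wrong and should be dropped.} You claim the argument survives if $T_u$ is ``bijective, or at least of degree one'', with ``an inequality in the same direction''. Without injectivity the identity $\frac{\dd (T_u)_\#\mathbf P}{\dd\mathbf P}\circ T_u=\Lambda_u^{-1}$ fails, and the resulting inequality points the other way. Already for a $C^1$ map $T$ of $\R^n$ with $\det DT>0$ and $\Lambda=(\det DT)(\gamma\circ T)/\gamma$, the area formula gives $\frac{\dd T_\#\gamma}{\dd\gamma}(Tx)=\sum_{x'\in T^{-1}(Tx)}\Lambda(x')^{-1}\ge\Lambda(x)^{-1}$. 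Hence $\operatorname{Ent}(T_\#\gamma\mid\gamma)\ge\mathbf E[-\log\Lambda]$, which is a lower bound and useless here. So a.s.\ injectivity is indispensable, and it is exactly what the cited theorem extracts from $\|D_{\mathcal H}u\|_{\rm op}\le\frac12$. In the finite-dimensional picture, degree one with positive Jacobian already forces a.e.\ injectivity, so your dichotomy does not really arise.

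Two smaller points.
\begin{enumerate}
\item You have the two absolute continuities swapped. The identity $(T_u)_\#(\Lambda_u\mathbf P)=\mathbf P$ by itself gives $\mathbf P\ll(T_u)_\#\mathbf P$. It is $(T_u)_\#\mathbf P\ll\mathbf P$ that uses $\Lambda_u>0$.
\item Your smoothing fallback would give the entropy bound by lower semicontinuity, and hence $(T_u)_\#\mathbf P\ll\mathbf P$. It does not give the reverse absolute continuity, since there is no positive density formula in the limit without the theorem itself.
\end{enumerate}
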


\begin{proof}
By \cite[Theorem~3.3]{UZ97}, the condition
$\|D_{\mathcal H}u\|_{\rm op}\le1/2$ implies that $T_u$ is almost surely
bijective and that $(T_u)_\#\mathbf P$ and $\mathbf P$ are equivalent.
Moreover, \cite[equation~(3.10)]{UZ97} gives
\begin{align*}
 (T_u)_\#(\Lambda_u\mathbf P)=\mathbf P,
 \qquad
 \Lambda_u
 =
 \left|\det_2(I+D_{\mathcal H}u)\right|
 \exp\left(
   -\delta u-\frac12\|u\|_{\mathcal H}^2
 \right),
\end{align*}
where $\delta$ denotes the Malliavin divergence operator and
\begin{align*}
 \det_2(I+B)=\det\bigl((I+B)e^{-B}\bigr)
\end{align*}
is the Carleman--Fredholm determinant. If
$\nu=(T_u)_\#\mathbf P$, then
\begin{align*}
 \frac{\dd\nu}{\dd\mathbf P}(T_u\omega)
 =\Lambda_u(\omega)^{-1},
 \qquad
 \operatorname{Ent}(\nu\mid\mathbf P)
 =\mathbf E[-\log\Lambda_u].
\end{align*}
Since $u\in\mathbb D^{1,2}(\mathcal H)$, one has
$u\in\operatorname{Dom}\delta$ and $\mathbf E\delta u=0$. Furthermore,
when $\|B\|_{\rm op}\le1/2$,
\begin{align*}
 \left|\log\left|\det_2(I+B)\right|\right|
 \le C\|B\|_{\HS}^2.
\end{align*}
Consequently,
\begin{align*}
 \operatorname{Ent}(\nu\mid\mathbf P)
 \le
 C\mathbf E\|D_{\mathcal H}u\|_{\HS}^2
 +\frac12\mathbf E\|u\|_{\mathcal H}^2\le C_{\rm Ram}d^2.
\end{align*}
\end{proof}

The abstract Ramer estimate must now be verified for the actual selector
feedback.  The finite patch construction makes this possible because all
driver dependence is carried by smooth Hilbert cutoffs.
\begin{lemma}
\label{lem:selector-Ramer-domain}
For $z\in\mathsf X$ and $h\in E_z$, define
\begin{align*}
 u_{z,h}(\omega)
 =-K_q(z,Y(\omega))h
 =-\sum_i\chi_i(z,Y(\omega))K_i(z)h.
\end{align*}
Then $(z,h,\omega)\mapsto u_{z,h}(\omega)$ is Borel and, for every fixed
$(z,h)$,
\begin{align*}
 u_{z,h}\in\mathbb D^{1,p}(\mathcal H)
 \qquad\text{for every finite }p.
\end{align*}
In addition,
\begin{align}
 \|u_{z,h}\|_{\mathcal H}
 +\|D_{\mathcal H}u_{z,h}\|_{\HS}
 &\le C_q\|h\|,
 \label{eq:selector-Ramer-HS}\\
 \|D_{\mathcal H}u_{z,h}\|_{\rm op}
 &\le C_q\|h\|.
 \label{eq:selector-Ramer-op}
\end{align}
Furthermore, 
\begin{align}\label{eq:feedback-realizes-driver-shift}
 Y\bigl(\omega+\mathbf u_{z,h}(\omega)\bigr)
 =
 Y(\omega)-\Gamma K_q(z,Y(\omega))h,
\end{align}
and whenever $\|h\|\le(2C_q)^{-1}$,
\begin{align}\label{eq:Ramer-block-application}
 (T_{z,h})_\#\mathbf P&\sim\mathbf P,\notag\\
 \operatorname{Ent}\bigl((T_{z,h})_\#\mathbf P\mid\mathbf P\bigr)
 &\le C_{\rm Ram}\|h\|^2,
\end{align}
where
\begin{align*}
 T_{z,h}(\omega)
 =\omega+\mathbf u_{z,h}(\omega).
\end{align*}
\end{lemma}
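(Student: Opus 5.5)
The plan is to read everything off the finite patch structure of \Cref{prop:universal-selector}(i) and the canonical realisation \eqref{eq:canonical-block-translation}, and then feed the resulting bounds into \Cref{lem:block-Ramer-entropy}.

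\emph{Measurability.} Joint Borel measurability of $(z,h,\omega)\mapsto u_{z,h}(\omega)$ follows because $u_{z,h}$ is a finite sum $-\sum_i\chi_i(z,Y(\omega))K_i(z)h$. Each $\chi_i$ is smooth, hence Borel, on $\mathsf X\times\mathsf Y$ and is extended by zero off its patch. The map $Y$ is Borel on $\Omega_\tau$, and $(z,h)\mapsto K_i(z)h$ is continuous on the chart domain and extended by zero elsewhere.

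\emph{Malliavin regularity and the bounds.} Fix $(z,h)$. The map $\omega\mapsto u_{z,h}(\omega)$ takes values in the fixed finite dimensional space spanned by the ranges of the $K_{\ell_ia_i}$. Its coefficients are $\chi_i(z,Y(\omega))$, where the $\chi_i(z,\cdot)$ are smooth on $\mathsf Y$ with bounded first derivatives (finitely many patches and radii), and $Y$ is affine along Cameron--Martin directions, $Y(\omega+\mathbf g)=Y(\omega)+\Gamma g$ on $\Omega_\tau^*$. Thus $\chi_i(z,Y(\cdot))$ is an $H$-differentiable bounded function with derivative $D_{\mathsf Y}\chi_i(z,Y)\circ\Gamma$. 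This is bounded and Hilbert--Schmidt as a functional, since $\Gamma$ is Hilbert--Schmidt by \Cref{lem:driver-HS}. Bounded functions with bounded $H$-derivative lie in $\mathbb D^{1,p}$ for every $p$. One can see this by approximating $\chi_i$ by cylindrical smooth functions of finitely many coordinates of $Y$, or by using the standard characterisation of $\mathbb D^{1,p}$ via ray absolute continuity plus stochastic Gâteaux differentiability. A finite sum of such scalars times fixed vectors stays in $\mathbb D^{1,p}(\mathcal H)$. The bounds \eqref{eq:selector-Ramer-HS}--\eqref{eq:selector-Ramer-op} are then exactly \eqref{eq:Kq-bound}--\eqref{eq:DKq-bound}, using $\|\cdot\|_{\rm op}\le\|\cdot\|_{\HS}$ and $\sum\chi_i\le1$.

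\emph{Driver identity.} Formula \eqref{eq:feedback-realizes-driver-shift} is immediate from \eqref{eq:canonical-block-translation} applied with the $\omega$-dependent element $g=u_{z,h}(\omega)$. Here it matters that the translation identity holds simultaneously for all $g\in\mathcal H$ on the fixed Cameron--Martin invariant set $\Omega_\tau^*$. That uniformity is what allows an anticipative, $\omega$-dependent $g$; this is the one delicate point and is why $\Omega_\tau^*$ was chosen that way.

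\emph{Entropy bound.} If $\|h\|\le(2C_q)^{-1}$, then \eqref{eq:selector-Ramer-op} gives $\|D_{\mathcal H}u_{z,h}\|_{\rm op}\le1/2$ almost surely, and \eqref{eq:selector-Ramer-HS} gives \eqref{eq:Ramer-small} with $d=\|h\|$ and $C=C_q$. \Cref{lem:block-Ramer-entropy} then yields equivalence and the entropy bound \eqref{eq:Ramer-block-application}, with $C_{\rm Ram}$ depending on $q$ only through $C_q$.

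The main obstacle is the second step: justifying $\mathbb D^{1,p}$ membership for a composition of a smooth Hilbert-space cutoff with the infinite-dimensional Gaussian driver $Y$. I would handle it through the Lipschitz/$H$-differentiable criterion in \cite{Bogachev10}, rather than through explicit cylindrical approximation.
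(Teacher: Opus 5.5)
Your proposal is correct and follows essentially the paper's proof. It has the same steps: measurability from the finite patch form, $\mathbb D^{1,p}$ membership of $\chi_i(z,Y)$ with derivative $\Gamma^*D_y\chi_i(z,Y)$, substitution of the random control into the Cameron--Martin invariant identity \eqref{eq:canonical-block-translation}, and application of \Cref{lem:block-Ramer-entropy} with $d=\|h\|$.

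The only difference is in the Malliavin regularity step. The paper uses the explicit cylindrical approximation $Y_N=\sum_{j\le N}(\delta e_j)\Gamma e_j$ together with closability, which identifies $D_{\mathcal H}\chi_i(z,Y)=\Gamma^*D_y\chi_i(z,Y)$ directly. Your preferred $H$-Lipschitz (ray absolute continuity) criterion from \cite{Bogachev10} reaches the same conclusion, since the cutoffs have globally bounded driver derivatives and $Y$ is exactly affine along Cameron--Martin directions on $\Omega_\tau^*$.
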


\begin{proof}
We verify the Malliavin regularity directly. Let $(e_j)_{j\ge1}$ be an
orthonormal basis of $\mathcal H$, let $P_N$ be the projection onto its
first $N$ elements, and set
\begin{align*}
 Y_N=\sum_{j=1}^N(\delta e_j)\Gamma e_j,
 \qquad
 F_{i,z}^{(N)}=\chi_i(z,Y_N).
\end{align*}
The variables $F_{i,z}^{(N)}$ are smooth cylindrical. Since
$\Gamma:\mathcal H\to\mathsf Y$ is Hilbert--Schmidt by
\Cref{lem:driver-HS},
\begin{align*}
 Y_N\longrightarrow Y
 \qquad\text{in }L^p(\Omega;\mathsf Y)
\end{align*}
for every finite $p$. The cylindrical chain rule gives
\begin{align*}
 D_{\mathcal H}F_{i,z}^{(N)}
 =
 P_N\Gamma^*D_y\chi_i(z,Y_N).
\end{align*}
The first two driver derivatives of the finitely many cutoffs are bounded.
Hence $F_{i,z}^{(N)}\to F_{i,z}:=\chi_i(z,Y)$ in $L^p$, while
\begin{align*}
 P_N\Gamma^*D_y\chi_i(z,Y_N)
 \longrightarrow
 \Gamma^*D_y\chi_i(z,Y)
 \qquad\text{in }L^p(\Omega;\mathcal H).
\end{align*}
Closability of the Malliavin derivative therefore gives
\begin{align}\label{eq:selector-Malliavin-chain}
 F_{i,z}\in\mathbb D^{1,p},
 \qquad
 D_{\mathcal H}F_{i,z}
 =
 \Gamma^*D_y\chi_i(z,Y)
\end{align}
for every finite $p$.

Since $K_i(z)h$ is deterministic, we have 
\begin{align*}
 u_{z,h}
 =-\sum_iF_{i,z}K_i(z)h
 \in\mathbb D^{1,p}(\mathcal H).
\end{align*}
Equations \eqref{eq:Kq-bound}, \eqref{eq:DKq-bound}, and
\eqref{eq:selector-Malliavin-chain} give
\eqref{eq:selector-Ramer-HS}--\eqref{eq:selector-Ramer-op}. Joint Borel
measurability follows from the explicit finite patch construction.

Finally, \eqref{eq:canonical-block-translation} holds simultaneously for every
$g\in\mathcal H$.  We may therefore substitute the random value
$g=u_{z,h}(\omega)$ and obtain
\begin{align*}
 Y\bigl(\omega+\mathbf u_{z,h}(\omega)\bigr)
 &=
 Y(\omega)+\Gamma u_{z,h}(\omega)\\
 &=
 Y(\omega)-\Gamma K_q(z,Y(\omega))h,
\end{align*}
which is \eqref{eq:feedback-realizes-driver-shift}. If
$\|h\|\le(2C_q)^{-1}$, then
\eqref{eq:selector-Ramer-op} gives
$\|D_{\mathcal H}u_{z,h}\|_{\rm op}\le1/2$.
Applying \Cref{lem:block-Ramer-entropy} with $d=\|h\|$ proves
\eqref{eq:Ramer-block-application}.
\end{proof}

A one block quasi invariance statement is not yet enough for the infinite
coupling.  Write $P_{\tau,[\infty]}\delta_x$ for the law on
$\mathsf X^{\mathbb N_0}$ of the sampled chain $(X_n)_{n\ge0}$ started
from $x$.  The next lemma concatenates the anticipative shifts triangularly,
controls the entropy by an entropy threshold, and passes to the full path space.
\begin{lemma}
\label{lem:infinite-Ramer}
Set the one block Ramer threshold $d_{\rm Ram}=(2C_q)^{-1}$. For $R>0$, let $h_n$ denote the proposed feedback displacement, defined by $h_n=\Delta_{X_n}(X'_n)$ on $\{(X_n,X'_n)\in\mathscr U_\Delta\}$ and by $h_n=0$ otherwise, and define
\begin{align*}
 \tau_R=\inf\left\{n\ge0:(X_n,X'_n)\notin\mathscr U_\Delta\ \text{or}\ \|h_n\|>d_{\rm Ram}\ \text{or}\ C_{\rm Ram}\sum_{k=0}^n\|h_k\|^2>R\right\}.
\end{align*}
At the $n$-th step, apply $T_{X_n,h_n}$ to the fresh Wiener input of the second component when $n<\tau_R$, and use the unshifted Wiener input when $n\ge\tau_R$. Denote the resulting generalized coupling by $\mathbf Q^R_{x,x'}$. Then the following conclusions hold.
\begin{itemize}
 \item The first marginal of $\mathbf Q^R_{x,x'}$ is $P_{\tau,[\infty]}\delta_x$, while its second marginal $\mathbf Q^{R,2}_{x,x'}$ satisfies
 \begin{align}\label{eq:triangular-entropy}
  \mathbf Q^{R,2}_{x,x'}\ll P_{\tau,[\infty]}\delta_{x'},\qquad \operatorname{Ent}\bigl(\mathbf Q^{R,2}_{x,x'}\mid P_{\tau,[\infty]}\delta_{x'}\bigr)\le R.
 \end{align}
 Moreover, $(x,x')\mapsto\mathbf Q^R_{x,x'}$ may be chosen Borel.
 \item Define the countable mixture over integer entropy thresholds
 \begin{align}\label{eq:entropy-threshold-mixture}
  \mathbf Q_{x,x'}=\sum_{m=1}^\infty 2^{-m}\mathbf Q^m_{x,x'}.
 \end{align}
 Then $\mathbf Q_{x,x'}$ is a Borel generalized coupling kernel, its first marginal is $P_{\tau,[\infty]}\delta_x$, and its second marginal is absolutely continuous with respect to $P_{\tau,[\infty]}\delta_{x'}$.
 \item For the corresponding unstopped feedback construction, let
 \begin{align*}
  \mathcal E_\infty=\bigcup_{m=1}^\infty\{\tau_m=\infty\}.
 \end{align*}
 be the event that at least one integer entropy threshold is never reached.
 Let $\mathsf D$ be a Borel event of paired sampled state paths. If there is a measurable event $\mathcal E\subset\mathcal E_\infty$ of positive probability for the unstopped construction such that the unstopped paired path belongs to $\mathsf D$ on $\mathcal E$, then $\mathbf Q_{x,x'}(\mathsf D)>0$.
\end{itemize}
\end{lemma}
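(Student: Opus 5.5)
I will realise everything on the product path space $\Omega_\tau^{\mathbb N_0}$ with product Wiener measure $\mathbf P^{\otimes\mathbb N_0}$, writing $\omega=(\omega_n)_{n\ge0}$. The first component is driven by the unshifted inputs, so $X_{n+1}=\Phi(X_n,Y(\omega_n))$. The second component is driven by
\begin{align*}
\omega_n'=\omega_n+\mathbf 1_{\{n<\tau_R\}}\mathbf u_{X_n,h_n}(\omega_n),
\end{align*}
and \eqref{eq:feedback-realizes-driver-shift} identifies this with the recursion of \Cref{lem:random-stable-radius} before $\tau_R$. The first marginal is then $P_{\tau,[\infty]}\delta_x$ by construction. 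The second component path is a measurable function of $x'$ and $(\omega_n')_n$. It therefore suffices to show that the law of $(\omega'_n)_n$ is absolutely continuous with respect to $\mathbf P^{\otimes\mathbb N_0}$ with relative entropy at most $R$. Entropy does not increase under pushforward by the measurable map $(x',(\omega'_n))\mapsto(X'_n)_n$, so \eqref{eq:triangular-entropy} follows.

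The key structural point is triangularity. Let $\mathcal F_n=\sigma(\omega_0,\dots,\omega_{n-1})$. Then $X_n$, $X'_n$, $h_n$ and the event $\{n<\tau_R\}$ are $\mathcal F_n$-measurable. Note that $\{n<\tau_R\}$ tests $\|h_n\|\le d_{\rm Ram}$ and the cumulative cost up to index $n$. Hence, conditionally on $\mathcal F_n$, the map $\omega_n\mapsto\omega_n'$ is either the identity or the fixed Ramer map $T_{z,h}$ with $(z,h)=(X_n,h_n)$ frozen and $\|h\|\le(2C_q)^{-1}$. \Cref{lem:selector-Ramer-domain} then gives that the conditional law of $\omega'_n$ is equivalent to $\mathbf P$, with conditional entropy at most $C_{\rm Ram}\|h_n\|^2\mathbf 1_{\{n<\tau_R\}}$.

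There is one subtlety: $\mathcal F_n$ is generated by the original inputs, not by the shifted ones. To use the chain rule for relative entropy I would condition on $\mathcal F'_n=\sigma(\omega'_0,\dots,\omega'_{n-1})$ instead. This works because each block Ramer map is almost surely bijective by \cite[Theorem~3.3]{UZ97}, so inductively $\mathcal F_n=\mathcal F'_n$ modulo null sets. The chain rule (disintegration) on the first $n+1$ coordinates then gives
\begin{align*}
\operatorname{Ent}\bigl(\mathcal L(\omega'_0,\dots,\omega'_n)\mid\mathbf P^{\otimes(n+1)}\bigr)=\mathbf E\sum_{k\le n}\operatorname{Ent}_k\le\mathbf E\Bigl[C_{\rm Ram}\sum_{k<\tau_R\wedge(n+1)}\|h_k\|^2\Bigr]\le R,
\end{align*}
since the stopping rule forces $C_{\rm Ram}\sum_{k\le n}\|h_k\|^2\le R$ whenever $n<\tau_R$. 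The finite-dimensional entropies are monotone in $n$, and relative entropy on the product $\sigma$-field is the supremum over these cylinder projections. Hence the full law satisfies $\operatorname{Ent}\le R$, and in particular it is absolutely continuous. Borel dependence on $(x,x')$ follows from joint Borel measurability of $u_{z,h}$, of $\Phi$ on compatible drivers, and of the extended coordinate $\Delta$. Kernels defined by integrating measurable functions against a fixed product measure are Borel.

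For the mixture \eqref{eq:entropy-threshold-mixture}, marginals pass through convex combinations. The first marginal is $\sum_m 2^{-m}P_{\tau,[\infty]}\delta_x=P_{\tau,[\infty]}\delta_x$. The second marginal is a countable convex combination of measures each absolutely continuous with respect to $P_{\tau,[\infty]}\delta_{x'}$, hence itself absolutely continuous. Borel measurability is preserved under countable sums.

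For the last bullet, I would realise all $\mathbf Q^m$ on the same input $\omega$ together with the unstopped construction. On $\{\tau_m=\infty\}$ the $m$-th stopped coupling coincides pathwise with the unstopped one. Suppose $\mathcal E\subset\mathcal E_\infty$ has positive probability. Then, by countable subadditivity, some $m$ satisfies $\mathbf P(\mathcal E\cap\{\tau_m=\infty\})>0$. On that event the $\mathbf Q^m$ path equals the unstopped path, which lies in $\mathsf D$. Therefore
\begin{align*}
\mathbf Q_{x,x'}(\mathsf D)\ge2^{-m}\mathbf P(\mathcal E\cap\{\tau_m=\infty\})>0.
\end{align*}

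The main obstacle I expect is the conditional entropy step: the block shift is anticipative within the block but chosen from past data. Making the identification of filtrations rigorous requires the almost sure invertibility in Ramer's theorem together with a regular conditional version of \Cref{lem:selector-Ramer-domain} that is uniform over the frozen parameters $(z,h)$. That uniformity is supplied by the constant $C_{\rm Ram}$, which does not depend on $(z,h)$.
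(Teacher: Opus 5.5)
Your proposal is correct, and apart from one step it follows the paper's proof. The shared steps are the triangular realisation, the prospective stopping rule that bounds $C_{\rm Ram}\sum_{k<\tau_R}\|h_k\|^2$ pathwise by $R$, the entropy chain rule, contraction of entropy under the solution map $\mathcal S_{x'}$, and the identical arguments for the mixture and the last bullet.

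The genuine difference is how you pass from the joint past $\mathcal F_n=\sigma(\omega_0,\dots,\omega_{n-1})$ to the second-coordinate past $\mathcal F'_n=\sigma(\omega'_0,\dots,\omega'_{n-1})$. You identify these two $\sigma$-fields modulo null sets, using almost sure invertibility of the block Ramer maps. The paper instead notes that $\mathcal L(\omega'_n\mid\mathcal F'_n)=\mathbf E[\mathcal L(\omega'_n\mid\mathcal F_n)\mid\mathcal F'_n]$ is a barycentre. Convexity of $\operatorname{Ent}(\cdot\mid\mathbf P)$ then gives $\operatorname{Ent}(\mathcal L(\omega'_n\mid\mathcal F'_n)\mid\mathbf P)\le\mathbf E[C_{\rm Ram}\|h_n\|^2\mathbf 1_{\{n<\tau_R\}}\mid\mathcal F'_n]$, with no invertibility and no measurable inverse needed.

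Your route yields the conditional laws exactly rather than an inequality. Its cost is precisely the obstacle you flag: you need an inverse of $T_{z,h}$ that is jointly Borel in $(z,h,\omega')$. In this setting that inverse is easy to supply. On the Cameron--Martin invariant set $\Omega_\tau^*$, the bound \eqref{eq:selector-Ramer-op} together with $\|h\|\le(2C_q)^{-1}$ makes $g\mapsto-u_{z,h}(\omega'+\mathbf g)$ a $\tfrac12$-contraction of $\mathcal H$. Hence $T_{z,h}$ is a bijection of $\Omega_\tau^*$, its inverse is the limit of Picard iterates, and that limit is jointly Borel. With this, your inductive identification $\mathcal F_n=\mathcal F'_n$ modulo null sets goes through.

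For the infinite sequence, you invoke the standard fact that relative entropy on the product $\sigma$-field is the supremum of the cylinder entropies. The paper proves this directly via the density martingale $f_N$, de la Vall\'ee Poussin uniform integrability and Fatou's lemma; the two arguments are equivalent.
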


\begin{proof}
Let $\mathscr G_n$ be the full joint sigma field generated by the coupling before the $n$-th fresh Wiener input is sampled, and let $\mathscr F'_n=\sigma(W'_0,\ldots,W'_{n-1})$ be the past of the second noise coordinate. The random variables $X_n$, $X'_n$, $h_n$, and $\mathbf 1_{\{n<\tau_R\}}$ are all $\mathscr G_n$ measurable.

Conditionally on $\mathscr G_n$, the fresh Wiener input $W_n$ driving the first component has law $\mathbf P$. On $\{n<\tau_R\}$, the corresponding noise input of the second component is $W'_n=T_{X_n,h_n}(W_n)$, while on $\{n\ge\tau_R\}$ it is unshifted and hence again has law $\mathbf P$. Therefore, if $\kappa_n=\mathcal L(W'_n\mid\mathscr G_n)$, then the
selector verification \Cref{lem:selector-Ramer-domain}, together with the
one block estimate \Cref{lem:block-Ramer-entropy}, gives
\begin{align*}
 \operatorname{Ent}(\kappa_n\mid\mathbf P)\le C_{\rm Ram}\|h_n\|^2\mathbf 1_{\{n<\tau_R\}}.
\end{align*}

We next pass from the full joint past to the second coordinate past. Put $\kappa'_n=\mathcal L(W'_n\mid\mathscr F'_n)$. Since $\mathscr F'_n\subset\mathscr G_n$, the tower property for conditional distributions gives, for every bounded Borel function $\varphi$ on $\Omega_\tau$,
\begin{align*}
 \int\varphi\,\dd\kappa'_n=\mathbf E\left[\int\varphi\,\dd\kappa_n\,\middle|\,\mathscr F'_n\right].
\end{align*}
Thus, after the value of the second past is fixed, $\kappa'_n$ is the barycentre of the $\mathscr G_n$ conditional laws $\kappa_n$ over the remaining uncertainty in the joint past. Convexity of relative entropy therefore yields
\begin{align*}
 \operatorname{Ent}(\kappa'_n\mid\mathbf P)\le \mathbf E\left[\operatorname{Ent}(\kappa_n\mid\mathbf P)\,\middle|\,\mathscr F'_n\right]\le \mathbf E\left[C_{\rm Ram}\|h_n\|^2\mathbf 1_{\{n<\tau_R\}}\,\middle|\,\mathscr F'_n\right].
\end{align*}

Let $\nu_N^R=\mathcal L(W'_0,\ldots,W'_N)$. The entropy chain rule, followed by the preceding conditional estimate and the tower property, gives
\begin{align*}
 \operatorname{Ent}(\nu_N^R\mid\mathbf P^{\otimes(N+1)})
 &=\sum_{n=0}^N\mathbf E\,\operatorname{Ent}\bigl(\mathcal L(W'_n\mid\mathscr F'_n)\mid\mathbf P\bigr)\\
 &\le\sum_{n=0}^N\mathbf E\left[C_{\rm Ram}\|h_n\|^2\mathbf 1_{\{n<\tau_R\}}\right]
 =C_{\rm Ram}\mathbf E\sum_{n=0}^N\|h_n\|^2\mathbf 1_{\{n<\tau_R\}}.
\end{align*}
By the prospective definition of $\tau_R$, the shift at index $n$ is executed only when $C_{\rm Ram}\sum_{k=0}^n\|h_k\|^2\le R$. Hence, pathwise,
\begin{align*}
 C_{\rm Ram}\sum_{n=0}^\infty\|h_n\|^2\mathbf 1_{\{n<\tau_R\}}\le R,
\end{align*}
and consequently
\begin{align}\label{eq:path-entropy-uniform}
 \operatorname{Ent}(\nu_N^R\mid\mathbf P^{\otimes(N+1)})\le R
\end{align}
for every $N\ge0$.

To pass to the entire second noise sequence, put $\boldsymbol\Omega=\Omega_\tau^{\mathbb N_0}$ and $\boldsymbol{\mathbf P}=\mathbf P^{\otimes\mathbb N_0}$, let $\pi_n(\omega_0,\omega_1,\ldots)=\omega_n$ be the $n$-th coordinate map, and set $\mathscr A_N=\sigma(\pi_0,\ldots,\pi_N)$. Let $\boldsymbol\nu^R=\mathcal L((W'_n)_{n\ge0})$, which is already defined by the stopped recursion, and let $f_N$ be the density of its restriction to $\mathscr A_N$ with respect to the restriction of $\boldsymbol{\mathbf P}$ to $\mathscr A_N$. Equivalently, $f_N$ is the density of $\nu_N^R$ with respect to $\mathbf P^{\otimes(N+1)}$, viewed as an $\mathscr A_N$ measurable function on $\boldsymbol\Omega$.

For every $A\in\mathscr A_N$, consistency of the finite dimensional marginals gives
\begin{align*}
 \int_A f_{N+1}\,\dd\boldsymbol{\mathbf P}=\boldsymbol\nu^R(A)=\int_A f_N\,\dd\boldsymbol{\mathbf P}.
\end{align*}
By the defining property of conditional expectation,
\begin{align*}
 f_N=\mathbf E_{\boldsymbol{\mathbf P}}[f_{N+1}\mid\mathscr A_N].
\end{align*}
Thus $(f_N)_{N\ge0}$ is a nonnegative $\boldsymbol{\mathbf P}$ martingale with $\mathbf E_{\boldsymbol{\mathbf P}}f_N=1$.

Set $\Psi(r)=r\log r-r+1$, with $0\log0=0$. Since $f_N$ has mean one,
\begin{align*}
 \mathbf E_{\boldsymbol{\mathbf P}}\Psi(f_N)=\mathbf E_{\boldsymbol{\mathbf P}}[f_N\log f_N]=\operatorname{Ent}(\nu_N^R\mid\mathbf P^{\otimes(N+1)})\le R
\end{align*}
by \eqref{eq:path-entropy-uniform}. Since $\Psi(r)/r\to\infty$ as $r\to\infty$, the de la Vall\'ee Poussin criterion shows that $(f_N)$ is uniformly integrable. Hence the martingale convergence theorem gives $f_N\to f$ almost surely and in $L^1(\boldsymbol{\mathbf P})$ for some $f\ge0$ with $\mathbf E_{\boldsymbol{\mathbf P}}f=1$.

Moreover, $\mathbf E_{\boldsymbol{\mathbf P}}[f\mid\mathscr A_N]=f_N$. Therefore, for every $A\in\mathscr A_N$,
\begin{align*}
 \int_A f\,\dd\boldsymbol{\mathbf P}=\int_A f_N\,\dd\boldsymbol{\mathbf P}=\boldsymbol\nu^R(A).
\end{align*}
Since $\bigcup_N\mathscr A_N$ generates the product Borel sigma field, a monotone class argument yields $\boldsymbol\nu^R=f\boldsymbol{\mathbf P}$. Finally, $\Psi\ge0$ and $f_N\to f$ almost surely, so Fatou's lemma gives
\begin{align*}
 \operatorname{Ent}(\boldsymbol\nu^R\mid\boldsymbol{\mathbf P})=\mathbf E_{\boldsymbol{\mathbf P}}\Psi(f)\le\liminf_{N\to\infty}\mathbf E_{\boldsymbol{\mathbf P}}\Psi(f_N)\le R.
\end{align*}

To pass from noise paths to state paths, let $\mathcal S_{x'}:\boldsymbol\Omega\to\mathsf X^{\mathbb N_0}$ be the ordinary sampled solution map: for $\omega'=(\omega'_n)_{n\ge0}$, set $\widetilde X_0=x'$ and recursively
\begin{align*}
 \widetilde X_{n+1}=\Phi(\widetilde X_n,Y(\omega'_n)).
\end{align*}
Thus $\mathcal S_{x'}(\omega')=(\widetilde X_n)_{n\ge0}$. The second component of our coupling is obtained by applying this same unmodified solution map to the transformed noise sequence $(W'_n)$; the feedback has already been encoded in the law of that noise sequence. Consequently,
\begin{align*}
 \mathbf Q^{R,2}_{x,x'}=(\mathcal S_{x'})_\#\boldsymbol\nu^R,\qquad P_{\tau,[\infty]}\delta_{x'}=(\mathcal S_{x'})_\#\boldsymbol{\mathbf P}.
\end{align*}
Relative entropy decreases under measurable pushforward, so the preceding infinite noise entropy estimate proves \eqref{eq:triangular-entropy}. The first noise coordinate is never changed, hence the first marginal is $P_{\tau,[\infty]}\delta_x$.

For each integer $m\ge1$, the endpoint map, the feedback, and the stopping rule are Borel in $(x,x')$, so $(x,x')\mapsto\mathbf Q^m_{x,x'}$ is a Borel kernel. Therefore \eqref{eq:entropy-threshold-mixture} defines a Borel generalized coupling kernel. Its first marginal remains $P_{\tau,[\infty]}\delta_x$. If $A$ is a Borel state path event with $P_{\tau,[\infty]}\delta_{x'}(A)=0$, then $\mathbf Q^{m,2}_{x,x'}(A)=0$ for every $m$ by \eqref{eq:triangular-entropy}, and hence the second marginal of the mixture also assigns zero mass to $A$. This proves its absolute continuity.

It remains to prove the last assertion. Realise the level $m$ stopped constructions and the unstopped construction from the same underlying sequence of fresh Wiener inputs. On $\{\tau_m=\infty\}$ the level $m$ construction never invokes its stopping rule and therefore coincides pathwise with the unstopped construction. If $\mathcal E\subset\mathcal E_\infty$ has positive probability, then
\begin{align*}
 \mathcal E=\bigcup_{m=1}^\infty\bigl(\mathcal E\cap\{\tau_m=\infty\}\bigr),
\end{align*}
so there is some $m\ge1$ such that $\mathbf P(\mathcal E\cap\{\tau_m=\infty\})>0$. If the unstopped paired path belongs to the Borel path event $\mathsf D$ on $\mathcal E$, then on this intersection the level $m$ paired path also belongs to $\mathsf D$. Hence
\begin{align*}
 \mathbf Q^m_{x,x'}(\mathsf D)\ge \mathbf P\bigl(\mathcal E\cap\{\tau_m=\infty\}\bigr)>0,
\end{align*}
and \eqref{eq:entropy-threshold-mixture} gives
\begin{align*}
 \mathbf Q_{x,x'}(\mathsf D)\ge2^{-m}\mathbf Q^m_{x,x'}(\mathsf D)>0.
\end{align*}
This proves the final claim.
\end{proof}

\section{A prepared bridge and accessibility}
\label{sec:prepared}

This section constructs a prepared neighborhood that can both be reached
from arbitrary states and coupled locally.  A deterministic observability
estimate yields exact first order cancellation, while a finite dimensional
shell control argument gives accessibility of the prepared fibre.  This
control construction uses the same basic Fourier interaction and relaxation
mechanism as in \cite{AS05,AS06}, adapted to the linearised projective
dynamics.

We consider the deterministic base fibre skeleton corresponding to zero
driver, with initial state $(0,\phi)$, where
\begin{align*}
\phi(x)=\cos x_1+\cos x_2.
\end{align*}
With normalized Lebesgue measure, $\|\phi\|_2=1$ and $-\Delta\phi=\phi$.
For the zero driver and zero initial base state, the deterministic base
trajectory is $w_t\equiv0$, while the raw fibre solves
$\partial_t\rho_t=\nu\Delta\rho_t$ and hence
$\rho_t=e^{-\nu t}\phi$.  Its projective representative is therefore
constant: $[\rho_t]=[\phi]$.

We linearize the deterministic endpoint map at this zero driver skeleton.
In the fixed chart $\phi^\perp$, the augmented tangent generator on
$E_b=H^b\oplus\phi^\perp$ is
\begin{align}\label{eq:L-prepared}
\mathcal L(\xi,\eta)
=\left(\nu\Delta\xi,\nu(\Delta+1)\eta+C_\phi\xi\right),
\end{align}
where
\begin{align}\label{eq:Cphi}
C_\phi\xi
=-\Pi_{\phi^\perp}\bigl(B(\xi,\phi)+B(\phi,\xi)\bigr).
\end{align}
For $b>1$, $C_\phi:H^b\to H$ is bounded.  Let
$S_T=e^{T\cL}$ denote the uncontrolled linearised endpoint.  If $u$ solves
\begin{align*}
 \partial_tu=\cL u+(Qv,0),\qquad u(0)=0,
\end{align*}
then $A_Tv=u(T)$ denotes the controlled endpoint generated by the control
$v$.  Let $Q_b: H\to H^b$ denote $Q$ viewed as an operator with
values in $H^b$.  Its Hilbert adjoint is
\begin{align*}
 (Q_b^*p)_{\boldsymbol k,\iota}
 =q_{\boldsymbol k}^\iota|\boldsymbol k|^{2b}
 p_{\boldsymbol k,\iota},
 \qquad p\in H^b.
\end{align*}

The Banach transpose of $C_\phi:H^1\to H$ is the bounded operator $ C_\phi^{*,0}:H^r\cap\phi^\perp\longrightarrow H^{r-1}$ for $r\in\mathbb R$ given by 
\begin{align}\label{eq:Cstar0}
      C_\phi^{*,0}q
      =\bigl[\Id-(-\Delta)^{-1}\bigr]J(\phi,q),
\end{align}
where 
\begin{align*}
      J(f,g)=\partial_1f\,\partial_2g-
             \partial_2f\,\partial_1g.
\end{align*}
Here \eqref{eq:Cstar0} is understood distributionally when $q\in H$;
it follows first for trigonometric polynomials by integration by parts and then by density. The Hilbert adjoint of
$C_\phi:H^b\to H$, relative to the $H^b$ inner product in the base
coordinate, is the bounded map $H\to H^b$ given by 
\begin{align*}
 C_{\phi,b}^*=(-\Delta)^{-b}C_\phi^{*,0}.
\end{align*}

\subsection{Observability and Douglas factorization}
For $N\in\N$, let $E_N=\Ker(-\Delta-N)$ be the $N$-th Laplace
shell and let $\Pi_N$ denote the $L^2$ orthogonal projection onto $E_N$.
Define
\begin{align*}
 T_N=(\Id-\Pi_1)J(\phi,\cdot)|_{E_N}.
\end{align*}
Thus $T_N$ is the part of the Jacobian interaction visible away from the
first shell.

The prepared observability estimate begins with coercivity on each Laplace
shell.  The next lemma shows that the coupling operator observes every
shell direction except the distinguished prepared direction $[\phi]$.
\begin{lemma}\label{lem:shell}
For $q_N\in E_N$, with $q_1\perp\phi$ when $N=1$,
\begin{align*}
 \norm{C_\phi^{*,0}q_N}_2
 \ge \frac14\norm{q_N}_2.
\end{align*}
In particular,
\begin{align*}
 \Ker(C_\phi^{*,0}|_{E_N})
 =\begin{cases}
   \{0\},&N\ne1,\\
   \operatorname{span}\{\phi\},&N=1.
  \end{cases}
\end{align*}
\end{lemma}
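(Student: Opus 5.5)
We will compute everything in the complex Fourier basis and reduce the lemma to an explicit quadratic form on each shell. Write $\phi=\tfrac12\sum_{\boldsymbol m\in M}e_{\boldsymbol m}$ with $M=\{\pm\boldsymbol e_1,\pm\boldsymbol e_2\}$. Since $J(e_{\boldsymbol m},e_{\boldsymbol k})=-\det(\boldsymbol m,\boldsymbol k)e_{\boldsymbol m+\boldsymbol k}$, the output coefficient at $\boldsymbol j$ for $q\in E_N$ is
\begin{align*}
 \widehat{J(\phi,q)}(\boldsymbol j)
 =-\tfrac12\Bigl[j_2\bigl(q_{\boldsymbol j-\boldsymbol e_1}-q_{\boldsymbol j+\boldsymbol e_1}\bigr)
 -j_1\bigl(q_{\boldsymbol j-\boldsymbol e_2}-q_{\boldsymbol j+\boldsymbol e_2}\bigr)\Bigr],
\end{align*}
using $\det(\boldsymbol m,\boldsymbol j-\boldsymbol m)=\det(\boldsymbol m,\boldsymbol j)$. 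By \eqref{eq:Cstar0}, $C_\phi^{*,0}q$ is this sequence multiplied by $1-|\boldsymbol j|^{-2}$. This multiplier vanishes only on the first shell and is at least $\tfrac12$ for $|\boldsymbol j|^2\ge2$. The zero mode never appears, since $J(\phi,q)$ has mean zero.

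The first step is to separate shells. For $\boldsymbol k\in E_N$, the outputs $\boldsymbol k\pm\boldsymbol e_i$ lie on shells $N+1\pm2k_i$. A shell-$1$ output can occur only for $N\le2$, namely from $\boldsymbol k=(\pm1,\pm1)$ or from $\boldsymbol k=\pm\boldsymbol e_i$ producing $\pm\boldsymbol e_{i'}$ with coefficient $\det=0$. Hence for $N\ge3$ we have
\begin{align*}
 \|C_\phi^{*,0}q\|_2\ge\tfrac12\|J(\phi,q)\|_2,
\end{align*}
and it suffices to prove $\|J(\phi,q)\|_2\ge\tfrac12\|q\|_2$ on $E_N$. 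The low shells $N=1$ (four modes; $\phi$ removed) and $N=2$ (four modes $(\pm1,\pm1)$) are finite-dimensional. There we will simply compute the matrices of $C_\phi^{*,0}|_{E_N}$, check the explicit singular values against $\tfrac14$, and read off that the kernel on $E_1$ is exactly $\operatorname{span}\{\phi\}$. This is consistent, since $J(\phi,\phi)=0$.

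For $N\ge3$ we work in physical space, where $J(\phi,q)=\sin x_2\,\partial_1q-\sin x_1\,\partial_2q$, and expand
\begin{align*}
 \|J(\phi,q)\|_2^2
 =\tfrac12\|\nabla q\|_2^2
 -\tfrac12\int\bigl(\cos2x_2\,(\partial_1q)^2+\cos2x_1\,(\partial_2q)^2\bigr)
 -2\int\sin x_1\sin x_2\,\partial_1q\,\partial_2q .
\end{align*}
The leading term equals $\tfrac N2\|q\|_2^2$. The remaining terms pair modes $\boldsymbol k,\boldsymbol k'\in E_N$ with $\boldsymbol k-\boldsymbol k'\in\{\pm2\boldsymbol e_i,\pm\boldsymbol e_1\pm\boldsymbol e_2\}$. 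Equivalently, $\|J(\phi,q)\|_2^2$ is the sum over output modes $\boldsymbol j$ of $|c_{\boldsymbol j}|^2$, where each output is fed by at most four shell-$N$ points (the lattice neighbours of $\boldsymbol j$ on the circle $|\boldsymbol k|^2=N$). For each output $\boldsymbol j$ the contributing neighbours carry weights $|j_2|$ or $|j_1|$ with $|\boldsymbol j|^2=N+O(\sqrt N)$. We will group the sum into these small local blocks and show that each input $\boldsymbol k$ contributes at least a fixed multiple of $|q_{\boldsymbol k}|^2$ after Cauchy--Schwarz between neighbours. The main obstacle is that the oscillatory terms are not small relative to the leading term: their naive bound is of the same order $N\|q\|^2$.

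At this step I would first try a direct Fourier lower bound. For each $\boldsymbol k\in E_N$, choose one output among $\boldsymbol k\pm\boldsymbol e_1,\boldsymbol k\pm\boldsymbol e_2$ that is fed by few other shell points and has weight $\max(|k_1|,|k_2|)\ge\sqrt{N/2}$. A point on the circle has at most two shell neighbours at these lattice offsets, so a weighted bipartite matching argument should give $\|J(\phi,q)\|_2^2\ge c\,N\|q\|_2^2$ with an explicit $c$. For $N\ge3$ this comfortably exceeds $\tfrac14\|q\|_2^2$. If the matching constant proves awkward, the fallback is the exact identity $J(\phi,q)=\{\phi,q\}$ as a Poisson bracket: a shell element annihilated up to $\tfrac12$ would be nearly invariant under the flow of $\phi$, and that can be excluded by evaluating the bracket on the extremal modes in $\boldsymbol k$.
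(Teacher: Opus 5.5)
Your reduction is correct and coincides with the paper's. The column formula, the multiplier $1-|\boldsymbol j|^{-2}\ge\frac12$ away from the first shell, and explicit matrices on $E_1,E_2$ all work out: $\operatorname{spec}(T_1^*T_1)=\{0,\frac12,\frac12,1\}$ with kernel spanned by $\phi$, and $T_2^*T_2=\tfrac12 I$. There is a small slip: shell-one outputs do not come from $\pm\boldsymbol e_i$ but from $\pm2\boldsymbol e_i\in E_4$. Their coefficients vanish, so your $N\ge3$ reduction is unaffected.

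The genuine gap is that the heart of the lemma for $N\ge3$ is left at ``should give''. You correctly note that the off-diagonal interactions are of the same order $N$ as the diagonal term. But a matching that sends each input to an output ``fed by few other shell points'' gives no lower bound at all: at an output shared by two columns the two contributions can cancel, and they do.

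Your claimed margin is in fact false at $N=5$. For every $\boldsymbol k\in\{(\pm1,\pm2),(\pm2,\pm1)\}$ both outputs of large weight are shared, and the overlap graph is an $8$-cycle. The Gram matrix of $J(\phi,\cdot)|_{E_5}$ is $\tfrac52 I$ minus the cycle adjacency with alternating weights $\frac54,1$. Its least eigenvalue is exactly $\frac14$, attained at $q=\sum_{|\boldsymbol k|^2=5}e_{\boldsymbol k}$: every shared output cancels and only eight private outputs of weight $\frac12$ survive. So the inequality $\|J(\phi,q)\|_2\ge\frac12\|q\|_2$ on which your chain rests is sharp on $E_5$, not comfortable.

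The missing idea is control of the overlap geometry. The paper observes that distinct $\boldsymbol n,\boldsymbol n'$ on the same shell share an output only if $\boldsymbol n'-\boldsymbol n\in\{\pm2\boldsymbol e_i,\pm\boldsymbol e_1\pm\boldsymbol e_2\}$. This forces $\boldsymbol n$ onto one of the lines $n_1=\pm1$, $n_2=\pm1$, $n_1\pm n_2=\pm1$, and a lattice point on two of these lines has $|\boldsymbol n|^2\le5$. Hence for $N>5$ each column overlaps at most one other. The Gram matrix then splits into $1\times1$ and $2\times2$ blocks with diagonal $\frac N2$ and off-diagonal entries $-\frac{N-1}4$ or $-\frac N4$, giving $\|T_Nq\|_2^2\ge\frac N4\|q\|_2^2$. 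The shells $N=4,5$ are computed by hand.

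Your matching idea can also be repaired, but only with \emph{private} outputs, i.e.\ outputs fed by no other point of the shell. A direct check shows that for $N\ge3$, $N\ne5$, and each $i$, at least one of $\boldsymbol k+\boldsymbol e_i$, $\boldsymbol k-\boldsymbol e_i$ is private. Taking $\boldsymbol e_i$ perpendicular to the larger coordinate of $\boldsymbol k$ then gives $\|J(\phi,q)\|_2^2\ge\frac N8\|q\|_2^2$. On $E_5$ you must instead use the private outputs in the other direction, of weight $\frac12$, which give exactly $\frac14$.

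In either version the shell $N=5$ needs separate treatment. The Poisson-bracket fallback is not an argument as it stands.
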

\begin{proof}
      It suffices to work in the complexification, since all the operators involved
      preserve the real subspace and commute with complex conjugation.  Let $e_{\boldsymbol n}(x)=e^{\mathrm i\boldsymbol n\cdot x}$, 
      \begin{align*}
      \Lambda_N=\{\boldsymbol n\in\mathbb Z^2:|\boldsymbol n|^2=N\},\qquad
      E_N^{\mathbb C}=\operatorname{span}_{\mathbb C}
      \{e_{\boldsymbol n}:\boldsymbol n\in\Lambda_N\}.
      \end{align*}
      Put
      $\mathcal A=\{\pm\boldsymbol e_1,\pm\boldsymbol e_2\}$.  Since
      \begin{align*}
      \phi=\frac12\sum_{\boldsymbol m\in\mathcal A}e_{\boldsymbol m},\qquad
      J(e_{\boldsymbol m},e_{\boldsymbol n})=-\det(\boldsymbol m,\boldsymbol n)e_{\boldsymbol m+\boldsymbol n}.
      \end{align*}
      we have, for every $\boldsymbol n\in\Lambda_N$,
      \begin{align}\label{eq:TN-column}
      T_Ne_{\boldsymbol n}
      =-\frac12\sum_{\substack{\boldsymbol m\in\mathcal A\\
      |\boldsymbol n+\boldsymbol m|^2\neq1}}
      \det(\boldsymbol m,\boldsymbol n)e_{\boldsymbol n+\boldsymbol m}.
      \end{align}
      Thus, before the shell one outputs are removed, the column indexed by
      $\boldsymbol n=(n_1,n_2)$ is
      \begin{align*}
      J(\phi,e_{\boldsymbol n})
      =-\frac{n_2}{2}e_{\boldsymbol n+\boldsymbol e_1}
      +\frac{n_2}{2}e_{\boldsymbol n-\boldsymbol e_1}
      +\frac{n_1}{2}e_{\boldsymbol n+\boldsymbol e_2}
      -\frac{n_1}{2}e_{\boldsymbol n-\boldsymbol e_2}.
      \end{align*}
      
      We first consider $N>5$.  No term in \eqref{eq:TN-column} is then removed: indeed, if $\boldsymbol n+\boldsymbol m\in\mathcal A$ for some
      $\boldsymbol m\in\mathcal A$, then
      $\boldsymbol n\in\mathcal A-\mathcal A$, and hence
      $|\boldsymbol n|^2\le4$.  Consequently every column has squared norm
      \begin{align*}
      \|T_Ne_{\boldsymbol n}\|_2^2
      =\frac14(2n_1^2+2n_2^2)=\frac N2.
      \end{align*}
      
      Suppose that the columns indexed by two distinct points
      $\boldsymbol n,\boldsymbol n'\in\Lambda_N$ have a common output.  Then
      $\boldsymbol n+\boldsymbol m=\boldsymbol n'+\boldsymbol m'$ for some
      $\boldsymbol m,\boldsymbol m'\in\mathcal A$, and therefore
      $\boldsymbol d:=\boldsymbol n'-\boldsymbol n$ belongs to
      \begin{align}\label{e.021101}
            (\mathcal A-\mathcal A)\setminus\{0\} = \left\{(\pm2,0),\,(0,\pm2),\,(\pm1,\pm1)\right\}.
      \end{align}
      Since $\boldsymbol n$ and $\boldsymbol n'$ lie on the same shell,
      $|\boldsymbol n+\boldsymbol d|^2=|\boldsymbol n|^2$.  Substitution of
      the possible values of $\boldsymbol d$ shows that an overlap can occur only
      when
      \begin{align}\label{eq:overlap-lines}
      n_1=\pm1,\qquad n_2=\pm1,\qquad
      n_1+n_2=\pm1,\qquad n_1-n_2=\pm1.
      \end{align}
      
      Every lattice point lying on two distinct lines in
      \eqref{eq:overlap-lines} has squared norm at most $5$.  Hence, for $N>5$,
      each vertex of the column overlap graph has degree at most one.  After
      reordering the Fourier basis, the Gram matrix $T_N^*T_N$ is therefore a
      direct sum of one dimensional blocks and two dimensional blocks.
      
      For an axial displacement from \eqref{e.021101}, symmetry reduces the two indices to
      $\boldsymbol n=(-1,a)$ and $\boldsymbol n'=(1,a)$, with $N=1+a^2$.
      The columns have the single common output $(0,a)$, at which their
      coefficients are $-a/2$ and $a/2$.  Their Gram off diagonal entry is
      therefore $-a^2/4=-(N-1)/4$.
      
      For a diagonal displacement from \eqref{e.021101}, symmetry reduces the indices to
      $\boldsymbol n=(a,a+1)$ and $\boldsymbol n'=(a+1,a)$, with
      $N=a^2+(a+1)^2$.  The two common outputs contribute respectively
      $-a^2/4$ and $-(a+1)^2/4$, so the Gram off diagonal entry is $-N/4$.
      
      Thus every two dimensional Gram block has the form
      \begin{align*}
      \begin{pmatrix}N/2&c\\ \overline c&N/2\end{pmatrix},
      \qquad |c|\le\frac N4.
      \end{align*}
      Its least eigenvalue is $N/2-|c|\ge N/4$.  The same lower bound is
      immediate for each one dimensional block, and hence
      \begin{align}\label{eq:TN-large}
      \|T_Nq_N\|_2\ge\frac{\sqrt N}{2}\|q_N\|_2,
      \qquad q_N\in E_N^{\mathbb C},\quad N>5.
      \end{align}
      
      The nonempty shells with $N\le5$ are $N=1,2,4,5$.  The exact matrices by a direct computation give
      \begin{align*}
      \operatorname{spec}(T_1^*T_1)
      =\left\{0,\frac12,\frac12,1\right\},\,\quad
      T_2^*T_2=\frac12\Id,\,\quad
      T_4^*T_4=2\Id,\,\quad
      T_5^*T_5\ge\frac14\Id.
      \end{align*}
      The kernel of $T_1$ is the equal coefficient vector on $\mathcal A$, namely
      $\phi$.  Therefore, on $E_1^{\mathbb C}\cap\phi^\perp$ and on each of the
      other nonempty low shells,
      $\|T_Nq_N\|_2\ge\frac12\|q_N\|_2$.  Together with
      \eqref{eq:TN-large}, this yields
      \begin{align}\label{eq:TN-uniform}
      \|T_Nq_N\|_2\ge\frac12\|q_N\|_2
      \end{align}
      for every nonempty shell, with the restriction $q_1\perp\phi$ when $N=1$.
      
      Finally,
      \begin{align*}
      C_\phi^{*,0}q_N
      =\bigl[\Id-(-\Delta)^{-1}\bigr]T_Nq_N,
      \end{align*}
      because $\bigl[\Id-(-\Delta)^{-1}\bigr]\Pi_1=0$.  Moreover,
      $T_Nq_N$ has neither a constant component nor a shell one component.
      On every remaining shell $E_\ell$, where $\ell\ge2$, the multiplier
      $\Id-(-\Delta)^{-1}$ equals multiplication by $1-\ell^{-1}$, whose
      absolute value is at least $1/2$.  Orthogonality of the Laplace shells
      therefore gives
      \begin{align*}
      \|C_\phi^{*,0}q_N\|_2^2
      &=\sum_{\ell\ge2}\left(1-\frac1\ell\right)^2
        \|\Pi_\ell T_Nq_N\|_2^2
       \ge\frac14\|T_Nq_N\|_2^2.
      \end{align*}
      Combining this with \eqref{eq:TN-uniform} proves
      \begin{align*}
      \|C_\phi^{*,0}q_N\|_2\ge\frac14\|q_N\|_2.
      \end{align*}
      
      For $N\neq1$, the lower bound implies
      $\Ker(C_\phi^{*,0}|_{E_N})=\{0\}$.  For $N=1$, one has
      $C_\phi^{*,0}\phi=0$ because $J(\phi,\phi)=0$.  If
      $q_1=c\phi+q_1^\perp$, with $q_1^\perp\perp\phi$, the lower bound applied
      to $q_1^\perp$ shows that $C_\phi^{*,0}q_1=0$ only when
      $q_1^\perp=0$.  Hence
      \begin{align*}
      \Ker(C_\phi^{*,0}|_{E_1})=\operatorname{span}\{\phi\}.
      \end{align*}
      The complex estimate restricts to the real sine cosine subspace.
\end{proof}

The second ingredient is a short time observability estimate for the
finitely many exponential rates produced by one Fourier output mode.  This is where the fact that at most four rates occur enters
quantitatively.
\begin{lemma}\label{lem:gram}
Fix $T,\nu>0$.  Let $K\in\N$, let
$d_1,\dots,d_r$ be distinct integers with
$0\le r\le4$ and $|d_j|\le2\sqrt K$, and suppose
\begin{align*}
 f'(t)=-\nu Kf(t)+\sum_{j=1}^rz_je^{-\nu(K+d_j)t},
\end{align*}
where $z_j$ are constants. Then
\begin{align*}
 \int_0^T|f(t)|^2\dd t
 \ge c_{\nu,T}(1+K)^{-15}
 \left(|f(0)|^2+\sum_{j=1}^r|z_j|^2\right).
\end{align*}
The constant is independent of $K,r,d_j,f(0),z_j$.
\end{lemma}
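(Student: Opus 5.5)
Since $f$ solves a scalar linear ODE with exponential forcing, I will write it out explicitly and reduce the statement to a lower bound for a Gram matrix of exponentials. Put $g(t)=e^{\nu Kt}f(t)$. Then $g'(t)=\sum_j z_je^{-\nu d_jt}$. If $d_j\ne0$ for all $j$, this gives
\begin{align*}
 f(t)=e^{-\nu Kt}\Bigl(c_0+\sum_{j=1}^r\frac{z_j}{-\nu d_j}e^{-\nu d_jt}\Bigr),
 \qquad c_0=f(0)+\sum_j\frac{z_j}{\nu d_j}.
\end{align*}
If some $d_j=0$, that term contributes $z_jt\,e^{-\nu Kt}$ instead. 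Thus $f$ is a combination of at most five functions from the family $e^{-\nu Kt}$, $te^{-\nu Kt}$, $e^{-\nu(K+d_j)t}$. The first step is to rescale time so that $K$ is removed from the exponentials. The factor $e^{-\nu Kt}$, together with $e^{-\nu d_jt}$ where $|d_j|\le2\sqrt K$, suggests restricting to the short interval $t\in[0,\min(T,1/(\nu(K+2\sqrt K+1)))]$. On that interval all the weights are comparable to $1$, and after the substitution $t=s/(\nu(K+1))$ the exponents become $\delta_j s$ with $\delta_j=d_j/(K+1)$. These are bounded by $2$ and separated from each other and from $0$ by at least $1/(K+1)$.

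The second step is a lower bound for the Gram matrix on $s\in[0,1]$ of the functions $1,e^{-\delta_1s},\dots,e^{-\delta_rs}$, with $s$ replacing one of them in the degenerate case. The smallest eigenvalue should be bounded below by a polynomial in the separations. For instance, one can use divided differences: the span is unchanged if we pass to divided-difference combinations, which have $O(1)$ derivatives and converge to the Taylor monomials $1,s,\dots,s^r$. The change of basis is triangular with entries of size at most $(K+1)^{r}$ per level, so its inverse costs a polynomial in $K$ whose degree is governed by $r\le4$. The dimension is at most $5$, and each coefficient in the basis change scales like a product of separations of order $(K+1)^{-1}$. Tracking these gives $\int|f|^2\ge c(K+1)^{-m}\|\text{coefficients}\|^2$ with $m$ of order $r(r+1)\le20$ before normalisation. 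I would aim to organise the counting carefully enough to land on exponent $15$. The factors $(K+1)^{-1}$ from the time rescaling and from the relation $z_j\leftrightarrow z_j/(\nu d_j)$ also enter, where $|d_j|\ge1$ so $|z_j/d_j|\ge|z_j|/(2\sqrt K)$.

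The third step converts the coefficients $(c_0, z_j/(\nu d_j))$ back to $(f(0),z_j)$. We have $f(0)=c_0-\sum z_j/(\nu d_j)$ and $|z_j|\le 2\nu\sqrt K\,|z_j/(\nu d_j)|$, so $|f(0)|^2+\sum|z_j|^2\lesssim(1+K)\|\text{coeffs}\|^2$, costing one more power of $K$. The degenerate case $d_j=0$ is handled in the same way, with $s$ in place of one exponential. Uniformity in the data follows because only $r\le4$, the separation $\ge1/(K+1)$ and the bound $|\delta_j|\le2$ enter.

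I expect the main obstacle to be the quantitative Gram (Müntz–Vandermonde type) lower bound with an explicit, uniform power of the minimal separation. The plan is to use the divided-difference change of basis together with the fact that the Gram matrix of $1,s,\dots,s^4$ on $[0,1]$ is fixed and nonsingular. The exact exponent $15$ may need slightly tighter bookkeeping, for example exploiting that at most four rates are present and that they are integers, but any fixed power suffices for the structure of the argument.
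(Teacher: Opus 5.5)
Your strategy is the paper's: restrict to a window of length $\asymp 1/K$ on which $e^{-\nu Kt}$ is comparable to $1$, rescale, bound a Gram matrix of at most five exponential-type functions from below via divided differences in the rate variable, and convert coefficients back. The gap is that you never establish the exponent, and that exponent is the quantitative content of the lemma. Your provisional count ($m$ of order $r(r+1)\le 20$, plus further powers) would overshoot $15$, so as written you prove only some polynomial bound.

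In fact your method gives more than enough once the count is done correctly. Take $\phi_y(s)=e^{-ys}$, nodes $y_0=0$ and $y_i=\delta_i$, and $\phi[y_0,\dots,y_k]=\sum_{i\le k}C_{ki}\phi_{y_i}$ with $C_{ki}=\prod_{j\le k,\,j\ne i}(y_i-y_j)^{-1}$. Writing $F=\sum_i b_i\phi_{y_i}=\sum_k a_k\phi[y_0,\dots,y_k]$ gives $b=C^{\top}a$. So the relevant quantity is $\|C\|$ itself, and level costs do not compound. (The inverse is the Newton-form matrix with entries $\prod_{j<k}(y_i-y_j)$, which is $O(1)$ since the nodes lie in $[-1,1]$.)

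All separations are at least $(K+1)^{-1}$, so $|C_{ki}|\le(K+1)^k$. Hence $\|b\|^2\lesssim(K+1)^{2r}\|a\|^2$, and $\int|F|^2\ge\lambda_{\min}(G_D)\|a\|^2\gtrsim(K+1)^{-8}\|b\|^2$. One further power comes from $\dd t=\dd s/(\nu(K+1))$ and one from $|z_j|\le 2\nu\sqrt K\,|b_j|$, giving $(1+K)^{-10}$.

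The degenerate case is not ``handled in the same way'' at the conversion step. There the coefficient of $s$ is $z_{j_0}/(\nu(K+1))$, so recovering $z_{j_0}$ costs $(K+1)^2$. The confluent divided-difference weights are still $O((K+1)^k)$, so this case gives $(1+K)^{-11}$. Both bounds beat $(1+K)^{-15}$.

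For comparison, the paper uses $\psi_y(x)=(1-e^{-yx})/y$, so its coefficients are exactly $(f(0),\delta z_1,\dots,\delta z_r)$ and $d_j=0$ is just $\psi_0(x)=x$. It bounds $\lambda_{\min}G\ge c\det G\ge c\prod_{i<j}|y_i-y_j|^2\ge c(\nu\delta)^{12}$, and its exponent is $15=12+1+2$. Your triangular route is sharper, since it pays $2r=8$ powers of the separation rather than $2\binom r2=12$.

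Two smaller repairs are needed. First, the uniform lower bound on $\lambda_{\min}(G_D)$ cannot rest on convergence of the divided differences to $(-s)^k/k!$. That convergence holds only because $|\delta_j|\le 2\sqrt K/(K+1)\to0$, so it covers large $K$ only. Instead, argue as the paper does: $G_D$ is continuous in the nodes on the compact set $[-1,1]^r$, including confluent configurations, and the confluent limits $s^me^{-ys}$ are linearly independent. Alternatively, combine your large-$K$ perturbation with the fact that for bounded $K$ only finitely many integer configurations occur.

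Second, your fixed substitution $t=s/(\nu(K+1))$ maps the window onto $[0,\min\{\nu(K+1)T,(K+1)/(\sqrt K+1)^2\}]$, not $[0,1]$. This interval contains $[0,\min\{2\nu T,1/2\}]$. Working on that fixed interval, or rescaling by the actual window length as the paper does with $\delta$, is what makes the constant depend only on $\nu$ and $T$.
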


\begin{proof}
Set
\begin{align*}
\delta=\min\left\{T,1,\frac{1}{4\nu(1+K+2\sqrt K)}\right\}.
\end{align*}
Since $\nu K\delta\le1/4$, the factor $e^{-\nu Kt}$ is bounded above and below by positive constants on $[0,\delta]$. Writing $f(t)=e^{-\nu Kt}g(t)$, direct integration gives
\begin{align*}
g(t)=f(0)+\sum_{j=1}^rz_j\frac{1-e^{-\nu d_jt}}{\nu d_j},
\end{align*}
with the quotient interpreted as $t$ when $d_j=0$. After setting $t=\delta x$, $y_j=\nu d_j\delta$, and
\begin{align*}
\psi_y(x)=\frac{1-e^{-yx}}{y},\qquad \psi_0(x)=x,
\end{align*}
we obtain
\begin{align}\label{eq:finite-rate-rescaled}
g(\delta x)=f(0)+\delta\sum_{j=1}^rz_j\psi_{y_j}(x).
\end{align}
By the definition of $\delta$ and $|d_j|\le2\sqrt K$, all $y_j$ belong to a fixed compact interval, for instance $[-1/8,1/8]$. Since the $d_j$ are distinct integers,
\begin{align}\label{eq:rate-separation}
|y_i-y_j|=\nu\delta|d_i-d_j|\ge\nu\delta,\qquad i\neq j.
\end{align}

Let $G(y_1,\dots,y_r)$ denote the $L^2(0,1)$ Gram matrix of $1,\psi_{y_1},\dots,\psi_{y_r}$. We claim that, uniformly for $0\le r\le4$ and distinct $y_j\in[-1/8,1/8]$,
\begin{align}\label{eq:finite-rate-Gram}
\lambda_{\min}G(y_1,\dots,y_r)\ge c\prod_{i<j}|y_i-y_j|^2.
\end{align}
Indeed, successive divided difference operations on the columns show that
\begin{align*}
\frac{\det G(y_1,\dots,y_r)}{\prod_{i<j}(y_i-y_j)^2}
\end{align*}
extends continuously to configurations in which some parameters coalesce. At such a confluent configuration the divided differences become successive $y$ derivatives of $\psi_y$. These functions remain linearly independent: differentiating in $x$ reduces the relevant family to functions of the form $x^ke^{-yx}$, and exponential polynomials with distinct exponents and their confluent multiplicities are linearly independent. Hence the extended quotient is everywhere positive. Compactness of the parameter set gives a uniform positive lower bound. Since all entries of $G$ are uniformly bounded on the same compact set, $\det G=\prod_\ell\lambda_\ell(G)$ then yields \eqref{eq:finite-rate-Gram}.

There are at most $6$ pairs. Thus \eqref{eq:rate-separation} and \eqref{eq:finite-rate-Gram} imply
\begin{align*}
\int_0^1\left|a_0+\sum_{j=1}^ra_j\psi_{y_j}(x)\right|^2\dd x\ge c_\nu\delta^{12}\left(|a_0|^2+\sum_{j=1}^r|a_j|^2\right).
\end{align*}
Applying this with $a_0=f(0)$ and $a_j=\delta z_j$, using \eqref{eq:finite-rate-rescaled}, $\dd t=\delta\,\dd x$, and the lower bound on $e^{-\nu Kt}$ over $[0,\delta]$, gives
\begin{align*}
\int_0^T|f(t)|^2\dd t&\ge c\delta\int_0^1|g(\delta x)|^2\dd x\ge c_\nu\delta^{13}\left(|f(0)|^2+\delta^2\sum_{j=1}^r|z_j|^2\right)\\
&\ge c_\nu\delta^{15}\left(|f(0)|^2+\sum_{j=1}^r|z_j|^2\right),
\end{align*}
where the last step uses $\delta\le1$. Finally, $1+K+2\sqrt K\le2(1+K)$ gives $\delta\ge c_{\nu,T}(1+K)^{-1}$, and therefore
\begin{align*}
\int_0^T|f(t)|^2\dd t\ge c_{\nu,T}(1+K)^{-15}\left(|f(0)|^2+\sum_{j=1}^r|z_j|^2\right).
\end{align*}
\end{proof}

Recall $\cL$, $S_T$, and $A_T$  defined in  \eqref{eq:L-prepared}--\eqref{eq:Cphi}.  
The shell coercivity and finite rate Gram estimate can now be combined for
the prepared linear system.  The resulting observability inequality yields
exact first order null control through Douglas' factorisation theorem.
\begin{theorem}
\label{thm:observability}
For every $T>0$,
\begin{align}\label{eq:observability}
 \norm{S_T^*\xi}_{E_b}^2
 \le C
 \norm{A_T^*\xi}_{L^2(0,T; H)}^2,
 \qquad \xi\in E_b,
\end{align}
where $C=C(T,\nu,a,b,Q)$, and there exists
$K_T\in\mathcal L(E_b,L^2(0,T; H))$ such that
\begin{align}\label{eq:douglas-factor}
 S_T=A_TK_T,
 \qquad \norm{K_T}\le\sqrt{C}.
\end{align}
\end{theorem}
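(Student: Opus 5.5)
The plan is to dualise: prove \eqref{eq:observability} mode by mode in the Fourier basis, using \Cref{lem:gram} at each output frequency and \Cref{lem:shell} to recover the fibre component. Then \eqref{eq:douglas-factor} follows from Douglas' lemma: an operator inequality $S_TS_T^*\le C A_TA_T^*$ yields $K_T$ with $S_T=A_TK_T$ and $\|K_T\|\le\sqrt C$.

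\emph{Adjoint system.} Let $\xi=(\xi_0,\eta)\in E_b$ and $(p_s,q_s)=e^{s\cL^*}\xi$, with adjoints taken in $E_b$. Since $\cL$ is block triangular, so is $\cL^*$:
\begin{align*}
 q_s=e^{\nu s(\Delta+1)}\eta,
 \qquad
 \partial_sp_s=\nu\Delta p_s+C_{\phi,b}^*q_s .
\end{align*}
Here $\Delta+1$ preserves $\phi^\perp$. The observation is $(A_T^*\xi)(t)=Q_b^*p_{T-t}$. I will pass to the $L^2$-representative $P=(-\Delta)^bp$, which solves $\partial_sP=\nu\Delta P+C_\phi^{*,0}q$ by $C^*_{\phi,b}=(-\Delta)^{-b}C^{*,0}_\phi$. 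The observation then reads $(Q_b^*p)_{\boldsymbol k}=q_{\boldsymbol k}P_{\boldsymbol k}$. By \eqref{eq:q},
\begin{align*}
\|A_T^*\xi\|^2\gtrsim\sum_{\boldsymbol k}|\boldsymbol k|^{-a}\int_0^T|P_{\boldsymbol k}|^2\,\dd s .
\end{align*}
I work in the complexification, as in \Cref{lem:shell}.

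\emph{Mode-by-mode reduction.} Fix an output $\boldsymbol k$ with $K=|\boldsymbol k|^2$. The forcing $(C^{*,0}_\phi q_s)_{\boldsymbol k}$ collects the inputs $\boldsymbol n=\boldsymbol k-\boldsymbol m$ with $\boldsymbol m\in\mathcal A$. The input $\boldsymbol n$ lies in the shell $N=K-2\boldsymbol k\cdot\boldsymbol m+1$ and decays like $e^{-\nu(N-1)s}$. Thus the forcing is $\sum_j z_je^{-\nu(K+d_j)s}$ with $d_j=-2\boldsymbol k\cdot\boldsymbol m\in\{\pm2k_1,\pm2k_2\}$, so $|d_j|\le2\sqrt K$ and there are at most four distinct rates. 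Coinciding rates are merged, and each distinct rate $d$ corresponds to the single shell $N=K+d+1$. Hence $z_{\boldsymbol k,d}=(C^{*,0}_\phi\Pi_N\eta)_{\boldsymbol k}$ exactly. \Cref{lem:gram} then gives
\begin{align*}
\int_0^T|P_{\boldsymbol k}|^2\ge c(1+K)^{-15}\Bigl(|P_{\boldsymbol k}(0)|^2+\sum_d|z_{\boldsymbol k,d}|^2\Bigr).
\end{align*}
The $\phi$ direction causes no loss, because $\eta\perp\phi$ and $\Delta+1$ kills only that direction in shell one. Summing over $\boldsymbol k$ and regrouping the $z$'s by shell, \Cref{lem:shell} gives $\sum_{\boldsymbol k}|(C^{*,0}_\phi\Pi_N\eta)_{\boldsymbol k}|^2\ge\frac1{16}\|\Pi_N\eta\|^2$. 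The outputs of $C^{*,0}_\phi\Pi_N\eta$ have $|\boldsymbol k|^2\le N+2\sqrt N+1$, so the polynomial weights are comparable. This yields
\begin{align*}
\|A_T^*\xi\|^2\ge c\sum_{\boldsymbol k}(1+|\boldsymbol k|)^{-M}|P_{\boldsymbol k}(0)|^2+c\sum_N(1+N)^{-M}\|\Pi_N\eta\|^2
\end{align*}
for some fixed $M=M(a)$.

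\emph{Upper bound on $S_T^*\xi$.} First, $\|q_T\|^2=\sum_Ne^{-2\nu T(N-1)}\|\Pi_N\eta\|^2$. Second, Duhamel gives
\begin{align*}
|P_{\boldsymbol k}(T)|\le e^{-\nu KT}|P_{\boldsymbol k}(0)|+CTe^{-\nu(K-2\sqrt K)T}\sum_d|z_{\boldsymbol k,d}|,
\end{align*}
while $\|p_T\|_{H^b}^2=\sum|\boldsymbol k|^{-2b}|P_{\boldsymbol k}(T)|^2$ and $|z_{\boldsymbol k,d}|\lesssim\sqrt N\,\|\Pi_N\eta\|$. Every term thus carries an exponential factor $e^{-c\nu TK}$ (or $e^{-c\nu TN}$), which dominates any polynomial weight $(1+K)^{M}$ uniformly in $K$. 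This proves \eqref{eq:observability}, with $C$ depending on $T,\nu,a,b,Q$. Low frequencies with $K-2\sqrt K$ small are finitely many and are absorbed into the constant.

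\emph{Main obstacle.} I expect the delicate step to be the exact bookkeeping of the rate grouping. One must check that distinct rates at a fixed output come from distinct shells, so that each $z_{\boldsymbol k,d}$ is a single shell component of $C^{*,0}_\phi\eta$ and \Cref{lem:shell} applies shellwise, with the shell-one and $\boldsymbol k=\pm\boldsymbol m$ degeneracies handled explicitly. One must also check that the polynomial loss in \Cref{lem:gram} is genuinely beaten by parabolic decay at time $T$, including the forcing terms with the slowest rate $K-2\sqrt K$. Once both are settled, Douglas' lemma applied in the Hilbert spaces $E_b$ and $L^2(0,T;H)$ finishes the proof.
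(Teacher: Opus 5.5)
Your proposal is correct and follows the paper's proof essentially step for step. The shared steps are the reversed-time triangular adjoint system, the modewise use of \Cref{lem:gram} with at most four distinct integer offsets $|d|\le2\sqrt K$, regrouping the $z_{\boldsymbol k,d}$ by input shell with comparable polynomial weights and invoking \Cref{lem:shell} (using $\eta\perp\phi$), absorbing the polynomial loss into parabolic decay at time $T$ for both components, and concluding with Douglas' lemma.

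There is one wording slip: $\Delta+1$ annihilates all of $E_1$, not only $\phi$. What you actually need, and use, is that $C_\phi^{*,0}$ annihilates only $\phi$ on $E_1$ and that $\phi^\perp$ is invariant under the fibre adjoint flow.
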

      
\begin{proof}
      We use complex Fourier notation throughout the proof; all estimates restrict
      to the real sine cosine subspace. Fix
      $\xi=(p_T,q_T)\in E_b$. In the reversed time variable $\tau=T-t$, let
      $(p(\tau),q(\tau))$ be the mild solution of the adjoint system
      \begin{align*}
      \partial_\tau p=\nu\Delta p+C_{\phi,b}^*q,\qquad
      \partial_\tau q=\nu(\Delta+1)q,\qquad
      (p(0),q(0))=(p_T,q_T).
      \end{align*}
      The boundedness of $C_{\phi,b}^*:H\to H^b$ makes this triangular system
      well posed in $E_b$, and
      \begin{align*}
      S_T^*\xi=(p(T),q(T)).
      \end{align*}
      
      The controlled endpoint has the representation
      \begin{align*}
      A_Tv=\int_0^T S_{T-t}(Q_bv(t),0)\,\dd t.
      \end{align*}
      Consequently, Hilbert space duality gives
      \begin{align*}
      (A_T^*\xi)(t)=Q_b^*p(T-t),\qquad
      \norm{A_T^*\xi}_{L^2(0,T; H)}^2
      =\int_0^T\norm{Q_b^*p(\tau)}_{ H}^2\,\dd\tau.
      \end{align*}
      
      Put $q_{T,N}=\Pi_Nq_T$. Since the second
      adjoint equation is diagonal in the Laplace decomposition,
      \begin{align}\label{eq:q-shell-evol}
      q_N(\tau)=e^{-\nu(N-1)\tau}q_{T,N}.
      \end{align}
      
      Fix a nonzero base Fourier mode $\boldsymbol k$ and write
      \begin{align*}
      K=|\boldsymbol k|^2,\qquad
      f_{\boldsymbol k}(\tau)
      =|\boldsymbol k|^{2b}p_{\boldsymbol k}(\tau).
      \end{align*}
      Taking the $\boldsymbol k$-th Fourier coefficient in the mild equation for
      $p$, and using
      $C_{\phi,b}^*=(-\Delta)^{-b}C_\phi^{*,0}$, gives
      \begin{align*}
      f_{\boldsymbol k}'(\tau)
      =-\nu Kf_{\boldsymbol k}(\tau)
      +\bigl(C_\phi^{*,0}q(\tau)\bigr)_{\boldsymbol k}.
      \end{align*}
      
      Recall that the Fourier support of $\phi$ is
      $\mathcal A=\{\pm\boldsymbol e_1,\pm\boldsymbol e_2\}$. If an input
      frequency contributes through $C_\phi^{*,0}$ to the output frequency
      $\boldsymbol k$, then it is of the form
      $\boldsymbol k+\boldsymbol m$ for some $\boldsymbol m\in\mathcal A$,
      because $\mathcal A=-\mathcal A$. Its Laplace eigenvalue is
      \begin{align*}
      N=|\boldsymbol k+\boldsymbol m|^2
      =K+2\boldsymbol k\cdot\boldsymbol m+1.
      \end{align*}
      Define the set of distinct rate offsets
      \begin{align*}
      \mathcal R_{\boldsymbol k}
      =\left\{
      2\boldsymbol k\cdot\boldsymbol m:
      \boldsymbol m\in\mathcal A,\ 
      K+2\boldsymbol k\cdot\boldsymbol m+1\ge1
      \right\}.
      \end{align*}
      Repeated values are included only once. For
      $d\in\mathcal R_{\boldsymbol k}$, put
      \begin{align*}
      N_{\boldsymbol k,d}=K+d+1,\qquad
      z_{\boldsymbol k,d}
      =\bigl(C_\phi^{*,0}\Pi_{N_{\boldsymbol k,d}}q_T\bigr)_{\boldsymbol k}.
      \end{align*}
      There are at most four distinct offsets, and
      \begin{align*}
      |\mathcal R_{\boldsymbol k}|\le4,\qquad
      |d|\le2|\boldsymbol k|=2\sqrt K.
      \end{align*}
      Using \eqref{eq:q-shell-evol}, the scalar modal equation becomes
      \begin{align}\label{eq:fk}
      f_{\boldsymbol k}'(\tau)
      =-\nu Kf_{\boldsymbol k}(\tau)
      +\sum_{d\in\mathcal R_{\boldsymbol k}}
      z_{\boldsymbol k,d}e^{-\nu(K+d)\tau}.
      \end{align}
      The exponents in \eqref{eq:fk} are distinct because the offsets in
      $\mathcal R_{\boldsymbol k}$ are distinct.
      
      By the formula for $Q_b^*$ and the lower bound in \eqref{eq:q},
      \begin{align*}
      \norm{A_T^*\xi}_{L^2(0,T; H)}^2
      &=\int_0^T\norm{Q_b^*p(\tau)}_{ H}^2\,\dd\tau\\
      &\ge c_Q^2\sum_{\boldsymbol k\ne\boldsymbol 0}
      |\boldsymbol k|^{-a}
      \int_0^T|f_{\boldsymbol k}(\tau)|^2\,\dd\tau.
      \end{align*}
      The finite rate observability estimate in \Cref{lem:gram} applies to
      \eqref{eq:fk}. Since $K=|\boldsymbol k|^2\ge1$,
      \begin{align*}
       |\boldsymbol k|^{-a}
       =K^{-a/2}
       \ge c(1+K)^{-a/2}.
      \end{align*}
      Hence, setting $L=15+a/2$ gives
      \begin{align}\label{eq:obs-mode}
      \norm{A_T^*\xi}_{L^2(0,T; H)}^2
      \ge c\sum_{\boldsymbol k\ne\boldsymbol 0}(1+K)^{-L}
      \left(
      |f_{\boldsymbol k}(0)|^2
      +\sum_{d\in\mathcal R_{\boldsymbol k}}|z_{\boldsymbol k,d}|^2
      \right).
      \end{align}
      
      Reorganise the second term in \eqref{eq:obs-mode} according to the
      input Laplace shell. If $N=N_{\boldsymbol k,d}$, then
      $N=|\boldsymbol k+\boldsymbol m|^2$ for some
      $\boldsymbol m\in\mathcal A$ with $|\boldsymbol m|=1$. Hence
      \begin{align*}
      1+N\le1+2|\boldsymbol k|^2+2|\boldsymbol m|^2
      \le3(1+K).
      \end{align*}
      Conversely,
      $\boldsymbol k=(\boldsymbol k+\boldsymbol m)-\boldsymbol m$ gives
      \begin{align*}
      1+K\le3(1+N).
      \end{align*}
      Thus
      \begin{align*}
      3^{-L}(1+N)^{-L}
      \le(1+K)^{-L}
      \le3^L(1+N)^{-L}.
      \end{align*}
      
      For fixed $N$ and $\boldsymbol k$, the coefficient
      $\bigl(C_\phi^{*,0}q_{T,N}\bigr)_{\boldsymbol k}$ occurs in the sum in
      \eqref{eq:obs-mode} with the unique offset
      $d=N-K-1$, whenever it is nonzero. Conversely, every
      $z_{\boldsymbol k,d}$ is of this form. Since all summands are nonnegative,
      Tonelli's theorem and Parseval's identity therefore give
      \begin{align*}
      \sum_{\boldsymbol k\ne\boldsymbol 0}(1+K)^{-L}
      \sum_{d\in\mathcal R_{\boldsymbol k}}|z_{\boldsymbol k,d}|^2
      &\ge c\sum_{N\ge1}(1+N)^{-L}
      \sum_{\boldsymbol k}
      \left|
      \bigl(C_\phi^{*,0}q_{T,N}\bigr)_{\boldsymbol k}
      \right|^2\\
      &=c\sum_{N\ge1}(1+N)^{-L}
      \norm{C_\phi^{*,0}q_{T,N}}_2^2.
      \end{align*}
      
      Since $q_T\in\phi^\perp$ and $\phi\in E_1$, one has
      $q_{T,1}\perp\phi$. The shell coercivity estimate in \Cref{lem:shell}
      therefore applies for every $N$ and yields
      \begin{align*}
      \norm{A_T^*\xi}_{L^2(0,T; H)}^2
      \ge c\sum_{N\ge1}(1+N)^{-L}\norm{q_{T,N}}_2^2.
      \end{align*}
      
      By \eqref{eq:q-shell-evol},
      \begin{align*}
      \norm{q(T)}_2^2
      &=\sum_{N\ge1}e^{-2\nu(N-1)T}\norm{q_{T,N}}_2^2\\
      &\le
      \left[
      \sup_{N\ge1}e^{-2\nu(N-1)T}(1+N)^L
      \right]
      \sum_{N\ge1}(1+N)^{-L}\norm{q_{T,N}}_2^2\\
      &\le C\norm{A_T^*\xi}_{L^2(0,T; H)}^2.
      \end{align*}
      The supremum is finite because exponential decay dominates every
      polynomial. This controls the fibre component of $S_T^*\xi$.
      
      It remains to control the base component. Variation of constants in
      \eqref{eq:fk} gives
      \begin{align*}
      f_{\boldsymbol k}(T)
      =e^{-\nu KT}f_{\boldsymbol k}(0)
      +\sum_{d\in\mathcal R_{\boldsymbol k}}z_{\boldsymbol k,d}
      \int_0^T e^{-\nu K(T-r)}
      e^{-\nu(K+d)r}\,\dd r.
      \end{align*}
      If $K\ge16$, then $|d|\le2\sqrt K$ implies
      \begin{align*}
      K+d\ge K-2\sqrt K\ge\frac K2.
      \end{align*}
      Consequently,
      \begin{align*}
      e^{-\nu K(T-r)}e^{-\nu(K+d)r}
      \le e^{-\nu KT/2},
      \qquad 0\le r\le T.
      \end{align*}
      Since $|\mathcal R_{\boldsymbol k}|\le4$, it follows that
      \begin{align}\label{eq:base-pullback}
      |f_{\boldsymbol k}(T)|^2
      \le Ce^{-c\nu KT}
      \left(
      |f_{\boldsymbol k}(0)|^2
      +\sum_{d\in\mathcal R_{\boldsymbol k}}|z_{\boldsymbol k,d}|^2
      \right).
      \end{align}
      There are only finitely many Fourier modes with $K<16$, and
      \eqref{eq:base-pullback} remains valid for those modes after increasing
      $C$.
      
      Because $f_{\boldsymbol k}=|\boldsymbol k|^{2b}
      p_{\boldsymbol k}$,
      \begin{align*}
      \norm{p(T)}_{H^b}^2
      =\sum_{\boldsymbol k\ne\boldsymbol 0}
      K^{-b}|f_{\boldsymbol k}(T)|^2.
      \end{align*}
      Moreover,
      \begin{align*}
      \sup_{K\ge1}e^{-c\nu KT}K^{-b}(1+K)^L<\infty.
      \end{align*}
      Combining this bound with \eqref{eq:base-pullback} and
      \eqref{eq:obs-mode} yields
      \begin{align*}
      \norm{p(T)}_{H^b}^2
      \le C\norm{A_T^*\xi}_{L^2(0,T; H)}^2.
      \end{align*}
      Together with the fibre estimate, this proves
      \begin{align*}
      \norm{S_T^*\xi}_{E_b}^2
      =\norm{p(T)}_{H^b}^2+\norm{q(T)}_2^2
      \le C\norm{A_T^*\xi}_{L^2(0,T; H)}^2,
      \end{align*}
      which is \eqref{eq:observability}.
      
      Finally, \eqref{eq:observability} is equivalent to the positive operator
      inequality
      \begin{align*}
      S_TS_T^*\le CA_TA_T^*
      \qquad\text{on }E_b.
      \end{align*}
      Douglas' range inclusion lemma \cite{Douglas}, applied to
      $S_T:E_b\to E_b$ and
      $A_T:L^2(0,T; H)\to E_b$, therefore yields
      $K_T\in\mathcal L(E_b,L^2(0,T; H))$ such that
      \begin{align*}
      S_T=A_TK_T,\qquad \norm{K_T}\le\sqrt C.
      \end{align*}
      This proves \eqref{eq:douglas-factor}.
\end{proof}

\subsection{A convolution driver bridge and local $d$-smallness}
\label{subsec:convolution-local-bridge}

The deterministic factorisation in \Cref{thm:observability} has a local
stochastic consequence that will be used in the final coupling argument
in \Cref{subsec:coupling-closure}. Use the projective charts
\begin{align}\label{eq:charts}
 \iota(\eta)=\left[\phi+\eta\right],
 \qquad \chi_\phi([\rho])=\frac{\Pi_{\phi^\perp}\rho}{\ip{\rho}{\phi}},
\end{align}
where $\eta\in\phi^\perp$ and the second expression is used when
$\ip{\rho}{\phi}>0$.  This is precisely the unified affine state coordinate
\eqref{eq:state-difference-chart} centred at $(0,[\phi])$: the map
$\iota$ is its inverse projective component and $\chi_\phi$ is its forward
projective component.  Write $z_*=(0,0)\in E_b$ for the coordinate
representative of $(0,[\phi])$.  For $T>0$, recall
\begin{align*}
 \mathsf Y_T&=H^\sigma(0,T;H^b)\oplus H^b,\qquad \frac14<\sigma<\frac12,\notag\\
 \Gamma_Tg&=\left(t\mapsto\int_0^t e^{\nu(t-s)\Delta}Qg_s\,\dd s,\ 
                  \int_0^T e^{\nu(T-s)\Delta}Qg_s\,\dd s\right),\notag\\
 Y_T(W)&=\left(t\mapsto\int_0^t e^{\nu(t-s)\Delta}Q\,\dd W_s,\ 
                  \int_0^T e^{\nu(T-s)\Delta}Q\,\dd W_s\right).
\end{align*}
The operator $\Gamma_T:\cH_T\to\mathsf Y_T$ is Hilbert--Schmidt.  In particular,
\begin{align}\label{eq:c-cm}
 \norm{\Gamma_Tg}_{\mathsf Y_T}\le c_{\Gamma,T}\norm g_{\cH_T}.
\end{align}
The random variable $Y_T(W)$ is a centered Radon Gaussian variable in the
separable Hilbert space $\mathsf Y_T$. For $y=(y^\circ,y^T)\in\mathsf Y_T$, subtract the convolution and solve
\begin{align*}
 v_t&=e^{\nu t\Delta}w_0-\int_0^t e^{\nu(t-s)\Delta}B(v_s+y_s^\circ,v_s+y_s^\circ)\,\dd s,\\
 w^\circ&=v+y^\circ,\qquad w_T=v_T+y^T.
\end{align*}
The raw fibre is driven by $w^\circ$.  Denote the resulting normalized
projective endpoint by $\Phi_T(z,y)$ whenever the raw endpoint is
nonzero, and by $F_T(z,y)$ its representation in the charts
\eqref{eq:charts}.  At the prepared zero driver skeleton,
\begin{align}\label{eq:prepared-convolution-derivatives}
 D_zF_T(z_*,0)=S_T,\qquad D_yF_T(z_*,0)\Gamma_T=A_T.
\end{align}
The second identity follows by differentiating in the direction
$\Gamma_Tg$: the base variation is
$u_t=\int_0^t e^{\nu(t-s)\Delta}Qg_s\,\dd s$, and the fibre variation is
exactly the controlled augmented tangent equation defining $A_Tg$.

We use the canonical Wiener realisation from
\eqref{eq:canonical-block-AWS}--\eqref{eq:canonical-block-translation},
with $\tau$ replaced by $T$.  Thus the convolution driver admits a Borel
realisation $Y_T:\Omega_T\to\mathsf Y_T$ such that, on one
Cameron--Martin invariant Borel set of full measure,
\begin{align}\label{eq:canonical-driver-translation}
 Y_T(\omega+\mathbf g)=Y_T(\omega)+\Gamma_Tg,
 \qquad g\in\cH_T.
\end{align}
Its law is a centred Radon Gaussian measure on $\mathsf Y_T$, with
Cameron--Martin space $\Gamma_T\cH_T$, and therefore
\begin{align}\label{eq:YT-support}
 \supp\mathcal L(Y_T)
 =\overline{\Gamma_T\cH_T}^{\,\mathsf Y_T}.
\end{align}
In particular, every open neighbourhood of zero has positive probability.

The factorisation is linear, whereas the bridge concerns the nonlinear
stochastic endpoint.  The following lemma gives the local consequence of
\Cref{lem:revision-base-two-jet} needed to transfer the cancellation from
the prepared skeleton to nearby state and driver pairs.

\begin{lemma}
\label{lem:local-flow}
There are $R_0>0$ and a modulus $m_T(R)\downarrow0$ as $R\downarrow0$
such that, whenever
$\norm{z-z_*}_{E_b}+\norm y_{\mathsf Y_T}\le R\le R_0$, the endpoint
$F_T(z,y)$ is defined and
\begin{align}\label{eq:local-flow-modulus}
 \norm{D_zF_T(z,y)-S_T}_{\cL(E_b,E_b)}
 +\norm{D_yF_T(z,y)\Gamma_T-A_T}_{\cL(\cH_T,E_b)}
 \le m_T(R).
\end{align}
Moreover, every corresponding raw fibre endpoint satisfies
\begin{align}\label{eq:normalization-tube}
 \norm{\rho_T}_2\ge\frac12e^{-\nu T},\qquad
 \ip{\rho_T}{\phi}\ge\frac12e^{-\nu T}.
\end{align}
\end{lemma}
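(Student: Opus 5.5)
The plan is to deduce the lemma from the local two jet bounds of \Cref{lem:revision-base-two-jet}, applied at the single prepared point $(z_*,0)$. The first step is the normalisation bounds \eqref{eq:normalization-tube}. At $(z_*,0)$ the base trajectory vanishes identically and the raw fibre is $\rho_t=e^{-\nu t}\phi$. Hence $\norm{\rho_T}_2=\ip{\rho_T}{\phi}=e^{-\nu T}$. I would then use continuity of the raw endpoint map $(w_0,\rho_0,y)\mapsto(w_T,\rho_T)$ from $H^b\times H\times\mathsf Y_T$ into $H^b\times H$, which holds on a neighbourhood of the skeleton by the local $C^2$ regularity in \Cref{lem:revision-base-two-jet}. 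A representative of $\iota(\eta)$ is $\phi+\eta$ with $\norm{\phi+\eta}_2\le1+\norm\eta_2$, so the initial fibre depends continuously on $z$. Choosing $R_0$ small then makes both quantities at least $\tfrac12e^{-\nu T}$ on the ball of radius $R_0$. In particular $\ip{\rho_T}{\phi}>0$ there, so the chart $\chi_\phi$ applies at the endpoint and $F_T(z,y)=\bigl(w_T,\chi_\phi([\rho_T])\bigr)$ is well defined.

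The second step establishes the derivative modulus \eqref{eq:local-flow-modulus}. On the ball, $F_T$ is the composition of three maps: the input chart $z\mapsto(w_0,\phi+\eta)$, which is affine; the raw endpoint map, which is $C^2$ with a second derivative bounded by a constant $\mathsf C$ on a tube of some radius $\mathsf r$ about the skeleton; and the output chart $\chi_\phi$. The map $\chi_\phi$ is smooth with bounded first and second derivatives on $\{\ip{\rho}{\phi}\ge\tfrac12e^{-\nu T},\ \norm\rho_2\le C\}$, because its denominator is bounded below there by \eqref{eq:normalization-tube}. The chain rule therefore shows that $DF_T$ is Lipschitz on the ball, with a constant $L_T$ depending only on $\mathsf C$, $T$ and $\nu$. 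By \eqref{eq:prepared-convolution-derivatives}, the value of $DF_T$ at $(z_*,0)$ is $(S_T,D_yF_T(z_*,0))$. For the $y$-component I compose with $\Gamma_T$ and use \eqref{eq:c-cm}, which gives the bound $\norm{(D_yF_T(z,y)-D_yF_T(z_*,0))\Gamma_T}\le c_{\Gamma,T}L_TR$. Taking $m_T(R)=(1+c_{\Gamma,T})L_TR$ and $R_0\le\mathsf r$ would finish the proof.

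The main obstacle is that \Cref{lem:revision-base-two-jet} has to be applied with the finite horizon $T$ in place of the block $\tau$, and with the full $\mathsf Y_T$ norm on the driver. I expect its tube and two jet constants to be defined for an arbitrary fixed horizon, so this should be a matter of checking the statement rather than a new estimate. If only continuity of the derivative (and not a $C^2$ bound) were available, I would instead take $m_T(R)$ to be the supremum of the derivative deviation over the closed ball of radius $R$. That supremum decreases as $R$ decreases, and it tends to $0$ as $R\downarrow0$ by continuity of the operator-norm representatives of $D_zF_T$ and $D_yF_T\Gamma_T$ at the point $(z_*,0)$, which is all the lemma requires.
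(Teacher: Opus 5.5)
Your proposal is correct and follows the paper's own argument. It takes local $C^2$ regularity of the raw endpoint from \Cref{lem:revision-base-two-jet} and uses continuity at the skeleton value $\rho_T=e^{-\nu T}\phi$ to obtain \eqref{eq:normalization-tube}, which also makes the output chart valid. It then uses \eqref{eq:prepared-convolution-derivatives} together with continuity of the first derivative to produce the modulus. Your explicit Lipschitz modulus $m_T(R)=(1+c_{\Gamma,T})L_TR$, built from a local second-derivative bound, is only a quantitative refinement of the paper's plain continuity argument, which is your stated fallback.
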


\begin{proof}
By \Cref{lem:revision-base-two-jet}, the unnormalised endpoint is
operator norm $C^2$ in $(z,y)$ near $(z_*,0)$.  At the prepared skeleton,
$\rho_T=e^{-\nu T}\phi$, so endpoint continuity gives
\eqref{eq:normalization-tube} after shrinking the neighborhood.  Hence
projective normalisation is $C^2$ there.  The identities
\eqref{eq:prepared-convolution-derivatives} and continuity of the first
derivative then give \eqref{eq:local-flow-modulus}.
\end{proof}

A Borel maximal coupling of a Cameron--Martin translate will also be
needed.  Let $\mathbf P_{-g}$ be the law of $W-\mathbf g$, where
$\mathbf g(t)=\int_0^t g_s\,\dd s$, and put
$r_g=\dd\mathbf P_{-g}/\dd\mathbf P$. The bridge also requires a coupling of a Wiener law with a
Cameron--Martin translate.  We use the maximal coupling below, whose Borel
dependence on the translation is needed for the final coupling kernel.
\begin{lemma}
\label{lem:Borel-CM-maximal}
There is a maximal coupling $\kappa_g$ of $(\mathbf P_{-g},\mathbf P)$
which is Borel in $g\in\cH_T$.  With
$m_g=1\wedge r_g$ and $c_g=\int m_g\,\dd\mathbf P$, it is given, when
$c_g<1$, by
\begin{align}\label{eq:Borel-maximal-coupling}
 \kappa_g(\dd x,\dd y)&=m_g(x)\mathbf P(\dd x)\delta_x(\dd y)\notag\\
 &\quad+\frac{(r_g(x)-1)_+\mathbf P(\dd x)(1-r_g(y))_+\mathbf P(\dd y)}{1-c_g},
\end{align}
and by the diagonal coupling when $c_g=1$.
\end{lemma}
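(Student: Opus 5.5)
The plan is to verify three properties of the displayed kernel directly: both marginals are correct, the diagonal carries the maximal mass $c_g$, and $g\mapsto\kappa_g$ is Borel. Everything starts from the Cameron--Martin theorem. For each $g\in\cH_T$ the measures $\mathbf P_{-g}$ and $\mathbf P$ are equivalent, and
\begin{align*}
 r_g=\exp\Bigl(-\delta(g)-\tfrac12\|g\|_{\cH_T}^2\Bigr),
\end{align*}
where $\delta(g)$ is the Paley--Wiener integral. I will fix a Borel version of $(g,\omega)\mapsto\delta(g)(\omega)$ that is jointly measurable. One way is to take $\delta(g)=\lim_n\sum_{j\le n}\ip{g}{e_j}\delta(e_j)$ along an orthonormal basis, defined on the Borel set where this limit exists and set to zero elsewhere. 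Since the partial sums converge in $L^2$ for each $g$, a fast subsequence converges almost surely; equivalently, I restrict to the Cameron--Martin invariant full measure set already used in \eqref{eq:canonical-driver-translation}. With this version $(g,\omega)\mapsto r_g(\omega)$ is jointly Borel.

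For the marginals I use $\int r_g\,\dd\mathbf P=1$, which gives
\begin{align*}
 \int(r_g-1)_+\,\dd\mathbf P=\int(1-r_g)_+\,\dd\mathbf P=1-c_g.
\end{align*}
In the first coordinate the kernel integrates to $m_g+(r_g-1)_+=r_g$ times $\mathbf P$, which is $\mathbf P_{-g}$. In the second coordinate it integrates to $m_g+(1-r_g)_+=1$ times $\mathbf P$, which is $\mathbf P$. For maximality, the diagonal carries mass $\ge c_g=1-\|\mathbf P_{-g}-\mathbf P\|_{TV}$. Moreover, the off-diagonal product term puts zero mass on the diagonal, since its two factors are supported on the disjoint sets $\{r_g>1\}$ and $\{r_g<1\}$. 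So $\kappa_g(\{x=y\})=c_g$, and this is the optimal value for any coupling. When $c_g=1$ we have $r_g=1$ almost surely, so $g=0$, and the diagonal coupling is trivially maximal.

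For Borel dependence, I check that $g\mapsto\int F\,\dd\kappa_g$ is Borel for every bounded Borel $F$ on $\Omega_T\times\Omega_T$. The diagonal part is $\int F(x,x)m_g(x)\,\mathbf P(\dd x)$, which is Borel in $g$ by Fubini and joint measurability of $m_g$. The product part is the integral of $F$ against the product of the two joint-Borel densities, divided by the Borel function $1-c_g$. This is fine on the Borel set $\{c_g<1\}=\{g\ne0\}$, and on its complement we use the diagonal coupling. A monotone class argument then extends measurability from products $F=f_1\otimes f_2$ to all bounded Borel $F$.

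The main obstacle is the joint measurable version of the Paley--Wiener integral. Once that is fixed, the rest is bookkeeping.
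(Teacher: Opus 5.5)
Your proposal is correct and follows the same route as the paper: a jointly Borel version of the Wiener integral makes $r_g$, and hence the displayed kernel, Borel in $g$; direct integration using $\int r_g\,\dd\mathbf P=1$ gives the two marginals; and $\kappa_g\{x=y\}=c_g=1-\|\mathbf P_{-g}-\mathbf P\|_{\rm TV}$ gives maximality, with your write-up supplying details (zero diagonal mass of the product part, $c_g=1\iff g=0$, the Fubini measurability step) that the paper leaves implicit. One small correction: no fast subsequence is needed, and a $g$-dependent one would spoil joint measurability. For each fixed $g$ the full partial sums $\sum_{j\le n}\ip{g}{e_j}\delta(e_j)$ form an $L^2$-bounded martingale and converge almost surely, so your limit-where-it-exists definition already gives the required jointly Borel version.
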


\begin{proof}
For deterministic $g\in\cH_T$, the Wiener integral
\begin{align*}
 \delta g(\omega)
 =\int_0^T\langle g_t,\dd W_t(\omega)\rangle
\end{align*}
admits a jointly Borel version in $(g,\omega)$.  Hence
\begin{align*}
 r_g(\omega)
 =\exp\left(-\delta g(\omega)-\frac12\|g\|_{\cH_T}^2\right)
\end{align*}
is jointly Borel, and so is \eqref{eq:Borel-maximal-coupling}.  Direct
integration shows that the two marginals are $\mathbf P_{-g}$ and
$\mathbf P$.  Moreover,
\begin{align*}
 \kappa_g\{x=y\}
 =c_g
 =1-\|\mathbf P_{-g}-\mathbf P\|_{\rm TV},
\end{align*}
so $\kappa_g$ is maximal.
\end{proof}

We can now couple two nearby prepared initial states with exact Wiener
marginals.  The deterministic factorisation cancels their first order
separation on a small driver event, while maximal coupling pays only the
Cameron--Martin translation cost.
\begin{theorem}
\label{thm:eventwise-bridge}
For every $T>0$ and every $\theta>0$, there are $\delta>0$ and $r>0$ such
that the following holds.  Let
\begin{align*}
 \mathcal U_r=\{(w,\iota(\eta)):\norm{(w,\eta)}_{E_b}<r\}
\end{align*}
be the $r$ neighborhood of the prepared state. If $\mathcal A\subset\Omega_T$ is Borel and
\begin{align*}
 \mathcal A\subset\{\norm{Y_T(W)}_{\mathsf Y_T}\le\delta\},\qquad
 \mathbf P(\mathcal A)>0,
\end{align*}
then, for every $z,z'\in\mathcal U_r$, write
$h=\Delta_{(0,[\phi])}(z')-\Delta_{(0,[\phi])}(z)$, equivalently
$h=z'-z$ in the prepared coordinates, and put $g=K_Th$.  There is a coupling law
$\mathbf Q^{\rm br}_{z,z'}$ of two standard Wiener paths
$(W,\widetilde W)$, Borel in $(z,z')$, such that
\begin{align}\label{eq:eventwise-bridge}
 \mathbf Q^{\rm br}_{z,z'}\left\{W\in\mathcal A,\
  \widetilde W=W-\int_0^\cdot g_s\,\dd s\right\}
 \ge\mathbf P(\mathcal A)-\frac12\norm g_{\cH_T}.
\end{align}
The corresponding endpoints have the exact transition laws
$P_T(z,\cdot)$ and $P_T(z',\cdot)$.  On the event in
\eqref{eq:eventwise-bridge},
\begin{align}\label{eq:local-cancel}
 \norm{F_T(z',Y_T(\widetilde W))-F_T(z,Y_T(W))}_{E_b}
 \le\theta\norm h_{E_b}.
\end{align}
\end{theorem}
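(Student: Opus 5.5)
The plan is to build the coupling from the Borel maximal coupling of \Cref{lem:Borel-CM-maximal}, restricted to the event $\mathcal A$, and to obtain the cancellation \eqref{eq:local-cancel} from a first order Taylor expansion around the prepared skeleton, using the factorisation $S_T=A_TK_T$ of \Cref{thm:observability}.

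\textbf{Parameter choice.} First fix $\theta>0$. Using \Cref{lem:local-flow}, choose $R\le R_0$ with
\begin{align*}
 m_T(R)\bigl(1+\sqrt C\,(1+c_{\Gamma,T})\bigr)\le\theta.
\end{align*}
Then take $\delta,r$ so small that
\begin{align*}
 2r+\delta+c_{\Gamma,T}\sqrt C\,2r\le R.
\end{align*}
For $z,z'\in\mathcal U_r$ we have $\norm h_{E_b}<2r$ and $\norm g_{\cH_T}\le\sqrt C\norm h$. Along the segment $s\mapsto(z+sh,\;Y_T(W)-s\Gamma_Tg)$, $s\in[0,1]$, the state lies within $2r$ of $z_*$. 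On $\mathcal A$ the driver has norm at most $\delta+c_{\Gamma,T}\norm g$. So the whole segment stays in the $R$-ball where $F_T$ is defined and \eqref{eq:local-flow-modulus} holds.

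\textbf{Cancellation estimate.} By \eqref{eq:canonical-driver-translation}, $Y_T(\widetilde W)=Y_T(W)-\Gamma_Tg$ on the event $\widetilde W=W-\mathbf g$. The mean value inequality along the segment gives
\begin{align*}
 F_T(z',Y_T(\widetilde W))-F_T(z,Y_T(W))
 =\int_0^1\bigl(D_zF_T\,h-D_yF_T\,\Gamma_Tg\bigr)\,\dd s .
\end{align*}
Replace $D_zF_T$ by $S_T$ and $D_yF_T\Gamma_T$ by $A_T$. The errors are at most $m_T(R)\bigl(\norm h+\norm g\bigr)$, and $S_Th-A_TK_Th=0$. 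This yields \eqref{eq:local-cancel}.

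\textbf{Construction of the coupling.} Take $(W,\widetilde W)$ with $W\sim\mathbf P$ and $\widetilde W\sim\mathbf P$ as follows.
\begin{enumerate}
\item Let $(X,V)\sim\kappa_g$ be the Borel maximal coupling of $(\mathbf P_{-g},\mathbf P)$.
\item Set $W=X+\mathbf g$. Its law is $\mathbf P$, because $X\sim\mathbf P_{-g}$. Set $\widetilde W=V$.
\item On the diagonal $\{X=V\}$, which has mass $1-\norm{\mathbf P_{-g}-\mathbf P}_{\rm TV}$, we get $\widetilde W=W-\mathbf g$.
\end{enumerate}
Pinsker's inequality gives
\begin{align*}
 \norm{\mathbf P_{-g}-\mathbf P}_{\rm TV}\le\sqrt{\tfrac12\,\norm g^2}\le\tfrac12\norm g\cdot\sqrt2 .
\end{align*}
To reach exactly $\tfrac12\norm g$ I would instead use the sharper Gaussian total variation bound
\begin{align*}
 \norm{\mathbf P_{-g}-\mathbf P}_{\rm TV}=2\Phi\bigl(\norm g/2\bigr)-1\le\norm g/\sqrt{2\pi}\le\tfrac12\norm g .
\end{align*}
A union bound then gives \eqref{eq:eventwise-bridge}:
\begin{align*}
 \mathbf Q^{\rm br}\{W\in\mathcal A,\ \widetilde W=W-\mathbf g\}\ge\mathbf P(\mathcal A)-\tfrac12\norm g .
\end{align*}
The endpoints are driven by genuine Wiener paths $W,\widetilde W$ from the initial states $z,z'$, so their laws are exactly $P_T(z,\cdot)$ and $P_T(z',\cdot)$.

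\textbf{Measurability.} The map $(z,z')\mapsto h\mapsto g=K_Th$ is continuous, and $\kappa_g$ is Borel in $g$, so $\mathbf Q^{\rm br}_{z,z'}$ is Borel in $(z,z')$. This is the reason for using \Cref{lem:Borel-CM-maximal}.

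\textbf{Main obstacle.} The delicate step is keeping the whole interpolation segment inside the tube of \Cref{lem:local-flow} uniformly. This requires the translation identity \eqref{eq:canonical-driver-translation} to hold on a full measure set that does not depend on $g$, so that it can be applied to the shifted path.
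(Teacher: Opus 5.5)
Your proposal is correct and follows the paper's proof essentially step for step. It uses the same choice of $R$, $\delta$, $r$ to keep the interpolation segment inside the tube of \Cref{lem:local-flow}, the same fundamental-theorem-of-calculus identity combined with $S_T=A_TK_T$, and the same Borel maximal coupling with $W=X+\mathbf g$ followed by a union bound. The only slip is your Pinsker arithmetic: since $\operatorname{Ent}(\mathbf P_{-g}\mid\mathbf P)=\frac12\norm g_{\cH_T}^2$, Pinsker gives $\sqrt{\frac12\cdot\frac12\norm g_{\cH_T}^2}=\frac12\norm g_{\cH_T}$ directly, which is what the paper uses, so your detour through the exact Gaussian total variation formula is correct but unnecessary.
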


\begin{proof}
Let $K_T$ be the factor in \eqref{eq:douglas-factor} and put
$\kappa=\norm{K_T}$.  Choose $R>0$ from \Cref{lem:local-flow} so that
\eqref{eq:normalization-tube} holds and $m_T(R)(1+\kappa)\le\theta$.
Choose $\delta>0$ and then $r>0$ such that
\begin{align*}
 \delta+2r+2c_{\Gamma,T}\kappa r<R,
\end{align*}
where $c_{\Gamma,T}$ is the constant from \eqref{eq:c-cm}. 
Fix $z,z'\in\mathcal U_r$, let
$h=\Delta_{(0,[\phi])}(z')-\Delta_{(0,[\phi])}(z)$, and set
$g=K_Th$.  On
$\{\norm{Y_T(W)}_{\mathsf Y_T}\le\delta\}$, the segment in the prepared coordinates 
\begin{align*}
 (z_s,y_s)=(z+sh,Y_T(W)-s\Gamma_Tg),\qquad 0\le s\le1,
\end{align*}
remains in the radius $R$ tube.  The fundamental theorem of calculus and
$S_T=A_TK_T$ give
\begin{align*}
 F_T(z',Y_T(W)-\Gamma_Tg)-F_T(z,Y_T(W))
 =\int_0^1\bigl(D_zF_T(z_s,y_s)h-D_yF_T(z_s,y_s)\Gamma_Tg\bigr)\,\dd s.
\end{align*}
and hence, by \Cref{lem:local-flow},
\begin{align*}
 \norm{F_T(z',Y_T(W)-\Gamma_Tg)-F_T(z,Y_T(W))}_{E_b}
 \le m_T(R)(1+\kappa)\norm h_{E_b}
 \le\theta\norm h_{E_b}.
\end{align*}

By Cameron--Martin and Pinsker,
\begin{align}\label{eq:pinsker}
 \operatorname{Ent}(\mathbf P_{-g}\mid\mathbf P)=\frac12\norm g_{\cH_T}^2,
 \qquad \norm{\mathbf P_{-g}-\mathbf P}_{\rm TV}\le\frac12\norm g_{\cH_T}.
\end{align}
Sample $(X,\widetilde W)$ from the Borel maximal coupling in
\Cref{lem:Borel-CM-maximal}, set $W=X+\mathbf g$, and denote the law of
$(W,\widetilde W)$ by $\mathbf Q^{\rm br}_{z,z'}$.  Both $W$ and
$\widetilde W$ are standard Wiener paths.  On the diagonal part,
$\widetilde W=X=W-\mathbf g$, and
\eqref{eq:canonical-driver-translation} gives
$Y_T(\widetilde W)=Y_T(W)-\Gamma_Tg$.  Since $W$ has law $\mathbf P$, the
union bound and \eqref{eq:pinsker} prove \eqref{eq:eventwise-bridge}; the
preceding pathwise estimate proves \eqref{eq:local-cancel}.  
\end{proof}

For a bounded metric $d$, write $W_d$ for the corresponding
Kantorovich--Wasserstein distance. \Cref{thm:eventwise-bridge} has an immediate transport consequence.  After
truncating the ambient distance, the prepared neighborhood is a genuine
$d$-small set with a uniform gap.
\begin{corollary}
\label{cor:d-small}
For every $T>0$, there are $r>0$, $L<\infty$, and $\varepsilon_*>0$ such
that
\begin{align*}
 \mathcal U_r=\{(w,\iota(\eta)):\norm{(w,\eta)}_{E_b}<r\}
\end{align*}
is $d_L$ small for $P_T$, where
\begin{align*}
 d_L\bigl((w,[\rho]),(\widetilde w,[\widetilde\rho])\bigr)
 =1\wedge L\left(\norm{w-\widetilde w}_{H^b}+d_{\PP}([\rho],[\widetilde\rho])\right).
\end{align*}
Thus, for every $z,z'\in\mathcal U_r$,
\begin{align}\label{eq:d-small}
 W_{d_L}\bigl(P_T(z,\cdot),P_T(z',\cdot)\bigr)\le1-\varepsilon_*.
\end{align}
\end{corollary}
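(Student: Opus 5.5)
We derive this from \Cref{thm:eventwise-bridge} by a fixed choice of parameters. Fix $T>0$ and $\theta=\tfrac12$, and let $\delta,r_1>0$ be the constants supplied by \Cref{thm:eventwise-bridge}. Take
\begin{align*}
 \mathcal A=\{\norm{Y_T(W)}_{\mathsf Y_T}\le\delta\}.
\end{align*}
By \eqref{eq:YT-support}, every neighbourhood of zero in the support of the centred Radon Gaussian law of $Y_T$ has positive mass, so $p_\delta:=\mathbf P(\mathcal A)>0$.

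For $z,z'\in\mathcal U_r$ with $r\le r_1$ to be fixed, we use the bridge coupling $\mathbf Q^{\rm br}_{z,z'}$ to build a coupling of the endpoints. The endpoints are $F_T(z,Y_T(W))$ and $F_T(z',Y_T(\widetilde W))$, mapped back to $\mathsf X$ through the prepared chart. Their marginals are exactly $P_T(z,\cdot)$ and $P_T(z',\cdot)$, so
\begin{align*}
 W_{d_L}\le \mathbf E\, d_L(\text{endpoints}).
\end{align*}
We bound the expectation on the good event and its complement separately.
\begin{itemize}
\item On the good event of \eqref{eq:eventwise-bridge}, \eqref{eq:local-cancel} gives an endpoint coordinate separation of at most $\theta\norm h_{E_b}\le 2\theta r$. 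By \eqref{eq:normalization-tube} the endpoint fibres stay in a fixed chart with denominators bounded below. The metric comparison \eqref{eq:state-chart-metric-comparison}, applied on this chart, then gives
\begin{align*}
 d_{\mathsf X}\le C'\norm{\cdot}_{E_b}\le 2C'\theta r.
\end{align*}
\item Off the good event we use the trivial bound $d_L\le 1$.
\end{itemize}
Combining the two regions,
\begin{align*}
 W_{d_L}\bigl(P_T(z,\cdot),P_T(z',\cdot)\bigr)
 \le 1-\mathbf P(\text{good})\bigl(1-2LC'\theta r\bigr),
\end{align*}
where
\begin{align*}
 \mathbf P(\text{good})\ge p_\delta-\tfrac12\norm{K_T}\,2r.
\end{align*}

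Now fix the constants in order. First shrink $r$ so that $\norm{K_T}r\le p_\delta/2$; this makes $\mathbf P(\text{good})\ge p_\delta/2$. Then choose $L=1$, or any $L$ with $2LC'\theta r\le\tfrac12$. With these choices
\begin{align*}
 W_{d_L}\le 1-\tfrac14 p_\delta,
\end{align*}
so we may take $\varepsilon_*=p_\delta/4$. The order of choice matters: $\delta$ is fixed by $\theta$ through \Cref{thm:eventwise-bridge}, so $p_\delta$ is fixed before $r$ is shrunk. Shrinking $r$ is legitimate because \Cref{thm:eventwise-bridge} remains valid for any smaller $r$.

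The main obstacle is checking that the bridge coupling really yields admissible couplings of the transition laws on $\mathsf X$. This requires identifying $F_T$ in charts with $\Phi_T$ and confirming that \eqref{eq:normalization-tube} keeps both endpoints inside the chart domain, so that \eqref{eq:state-chart-metric-comparison} applies. The set $\mathcal U_r$ is not literally required to have small $d_L$-diameter; only the uniform gap \eqref{eq:d-small} is needed. Borel dependence of $\mathbf Q^{\rm br}_{z,z'}$ on $(z,z')$ is not needed for this estimate.
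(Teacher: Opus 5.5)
Your proposal is correct and follows the paper's proof essentially step for step: take the small-ball event $\mathcal A$ with $p_\delta>0$, shrink $r$ so the maximal-coupling loss $\tfrac12\norm{g}_{\cH_T}\le\norm{K_T}r$ is at most $p_\delta/2$, then choose $L$ so the $d_L$ cost on the matching event is at most $\tfrac12$, giving $\varepsilon_*=p_\delta/4$. One citation is slightly off: both endpoints are written in the fixed prepared chart centred at $(0,[\phi])$, not in a chart centred at one of them as in \eqref{eq:state-chart-metric-comparison}, so what you actually need is local Lipschitz continuity of $\iota$ on the output tube (immediate since $\norm{\phi+\eta}_2\ge1$ for $\eta\perp\phi$), which is the local bi-Lipschitz fact the paper invokes.
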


\begin{proof}
Apply \Cref{thm:eventwise-bridge} with fixed $\theta>0$.  Since zero
belongs to the Gaussian support in \eqref{eq:YT-support},
\begin{align*}
 p_\delta:=\mathbf P\{\norm{Y_T(W)}_{\mathsf Y_T}\le\delta\}>0.
\end{align*}
Take $\mathcal A=\{\norm{Y_T(W)}_{\mathsf Y_T}\le\delta\}$.  On the output
tube, the prepared chart and the product metric are locally bi-Lipschitz,
so for some $c_{\rm ch}<\infty$ the matching event satisfies
\begin{align*}
 \norm{w_T-\widetilde w_T}_{H^b}+d_{\PP}([\rho_T],[\widetilde\rho_T])
 \le c_{\rm ch}\theta\norm h_{E_b}.
\end{align*}
Shrink $r$ so that $r\le p_\delta/(4\kappa)$ when
$\kappa=\norm{K_T}>0$.  Then \eqref{eq:eventwise-bridge} gives matching
probability at least $p_\delta/2$.  Choose $L$ so that
$2Lc_{\rm ch}\theta r\le1/2$.  The $d_L$ cost is at most $1/2$ on the
matching event and at most $1$ otherwise, whence
\begin{align*}
 W_{d_L}\bigl(P_T(z,\cdot),P_T(z',\cdot)\bigr)
 \le1-\frac12\mathbf P\{\text{matching event}\}
 \le1-\frac{p_\delta}{4}.
\end{align*}
Thus \eqref{eq:d-small} holds with $\varepsilon_*=p_\delta/4$.
\end{proof}

\begin{remark}
\Cref{cor:d-small} gives only local $d$-smallness.  The additional
$d$ contraction estimate required for a weak Harris spectral gap argument
is currently out of reach and is left for future work.
\end{remark}

\subsection{Accessibility}

It remains to establish the topological accessibility needed for the
coupling argument.  The controls below prescribe the base vorticity itself.  If a smooth path $\bar w$
is prescribed, its forcing is
\begin{align}\label{eq:skeleton-control}
 g=Q^{-1}\bigl(\partial_t\bar w-
       \nu\Delta\bar w+B(\bar w,\bar w)\bigr).
\end{align}
For a spatially smooth path with compact time interval, $g\in\cH_T$ because
$Q^{-1}$ has only polynomial growth. The theorem below is the precise
accessibility statement needed in the uniqueness argument, whose proof is given at the end of this subsection. 
\begin{theorem}
\label{thm:accessibility}
For every $z\in H^b\times\PP(H)$ and every $r>0$, there is
$T=T(z,r)>0$ such that
\begin{align}\label{eq:point-access}
 P_T(z,\mathcal U_r)>0.
\end{align}
Moreover, for the block length $\tau$ fixed in
\eqref{eq:revision-single-block-choice}, there is $n=n(z,r)\ge1$ such that
\begin{align}\label{eq:sampled-point-access}
 P_{n\tau}(z,\mathcal U_r)>0.
\end{align}
Consequently, every $P_\tau$ invariant probability measure $\nu$ satisfies
$\nu(\mathcal U_r)>0$.
\end{theorem}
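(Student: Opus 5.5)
The proof has three stages. First, construct a Cameron--Martin control $g\in\cH_T$ whose deterministic controlled trajectory, started at $z=(w_0,[\mathsf p])$, ends inside $\mathcal U_{r/2}$ at time $T$. Second, pass from the control to positive probability using the Gaussian support \eqref{eq:YT-support} and continuity of the endpoint. Third, adjust the time to a multiple of $\tau$ and deduce the statement about invariant measures. The probabilistic and bookkeeping steps are soft; essentially all the work is in the control construction.

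\emph{Control construction.} I would build the base path piecewise and recover each forcing from \eqref{eq:skeleton-control}; polynomial growth of $Q^{-1}$ puts it in $\cH_T$ once the path is spatially smooth.
\begin{enumerate}[label=\textup{(\alph*)}]
\item \emph{Smoothing.} Run with zero control on $[0,t_0]$. Parabolic smoothing makes $w_{t_0}$ analytic, and backward uniqueness keeps $\rho_{t_0}\neq0$.
\item \emph{Base to zero.} Prescribe $\bar w_t=\theta(t)w_{t_0}$ with $\theta$ smooth, decreasing from $1$ to $0$. The fibre is only transported by \eqref{eq:raw-tangent} and stays nonzero.
\item \emph{First shell seeding.} Now $w=0$ and $\rho\neq0$. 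If $\Pi_1\rho=0$, apply a short, small smooth base pulse $\bar w$ built from low modes. Since $\mathcal S(\bar w,\cdot)$ shifts Fourier support by the support of $\bar w$, a pulse supported near the lowest shell $N_0$ charged by $\rho$, with suitable modes, gives $\Pi_1\rho\neq0$ to first order in the pulse amplitude, and hence for small amplitude.
\item \emph{Relaxation.} Set $w\equiv0$ for a long time. The fibre evolves by $e^{\nu t\Delta}$, so $[\rho_t]\to[\Pi_1\rho]$ in $\PP(H)$ at rate $e^{-\nu t}$; the direction enters any neighbourhood of $\PP(E_1)$.
\item \emph{Steering inside $\PP(E_1)$.} Use base paths of the form $\epsilon\bar w$, with $\bar w$ smooth and supported on finitely many low modes, returning to $0$. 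To leading order in $\epsilon$ these act on $E_1$ through the compressions $\Pi_1\mathcal S(\bar w,\cdot)\Pi_1$ and their brackets, with higher shells relaxing. The goal is to check that these brackets together with $\nu\Delta$ generate a Lie algebra acting transitively on the relevant part of $\PP(E_1)$ (the ``four dimensional control algebra''). Then drive $[\Pi_1\rho]$ to $[\phi]$ and relax once more so the higher-shell part is negligible.
\end{enumerate}
This step (e) is the one I expect to be the main obstacle. Since $\mathcal S(\bar w,\cdot)$ maps $E_1$ out of $E_1$ for most $\bar w$, the effective $E_1$ dynamics appear only at second order: a pulse $\epsilon\bar w$ pushes into shells $2,4,5$ and then back. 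I would first compute these second-order averaged generators explicitly for $\bar w$ a combination of $\cos,\sin$ of $\boldsymbol e_1\pm\boldsymbol e_2$ and $2\boldsymbol e_i$. Then I would verify by a finite matrix computation that their span acts transitively on the sphere of $E_1\cong\R^4$ modulo sign. The final state is $(0,[\phi+\eta])$ with $\|\eta\|_2$ small, i.e.\ in $\mathcal U_{r/2}$.

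\emph{Positive probability.} The endpoint $\Phi_T(z,y)$ is continuous in $y$ at compatible drivers by \Cref{lem:revision-base-two-jet}, with backward uniqueness giving a nonzero raw fibre endpoint. So there is $\varepsilon>0$ such that $\|Y_T(W)-\Gamma_Tg\|_{\mathsf Y_T}<\varepsilon$ implies $\Phi_T(z,Y_T(W))\in\mathcal U_r$. By \eqref{eq:YT-support}, $\Gamma_Tg$ lies in the support of the law of $Y_T$, so this event has positive probability. This gives \eqref{eq:point-access}.

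\emph{Sampled times.} With zero driver, $(0,[\phi])$ is a fixed point of the skeleton. Append zero control on $[T,n\tau]$ with $n\tau\ge T$, and choose the earlier stages to land in $\mathcal U_{r'}$ with $r'$ small. Continuity of the zero-driver flow on the compact time interval $[0,n\tau-T]$ near the fixed point then keeps the endpoint in $\mathcal U_{r/2}$, and the same support argument on $[0,n\tau]$ gives \eqref{eq:sampled-point-access}.

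\emph{Invariant measures.} Let $\nu$ be $P_\tau$ invariant. For every $n$, $\nu(\mathcal U_r)=\int P_{n\tau}(z,\mathcal U_r)\,\nu(\dd z)$. The Borel sets $\{z:P_{n\tau}(z,\mathcal U_r)>0\}$ cover $\mathsf X$ by \eqref{eq:sampled-point-access}, so one of them has positive $\nu$ measure. For that $n$ the integral is positive, hence $\nu(\mathcal U_r)>0$.
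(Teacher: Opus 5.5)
Your architecture is exactly the paper's: zero-control smoothing, steering the base to $0$ through \eqref{eq:skeleton-control}, first-shell seeding, relaxation, rotation inside $E_1$, Gaussian support, a smaller target plus appended zero control for the sampled times, and the covering argument for invariant laws. The gap is step (e). It carries all of the controllability content, and you leave it as an unperformed matrix computation resting on a misdiagnosis.

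For the very modes you propose, the $E_1$ dynamics are \emph{not} second order. If $\upsilon$ is a real phase at frequency $\boldsymbol e_2-\boldsymbol e_1$ or $\boldsymbol e_1+\boldsymbol e_2$, then $R_\upsilon=\Pi_1\cS(\upsilon,\cdot)|_{E_1}\neq0$. For instance, $(\boldsymbol e_2-\boldsymbol e_1)+\boldsymbol e_1=\boldsymbol e_2$ and $\Gamma(\boldsymbol e_2-\boldsymbol e_1,\boldsymbol e_1)=\tfrac12$ in \eqref{eq:Gamma}. These compressions are skew: $\Gamma(\boldsymbol m,\boldsymbol n)=0$ when $|\boldsymbol m|=|\boldsymbol n|$, so $\cS(f,g)=0$ for $f,g\in E_1$. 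The four resulting matrices $M_1,\dots,M_4$ generate $\mathfrak{so}(4)$ after only the two brackets $[M_1,M_2]$ and $[M_3,M_4]$. The reachable group is therefore $SO(4)$, which is transitive on the unit sphere of $E_1$. This first-order control algebra is the missing idea (\Cref{lem:first-shell-seeding-control}\textup{(iii)}). The $2\boldsymbol e_i$ modes you list contribute nothing to $E_1$ at first order and are not needed.

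The first-order picture is also what makes the full-PDE realisation work. A pulse $\varepsilon\beta\upsilon$ with $\int\beta=1$ gives, after removing the common heat factor, $\Pi_1V\Pi_1=I+\varepsilon R_\upsilon+O(\varepsilon^2)$ with $O(\varepsilon)$ cross blocks (\Cref{lem:rev-one-pulse-blocks}). Long zero-base relaxations between pulses keep the ratio of transverse to $E_1$ mass small. Realising $e^{tR_\upsilon}$ by Euler products $(I+\tfrac tnR_\upsilon)^n$ gives $\sum_j\varepsilon_j^2=t^2/n\to0$, so the accumulated $O(\varepsilon^2)$ errors vanish (\Cref{lem:pulse-relax-concatenation}, \Cref{cor:rev-accessibility-closed}).

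In your second-order scheme, the useful effect of each pulse would itself be $O(\varepsilon^2)$, the same size as the error these estimates tolerate. You would then need:
\begin{itemize}
\item a third-order remainder;
\item control of the second-order terms routed through the leaked higher-shell component;
\item a proof that the averaged second-order generators, which need not be skew, generate a transitive action on $\PP(E_1)$.
\end{itemize}
None of this is supplied. Moreover, with shell-two pulses and $\int\beta\neq0$ the first-order rotation dominates, so the computation you propose would not isolate the leading dynamics.

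A smaller point concerns (c). A real pulse at $\pm\boldsymbol m$ feeds a given unit frequency $\boldsymbol\ell$ from both $\boldsymbol\ell-\boldsymbol m$ and $\boldsymbol\ell+\boldsymbol m$, and these contributions could cancel. So ``suitable modes'' must include the paper's argument, via the complex mode, that at least one of the $\cos$ or $\sin$ phases has a nonzero first-shell derivative.
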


Recall that $E_1=\Ker(-\Delta-1)$ is a real four dimensional space.
We separate the finite dimensional control geometry from the later
full PDE leakage estimates.  The next lemma seeds the first shell, relaxes
to it, and identifies the shell two control algebra acting on $E_1$.
\begin{lemma}
      \label{lem:first-shell-seeding-control}
      Let $\Pi_1$ be the orthogonal projection onto $E_1$.
      \begin{enumerate}[label=\textup{(\roman*)}]
      \item If $h\in H\setminus\{0\}$ and $\Pi_1h=0$, a smooth,
      zero to zero, finite energy base pulse sends $h$ to a raw fibre with
      nonzero first shell projection.
      \item If $\Pi_1h\ne0$, a zero base relaxation makes the projective fibre
      arbitrarily close to $[\Pi_1h]$.
      \item The four shell two controls on $E_1$ generate
      $\mathfrak{so}(E_1)$.
      \end{enumerate}
\end{lemma}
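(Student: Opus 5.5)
I would prove the three parts separately. Each reduces to an explicit Fourier computation for the fibre equation $\partial_t\rho=\nu\Delta\rho+\cS(w,\rho)$ with a prescribed smooth base path $w$. The forcing is then recovered from \eqref{eq:skeleton-control}, so we are free to choose $w$ directly.

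For \textup{(ii)}, take $w\equiv0$. The fibre is then $e^{\nu t\Delta}h$. Its $E_1$ component decays like $e^{-\nu t}$, and every other shell decays at rate at least $e^{-2\nu t}$. Hence
\begin{align*}
 d_{\PP}\bigl([e^{\nu t\Delta}h],[\Pi_1h]\bigr)\lesssim e^{-\nu t}\|h\|_2/\|\Pi_1h\|_2\to0 .
\end{align*}
This step is routine.

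For \textup{(i)}, use a short pulse $w_t=\varepsilon\chi(t)e$. Here $\chi$ is a smooth bump vanishing near the endpoints and $e$ is a single real Fourier mode, chosen below. To first order in $\varepsilon$, the endpoint fibre is
\begin{align*}
 e^{\nu T\Delta}h+\varepsilon\int_0^T\chi(s)\,e^{\nu(T-s)\Delta}\cS\bigl(e,e^{\nu s\Delta}h\bigr)\,\dd s+O(\varepsilon^2).
\end{align*}
The first term has zero projection onto $E_1$, so it suffices to make the $E_1$ projection of the middle term nonzero. Pick a frequency $\boldsymbol j$ with $\hat h(\boldsymbol j)\ne0$, and a mode $\boldsymbol m$ with $\boldsymbol j+\boldsymbol m\in\{\pm\boldsymbol e_1,\pm\boldsymbol e_2\}$ and $c_{\boldsymbol m}(\boldsymbol j)\ne0$, using \eqref{eq:Hm-shift}. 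With $\boldsymbol j$ fixed there are four candidates for $\boldsymbol m$. The coefficient vanishes only if $\det(\boldsymbol m,\boldsymbol j)=0$ or $|\boldsymbol m|=|\boldsymbol j|$, and these conditions cannot exclude all four candidates. Indeed, $\det(\boldsymbol m,\boldsymbol j)=\det(\boldsymbol e,\boldsymbol j)$ with $\boldsymbol e=\boldsymbol j+\boldsymbol m$, and this is nonzero for at least two of the axis vectors $\boldsymbol e$. The condition $|\boldsymbol e-\boldsymbol j|=|\boldsymbol j|$ amounts to $2\boldsymbol e\cdot\boldsymbol j=1$, which is impossible for integers.

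Several input frequencies could a priori cancel in the $E_1$ output. To rule this out, I would choose $\chi$ generic, or vary the real phase (cos versus sin) of $e$. The time integrals weight different input shells by different exponential factors $e^{-\nu|\boldsymbol j|^2s}$, so an analyticity argument in the weights of $\chi$ makes the projection nonzero. Backward uniqueness keeps the fibre nonzero throughout. Finally, taking $\varepsilon$ small and $T$ fixed, the first-order term dominates.

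For \textup{(iii)}, the shell two controls are base modes $w=e_{\boldsymbol m}$ with $|\boldsymbol m|^2=2$, that is $\boldsymbol m\in\{(\pm1,\pm1)\}$, taken with real phases. Each gives a generator $\Pi_1\cS(w,\cdot)|_{E_1}$, and I would compute its $4\times4$ matrix from \eqref{eq:Hm-shift} on $E_1$. The key observation is $c_{\boldsymbol m}(\boldsymbol j)=\det(\boldsymbol m,\boldsymbol j)(\tfrac12-1)$ for $\boldsymbol j\in\mathcal A$ and $\boldsymbol j+\boldsymbol m\in\mathcal A$. Since $|\boldsymbol j|=|\boldsymbol j+\boldsymbol m|=1$, the reversed coefficient $c_{-\boldsymbol m}(\boldsymbol j+\boldsymbol m)$ equals $-c_{\boldsymbol m}(\boldsymbol j)$, which shows the generators are antisymmetric on $E_1$. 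I would then check, by computing two or three brackets explicitly, that together with their commutators they span all six dimensions of $\mathfrak{so}(4)$. The main obstacle is this bracket computation and, in \textup{(i)}, excluding cancellation among the input frequencies. For \textup{(iii)} I expect the linear span of the four generators to be $\mathfrak{so}(2)\oplus\mathfrak{so}(2)$-like and one further layer of brackets to be needed. I would verify this by writing the generators in the basis $\cos x_1,\sin x_1,\cos x_2,\sin x_2$.
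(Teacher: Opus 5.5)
Your plan follows the paper's proof in all three parts. Part (ii) is the same spectral-gap relaxation. Part (i) uses the same single-mode pulse with $\boldsymbol m=\boldsymbol e-\boldsymbol j$ and the same nonvanishing argument: $\det(\boldsymbol e,\boldsymbol j)\ne0$ for at least two axis vectors, and $2\boldsymbol e\cdot\boldsymbol j\ne1$ by parity. Part (iii) is the same explicit $4\times4$ bracket computation. Two points need correcting.

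\textbf{Cancellation in (i).} For a real pulse, exactly two inputs feed the output $\boldsymbol e$, namely $\boldsymbol j$ and $2\boldsymbol e-\boldsymbol j$.

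The mechanism you elaborate is genericity of $\chi$ separating the time weights. It fails when the two inputs lie on the same shell, i.e.\ when $\boldsymbol e\cdot\boldsymbol j=1$, and your selection rule allows this. Take $h=2\cos x_1\cos x_2$, $\boldsymbol j=(1,1)$, $\boldsymbol e=\boldsymbol e_1$, so the pulse is $\cos x_2$. Then the two contributions are $\pm\tfrac18$ times the same time integral. So the first-order $E_1$ projection vanishes for every $\chi$.

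The phase switch, which you list only as an alternative, is what works, and it is what the paper does. For the complex mode $e_{\boldsymbol m}$, the map $\cS(e_{\boldsymbol m},\cdot)$ is a pure weighted shift, so only $\boldsymbol j$ feeds $\boldsymbol e$. The resulting coefficient is
\begin{align*}
 c_{\boldsymbol m}(\boldsymbol j)\,\widehat h(\boldsymbol j)\,e^{-\nu T}\int_0^T\chi(s)e^{-\nu(|\boldsymbol j|^2-1)s}\,\dd s ,
\end{align*}
which is nonzero for $\chi\ge0$, $\chi\not\equiv0$. This complex first variation equals the cosine variation plus $\mathrm i$ times the sine variation. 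Hence one of the two real phases has nonzero $E_1$ projection.

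Alternatively, you can keep the $\chi$ argument if you restrict the choice to $\boldsymbol e$ with $\boldsymbol e\cdot\boldsymbol j\ne1$. That is always possible among the admissible axis vectors, and then the two inputs sit on different shells.

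\textbf{Structure in (iii).} Your expectation about the span is backwards. Each shell-two mode shifts $\pm\boldsymbol e_1$ to $\pm\boldsymbol e_2$ and vice versa. So the four generators are linearly independent skew maps exchanging $\operatorname{span}\{\cos x_1,\sin x_1\}$ and $\operatorname{span}\{\cos x_2,\sin x_2\}$.

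In the coordinates $(\Re z_1,\Im z_1,\Re z_2,\Im z_2)$ with $J_{ij}=E_{ij}-E_{ji}$, they are
\begin{align*}
 M_1=-J_{13}-J_{24},\quad M_2=-J_{14}+J_{23},\quad M_3=J_{13}-J_{24},\quad M_4=J_{14}+J_{23}.
\end{align*}
Their span is the off-diagonal part. What is missing is exactly the block-diagonal $\mathfrak{so}(2)\oplus\mathfrak{so}(2)=\operatorname{span}\{J_{12},J_{34}\}$. One layer of brackets supplies it: $[M_1,M_2]=2(J_{12}-J_{34})$ and $[M_3,M_4]=-2(J_{12}+J_{34})$. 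So your conclusion that one further layer of brackets suffices is right, but for the opposite structural reason.
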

      
\begin{proof}
      \emph{Seeding the first shell.}
      We first work in the complexified Fourier representation. Write
      \begin{align*}
       h=\sum_{\boldsymbol k\ne\boldsymbol 0}h_{\boldsymbol k}e_{\boldsymbol k},
       \qquad e_{\boldsymbol k}(x)=e^{\mathrm i\boldsymbol k\cdot x},
      \end{align*}
      so that $h_{\boldsymbol k}$ denotes the $\boldsymbol k$-th complex Fourier
      coefficient of $h$; for real valued $h$ one has
      $h_{-\boldsymbol k}=\overline{h_{\boldsymbol k}}$. The Fourier coefficient
      of the symmetric tangent interaction is, up to the fixed Fourier
      normalisation,
      \begin{align}\label{eq:Gamma}
       \widehat{\cS(f,g)}(\boldsymbol k)
       =\sum_{\boldsymbol m+\boldsymbol n=\boldsymbol k}
       \Gamma(\boldsymbol m,\boldsymbol n)f_{\boldsymbol m}g_{\boldsymbol n},
       \quad
       \Gamma(\boldsymbol m,\boldsymbol n)
       =\det(\boldsymbol m,\boldsymbol n)
       \bigl(|\boldsymbol m|^{-2}-|\boldsymbol n|^{-2}\bigr).
      \end{align}
      
      Assume $\Pi_1h=0$. Since $h\ne0$, choose
      $\boldsymbol n\in\mathbb Z^2\setminus\{\boldsymbol 0\}$ such that
      $h_{\boldsymbol n}\ne0$. Necessarily $|\boldsymbol n|^2>1$. Choose a unit
      lattice vector $\boldsymbol\ell$, $|\boldsymbol\ell|=1$, such that
      $\det(\boldsymbol\ell,\boldsymbol n)\ne0$, and put
      $\boldsymbol m=\boldsymbol\ell-\boldsymbol n$. Then
      \begin{align*}
       \det(\boldsymbol m,\boldsymbol n)
       =\det(\boldsymbol\ell,\boldsymbol n)\ne0,
       \qquad
       |\boldsymbol m|^2-|\boldsymbol n|^2
       =1-2\boldsymbol\ell\cdot\boldsymbol n\ne0.
      \end{align*}
      Hence
      $\Gamma(\boldsymbol m,\boldsymbol n)h_{\boldsymbol n}\ne0$.
      
      Choose $\beta\in C_c^\infty(0,\delta)$ with $\beta\ge0$ and
      $\beta\not\equiv0$. For the moment consider the complexified base pulse
      $\bar w_\varepsilon(s)=\varepsilon\beta(s)e_{\boldsymbol m}$ and let
      $\rho^\varepsilon_s=U_{\bar w_\varepsilon}(s,0)h$. Thus
      \begin{align*}
       \partial_s\rho^\varepsilon_s
       =\nu\Delta\rho^\varepsilon_s
       +\cS(\bar w_\varepsilon(s),\rho^\varepsilon_s),
       \qquad \rho^\varepsilon_0=h.
      \end{align*}
      At $\varepsilon=0$, $\rho^0_s=e^{\nu s\Delta}h$. Therefore the first
      variation
      \begin{align*}
       \zeta_s
       =\left.\frac{\mathrm d}{\mathrm d\varepsilon}\right|_{\varepsilon=0}
       \rho^\varepsilon_s
      \end{align*}
      satisfies
      \begin{align*}
       \partial_s\zeta_s
       =\nu\Delta\zeta_s
       +\beta(s)\cS(e_{\boldsymbol m},e^{\nu s\Delta}h),
       \qquad \zeta_0=0,
      \end{align*}
      and hence
      \begin{align*}
       \zeta_\delta
       =\int_0^\delta e^{\nu(\delta-s)\Delta}
       \beta(s)\cS(e_{\boldsymbol m},e^{\nu s\Delta}h)\,\dd s.
      \end{align*}
      Since $\boldsymbol m+\boldsymbol n=\boldsymbol\ell$ and
      $|\boldsymbol\ell|=1$, the $\boldsymbol\ell$-th Fourier coefficient is
      \begin{align*}
       \left[
       \left.\frac{\mathrm d}{\mathrm d\varepsilon}\right|_{\varepsilon=0}
       \Pi_1U_{\bar w_\varepsilon}(\delta,0)h
       \right]_{\boldsymbol\ell}
       =
       \Gamma(\boldsymbol m,\boldsymbol n)h_{\boldsymbol n}e^{-\nu\delta}
       \int_0^\delta\beta(s)
       e^{-\nu(|\boldsymbol n|^2-1)s}\,\dd s\ne0.
      \end{align*}
      
      Returning to real controls, since
      $e_{\boldsymbol m}=\cos(\boldsymbol m\cdot x)
      +\mathrm i\sin(\boldsymbol m\cdot x)$, the preceding complex first
      variation is the complexification of the two real first variations
      corresponding to the base paths
      $\varepsilon\beta(s)\cos(\boldsymbol m\cdot x)$ and
      $\varepsilon\beta(s)\sin(\boldsymbol m\cdot x)$. Hence at least one of
      these two real phase directions has a nonzero first shell derivative.
      Fix such a real Fourier phase $\upsilon$ and set
      $\bar w_{\varepsilon,\upsilon}(s)=\varepsilon\beta(s)\upsilon$.
      
      This prescribed real base path is an exact controlled Navier--Stokes
      skeleton. Namely, by \eqref{eq:skeleton-control}, define
      \begin{align*}
       g_{\varepsilon,\upsilon}(s)
       =Q^{-1}\bigl(
       \varepsilon\beta'(s)\upsilon
       -\nu\varepsilon\beta(s)\Delta\upsilon
       +\varepsilon^2\beta(s)^2B(\upsilon,\upsilon)
       \bigr).
      \end{align*}
      Then
      \begin{align*}
       \partial_s\bar w_{\varepsilon,\upsilon}
       =\nu\Delta\bar w_{\varepsilon,\upsilon}
       -B(\bar w_{\varepsilon,\upsilon},
          \bar w_{\varepsilon,\upsilon})
       +Qg_{\varepsilon,\upsilon}.
      \end{align*}
      Since $\upsilon$ is a single smooth Fourier phase and every corresponding
      noise coefficient is nonzero, $g_{\varepsilon,\upsilon}\in\cH_\delta$.
      Moreover, $\beta\in C_c^\infty(0,\delta)$ implies that
      $\bar w_{\varepsilon,\upsilon}$ is identically zero near both endpoints,
      so it is a smooth zero to zero base loop.
      
      For this real phase, put
      \begin{align*}
       v=
       \left.\frac{\mathrm d}{\mathrm d\varepsilon}\right|_{\varepsilon=0}
       \Pi_1U_{\bar w_{\varepsilon,\upsilon}}(\delta,0)h.
      \end{align*}
      By construction $v\ne0$, whereas
      $\Pi_1U_{\bar w_{0,\upsilon}}(\delta,0)h
      =\Pi_1e^{\nu\delta\Delta}h=0$. The $C^1$ dependence of the fibre endpoint
      on $\varepsilon$ therefore gives
      \begin{align*}
       \Pi_1U_{\bar w_{\varepsilon,\upsilon}}(\delta,0)h
       =\varepsilon v+o(\varepsilon),
       \qquad v\ne0.
      \end{align*}
      Thus the first shell projection is nonzero for every sufficiently small
      nonzero $\varepsilon$, proving \textup{(i)}.
      
      \emph{Relax to $E_1$.}
      Suppose now that $\Pi_1h\ne0$, and write
      $h=\Pi_1h+\Pi_{>1}h$, where $\Pi_{>1}=I-\Pi_1$. With zero base, the
      raw fibre is $e^{\nu t\Delta}h$. Since the first shell has Laplace
      eigenvalue $1$ and every mode in $\Pi_{>1}H$ has eigenvalue at least $2$,
      \begin{align*}
       [e^{\nu t\Delta}h]
       =\left[\Pi_1h+r_t\right],
       \qquad
       r_t=e^{\nu t}e^{\nu t\Delta}\Pi_{>1}h,
       \qquad
       \norm{r_t}_2\le e^{-\nu t}\norm{\Pi_{>1}h}_2.
      \end{align*}
      Since $r_t\perp\Pi_1h$, the definition of the projective metric gives
      \begin{align*}
       d_{\PP}([e^{\nu t\Delta}h],[\Pi_1h])
       \le C\frac{\norm{r_t}_2}{\norm{\Pi_1h}_2}
       \le Ce^{-\nu t}
       \frac{\norm{\Pi_{>1}h}_2}{\norm{\Pi_1h}_2}.
      \end{align*}
      Thus a sufficiently long zero base interval makes the projective fibre
      arbitrarily close to $[\Pi_1h]$, proving \textup{(ii)}.
      
      \emph{Rotate inside $E_1$.}
      Let $\boldsymbol e_1=(1,0)$ and $\boldsymbol e_2=(0,1)$. A real
      $f\in E_1$ has the complex Fourier representation
      \begin{align*}
       f=z_1e_{\boldsymbol e_1}+\bar z_1e_{-\boldsymbol e_1}
         +z_2e_{\boldsymbol e_2}+\bar z_2e_{-\boldsymbol e_2},
       \qquad
       z_1=f_{\boldsymbol e_1},\quad z_2=f_{\boldsymbol e_2}.
      \end{align*}
      Thus $E_1\simeq\mathbb C^2\simeq\mathbb R^4$. Consider the two
      shell two frequencies
      \begin{align*}
       \boldsymbol q=\boldsymbol e_2-\boldsymbol e_1,
       \qquad
       \boldsymbol r=\boldsymbol e_1+\boldsymbol e_2,
      \end{align*}
      and write
      $a=w_{\boldsymbol q}$ and $c=w_{\boldsymbol r}$. Since $w$ is real,
      $w_{-\boldsymbol q}=\bar a$ and
      $w_{-\boldsymbol r}=\bar c$.
      
      We compute the shell one component of the infinitesimal control operator
      $\Pi_1\cS(w,\cdot)|_{E_1}$. For the output frequency
      $\boldsymbol e_1$, the only relevant pairs in \eqref{eq:Gamma} are
      $(-\boldsymbol q,\boldsymbol e_2)$ and
      $(\boldsymbol r,-\boldsymbol e_2)$; for the output frequency
      $\boldsymbol e_2$, they are
      $(\boldsymbol q,\boldsymbol e_1)$ and
      $(\boldsymbol r,-\boldsymbol e_1)$. Directly from
      \eqref{eq:Gamma},
      \begin{align*}
       \Gamma(-\boldsymbol q,\boldsymbol e_2)
       =-\gamma,\qquad
       \Gamma(\boldsymbol r,-\boldsymbol e_2)=\gamma,\qquad
       \Gamma(\boldsymbol q,\boldsymbol e_1)=\gamma,\qquad
       \Gamma(\boldsymbol r,-\boldsymbol e_1)=-\gamma,
      \end{align*}
      where $\gamma\ne0$ is the same fixed constant, equal to $1/2$ under the
      unnormalised complex convention. The scalar heat contribution on $E_1$
      is $-\nu I$ and therefore has no effect on the projective direction.
      After suppressing this common scalar and absorbing $\gamma$ into the
      control amplitude, the projected control equations are
      \begin{align*}
       \dot z_1=-\bar a z_2+c\bar z_2,
       \qquad
       \dot z_2=a z_1-c\bar z_1.
      \end{align*}
      
      Write
      $x=(\Re z_1,\Im z_1,\Re z_2,\Im z_2)$. After an immaterial rescaling
      and sign choice of the four real Fourier phases, the choices
      $(a,c)=(1,0)$, $(\mathrm i,0)$, $(0,1)$, and $(0,\mathrm i)$ give,
      respectively, the skew symmetric matrices
      \begin{align*}
      M_1&=\begin{pmatrix}0&0&-1&0\\0&0&0&-1\\1&0&0&0\\0&1&0&0\end{pmatrix},&
      M_2&=\begin{pmatrix}0&0&0&-1\\0&0&1&0\\0&-1&0&0\\1&0&0&0\end{pmatrix},\\
      M_3&=\begin{pmatrix}0&0&1&0\\0&0&0&-1\\-1&0&0&0\\0&1&0&0\end{pmatrix},&
      M_4&=\begin{pmatrix}0&0&0&1\\0&0&1&0\\0&-1&0&0\\-1&0&0&0\end{pmatrix}.
      \end{align*}
      
      To see directly that these controls generate all of $\mathfrak{so}(4)$,
      let $J_{ij}=E_{ij}-E_{ji}$. Then
      \begin{align*}
 M_1&=-J_{13}-J_{24},\qquad M_2=-J_{14}+J_{23},\\
 M_3&=J_{13}-J_{24},\qquad M_4=J_{14}+J_{23}.
\end{align*}
      while
      \begin{align*}
       [M_1,M_2]=2(J_{12}-J_{34}),
       \qquad
       [M_3,M_4]=-2(J_{12}+J_{34}).
      \end{align*}
      Hence the Lie algebra generated by $M_1,\ldots,M_4$ contains all six
      standard generators $J_{ij}$, $1\le i<j\le4$, and therefore equals
      $\mathfrak{so}(4)$. By the standard analytic subgroup theorem for
      matrix Lie groups \cite[Chapter~II, Theorem~2.1]{Hall15}, the connected
      subgroup generated by the one parameter groups
      $\exp(tM_j)$ has precisely this Lie algebra. Since $SO(4)$ is connected,
      that subgroup is $SO(4)$. It acts transitively on the unit sphere of the
      real four dimensional space $E_1$. This proves \textup{(iii)}.
\end{proof}

Put $\Pi_1^\perp=I-\Pi_1$.  In accordance with the calculation in
\Cref{lem:first-shell-seeding-control}\textup{(iii)}, define the four shell two real phases by
\begin{align*}
 \mathcal A_2=\{e_{\boldsymbol e_2-\boldsymbol e_1,\cos},e_{\boldsymbol e_2-\boldsymbol e_1,\sin},e_{\boldsymbol e_1+\boldsymbol e_2,\cos},e_{\boldsymbol e_1+\boldsymbol e_2,\sin}\}.
\end{align*}
For $\upsilon\in\mathcal A_2$, set
\begin{align*}
 R_\upsilon=\left.\Pi_1\cS(\upsilon,\cdot)\right|_{E_1}.
\end{align*}
For $f,g\in E_1$, one has $\cS(f,g)=0$ and therefore
$\ip{R_\upsilon f}{g}+\ip{f}{R_\upsilon g}=-\ip{\cS(f,g)}{\upsilon}=0$.  Thus $R_\upsilon\in\mathfrak{so}(E_1)$.
Fix $\beta\in C_c^\infty(0,\delta)$ with
$\int_0^\delta\beta(s)\,\dd s=1$.  Let $U_{\varepsilon,\upsilon}$ be the
raw fibre propagator over the base loop
$\bar w(s)=\varepsilon\beta(s)\upsilon$, and remove the common shell one
heat scalar by setting
\begin{align}\label{eq:veup}
V_{\varepsilon,\upsilon}
=e^{\nu\delta}U_{\varepsilon,\upsilon}(\delta,0).
\end{align}

To realise the shell rotations by actual full PDE pulses, we need a
uniform first order expansion on $E_1$ together with bounds for leakage to
and from higher shells.  The next lemma provides these estimates for each
of the four generating phases.
\begin{lemma}
\label{lem:rev-one-pulse-blocks}
There are $\varepsilon_0>0$ and $C<\infty$, uniform over
$\upsilon\in\mathcal A_2$, such that, for
$|\varepsilon|\le\varepsilon_0$,
\begin{align}
\bigl\|\Pi_1V_{\varepsilon,\upsilon}\Pi_1-(I_{E_1}+\varepsilon R_\upsilon)\bigr\|
&\le C\varepsilon^2,
\label{eq:rev-pulse-EE}\\
\bigl\|\Pi_1^\perp V_{\varepsilon,\upsilon}\Pi_1\bigr\|
+\bigl\|\Pi_1V_{\varepsilon,\upsilon}\Pi_1^\perp\bigr\|
&\le C|\varepsilon|,
\notag\\
\bigl\|\Pi_1^\perp V_{\varepsilon,\upsilon}\Pi_1^\perp\bigr\|
&\le e^{-\nu\delta}+C|\varepsilon|,
\qquad
\bigl\|V_{\varepsilon,\upsilon}\bigr\|\le C.
\notag
\end{align}
\end{lemma}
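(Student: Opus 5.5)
The plan is a perturbative expansion of the raw fibre propagator in the small amplitude $\varepsilon$. The base loop $\bar w(s)=\varepsilon\beta(s)\upsilon$ is a single smooth Fourier phase supported in $(0,\delta)$. Hence the fibre equation is
\begin{align*}
\partial_s\rho_s=\nu\Delta\rho_s+\varepsilon\beta(s)H_\upsilon\rho_s,\qquad H_\upsilon=\cS(\upsilon,\cdot),
\end{align*}
and $H_\upsilon$ is a finite sum of weighted shifts of order one, with symbol in $\mathfrak A$ as in \Cref{rev:cor-lie-word-mapping}. I would write Duhamel's formula around the heat semigroup:
\begin{align*}
U_{\varepsilon,\upsilon}(\delta,0)=e^{\nu\delta\Delta}+\varepsilon\int_0^\delta e^{\nu(\delta-s)\Delta}\beta(s)H_\upsilon e^{\nu s\Delta}\,\dd s+\varepsilon^2\mathcal R_\varepsilon .
\end{align*}
The remainder $\mathcal R_\varepsilon$ is the iterated Duhamel term, with $U_{\varepsilon,\upsilon}$ inside. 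Multiplying by $e^{\nu\delta}$ gives $V_{\varepsilon,\upsilon}$.

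First I would prove the uniform bound $\|U_{\varepsilon,\upsilon}(t,s)\|_{\mathcal L(H)}\le C$ for $|\varepsilon|\le\varepsilon_0$. I would get it by the $L^2$ energy identity. Since $\upsilon$ is smooth, $B(\upsilon,\rho)$ is transport by the divergence free field $K\upsilon$, so $\langle B(\upsilon,\rho),\rho\rangle=0$. The other term is bounded by $|\langle B(\rho,\upsilon),\rho\rangle|\le\|K\rho\|_\infty\|\nabla\upsilon\|_\infty\|\rho\|_2$. Using $\|K\rho\|_{L^\infty}\lesssim\|\rho\|_{H^{1/2+}}$ and interpolation with the dissipation $\nu\|\nabla\rho\|^2$, Gronwall gives the bound. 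Alternatively, $\|K\rho\|_{L^2}\le\|\rho\|_2$ together with $\nabla\upsilon\in L^\infty$ suffices directly. Either way, $\|U\|\le e^{C|\varepsilon|\int\beta}$, which gives $\|V\|\le C$.

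Next I would estimate the remainder. The key point is that $H_\upsilon$ loses one derivative, so a naive $L^2$ operator bound fails. I would absorb the loss with parabolic smoothing: $\|H_\upsilon e^{\nu s\Delta}\|_{\mathcal L(H)}\lesssim(\nu s)^{-1/2}$, which is integrable. A cleaner route is to use $\beta\in C_c^\infty(0,\delta)$, supported in $[\delta_1,\delta_2]\subset(0,\delta)$, so that the input has already been smoothed by $e^{\nu\delta_1\Delta}$. The energy estimate also controls $\int\|\nabla\rho\|^2$. Combining these, $\|\mathcal R_\varepsilon\|\le C$ and the first order term has norm $\le C$. This handles the main technical issue, which is the uniformity and the derivative loss. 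I expect it to be the main obstacle, but it is routine with the $(\nu s)^{-1/2}$ smoothing bound.

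Finally I would read off the blocks.

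On $E_1$, $e^{\nu s\Delta}$ acts as the scalar $e^{-\nu s}$. After the factor $e^{\nu\delta}$, the first order term on $\Pi_1\cdot\Pi_1$ becomes
\begin{align*}
\int\beta(s)\,\Pi_1H_\upsilon\Pi_1\,\dd s=R_\upsilon,
\end{align*}
because $\int\beta=1$. This gives \eqref{eq:rev-pulse-EE} with the $\varepsilon^2$ error.

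The zeroth order term $e^{\nu\delta}e^{\nu\delta\Delta}$ is block diagonal. On $\Pi_1$ it is the identity. On $\Pi_1^\perp$ its norm is at most $e^{\nu\delta}e^{-2\nu\delta}=e^{-\nu\delta}$, since every remaining eigenvalue of $-\Delta$ is at least $2$. So the off-diagonal blocks come only from the order $\varepsilon$ terms and are $\le C|\varepsilon|$. The $\Pi_1^\perp$ diagonal block is at most $e^{-\nu\delta}+C|\varepsilon|$.

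All constants are uniform because $\mathcal A_2$ contains only four fixed smooth phases.
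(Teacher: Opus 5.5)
Your proposal is correct and follows essentially the same route as the paper: a Duhamel expansion of $U_{\varepsilon,\upsilon}(\delta,0)$ around $e^{\nu\delta\Delta}$, a uniform $\mathcal L(H)$ bound, $t^{-1/2}$ parabolic smoothing to absorb the derivative lost by $H_\upsilon$, and then reading off the blocks from $\Pi_1e^{\nu s\Delta}=e^{-\nu s}\Pi_1$, $\int\beta=1$, and the block diagonal zeroth order term. The differences are minor: the paper gets the uniform bound from the Duhamel formula itself for small $\varepsilon$, rather than from the $L^2$ energy cancellation, and it places the smoothing on the left, $\|e^{\nu t\Delta}\|_{\mathcal L(H^{-1},H)}\le Ct^{-1/2}$ with $H_\upsilon:H\to H^{-1}$, so the $O(\varepsilon^2)$ remainder follows using only the $\mathcal L(H)$ bound on $U_{\varepsilon,\upsilon}(s,0)$, whereas your right-sided version needs the $L^2_tH^1$ energy control that you mention.
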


\begin{proof}
      Put $S_t=e^{\nu t\Delta}$ and $H_\upsilon=\mathcal S(\upsilon,\cdot)$.  Since
      $\upsilon$ is one of the four fixed smooth Fourier phases,
      $H_\upsilon:H\to H^{-1}$ is bounded, while
      $\|S_t\|_{\mathcal L(H^{-1},H)}\le Ct^{-1/2}$.  Hence the Duhamel formula
      for the fibre equation over $\bar w(s)=\varepsilon\beta(s)\upsilon$ gives,
      uniformly for $|\varepsilon|$ sufficiently small,
      \begin{align*}
      U_{\varepsilon,\upsilon}(t,0)
      =S_t+\varepsilon\int_0^tS_{t-s}\beta(s)H_\upsilon
      U_{\varepsilon,\upsilon}(s,0)\,\dd s,
      \qquad
      \sup_{t\le\delta}\|U_{\varepsilon,\upsilon}(t,0)\|\le C.
      \end{align*}
      It follows first that
      $\sup_{t\le\delta}\|U_{\varepsilon,\upsilon}(t,0)-S_t\|\le C|\varepsilon|$,
      and substitution of this estimate once more into the Duhamel formula yields
      the operator norm expansion
      \begin{align*}
      U_{\varepsilon,\upsilon}(\delta,0)
      =S_\delta+\varepsilon\int_0^\delta
      S_{\delta-s}\beta(s)H_\upsilon S_s\,\dd s+O(\varepsilon^2)
      \qquad\text{in }\mathcal L(H).
      \end{align*}
      The constants are uniform in $\upsilon\in\mathcal A_2$ because
      $\mathcal A_2$ contains only four elements.
      
      Now let $v\in E_1$.  Since $S_sv=e^{-\nu s}v$ and
      $\Pi_1S_{\delta-s}=e^{-\nu(\delta-s)}\Pi_1$, multiplication by
      $e^{\nu\delta}$ gives
      \begin{align*}
      e^{\nu\delta}\Pi_1
      \int_0^\delta S_{\delta-s}\beta(s)H_\upsilon S_sv\,\dd s
      =\left(\int_0^\delta\beta(s)\,\dd s\right)\Pi_1H_\upsilon v
      =R_\upsilon v.
      \end{align*}
      Therefore
      \begin{align*}
      \Pi_1V_{\varepsilon,\upsilon}\Pi_1
      =I_{E_1}+\varepsilon R_\upsilon+O(\varepsilon^2),
      \end{align*}
      which proves \eqref{eq:rev-pulse-EE}.
      
      At $\varepsilon=0$ one has
      $V_{0,\upsilon}=e^{\nu\delta}e^{\nu\delta\Delta}$, which preserves the
      Laplace shell decomposition.  Hence both cross blocks vanish, whereas on
      $E_1^\perp$ the smallest Laplace eigenvalue is $2$, so
      \begin{align*}
      \|\Pi_1^\perp V_{0,\upsilon}\Pi_1^\perp\|
      =e^{-\nu\delta}.
      \end{align*}
      Finally, the preceding Duhamel expansion gives
      $\|V_{\varepsilon,\upsilon}-V_{0,\upsilon}\|\le C|\varepsilon|$ and
      $\|V_{\varepsilon,\upsilon}\|\le C$.  The remaining three estimates follow
      immediately.
\end{proof}

For a zero base relaxation of length $L$, use the projectively equivalent
scaled heat operator
\begin{align*}
D_L=e^{\nu L}e^{\nu L\Delta},
\qquad
D_L\Pi_1=\Pi_1,
\qquad
\bigl\|D_L\Pi_1^\perp\bigr\|\le e^{-\nu L}.
\end{align*}

The one pulse estimate can now be iterated with zero base relaxation
between pulses.  The next proposition shows that a finite ideal word in
$E_1$ survives in the full PDE while the transverse leakage is made
arbitrarily small.
\begin{proposition}
\label{lem:pulse-relax-concatenation}
Let $\upsilon_1,\dots,\upsilon_N\in\mathcal A_2$ and
$|\varepsilon_j|\le\varepsilon_0$ where $\varepsilon_0$ is from \Cref{lem:rev-one-pulse-blocks}, and set
$G_j=I_{E_1}+\varepsilon_jR_{\upsilon_j}$.
Let $x\in H$ satisfy $\Pi_1x\ne0$, and suppose that
\begin{align}
 d_{\PP(E_1)}
 \bigl([G_N\cdots G_1\Pi_1x],[\phi]\bigr)<\eta/4.
 \label{eq:rev-ideal-word-target}
\end{align}
If $\sum_{j=1}^N\varepsilon_j^2$ is sufficiently small, then there are
finite relaxation times $L_0,\dots,L_N$ such that, with
\begin{align*}
 x_0=D_{L_0}x,\qquad
 x_{j+1}
 =D_{L_{j+1}}V_{\varepsilon_{j+1},\upsilon_{j+1}}x_j,
 \qquad 0\le j<N,
\end{align*}
where $V_{\varepsilon_{j+1},\upsilon_{j+1}}$ is given as in \eqref{eq:veup}, one has
\begin{align*}
 \Pi_1x_j\ne0,\qquad 0\le j\le N,
 \qquad
 d_{\PP(H)}([x_N],[\phi])<\eta.
\end{align*}
The corresponding base pieces form a smooth zero to zero loop of finite
Cameron--Martin energy.
\end{proposition}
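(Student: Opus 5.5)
The plan is a perturbation argument around the ideal word $G_N\cdots G_1$ on $E_1$. Long relaxations kill the transverse component, and the $O(\varepsilon_j^2)$ and leakage errors stay summably small. Track each iterate through the decomposition $x_j=\Pi_1x_j+\Pi_1^\perp x_j$, writing $a_j=\Pi_1x_j\in E_1$ and $b_j=\Pi_1^\perp x_j$. The quantity to control is the relative size $\theta_j=\|b_j\|_2/\|a_j\|_2$ together with the defect $a_j-G_j\cdots G_1\Pi_1x$.

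\emph{Initial relaxation.} Since $D_{L_0}\Pi_1=\Pi_1$ and $\|D_{L_0}\Pi_1^\perp\|\le e^{-\nu L_0}$, we get $a_0=\Pi_1x\ne0$ and $\theta_0\le e^{-\nu L_0}\|x\|_2/\|\Pi_1x\|_2$, which is as small as we like for large $L_0$.

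\emph{One step.} Apply \Cref{lem:rev-one-pulse-blocks} and then $D_{L_{j+1}}$. This gives
\begin{align*}
 a_{j+1}=G_{j+1}a_j+e_{j+1},\qquad \|e_{j+1}\|\le C\varepsilon_{j+1}^2\|a_j\|+C|\varepsilon_{j+1}|\,\|b_j\|,
\end{align*}
and $\|b_{j+1}\|\le e^{-\nu L_{j+1}}C(\|a_j\|+\|b_j\|)$.

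The rotation part of $G_j$ is exact, and $\|G_j\|\le1+C\varepsilon_j^2$ by skew-symmetry of $R_{\upsilon_j}$. Likewise $\|G_j^{-1}\|\le(1-C\varepsilon_j^2)^{-1}$, uniformly. Hence all products $G_k\cdots G_{j+1}$ and their inverses are bounded by $\exp(C\sum\varepsilon_j^2)\le2$ once $\sum\varepsilon_j^2$ is small. This gives the lower bound $\|a_j\|\ge\frac12\|\Pi_1x\|$, provided the accumulated errors are at most a quarter of $\|\Pi_1x\|$. In particular $\Pi_1x_j\ne0$ for all $j$.

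Choose $L_{j+1}$ inductively so large that $\theta_{j+1}\le\epsilon'$. Then the leakage term $C|\varepsilon_{j+1}|\|b_j\|$ is bounded by $C\varepsilon_0\epsilon'\|a_j\|$. Summing through the discrete Duhamel formula for $a_N-G_N\cdots G_1a_0$ yields the bound $C(\sum_j\varepsilon_j^2+N\epsilon')\|\Pi_1x\|$. The $\sum\varepsilon_j^2$ term is small by hypothesis, and $N\epsilon'$ is small by choosing $\epsilon'$ after $N$ is fixed. Relative to $\|G_N\cdots G_1\Pi_1x\|\ge\frac12\|\Pi_1x\|$, this is below $\eta/8$ in projective distance.

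\emph{Closing the estimate.} The final relaxation makes $\theta_N$ below $\eta/16$. Then $d_{\PP}([x_N],[a_N])\lesssim\theta_N$, and $d_{\PP}([a_N],[G_N\cdots G_1\Pi_1x])$ is below $\eta/8$. Combined with \eqref{eq:rev-ideal-word-target}, the triangle inequality gives $d_{\PP(H)}([x_N],[\phi])<\eta$.

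Some constants will need checking. The step I expect to be the delicate point is the $\varepsilon^2$ bookkeeping: the second-order error must be summed against $\sum\varepsilon_j^2$ rather than $N\max\varepsilon_j^2$. This is exactly why the hypothesis is stated in terms of $\sum\varepsilon_j^2$. The uniform bounds on the products of the $G_j$ are what make this summation work.

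\emph{Base path.} Each pulse is the zero-to-zero loop $\varepsilon_j\beta(s)\upsilon_j$ from \Cref{lem:first-shell-seeding-control}, with finite energy via \eqref{eq:skeleton-control}. Each relaxation is the zero path, with zero control. There are finitely many pieces, and $\beta\in C_c^\infty$ vanishes near the endpoints of each pulse interval. Therefore the concatenation is a smooth zero-to-zero loop with finite Cameron--Martin energy.
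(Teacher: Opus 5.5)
Your proposal is correct and follows essentially the same route as the paper: the splitting $a_j=\Pi_1x_j$, $b_j=\Pi_1^\perp x_j$, the one-step recursion from \Cref{lem:rev-one-pulse-blocks}, the inductive choice of relaxation times keeping $\|b_j\|_2/\|a_j\|_2$ small, the discrete Duhamel comparison of $a_N$ with the ideal word $G_N\cdots G_1\Pi_1x$, and the same pulse-plus-zero-relaxation base loop. The one refinement is your bound $\|G_k\cdots G_{j+1}\|\le\exp(C\sum_j\varepsilon_j^2)$ from skew-symmetry, which makes the comparison constant independent of the word length $N$ (the paper's $C_{\rm w}$ is only said to depend on the word), and so makes explicit the $N$-uniformity used when \Cref{cor:rev-accessibility-closed} refines the Euler products.
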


\begin{proof}
Write
\begin{align*}
 a_j=\Pi_1x_j,\qquad
 b_j=\Pi_1^\perp x_j,\qquad
 r_j=\frac{\|b_j\|_2}{\|a_j\|_2}.
\end{align*}
Since $D_L$ is the identity on $E_1$ and contracts $E_1^\perp$ by
$e^{-\nu L}$, \Cref{lem:rev-one-pulse-blocks} gives
\begin{align*}
 a_{j+1}&=G_{j+1}a_j+e_{j+1},\\
 \|e_{j+1}\|_2
 &\le C\bigl(\varepsilon_{j+1}^2
       +|\varepsilon_{j+1}|r_j\bigr)\|a_j\|_2,\\
 \|b_{j+1}\|_2
 &\le Ce^{-\nu L_{j+1}}
       (1+r_j)\|a_j\|_2.
\end{align*}
Since $R_{\upsilon_{j+1}}$ is skew adjoint,
$\|G_{j+1}a_j\|_2\ge\|a_j\|_2$.  Hence, whenever
\begin{align*}
 C\varepsilon_{j+1}^2
 +C|\varepsilon_{j+1}|r_j\le\frac14,
\end{align*}
we have
\begin{align*}
 \|a_{j+1}\|_2\ge\frac34\|a_j\|_2,
 \qquad
 r_{j+1}
 \le\frac{4C}{3}e^{-\nu L_{j+1}}(1+r_j).
\end{align*}

Choose small numbers $\theta_0,\dots,\theta_N>0$.  First take $L_0$
large enough that $r_0\le\theta_0$.  Inductively, after $x_j$ has been
constructed, choose $L_{j+1}<\infty$ so that
$r_{j+1}\le\theta_{j+1}$.  Taking the $\theta_j$ sufficiently small,
and using the assumed smallness of $\sum_j\varepsilon_j^2$, ensures the
preceding admissibility condition at every step.  In particular,
$a_j\ne0$ throughout the word.

It remains to compare the first shell motion with the ideal word.  Put
$y_0=a_0$ and $y_{j+1}=G_{j+1}y_j$.  Since the word is finite, iteration
of the preceding recurrence gives
\begin{align*}
 \|a_N-y_N\|_2
 \le C_{\rm w}\|a_0\|_2
 \left(
   \sum_{j=1}^N\varepsilon_j^2
   +\sum_{j=1}^N|\varepsilon_j|\theta_{j-1}
 \right),
\end{align*}
where $C_{\rm w}<\infty$ depends only on the fixed finite word.
Moreover, skew adjointness gives
$\|y_j\|_2\ge\|a_0\|_2$ for every $j$.  Thus, by first taking
$\sum_j\varepsilon_j^2$ sufficiently small and then the
$\theta_j$ sufficiently small, the true first shell endpoint is
arbitrarily close, projectively, to $[y_N]$, while
$r_N\le\theta_N$ makes the $E_1^\perp$ component arbitrarily small.
Together with \eqref{eq:rev-ideal-word-target}, this gives
\begin{align*}
 d_{\PP(H)}([x_N],[\phi])<\eta.
\end{align*}

Finally, each pulse has base path
$\bar w(s)=\varepsilon\beta(s)\upsilon$, with
$\beta\in C_c^\infty(0,\delta)$, and therefore joins smoothly to the zero
base relaxation intervals.  Its skeleton control is
\begin{align*}
 g_{\varepsilon,\upsilon}
 =Q^{-1}\bigl(
   \varepsilon\beta'\upsilon
   -\nu\varepsilon\beta\Delta\upsilon
   +\varepsilon^2\beta^2B(\upsilon,\upsilon)
 \bigr).
\end{align*}
All terms have finite Fourier support and the corresponding noise
coefficients are nonzero, so
\begin{align}
 \|g_{\varepsilon,\upsilon}\|_{L^2(0,\delta;H)}^2
 \le C_\upsilon(\varepsilon^2+\varepsilon^4).
 \label{eq:rev-pulse-energy}
\end{align}
There are only finitely many pulses and the relaxation intervals use zero
control, hence the concatenated loop has finite Cameron--Martin energy.
\end{proof}

The finite dimensional controllability can therefore be lifted to the full
projective fibre.  The following corollary packages seeding, relaxation, and
rotation into a single finite energy zero to zero controlled loop.

\begin{corollary}
\label{cor:rev-accessibility-closed}
For every $h\in H\setminus\{0\}$ and every $\eta>0$, there is a smooth
finite energy zero to zero controlled base path whose tangent propagator
sends $[h]$ into the $\eta$ neighborhood of $[\phi]$.  The control time is
finite and may depend on $h$.
\end{corollary}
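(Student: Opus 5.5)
The plan is to concatenate the three ingredients of \Cref{lem:first-shell-seeding-control} with \Cref{lem:pulse-relax-concatenation}, all along zero-to-zero base loops, so that the base state is $0$ at every junction and the fibre propagators compose.

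\emph{Seeding.} Fix $h\in H\setminus\{0\}$. If $\Pi_1h=0$, apply \Cref{lem:first-shell-seeding-control}\textup{(i)}: a smooth zero-to-zero pulse of finite energy produces a raw fibre $h_1=U(\delta,0)h$ with $\Pi_1h_1\ne0$. By backward uniqueness $h_1\ne0$. If $\Pi_1h\ne0$, skip this step and set $h_1=h$. Since the pulse ends with zero base, we may continue from the state $(0,h_1)$.

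\emph{Ideal word in $E_1$.} By \Cref{lem:first-shell-seeding-control}\textup{(iii)}, the one-parameter groups $\exp(tR_\upsilon)$, $\upsilon\in\mathcal A_2$, generate $SO(E_1)$, which acts transitively on the unit sphere. So there is a finite product of such exponentials mapping $\Pi_1h_1/\|\Pi_1h_1\|_2$ to $\phi$. Each $\exp(tR_\upsilon)$ is a limit of $(I+(t/n)R_\upsilon)^n$, and the error in $\mathcal L(E_1)$ is $O(t^2/n)$. Choosing $n$ large therefore gives a word $G_N\cdots G_1$ with $G_j=I+\varepsilon_jR_{\upsilon_j}$ satisfying \eqref{eq:rev-ideal-word-target} with $\eta/4$. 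The key point is that $\sum_j\varepsilon_j^2\le\sum t^2/n\to0$ while the word's endpoint still converges. Thus the smallness hypothesis of \Cref{lem:pulse-relax-concatenation} can be met simultaneously with the target condition.

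\emph{Lifting to the PDE.} The delicate point is the order of quantifiers. I would first fix $n$ so large that the product of exponentials is approximated to within $\eta/8$ projectively, and so that $\sum\varepsilon_j^2$ lies below the threshold of \Cref{lem:pulse-relax-concatenation}. Here one must check that the constant $C_{\rm w}$ there remains controlled. As $n$ grows, $C_{\rm w}$ comes from products of $\|G_j\|\le 1+C|\varepsilon_j|$, and these stay bounded by $e^{C\sum|\varepsilon_j|}$, which is bounded because $\sum|\varepsilon_j|=\sum|t|$ is fixed. Then choose the relaxation times $L_j$ and apply the proposition with $x=h_1$; its relaxation $D_{L_0}$ also absorbs \Cref{lem:first-shell-seeding-control}\textup{(ii)}. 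This gives $d_{\PP(H)}([x_N],[\phi])<\eta$.

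\emph{Assembly.} The controlled base path is the seeding pulse followed by the pulse--relaxation loop of \Cref{lem:pulse-relax-concatenation}. It is smooth, zero to zero, has finite total time, and has finite Cameron--Martin energy by \eqref{eq:rev-pulse-energy} summed over finitely many pulses. Finally, $D_L$ and $V_{\varepsilon,\upsilon}$ are positive scalar multiples of the true propagators, so the projective endpoint of the actual tangent propagator is $[x_N]$. The main obstacle is the uniformity in the middle step: the quadratic errors must vanish as the word is refined while the linear leakage terms are killed by the relaxations.
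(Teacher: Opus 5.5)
Your proposal is correct and follows the paper's own proof. You seed with \Cref{lem:first-shell-seeding-control}(i) when $\Pi_1h=0$, let the relaxation $D_{L_0}$ play the role of part (ii), and approximate the finite product of exponentials $\exp(tR_\upsilon)$ from part (iii) by Euler products with $\sum_j\varepsilon_j^2=\sum_k t_k^2/n\to0$, then lift with \Cref{lem:pulse-relax-concatenation}. Your check that the word constant $C_{\rm w}$ stays bounded under refinement (because $\sum_j|\varepsilon_j|=\sum_k|t_k|$ is fixed) makes explicit a uniformity that the paper leaves implicit.
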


\begin{proof}
If $\Pi_1h=0$, \Cref{lem:first-shell-seeding-control}\textup{(i)} supplies one smooth finite energy pulse after which the first shell projection is nonzero. If $\Pi_1h\ne0$, no seeding pulse is needed. In either case, \Cref{lem:first-shell-seeding-control}\textup{(ii)} gives an input $x$ with $\Pi_1x\ne0$ and with $\|\Pi_1^\perp x\|_2/\|\Pi_1x\|_2$ arbitrarily small.

By \Cref{lem:first-shell-seeding-control}\textup{(iii)}, the operators
$R_\upsilon$, $\upsilon\in\mathcal A_2$, generate
$\mathfrak{so}(E_1)$.  Hence finite products of the corresponding one
parameter groups generate $SO(E_1)$, and a finite product of exponentials $\exp(tR_\upsilon)$ sends $[\Pi_1x]$ to $[\phi]$. Approximate each factor by its Euler products
\begin{align*}
 \exp(tR_\upsilon)
 =\lim_{n\to\infty}
 \left(I_{E_1}+\frac{t}{n}R_\upsilon\right)^n.
\end{align*}
Since the product contains only finitely many factors, the subdivisions
may be chosen so fine that the resulting finite word satisfies
\eqref{eq:rev-ideal-word-target}, while
\begin{align*}
 \sum_j\varepsilon_j^2
\end{align*}
is as small as required in
\Cref{lem:pulse-relax-concatenation}, thereby realizes the
word by the full PDE, with sufficiently small transverse leakage, and
therefore sends the projective fibre into the $\eta$ neighborhood of
$[\phi]$.

All relaxation intervals have zero control, while the seeding pulse has
finite energy and the finitely many rotation pulses satisfy
\eqref{eq:rev-pulse-energy}.  Their concatenation is therefore a smooth
zero to zero controlled base path of finite Cameron--Martin energy.
\end{proof}

\begin{proof}[Proof of \Cref{thm:accessibility}]
Fix $z=(w,[\rho])$ and $r>0$.  After a short zero control interval,
parabolic smoothing makes both the base and the raw fibre smooth, while
backward uniqueness keeps the fibre nonzero.  Connect the resulting smooth
base state to zero by a smooth controlled path using
\eqref{eq:skeleton-control}, and then apply
\Cref{cor:rev-accessibility-closed} to steer the projective fibre into an
arbitrarily small neighborhood of $[\phi]$.  Thus there is a finite energy
Cameron--Martin skeleton from $z$ whose endpoint belongs to
$\mathcal U_r$.

In convolution coordinates this skeleton corresponds to a point
$\Gamma_Tg\in\mathsf Y_T$.  Since
\begin{align*}
 \supp\mathcal L(Y_T)
 =\overline{\Gamma_T\mathcal H_T}^{\,\mathsf Y_T}
\end{align*}
by \eqref{eq:YT-support}, and the endpoint map is continuous at the smooth
skeleton, every sufficiently small neighborhood of it has positive
probability.  Hence
\begin{align*}
 P_T(z,\mathcal U_r)>0,
\end{align*}
which proves \eqref{eq:point-access}.

For the sampled assertion, choose $r'>0$ so small that the zero control
flow sends $\mathcal U_{r'}$ into $\mathcal U_r$ for every
$s\in[0,\tau]$.  Apply the preceding construction with $\mathcal U_{r'}$
as target, and append zero control until the next time $n\tau$.
The same support argument then gives
\begin{align*}
 P_{n\tau}(z,\mathcal U_r)>0
\end{align*}
for some $n=n(z,r)\ge1$, proving
\eqref{eq:sampled-point-access}.

Finally, letting $\nu P_\tau=\nu$ and setting
\begin{align*}
 A_n=\{z:P_{n\tau}(z,\mathcal U_r)>0\},\qquad n\ge1, 
\end{align*}
then $\bigcup_{n\ge1}A_n=\mathsf X$, so
$\nu(A_n)>0$ for some $n$ and invariance gives
\begin{align*}
 \nu(\mathcal U_r)
 =\int_{\mathsf X}P_{n\tau}(z,\mathcal U_r)\,\nu(\dd z)>0.
\end{align*}
\end{proof}

\section{Asymptotic generalized coupling and uniqueness}
\label{sec:compact-dense-closure}

This section connects the random stable ball from
\Cref{sec:random-stable-ball} to the prepared neighborhood.  Since the local bridge and the selector are
both written in the convolution coordinate, only one Gaussian driver is
needed throughout this section.

\subsection{A compact outer endpoint}
\label{subsec:strong-convolution-driver}

The bridge in \Cref{sec:prepared} is formulated in the Hilbert driver
$\mathsf Y$, which is the topology needed for differentiability and
Cameron--Martin translation.  At one point we also need compactness of the
endpoint in the stronger state topology entering the Lyapunov function.
For this purpose only a mild additional spatial regularity of the
stochastic convolution near the end of the block is needed.

Retain $b<\beta<a/2-1$ from \Cref{sec:foundations} and choose a noninteger
\begin{align*}
 \beta<\beta_+<\frac a2.
\end{align*}
Set
\begin{align}\label{eq:strong-convolution-space}
 \mathsf Y^+
 =
 \left\{
 y=(y^\circ,y^\tau)\in\mathsf Y:
 y^\circ|_{[\tau/3,\tau]}\in
 C([\tau/3,\tau];c^{\beta_+}),\
 y^\tau\in c^{\beta_+}
 \right\},
\end{align}
with the sum norm.  Here $c^r$ denotes the separable little H\"older space.
The inclusion $\mathsf Y^+\hookrightarrow\mathsf Y$ is continuous.

We need the bridge event to land not merely in the Hilbert state space but
in a set on which the Lyapunov function is bounded.  A small amount of extra
spatial regularity of the convolution near the end of the block gives the
required compact driver event.
\begin{lemma}
\label{lem:compact-strong-driver-event}
The stochastic convolution driver $Y=(Z,Z_\tau)$ has a centred Radon
Gaussian version in $\mathsf Y^+$.  Consequently, for every $\delta>0$
there is a compact set $\mathfrak L\subset\mathsf Y^+$ such that
\begin{align}\label{eq:compact-small-driver}
 p_{\mathfrak L}:=\mathbf P\{Y\in\mathfrak L\}>0,
 \qquad
 \sup_{y\in\mathfrak L}\|y\|_{\mathsf Y}<\delta.
\end{align}
The version in $\mathsf Y^+$ may be chosen from the same Fourier partial
sums as the canonical $\mathsf Y$ valued version, and hence the two agree
almost surely.
\end{lemma}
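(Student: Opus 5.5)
The plan has three parts. First build a Gaussian version of the driver in the separable Banach space $\mathsf Y^+$ from the Fourier partial sums. Then pick a compact set inside a small $\mathsf Y$-ball, using the Radon property together with the Gaussian support theorem. Finally, identify this version with the canonical one.

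\emph{Step 1 (version in $\mathsf Y^+$).} Write $Z^{(N)}$ for the partial sums of the stochastic convolution over the real Fourier phases $r\le N$. Each coordinate is a scalar Ornstein--Uhlenbeck process $q_r\int_0^t e^{-\nu\lambda_r(t-s)}\,\dd W^r_s$, with variance at most $q_r^2/(2\nu\lambda_r)\lesssim|\boldsymbol k|^{-a-2}$. Away from $t=0$, on $[\tau/3,\tau]$, the time increments obey the usual bound $\mathbf E|Z^r_t-Z^r_s|^2\lesssim q_r^2\min(|t-s|,\lambda_r^{-1})$. Combined with $\beta_+<a/2$, this leaves a positive margin $\epsilon=a/2-\beta_+$ in spatial regularity. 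I would pick $\beta_+<\gamma<a/2$ and estimate $\mathbf E\|Z_t-Z_s\|^{2m}_{H^{\gamma'}}$ for some $\gamma'$ slightly below $a/2$, with a small time H\"older exponent traded against spatial smoothness. Applying Kolmogorov and the Sobolev embedding $H^{\gamma'}\hookrightarrow C^{\gamma''}$ with $\gamma''>\beta_+$ in two dimensions then shows that $Z^{(N)}$ is Cauchy in $L^{2m}(\Omega;C([\tau/3,\tau];C^{\gamma''}))$. Since $C^{\gamma''}\hookrightarrow c^{\beta_+}$ for $\gamma''>\beta_+$, and trigonometric polynomials lie in the little H\"older space, the limit takes values in the closed separable subspace $C([\tau/3,\tau];c^{\beta_+})$. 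The same argument at $t=\tau$ handles $Z_\tau$, and the $H^\sigma(0,\tau;H^b)$ component converges by the estimates already used for $\mathsf Y$. The sum-norm partial sums therefore converge in $L^2(\Omega;\mathsf Y^+)$, so the limit is centred Gaussian. Since $\mathsf Y^+$ is separable, the limit is Radon. Because $\mathsf Y^+\hookrightarrow\mathsf Y$ continuously, the same partial sums converge in $\mathsf Y$, and a subsequence converges almost surely. The two versions therefore agree almost surely, which gives the last sentence of the statement.

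\emph{Step 2 (small compact event).} The law $\gamma^+$ is a centred Radon Gaussian measure on $\mathsf Y^+$. Its topological support is the closure of its Cameron--Martin space, which is a closed linear subspace containing $0$. Hence the open set $O=\{y\in\mathsf Y^+:\|y\|_{\mathsf Y}<\delta/2\}$ has $\gamma^+(O)=:p>0$; this set is open because the inclusion into $\mathsf Y$ is continuous. By inner regularity of Radon measures, there is a compact $\mathfrak L\subset O$ with $\gamma^+(\mathfrak L)\ge p/2>0$. This gives \eqref{eq:compact-small-driver}, since $\sup_{\mathfrak L}\|y\|_{\mathsf Y}\le\delta/2<\delta$. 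An alternative to the support theorem is Anderson's inequality applied to a symmetric convex neighbourhood of $0$.

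\emph{Main obstacle.} The hardest part is Step 1, specifically the uniform-in-time estimate in the little H\"older space with the sharp constraint $\beta_+<a/2$. Near the upper limit the margin is thin, so the moment index $m$ in Kolmogorov must be chosen large relative to $\epsilon^{-1}$. One also has to check that the limit lies in the closure of smooth functions, the little H\"older space, rather than in $C^{\beta_+}$. I would handle the second point by proving the bound in $C^{\gamma''}$ with $\gamma''>\beta_+$ and using the compact inclusion. Restricting to $[\tau/3,\tau]$ is harmless here, since the convolution starts at $0$ and is regular for all $t$.
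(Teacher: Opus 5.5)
Your overall structure matches the paper's. You use Fourier partial sums, a Kolmogorov argument in time that trades a small H\"older exponent for spatial regularity, separability of $\mathsf Y^+$ for the Radon property, and almost sure identification through the continuous inclusion $\mathsf Y^+\hookrightarrow\mathsf Y$. You then take a positive-mass small $\mathsf Y$-ball and apply inner regularity. Step~2 is fine; your support-theorem argument in $\mathsf Y^+$ is equivalent to the paper's.

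The gap is the regularity transfer in Step~1. On $\T^2$ one only has $H^{\gamma'}\hookrightarrow C^{\gamma'-1-\epsilon}$: an $L^2$-based Sobolev norm loses $d/2=1$ derivative when converted into H\"older regularity. Your moment bound forces $\gamma'<a/2$. Indeed, for $t\in[\tau/3,\tau]$ one has $\mathbf E\|Z_t\|_{H^{a/2}}^2\asymp\sum_{\boldsymbol k}|\boldsymbol k|^{-2}=\infty$, so no choice of $\gamma'$ helps. The argument as written therefore gives $C([\tau/3,\tau];c^{\beta_+})$ only for $\beta_+<a/2-1$. The lemma, however, is stated for any noninteger $\beta_+\in(\beta,a/2)$.

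The missing ingredient is the Gaussian gain. The field is genuinely $C^{a/2-}$, but seeing this requires pointwise or blockwise Gaussian moments rather than the $L^2$ norm.

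\emph{The paper's route.} It uses inhomogeneous Littlewood--Paley blocks. The pointwise standard deviation of $\Delta_j(Z_t-Z_s)$ is of order $2^{j(1-a/2)}\min\{|t-s|^{1/2},2^{-j}\}$. A Gaussian maximal inequality over the $O(2^{2j})$ relevant points costs only a factor $(1+j)^{1/2}$. With $\min\{h^{1/2},2^{-j}\}\le h^\vartheta 2^{-j(1-2\vartheta)}$, the blocks are summable in $c^{\beta_+}$ as soon as $\beta_++2\vartheta<a/2$.

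\emph{A repair in your framework.} Replace $H^{\gamma'}$ by $W^{\gamma',p}$ with $p$ large. The quantity $(1-\Delta)^{\gamma'/2}(Z_t-Z_s)(x)$ is a centred Gaussian whose variance is bounded, uniformly in $x$, by a constant times $\mathbf E\|Z_t-Z_s\|_{H^{\gamma'}}^2$. Hence $\mathbf E\|Z_t-Z_s\|_{W^{\gamma',p}}^p\le C_p\bigl(\mathbf E\|Z_t-Z_s\|_{H^{\gamma'}}^2\bigr)^{p/2}$, and $W^{\gamma',p}\hookrightarrow C^{\gamma'-2/p-\epsilon}$ loses only $2/p$. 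Your remark that some integrability index must be large relative to the margin $a/2-\beta_+$ is right, but that index is this spatial exponent $p$, not the time Kolmogorov index.

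Alternatively, your argument as written suffices if $\beta_+$ is taken in $(\beta,a/2-1)$. That interval is nonempty, and the later compact outer endpoint proposition only needs $\beta_+>\beta>b$. But this proves a weaker statement than the lemma as formulated.
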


\begin{proof}
For an inhomogeneous Littlewood--Paley block $\Delta_j$, Gaussian maximal
estimates give, for every finite $r$,
\begin{align}\label{eq:strong-OU-dyadic}
 \mathbf E\|\Delta_j(Z_t-Z_s)\|_\infty^r
 \le C_r(1+j)^{r/2}2^{r(1-a/2)j}
       \min\{|t-s|^{1/2},2^{-j}\}^r.
\end{align}
Choose $\vartheta>0$ so small that
$\beta_++2\vartheta<a/2$, and then choose $r$ so large that the time
Kolmogorov loss is smaller than $\vartheta$.  Using
\begin{align*}
 \min\{h^{1/2},2^{-j}\}
 \le h^\vartheta2^{-j(1-2\vartheta)}
\end{align*}
in \eqref{eq:strong-OU-dyadic}, summing the spatial Fourier tails, and
applying the Banach valued Kolmogorov theorem show that the Fourier partial
sums converge almost surely in
$C([\tau/3,\tau];c^{\beta_+})$.  The same partial sums already converge in
$\mathsf Y$ by the construction in \Cref{sec:random-stable-ball}, while
evaluation at $\tau$ gives the $c^{\beta_+}$ terminal component.  Thus they
converge in the intersection norm \eqref{eq:strong-convolution-space}.
Since $\mathsf Y^+$ is separable, the resulting Gaussian law is Radon.

The centred Gaussian law of $Y$ assigns positive mass to every
$\mathsf Y$ neighbourhood of zero.  Hence
$\mathbf P\{\|Y\|_{\mathsf Y}<\delta/2\}>0$.  This event is Borel in
$\mathsf Y^+$, so inner regularity of the $\mathsf Y^+$ valued law gives a
positive probability compact subset $\mathfrak L$ of it.  This proves
\eqref{eq:compact-small-driver}.
\end{proof}

Apply \Cref{thm:eventwise-bridge} with $T=\tau$ and a fixed $\theta>0$.
Let $\mathcal U_R$ be the resulting outer prepared neighbourhood and let
$\delta$ be its driver radius.  Shrink $R$ and $\delta$ if necessary so that
$\overline{\mathcal U}_R\times\{y:\|y\|_{\mathsf Y}\le\delta\}$
is contained in the neighborhood where
\eqref{eq:local-flow-modulus}--\eqref{eq:normalization-tube} hold.  Choose $\mathfrak L$ from
\Cref{lem:compact-strong-driver-event} with this $\delta$, and on the
canonical Wiener space put
\begin{align}\label{eq:outer-bridge-event}
 \mathcal A_{\mathfrak L}=\{Y(W)\in\mathfrak L\}.
\end{align}
Then $\mathcal A_{\mathfrak L}$ is Borel,
$\mathbf P(\mathcal A_{\mathfrak L})=p_{\mathfrak L}$, and
$\|Y(W)\|_{\mathsf Y}<\delta$ on $\mathcal A_{\mathfrak L}$.  The prepared chart comparison
therefore gives a finite constant $\theta_R$ such that, on the matching
event of the bridge,
\begin{align}\label{eq:outer-bridge-bound}
 d_{\mathsf X}(\mathbb X_1,\mathbb X_1^{\prime})
 \le\theta_R
 \left\|
 \Delta_{(0,[\phi])}(z')-\Delta_{(0,[\phi])}(z)
 \right\|_{E_b},
 \qquad z,z'\in\mathcal U_R.
\end{align}

The compact driver event can now be pushed through the prepared endpoint
map.  The next proposition shows that the resulting outer endpoint set is
compact in the state topology and bounded for the Lyapunov function used by
the stable ball estimate.
\begin{proposition}
\label{prop:compact-outer-endpoint}
Let the outer endpoint set be
\begin{align}\label{eq:Cout-def}
 C_{\rm out}
 =
 \overline{
 \{\Phi(z,y):z\in\overline{\mathcal U}_{R},\ y\in\mathfrak L\}
 }^{\,H^b\times\PP(H)}.
\end{align}
Then $C_{\rm out}$ is compact and the Lyapunov function $\mathcal W$ from
\Cref{cor:revision-conditional-Lyapunov} is bounded on $C_{\rm out}$.
More precisely, for some $\epsilon>0$,
\begin{align}\label{eq:outer-strong-bounds}
 \sup_{z\in\overline{\mathcal U}_{R},\,y\in\mathfrak L}
 \left(
 \|w_\tau(z,y)\|_{c^\beta}
 +
 \left\|
 \frac{U_{w^\circ(z,y)}(\tau,0)\mathsf p}
      {\|U_{w^\circ(z,y)}(\tau,0)\mathsf p\|_2}
 \right\|_{H^{1+\epsilon}}
 \right)<\infty,
\end{align}
where $\mathsf p$ is any unit representative of the projective component
of $z$.
\end{proposition}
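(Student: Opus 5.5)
The plan is to prove the strong bounds \eqref{eq:outer-strong-bounds} first, and then deduce compactness and boundedness of $\mathcal W$ from them. Write the base endpoint as $w_\tau=v_\tau+y^\tau$, where $v$ solves the mild equation with driver $y^\circ$. By \Cref{lem:compatible-driver-global} and the continuity in \Cref{lem:revision-base-two-jet}, the set $\overline{\mathcal U}_R\times\mathfrak L$ is bounded in $H^b\times\mathsf Y$, so $\sup_{t\le\tau}\|v_t\|_{H^b}$ is uniformly bounded there. On $[\tau/3,\tau]$ I would bootstrap parabolic regularity. The nonlinearity $B(v+y^\circ,v+y^\circ)$ has coefficients bounded in $C([\tau/3,\tau];c^{\beta_+})+H^b$. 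The smoothing estimate $\|e^{\nu t\Delta}f\|_{c^{\gamma+1-\kappa}}\lesssim t^{-(1-\kappa)/2}\|f\|_{c^\gamma}$, combined with the product estimate in Hölder spaces, then gives finitely many steps of a uniform bound for $v_\tau$ in $c^{\beta_++1-\kappa}\subset c^\beta$. Since $y^\tau\in c^{\beta_+}$ is bounded uniformly on the compact set $\mathfrak L$, this bounds $\|w_\tau\|_{c^\beta}$. Next, for the fibre, use the raw fibre equation \eqref{eq:raw-tangent} over $w^\circ=v+y^\circ$. The energy inequality and parabolic smoothing on $[\tau/3,\tau]$ with coefficient bounded in $c^{\beta_+}$ give $\|\rho_\tau\|_{H^{1+\epsilon}}\le C\|\mathsf p\|_2$ uniformly. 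The denominator is controlled by \eqref{eq:normalization-tube}: $\|\rho_\tau\|_2\ge\frac12e^{-\nu\tau}$ on this tube, which the choice of $R,\delta$ guarantees. Together these give \eqref{eq:outer-strong-bounds}.

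For compactness, the bound on $\|w_\tau\|_{c^\beta}$ with $\beta>b$ places the base components in a compact subset of $H^b$, since $c^\beta\hookrightarrow H^{b+\epsilon'}$ compactly embeds into $H^b$ on $\T^2$. Similarly, the unit representatives are bounded in $H^{1+\epsilon}$, hence lie in a compact subset of the unit sphere of $H$, and the quotient map to $\PP(H)$ is continuous. So the image set is relatively compact, and its closure $C_{\rm out}$ is compact. The closure also preserves the bounds: closed balls of $c^\beta$ and $H^{1+\epsilon}$ are closed under weak/$H^b$ limits (the $c^\beta$ case by lower semicontinuity of the Hölder seminorm under uniform convergence).

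For boundedness of $\mathcal W=W_0+\varepsilon_1F_{p^2}$ I would invoke the explicit structure from \Cref{sec:foundations}. By \Cref{cor:revision-common-block}, $W_0$ is bounded by a function of $\|w\|_{c^\beta}$ (through $V_*$) and of $\|\mathsf p\|_{H^1}^{M_*}$. By \Cref{lem:projective-log-moment} with $r=p^2$, $F_{p^2}(w,[\mathsf p])\le C(V_*(w)+1+\|\mathsf p\|_{H^1}^{d_{p^2}})$. Both are therefore bounded on $C_{\rm out}$ by \eqref{eq:outer-strong-bounds}.

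The main obstacle is matching the exact form of $W_0$ and $V_*$: if $V_*$ is exponential in a norm of $w$, a uniform $c^\beta$ bound still suffices, since it is then a bounded set. The real difficulty is ensuring the Hölder bootstrap uses only driver regularity on $[\tau/3,\tau]$, because the $\mathsf Y$ norm alone gives no control of $y^\circ$ in $c^{\beta_+}$. I would handle this by restarting the mild formulation at $\tau/3$, with initial datum $v_{\tau/3}\in H^b$ smoothed by the semigroup.
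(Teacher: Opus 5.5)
Your proposal is correct and follows essentially the paper's proof. The shared ingredients are a H\"older bootstrap for $v$ restarted on subintervals of $[\tau/3,\tau]$ using the $C([\tau/3,\tau];c^{\beta_+})$ driver bound, uniform fibre smoothing into $H^{1+\epsilon}$ combined with the lower bound \eqref{eq:normalization-tube}, the compact embeddings $c^\beta\Subset H^b$ and $H^{1+\epsilon}\Subset H$, and the bound on $\mathcal W=W_0+\varepsilon_1F_{p^2}$ via the explicit form of $W_0$ and \Cref{lem:projective-log-moment} at $r=p^2$. The differences are minor: the paper gets the fibre gain from the $L^{p_\sigma}(0,\tau;H^b)$ bound alone (two gains of $(1+\epsilon)/2<2\sigma$ from \Cref{lem:gain}(ii)), and passes to the closure by lower semicontinuity of $\mathcal W$ rather than by closedness of norm balls; also, the heat estimate your bootstrap actually needs is $c^{r-1}\to c^{r+1-\kappa}$ with integrable singularity $t^{-1+\kappa/2}$, since the gain of only $1-\kappa$ you wrote would not overcome the derivative lost in $B$.
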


\begin{proof}
Since $\mathfrak L$ is compact in $\mathsf Y^+$, it is bounded both in
$\mathsf Y$ and in
\begin{align*}
 C([\tau/3,\tau];c^{\beta_+})\oplus c^{\beta_+}.
\end{align*}
Together with the boundedness of $\overline{\mathcal U}_{R}$,
\Cref{lem:compatible-driver-global} and
\eqref{eq:rough-driver-global-bound} give a uniform
$C([0,\tau];H^b)$ bound for $v$ and a uniform
$L^{p_\sigma}(0,\tau;H^b)$ bound for
$w^\circ=v+y^\circ$.

Choose $\epsilon_0>0$ so small that
$b+1-3\epsilon_0>\beta$.  On $[\tau/3,\tau]$,
\begin{align*}
 H^b\hookrightarrow c^{b-1-\epsilon_0},
 \qquad
 y^\circ\in C([\tau/3,\tau];c^{\beta_+}),
 \qquad
 \beta_+>b.
\end{align*}
Apply the mild equation \eqref{eq:compatible-global-mild} for $v$ first on $[\tau/3,2\tau/3]$ and then on
$[2\tau/3,\tau]$.  Using
\begin{align*}
 B:c^r\times c^r\to c^{r-1},
 \qquad
 \|e^{\nu t\Delta}\|_{c^{r-1}\to c^{r+1-\epsilon_0}}
 \le Ct^{-1+\epsilon_0/2},
\end{align*}
the first step gives a uniform $c^{b-2\epsilon_0}$ bound for $v$ on the
second interval.  Since $\beta_+>b$, the same bound holds for
$w^\circ=v+y^\circ$ there.  Applying the estimate once more gives
\begin{align*}
 \sup_{z\in\overline{\mathcal U}_{R},\,y\in\mathfrak L}
 \|v_\tau(z,y)\|_{c^{b+1-3\epsilon_0}}<\infty.
\end{align*}
Since $b+1-3\epsilon_0>\beta$ and
$y^\tau\in c^{\beta_+}$ with $\beta_+>\beta$, we obtain
\begin{align*}
 \sup_{z\in\overline{\mathcal U}_{R},\,y\in\mathfrak L}
 \|w_\tau(z,y)\|_{c^\beta}<\infty.
\end{align*}

For the fibre, choose $\epsilon>0$ so small that
\begin{align*}
 \frac{1+\epsilon}{2}
 <1-\frac2{p_\sigma}=2\sigma.
\end{align*}
The uniform $L^{p_\sigma}(0,\tau;H^b)$ bound for $w^\circ$ and
\Cref{lem:gain}\textup{(ii)} allow two successive gains of
$(1+\epsilon)/2$ on the intervals
$[\tau/3,2\tau/3]$ and $[2\tau/3,\tau]$.  Hence
\begin{align*}
 U_{w^\circ(z,y)}(\tau,0):H\longrightarrow H^{1+\epsilon}
\end{align*}
with operator norm bounded uniformly for
$z\in\overline{\mathcal U}_{R}$ and $y\in\mathfrak L$.
By \eqref{eq:normalization-tube},
\begin{align*}
 \|U_{w^\circ(z,y)}(\tau,0)\mathsf p\|_2
 \ge\frac12e^{-\nu\tau}
\end{align*}
uniformly on the same set.  This proves the second bound in
\eqref{eq:outer-strong-bounds}.

The compact embeddings
\begin{align*}
 c^\beta\Subset H^b,
 \qquad
 H^{1+\epsilon}\Subset H
\end{align*}
show that the endpoint image in \eqref{eq:Cout-def} is relatively compact.
Hence $C_{\rm out}$ is compact.

It remains to bound $\mathcal W$.  By \eqref{eq:block-log-cost} and
\eqref{eq:revision-W-augmentation}, together with
\Cref{lem:projective-log-moment} at $r=p_{\rm occ}^2$,
the bound \eqref{eq:outer-strong-bounds} implies that
$\mathcal W$ is uniformly bounded on the endpoint image.

Finally, $\mathcal W$ is lower semicontinuous by
\Cref{cor:revision-conditional-Lyapunov}.  Therefore its finite sublevel
sets are closed.  Since the endpoint image is contained in one such
sublevel set, its closure $C_{\rm out}$ is contained in the same sublevel
set.  Hence $\mathcal W$ is bounded on $C_{\rm out}$.
\end{proof}

\subsection{Proof of the main theorem}
\label{subsec:coupling-closure}

The three ingredients above can now be concatenated: accessibility of the
prepared neighborhood, the exact marginal bridge into a compact outer set,
and the random stable ball realised by the triangular Ramer coupling.  This
produces the positive probability asymptotic generalized coupling required
by the uniqueness criterion of \cite{HMS11}.

\begin{proof}[Proof of \Cref{thm:main}]
Existence of a $P_\tau$ invariant probability measure was established at
the beginning of \Cref{subsec:universal-compact-core-selector}.  It remains to prove uniqueness.

Since $\mathcal W$ is bounded on $C_{\rm out}$ by
\Cref{prop:compact-outer-endpoint}, the uniform quantile in
\Cref{lem:random-stable-radius} gives
\begin{align}\label{eq:Cout-stable}
 r_0,p_0>0,\qquad
 \inf_{x\in C_{\rm out}}\mathbf P_x\{r_*(x,\omega)\ge r_0\}\ge p_0.
\end{align}
Decrease and relabel $r_0$ so that $r_0<1$ and, by
\eqref{eq:state-chart-metric-comparison},
$d_{\mathsf X}(x,x')<r_0$ implies $(x,x')\in\mathscr U_\Delta$ and
$\|\Delta_x(x')\|_{E_x}$ is smaller than the stable radius in
\eqref{eq:Cout-stable}.  The constants in this comparison are universal,
so the choice is uniform for $x\in C_{\rm out}$.

Choose an inner prepared neighborhood
$\mathcal U_r\subset\mathcal U_R$ so small that
$2\theta_Rr<r_0$ and that the maximal coupling loss in
\eqref{eq:eventwise-bridge} is at most $p_{\mathfrak L}/2$.  Apply
\Cref{thm:eventwise-bridge} to the fixed Borel event
$\mathcal A_{\mathfrak L}$ from \eqref{eq:outer-bridge-event}, and denote
by $(Z_1,Z_1')$ the endpoint pair under the resulting Borel bridge law
$\mathbf Q^{\rm br}_{z,z'}$.  On its matching event,
$Z_1\in C_{\rm out}$ and \eqref{eq:outer-bridge-bound} gives
$d_{\mathsf X}(Z_1,Z_1')<r_0$.  Therefore, with
$p_b:=p_{\mathfrak L}/2>0$,
\begin{align}\label{eq:bridge-to-stable}
 \inf_{z,z'\in\mathcal U_r}
 \mathbf Q^{\rm br}_{z,z'}
 \{Z_1\in C_{\rm out},\ d_{\mathsf X}(Z_1,Z_1')<r_0\}
 \ge p_b.
\end{align}

Let
\begin{align*}
 E_{\rm br}
 &=
 \{Z_1\in C_{\rm out},\ d_{\mathsf X}(Z_1,Z_1')<r_0\},\\
 \mathsf D_\infty
 &=
 \{((X_n),(X_n')):d_{\mathsf X}(X_n,X_n')\to0\}.
\end{align*}
Conditionally on the bridge sigma field, attach a fresh future Wiener
sequence independent of the bridge block.  On $E_{\rm br}$ run the
mixture over entropy thresholds from \Cref{lem:infinite-Ramer} starting at
$(Z_1,Z_1')$; on $E_{\rm br}^c$ use conditionally independent genuine
Markov evolutions.  Because the first future coordinate on $E_{\rm br}$
has the genuine Markov law from $Z_1\in C_{\rm out}$,
\eqref{eq:Cout-stable} gives conditional probability at least $p_0$ for
the stable radius event.  On this event,
\Cref{lem:random-stable-radius} gives
$(X_n,X_n')\in\mathscr U_\Delta$ for every $n$,
$\sum_n\|h_n\|^2<\infty$, and exponential convergence.  Moreover, the
proof of that lemma gives $\|h_n\|<r_n/2\le d_{\rm Ram}$ for every $n$.
Hence the stable event is contained in $\mathcal E_\infty$ from
\Cref{lem:infinite-Ramer}, and the corresponding unstopped paired path
belongs to $\mathsf D_\infty$.  The last assertion of
\Cref{lem:infinite-Ramer}, applied with
$\mathsf D=\mathsf D_\infty$, therefore shows that the future
mixture over entropy thresholds assigns positive conditional probability
to $\mathsf D_\infty$ on $E_{\rm br}$.

On $E_{\rm br}$ the second conditional path law is absolutely continuous
with respect to the genuine Markov path law from $Z_1'$ by
\Cref{lem:infinite-Ramer}; on $E_{\rm br}^c$ it equals that law.
Disintegration at the bridge endpoint and exactness of both bridge
marginals therefore preserve absolute continuity of the second full path
marginal.  The Borel bridge kernel from \Cref{thm:eventwise-bridge},
concatenated with the Borel kernel obtained by mixing over entropy thresholds in
\Cref{lem:infinite-Ramer}, gives a Borel kernel
$(z,z')\mapsto\mathbf Q_{z,z'}$ on the two sampled path spaces.  Since
\eqref{eq:bridge-to-stable} gives
$\mathbf Q^{\rm br}_{z,z'}(E_{\rm br})\ge p_b>0$, the preceding
conditional success and marginal calculation yield
\begin{align}
 \mathbf Q_{z,z'}(\mathsf D_\infty)&>0,
 \label{eq:positive-asymptotic}\\
 (\operatorname{pr}_1)_\#\mathbf Q_{z,z'}&=P_{\tau,[\infty]}\delta_z,
 \qquad
 (\operatorname{pr}_2)_\#\mathbf Q_{z,z'}
 \ll P_{\tau,[\infty]}\delta_{z'}.
 \label{eq:generalized-marginals}
\end{align}

By the last assertion of \Cref{thm:accessibility}, every
$P_\tau$ invariant probability measure charges $\mathcal U_r$.  We may
therefore apply \cite[Corollary~2.2]{HMS11} with the Markov operator
$P_\tau$, the Borel set $A=\mathcal U_r$, and the Borel convergence event
$\mathsf D_\infty$.  For every $z,z'\in A$, the map
$(z,z')\mapsto\mathbf Q_{z,z'}$ is Borel, and
\eqref{eq:positive-asymptotic}--\eqref{eq:generalized-marginals} give
\begin{align*}
 \mathbf Q_{z,z'}(\mathsf D_\infty)>0,\qquad
 (\operatorname{pr}_1)_\#\mathbf Q_{z,z'}=P_{\tau,[\infty]}\delta_z,\qquad
 (\operatorname{pr}_2)_\#\mathbf Q_{z,z'}
 \ll P_{\tau,[\infty]}\delta_{z'}.
\end{align*}
Thus all the hypotheses of the cited asymptotic coupling criterion are
satisfied, and $P_\tau$ has at most one invariant probability measure.
Existence therefore gives a unique $P_\tau$ invariant probability measure,
denoted by $\widehat\mu$.

For every $t\ge0$,
\begin{align*}
 (\widehat\mu P_t)P_\tau
 =(\widehat\mu P_\tau)P_t
 =\widehat\mu P_t.
\end{align*}
Thus $\widehat\mu P_t$ is $P_\tau$ invariant, so uniqueness gives
$\widehat\mu P_t=\widehat\mu$ for every $t\ge0$.  Hence
$\widehat\mu$ is invariant for the full continuous time semigroup.
Conversely, every continuous time invariant probability measure is
$P_\tau$ invariant, and therefore equals $\widehat\mu$.  This proves
uniqueness for the continuous time projective process and completes
the proof.
\end{proof}

\subsection{Proof of the Furstenberg--Khasminskii formula}
\label{subsec:FK-proof}

\begin{proof}[Proof of \Cref{cor:exact-FK}]
Let $\mu$ be the first marginal of $\widehat\mu$, and let $t_*>0$ be the reset time in \Cref{lem:robust-HPSRY-hierarchy}. For a unit representative $\mathsf p$, set
\begin{align*}
 \mathfrak a(w,[\mathsf p])=-\nu\|\nabla\mathsf p\|_2^2+\langle w,B(\mathsf p,\mathsf p)\rangle_2,
\end{align*}
which is independent of the choice of sign of $\mathsf p$.

We first give the two ingredients used below. By invariance of $\widehat\mu$, \eqref{eq:robust-all-H1-moments}, and the integrability of the base Lyapunov weights,
\begin{align*}
 \int\|\mathsf p\|_{H^1}^2\,\widehat\mu(\dd w,\dd[\mathsf p])<\infty.
\end{align*}
Moreover, since $b>2$, integration by parts and the Biot--Savart gain give
\begin{align*}
 |\langle w,B(\mathsf p,\mathsf p)\rangle_2|=|\langle\mathsf p,B(\mathsf p,w)\rangle_2|\le C\|w\|_{H^b}\|\mathsf p\|_2^2.
\end{align*}
Hence
\begin{align}\label{eq:FK-integrability}
 \mathfrak a\in L^1(\widehat\mu).
\end{align}
If $\rho_t=U(t,0)\mathsf p$ and $\mathsf p_t=\rho_t/\|\rho_t\|_2$, backward uniqueness ensures that $\rho_t\ne0$ for $t>0$, and the energy identity yields
\begin{align}\label{eq:FK-additive-functional}
 \frac{\dd}{\dd t}\log\|\rho_t\|_2=-\nu\|\nabla\mathsf p_t\|_2^2+\langle w_t,B(\mathsf p_t,\mathsf p_t)\rangle_2=\mathfrak a(w_t,[\mathsf p_t]).
\end{align}

It remains to identify the stationary exponent in \eqref{eq:FK-additive-functional} with the top Lyapunov exponent $\lambda_1$. Start the base with $w_0\sim\mu$. The compact multiplicative ergodic theorem \cite{Ruelle82,LianLu10} applies to the cocycle on $H$: positive time smoothing makes $U(1,0)$ compact, backward uniqueness makes it injective, and the energy estimate together with stationarity gives the required integrability of $\log^+\|U(1,0)\|_{\mathcal L(H)}$. Consequently, for almost every $(\omega,w_0)$ there is a proper closed subspace $F(\omega,w_0)\subset H$ such that
\begin{align*}
 \lim_{t\to\infty}\frac1t\log\|U(t,0)\mathsf p\|_2=\lambda_1
\end{align*}
for every $\mathsf p\notin F(\omega,w_0)$.

Choose a covariance injective $H^1$ valued Gaussian measure $\gamma$ on $H$. Since $\gamma(F)=0$ for every proper closed linear subspace $F\subset H$, Fubini's theorem yields a deterministic unit vector $\mathsf p_*\in H^1$ such that
\begin{align}\label{eq:generic-top-direction}
 \lim_{t\to\infty}\frac1t\log\|U(t,0)\mathsf p_*\|_2=\lambda_1
 \qquad\text{almost surely}.
\end{align}

Write
\begin{align*}
 \mathsf p_t=\frac{U(t,0)\mathsf p_*}{\|U(t,0)\mathsf p_*\|_2},\qquad Z_t=(w_t,[\mathsf p_t]),\qquad L_t=\log\|U(t,0)\mathsf p_*\|_2,
\end{align*}
and define the mean occupation laws
\begin{align*}
 \overline\nu_T=\frac1T\int_0^T\operatorname{Law}(Z_s)\,\dd s.
\end{align*}
The positive time reset estimates for the base and projective direction make $(\overline\nu_T)_{T\ge1}$ tight. Every weak limit is invariant for the projective semigroup. Indeed, for $\varphi\in C_b$ and $r>0$,
\begin{align*}
 \int P_r\varphi\,\dd\overline\nu_T-\int\varphi\,\dd\overline\nu_T
 =\frac1T\left(\int_T^{T+r}\mathbf E\varphi(Z_s)\,\dd s-\int_0^r\mathbf E\varphi(Z_s)\,\dd s\right)\longrightarrow0.
\end{align*}
The Feller property permits passage to weak limits. By \Cref{thm:main}, the invariant law is unique, and therefore
\begin{align}\label{eq:FK-occupation-convergence}
 \overline\nu_T\Longrightarrow\widehat\mu.
\end{align}

Fix $h=t_*$ and define
\begin{align*}
 \Psi_h(w,[\mathsf p])=\mathbf E_{w,[\mathsf p]}\log\|U(h,0)\mathsf p\|_2,
 \qquad \|\mathsf p\|_2=1.
\end{align*}
The logarithmic energy identity gives
\begin{align*}
 \left|\log\|U(h,0)\mathsf p\|_2\right|
 \le \nu\int_0^h\|\mathsf p_s\|_{H^1}^2\,\dd s+C\int_0^h(1+\|w_s\|_{H^b})\,\dd s.
\end{align*}
Together with \eqref{eq:robust-block-energy}, \eqref{eq:revision-cond-reset-p}, and \eqref{eq:revision-maximal-base-moment}, this implies, for some $r>1$,
\begin{align}\label{eq:FK-block-UI}
 \sup_{s\ge0}\mathbf E|L_{s+h}-L_s|^r<\infty,\qquad
 \sup_{t\ge1}\mathbf E\left|\frac{L_t}{t}\right|^r<\infty.
\end{align}
By conditional Jensen,
\begin{align*}
 \sup_{T\ge1}\int|\Psi_h|^r\,\dd\overline\nu_T<\infty.
\end{align*}

Fix $b<\beta<a/2-1$ and set
\begin{align*}
 \mathcal K_M=\{(w,[\mathsf p]):\|w\|_{C^\beta}\le M,\ \|\mathsf p\|_2=1,\ \|\mathsf p\|_{H^1}\le M\}.
\end{align*}
The compact embeddings $C^\beta\Subset H^b$ and $H^1\Subset H$ imply that $\mathcal K_M$ is compact in $H^b\times\PP(H)$. The reset estimates give
\begin{align*}
 \lim_{M\to\infty}\limsup_{T\to\infty}\overline\nu_T(\mathcal K_M^c)=0,\qquad
 \widehat\mu(\mathcal K_M^c)\longrightarrow0.
\end{align*}
On each $\mathcal K_M$, continuity of the solution map and backward uniqueness imply continuity of $\Psi_h$. Hence \eqref{eq:FK-occupation-convergence}, the compact core tightness above, and the uniform $L^r$ bound give, by a standard truncation argument,
\begin{align}\label{eq:FK-block-observable-limit}
 \int\Psi_h\,\dd\overline\nu_T\longrightarrow\int\Psi_h\,\dd\widehat\mu.
\end{align}

On the other hand, the Markov property and the cocycle identity give
\begin{align*}
 \int\Psi_h\,\dd\overline\nu_T
 &=\frac1T\int_0^T\mathbf E(L_{s+h}-L_s)\,\dd s\\
 &=\frac1T\mathbf E\left[\int_T^{T+h}L_s\,\dd s-\int_0^hL_s\,\dd s\right].
\end{align*}
By \eqref{eq:FK-block-UI}, $(L_t/t)_{t\ge1}$ is uniformly integrable. Thus \eqref{eq:generic-top-direction} and Vitali's theorem give
\begin{align*}
 \mathbf E\frac{L_t}{t}\longrightarrow\lambda_1,
\end{align*}
and therefore
\begin{align}\label{eq:FK-top-block}
 \lim_{T\to\infty}\int\Psi_h\,\dd\overline\nu_T=h\lambda_1.
\end{align}
Combining \eqref{eq:FK-block-observable-limit} and \eqref{eq:FK-top-block},
\begin{align*}
 h\lambda_1=\int\Psi_h\,\dd\widehat\mu.
\end{align*}

Finally, stationarity, \eqref{eq:FK-integrability}, and \eqref{eq:FK-additive-functional} give
\begin{align*}
 \int\Psi_h\,\dd\widehat\mu
 &=\int\mathbf E_{w,[\mathsf p]}\int_0^h\mathfrak a(w_s,[\mathsf p_s])\,\dd s\,\widehat\mu(\dd w,\dd[\mathsf p])\\
 &=h\int\mathfrak a(w,[\mathsf p])\,\widehat\mu(\dd w,\dd[\mathsf p]).
\end{align*}
Consequently,
\begin{align*}
 \lambda_1=\int_{H^b\times\PP(H)}
 \left(-\nu\|\nabla\mathsf p\|_2^2+\langle w,B(\mathsf p,\mathsf p)\rangle_2\right)
 \widehat\mu(\dd w,\dd[\mathsf p]),
\end{align*}
which proves \eqref{eq:exact-FK}.

Since $\widehat\mu$ is the unique invariant probability measure, Birkhoff's theorem and \eqref{eq:FK-additive-functional} therefore give
\begin{align*}
 \lim_{t\to\infty}\frac1t\log\|U(t,0)\mathsf p\|_2
 =\int\mathfrak a\,\dd\widehat\mu=\lambda_1
\end{align*}
for $\widehat\mu\otimes\mathbf P$ almost every stationary initial state and driving path. This proves \eqref{eq:stationary-top-growth}.
\end{proof}

\appendix

\section{Lyapunov structure for diagonal power law noise}
\label{sec:foundations}

This appendix develops the Lyapunov structure needed for diagonal power
law noise satisfying \eqref{eq:q}.  The spectral median mechanism of
\cite{HPSRY} is adapted to amplitudes
that may depend on the Fourier direction and real phase.  The covariance is
decomposed into an independent radial part and a nonnegative residual part;
a conditional one trial estimate and a fixed ratio spectral descent then
give the positive time reset and the inf-compact Lyapunov function used in
the coupling argument.  The required base estimates are collected in
\Cref{sec:base-linearisation-estimates}.

We also recall the parameter translation.  The equation of \cite{HPSRY} is
written for the velocity.  In the present variables $u=Kw$, the real velocity
noise amplitudes are comparable to
$|\boldsymbol k|^{-(a+2)/2}$.  Thus
\begin{align*}
 \alpha_{\rm H}=a+2,
 \qquad \frac{\alpha_{\rm H}-3}{2}=b+1,
\end{align*}
and $K:H^b\to H^{b+1}$.  The reduction from viscosity $\nu>0$ to unit
viscosity is
\begin{align*}
 s=\nu t,
 \qquad \bar w_s=\nu^{-1}w_{s/\nu},
 \qquad \bar W_s=\sqrt\nu W_{s/\nu}.
\end{align*}
The rescaled velocity noise amplitude is $\nu^{-3/2}$ times the original
one.  All constants below may therefore depend on
$(\nu,c_Q,C_Q)$. 

\subsection{A radial subnoise and a conditional one trial estimate}
\label{subsec:covariance-robust-reset-revision}

After the preceding rescaling, write the real velocity noise as
\begin{align*}
 \overline Q=Q_0D,
 \qquad
 Q_0e_{\boldsymbol k,\iota}
 =|\boldsymbol k|^{-\alpha_{\rm H}/2}e_{\boldsymbol k,\iota},
 \qquad
 De_{\boldsymbol k,\iota}=d_{\boldsymbol k,\iota}e_{\boldsymbol k,\iota}.
\end{align*}
Changing the sign of a real Brownian coordinate if necessary, we may and do assume
\begin{align*}
 0<c_*\le d_{\boldsymbol k,\iota}\le C_*<\infty.
\end{align*}
Put $d_0=c_*/2$ and
\begin{align*}
 r_{\boldsymbol k,\iota}
 =\bigl(d_{\boldsymbol k,\iota}^2-d_0^2\bigr)^{1/2},
 \qquad
 Re_{\boldsymbol k,\iota}=r_{\boldsymbol k,\iota}e_{\boldsymbol k,\iota}.
\end{align*}
On a product Wiener space carrying independent cylindrical Brownian motions
$W^0,W^1$, the Gaussian noise
\begin{align*}
 d_0Q_0\dd W^0+Q_0R\dd W^1
\end{align*}
has the same covariance, and therefore the same law, as
$Q_0D\dd W$.  We use this auxiliary realisation only for transition
estimates.  The Wiener coordinate used in the Malliavin and Ramer arguments
in the body of the paper is unchanged.

We state the one trial input in the unit viscosity velocity variables.  Let
$P_t=e^{t\Delta}$ be the heat semigroup and let
$\Pi_0=\Pi_{\le\sqrt2}$ be the fixed low mode projection.  Below
$\Pi_{\rm L}$ denotes the Leray projection and $\otimes_s$ the symmetric
tensor product.  Set
\begin{align*}
 \mathcal R[v]f&=vf+(\Delta v)(-\Delta)^{-1}f,\\
 \Phi[v,f]_t&=-\int_0^tP_{t-r}\operatorname{div}
                    \bigl(\mathcal R[v_r]f_r\bigr)\,\dd r,\\
 \Psi[v_1,v_2]_t&=-\int_0^tP_{t-r}\Pi_{\rm L}
       \operatorname{div}(v_{1,r}\mathbin\otimes_s v_{2,r})\,\dd r.
\end{align*}
For $\beta_0\ge1$ define the spectral quantile
\begin{align*}
 M^{(\beta_0)}(\rho)
 =\min\left\{M\in\mathbb N:
   \|\Pi_{>M}\rho\|_2\le
   \beta_0\|\Pi_{\le M}\rho\|_2\right\}.
\end{align*}

We begin the projective reset with one radial trial in a frozen compatible
background.  Relative to the radial setting of \cite{HPSRY}, the additional
point needed here is uniformity over the bounded residual background
generated by the nonradial covariance.  In this subsection
$\mathbf P_0$ and $\mathbf E_0$ denote probability and expectation with
respect to the radial noise $W^0$, with the compatible background fixed.
\begin{lemma}
\label{lem:revision-radial-background-trial}
Put
\begin{align*}
 s_*=\frac{\alpha_{\rm H}}2-\frac14,
 \qquad s_*<\gamma<\frac{\alpha_{\rm H}}2,
 \qquad \lambda_0=\frac{\alpha_{\rm H}}2+5,
 \qquad t_M=\lambda_0M^{-2}\log M.
\end{align*}
Fix $R_0<\infty$.  Let
$z\in C_0([0,t_M];C^\gamma)$ be a deterministic, mean zero,
divergence free compatible path with
$\|z\|_{C([0,t_M];C^\gamma)}\le R_0$, let $Z^0$ be the
stochastic convolution generated by the radial noise $d_0Q_0\dd W^0$, and
put
\begin{align*}
 D_t=P_tu_0+z_t,
 \qquad X=D+Z^0,
 \qquad U_0=1+\|u_0\|_{C^\gamma}.
\end{align*}
Let $u$ be the Navier--Stokes solution with linear part $X$, and let $\rho$
solve its linearised vorticity equation from $0\ne\rho_0\in L^2$.  There
are deterministic $M_0,K_{R_0},r_0<\infty$ and
$a_0,\alpha_0>0$, depending only on the fixed equation parameters, such
that, whenever
\begin{align}
 M\ge M_0\vee M^{(2)}(\rho_0)\vee K_{R_0}U_0^{r_0},
 \label{eq:revision-background-trial-threshold}
\end{align}
one has
\begin{align}
 \mathbf P_0\left\{
  \|\Pi_0\rho_{t_M}\|_2
  \ge a_0M^{-3}
       \|\rho_0\|_{H^{-(\alpha_{\rm H}/2+1)}}
 \right\}\ge\alpha_0.
 \label{eq:revision-background-trial-success}
\end{align}
The constants are uniform over all such compatible paths in the fixed
$R_0$ ball.  Here ``compatible'' means a limit, in the displayed topology,
of smooth Fourier truncated velocity paths; this class contains the
residual stochastic convolutions used below.
\end{lemma}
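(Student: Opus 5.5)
The plan is to run the one trial argument of \cite{HPSRY} on the short window $[0,t_M]$ with the frozen background $D=P_tu_0+z$ playing the role of the deterministic linear part, and to check that every estimate depends on $D$ only through $\|D\|_{C([0,t_M];C^\gamma)}\le U_0+R_0$. First decompose $u=X+v$, where $v$ solves the remainder equation $v=\Psi[v+X,v+X]$. On the event $\{\|Z^0\|_{C([0,t_M];C^\gamma)}\le 1\}$, which has probability bounded below uniformly in $M$ by Gaussian support, a contraction argument on the window of length $t_M\sim M^{-2}\log M$ gives $\|v\|\lesssim t_M^{1/2}(U_0+R_0+1)^2$. This is small once $M\ge K_{R_0}U_0^{r_0}$, which is where the threshold \eqref{eq:revision-background-trial-threshold} enters.

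Next expand the fibre. Write $\rho_t=P_t\rho_0+\Phi[u,\rho]_t$ (in vorticity form), and split $\Phi[u,\rho]=\Phi[D,\rho]+\Phi[Z^0,\rho]+\Phi[v,\rho]$. Since $M\ge M^{(2)}(\rho_0)$, the heat flow at time $t_M=\lambda_0M^{-2}\log M$ damps the spectral mass above $M$ by $M^{-\lambda_0}$. The low part $\Pi_{\le M}\rho_0$ therefore dominates $\|\rho_0\|_{H^{-(\alpha_{\rm H}/2+1)}}$ up to powers of $M$. The key term is the linear Gaussian chaos
\begin{align*}
 \Pi_0\Phi[Z^0,P_\cdot\rho_0]_{t_M},
\end{align*}
whose conditional variance is bounded below as follows. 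The radial covariance $d_0^2|\boldsymbol k|^{-\alpha_{\rm H}}$ transfers mass from each Fourier mode of $\Pi_{\le M}\rho_0$ into the fixed low shell $\Pi_0$, via the coupling symbol $\mathcal R$, with weight $\gtrsim M^{-\alpha_{\rm H}-2}$ times the time factor. Since the covariance is exactly radial, this is the computation of \cite{HPSRY}, and it is unaffected by $z$.

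The remaining terms, namely $\Pi_0P_{t_M}\rho_0$ (exponentially small or absent once $\Pi_0$ content is controlled), $\Phi[D,\rho]$, $\Phi[v,\rho]$, and the second order chaos $\Phi[Z^0,\Phi[u,\rho]]$, are estimated in $L^2(\mathbf P_0)$. Each gains either a factor $t_M^{1/2}(U_0+R_0)$ or a higher power of $M^{-1}$. The essential point is that $D$ is deterministic and enters only through its $C^\gamma$ norm, with $\gamma>s_*$ ensuring that $\mathcal R[D]$ acts boundedly on $H^{-1}$ regularity fibres. Then apply a Paley--Zygmund type bound to the Gaussian term (hypercontractivity of first chaos), together with Chebyshev on the error terms, to get \eqref{eq:revision-background-trial-success} with $a_0,\alpha_0$ uniform.

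I expect the main obstacle to be the cross term $\Phi[D,\rho]$. In \cite{HPSRY} the background is $P_tu_0$ alone. Here $z$ is only $C^\gamma$ and not smoother, so it is not a heat profile, and one must verify that its contribution to $\Pi_0\rho_{t_M}$ is $o(M^{-3})\|\rho_0\|_{H^{-(\alpha_{\rm H}/2+1)}}$ using only the regularity $\gamma>\alpha_{\rm H}/2-1/4$ and the shortness of $t_M$. The plan for this term is to bound it by $t_M^{(1+\gamma-s)/2}\|D\|_{C^\gamma}\|\rho\|_{H^{-s}}$ and to choose $r_0$ large enough to absorb it. Compatibility, meaning limits of Fourier truncations, is used only to justify the estimates by approximation.
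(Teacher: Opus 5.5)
There is a genuine gap. The architecture ``leading Gaussian term $\Pi_0\Phi[Z^0,P_\cdot\rho_0]_{t_M}$ plus small errors, then Paley--Zygmund and Chebyshev'' cannot work. Several of the terms you treat as errors are not small, and no choice of $M_0,K_{R_0},r_0$ makes them small.

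First, $\Pi_0P_{t_M}\rho_0$ is essentially $\Pi_0\rho_0$, since the modes in $\Pi_0$ have $|\boldsymbol\ell|^2\le2$ and $t_M\to0$. The quantile hypothesis controls high modes relative to low ones, not the size of $\Pi_0\rho_0$.

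More seriously, the deterministic cross term $\Pi_0\Phi[D,P_\cdot\rho_0]_{t_M}$ is of the same order in the background as the leading term. However, $D$ carries no small factor, whereas $Z^0$ on $[0,t_M]$ does. For example, let $\rho_0$ be a single real mode at $|\boldsymbol n|\approx M/2$. Let $u_0$ contain the complementary mode $\boldsymbol\ell-\boldsymbol n$, with $|\boldsymbol\ell|\le\sqrt2$ and amplitude $M^{-\gamma}$, so that $U_0\asymp1$. The coupling symbol is $O(M^{-1})$ and the time integral contributes $M^{-2}$. So in a generic such configuration this term has size $\asymp M^{-3-\gamma}|\widehat\rho_0(\boldsymbol n)|$. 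The target $M^{-3}\|\rho_0\|_{H^{-(\alpha_{\rm H}/2+1)}}$, which is also the size of the Gaussian term, is $\asymp M^{-4-\alpha_{\rm H}/2}|\widehat\rho_0(\boldsymbol n)|$. Since $\gamma<\alpha_{\rm H}/2$, your ``error'' exceeds the signal by $M^{1+\alpha_{\rm H}/2-\gamma}$.

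Your own bound $t_M^{(1+\gamma-s)/2}\|D\|_{C^\gamma}\|\rho\|_{H^{-s}}$ already shows this: at $s=\alpha_{\rm H}/2+1>\gamma$ the power of $t_M$ is negative. The same holds for $\Phi[D,\Phi[D,Y]]$, $\Phi[\Psi[D,D],Y]$ and the other pure-background terms. A sum of such deterministic terms has no lower bound and may even vanish, so it can be neither discarded as error nor used as the signal.

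Your genuinely higher-order terms are also estimated too crudely. A pathwise gain $t_M^{1/2}$ in $\Phi[v,\rho]$ gives only $t_M^{3/2}\|\rho_0\|_{H^{-(s_*+1)}}$. After the quantile comparison $\|\rho_0\|_{H^{-(s_*+1)}}\lesssim M^{1/4}\|\rho_0\|_{H^{-(\alpha_{\rm H}/2+1)}}$, this still exceeds the target by a factor $M^{1/4}(\log M)^{3/2}$.

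The missing idea is Wiener chaos orthogonality with respect to the fresh radial noise $W^0$.
\begin{itemize}
\item The paper expands $\rho=Y+\Phi[X,Y]+\mathcal P[X,Y]+\operatorname{Rem}[u,\rho]$ to third order, with $Y=P_t\rho_0$ and $X=D+Z^0$. It lets $F_D$ be $\Pi_0$ of the polynomial part, which lies in the chaoses of order $0$ to $3$.
\item It then projects onto the first chaos. Every pure-background term lies in the zeroth chaos and drops out, so $\Pi^{(1)}_0F_D=A+E_D$.
\item Here $A=\Pi_0\Pi^{(1)}_0\Phi[Z^0,Y]_{t_M}$ is independent of $D$ and obeys the radial lower bound.
\item $E_D$ contains only the mixed terms with a single $Z^0$ leg, plus the Wick contractions of the cubic $Z^0$ term. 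These are bounded with Gaussian $L^2(\mathbf P_0)$ estimates gaining $t_M^2$, and the threshold on $M$ gives $\|E_D\|\le\frac12\|A\|$.
\item Hence $\mathbf E_0\|F_D\|_2^2\ge\frac14\|A\|^2$. Hypercontractivity on the finite chaos sum gives $\mathbf E_0\|F_D\|_2^4\lesssim(\mathbf E_0\|F_D\|_2^2)^2$.
\item Paley--Zygmund is applied to all of $F_D$, not to the Gaussian term alone. A large or cancelling deterministic part is therefore harmless.
\end{itemize}
Only the non-polynomial remainder $\operatorname{Rem}$ has to be made small. The paper does this pathwise, using moment bounds for $\|u\|_{C_tC^\gamma}$ and Markov's inequality.
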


\begin{proof}
The deterministic background enters through the following terms.  With
$Y=P_t\rho_0$, the finite expansion of
\cite[(4.9)--(4.11), Sections~4--5]{HPSRY} gives
\begin{align}
 \rho=Y+\Phi[X,Y]+\mathcal P[X,Y]+\operatorname{Rem}[u,\rho],
 \label{eq:revision-radial-expansion}
\end{align}
where
\begin{align*}
 \mathcal P[X,Y]
 =\Phi[\Psi[X,X],Y]+
   \Phi[X,\Phi[X,Y]]+
   \Phi[X,\Phi[X,\Phi[X,Y]]].
\end{align*}
This is an algebraic mild expansion and remains valid for every fixed
$D\in C([0,t_M];C^\gamma)$.  Denote by $\Pi^{(1)}_0$ the first homogeneous
Wiener chaos projection with respect to $W^0$, and put
\begin{align*}
 A&=\Pi_0\Pi^{(1)}_0\Phi[Z^0,Y]_{t_M},\\
 E_D&=\Pi_0\Pi^{(1)}_0\mathcal P[D+Z^0,Y]_{t_M}.
\end{align*}
The leading term contains no $D$.  The radial Fourier calculation in
\cite[Lemmas~5.1--5.2]{HPSRY}, with the fixed intensity $d_0$, gives
\begin{align}
 \|A\|_{L^2(\mathbf P_0;L^2)}^2
 \ge cM^{-6}
   \|\rho_0\|_{H^{-(\alpha_{\rm H}/2+1)}}^2.
 \label{eq:revision-radial-leading}
\end{align}

For clarity, the complete first chaos correction is the first chaos
projection of
\begin{align*}
 &2\mathcal T_\Psi(D,Z^0)
 +\mathcal T_2(D,Z^0)+\mathcal T_2(Z^0,D)\\
 &\quad+\mathcal T_3(Z^0,D,D)+\mathcal T_3(D,Z^0,D)
       +\mathcal T_3(D,D,Z^0)+\mathcal T_3(Z^0,Z^0,Z^0),
\end{align*}
where
\begin{align*}
 \mathcal T_\Psi(v_1,v_2)&=\Pi_0\Phi[\Psi[v_1,v_2],Y]_{t_M},\\
 \mathcal T_2(v_1,v_2)&=\Pi_0\Phi[v_1,\Phi[v_2,Y]]_{t_M},\\
 \mathcal T_3(v_1,v_2,v_3)&=
  \Pi_0\Phi[v_1,\Phi[v_2,\Phi[v_3,Y]]]_{t_M}.
\end{align*}
The last expression includes the three radial Wick pairings.  No residual
noise is contracted because it has been frozen into $D$.

The deterministic estimates used in the proof of
\cite[Proposition~5.4]{HPSRY} are, for
$f\in C_tH^{-(s_*+1)}$ and $v\in C_tC^\gamma$,
\begin{align*}
 \sup_{r\le t}\|\Phi[v,f]_r\|_{H^{-(s_*+1)}}
 &\le Ct^{1/2}\|v\|_{C_tC^\gamma}
       \sup_{r\le t}\|f_r\|_{H^{-(s_*+1)}},\\
 \|\Pi_0\Phi[v,f]_t\|_2
 &\le Ct\|v\|_{C_tC^\gamma}
       \sup_{r\le t}\|f_r\|_{H^{-(s_*+1)}}.
\end{align*}
The estimates depend on the path norm of $v$ rather than on the
identity $v=P_tu_0$.  We give a mixed term for which the proof
of \cite[Proposition~5.4]{HPSRY} uses the special choice $P_tu_0$.
Writing $\sigma_{\boldsymbol k,\iota}$ for a real velocity Fourier mode,
put
\begin{align*}
 \mathcal U_{r,\tau,\boldsymbol k,\iota}
 =\operatorname{div}P_{r-\tau}\Pi_{\rm L}
   \bigl(D_\tau\mathbin\otimes_s
        \sigma_{\boldsymbol k,\iota}\bigr).
\end{align*}
The same Fourier multiplier estimate gives
\begin{align*}
 &\sum_{\boldsymbol k,\iota}|\boldsymbol k|^{-\alpha_{\rm H}}
  \bigl\|\Pi_0\mathcal R[
       \mathcal U_{r,\tau,\boldsymbol k,\iota}]Y_r\bigr\|_2^2\\
 &\qquad\le C(r-\tau)^{-1}\|D_\tau\|_{H^{s_*}}^2
             \|Y_r\|_{H^{-(s_*+1)}}^2
 \le C(r-\tau)^{-1}\|D\|_{C_tC^\gamma}^2
             \|Y_r\|_{H^{-(s_*+1)}}^2,
\end{align*}
because $C^\gamma\hookrightarrow H^{s_*}$ and $\gamma>s_*$.  Jensen's
inequality and integration of the kernel $(r-\tau)^{-1/2}$ give the same
$t_M^4$ bound for $\Phi[\Psi[D,Z^0],Y]$ as in the radial proof.  Here are
the details of the Fourier estimate and the time power.  In complex
notation, expand $D_\tau=\sum_{\boldsymbol m}D_{\tau,\boldsymbol m}e_{\boldsymbol m}$ and
$Y_r=\sum_{\boldsymbol n}Y_{r,\boldsymbol n}e_{\boldsymbol n}$.  For a fixed output mode
$0<|\boldsymbol\ell|\le\sqrt2$, write $\boldsymbol j=\boldsymbol m+\boldsymbol k$ and
$\boldsymbol n=\boldsymbol\ell-\boldsymbol j$.  Boundedness of the Leray multiplier and the fact
\begin{align*}
 \left|1-\frac{|\boldsymbol j|^2}{|\boldsymbol\ell-\boldsymbol j|^2}\right||\boldsymbol j|\le C_{\boldsymbol\ell}
\end{align*}
away from finitely many modes, which are estimated directly, give
\begin{align*}
 \left|\bigl(\Pi_0\mathcal R[
       \mathcal U_{r,\tau,\boldsymbol k}]Y_r\bigr)_{\boldsymbol\ell}\right|
 \le C\sum_{\boldsymbol j} e^{-(r-\tau)|\boldsymbol j|^2}
          |D_{\tau,\boldsymbol j-\boldsymbol k}|\,|Y_{r,\boldsymbol\ell-\boldsymbol j}|.
\end{align*}
The term $\boldsymbol n=\boldsymbol\ell-\boldsymbol j=\boldsymbol0$ is absent because $Y$ has zero spatial mean, and the
$\boldsymbol j=\boldsymbol0$ term is killed by the divergence; this also disposes of the apparent
singular cases in the preceding multiplier inequality.
Cauchy--Schwarz with the weight
$(1+|\boldsymbol j-\boldsymbol k|)^{s_*}$, followed by summation in the
noise mode, yields
\begin{align*}
 &\sum_{\boldsymbol k}|\boldsymbol k|^{-\alpha_{\rm H}}
 \left|\bigl(\Pi_0\mathcal R[
       \mathcal U_{r,\tau,\boldsymbol k}]Y_r\bigr)_{\boldsymbol\ell}\right|^2\\
 &\quad\le C\|D_\tau\|_{H^{s_*}}^2
 \sum_{\boldsymbol j} e^{-2(r-\tau)|\boldsymbol j|^2}|Y_{r,\boldsymbol\ell-\boldsymbol j}|^2
 \sum_{\boldsymbol k}|\boldsymbol k|^{-\alpha_{\rm H}}
       (1+|\boldsymbol j-\boldsymbol k|)^{-2s_*}\\
 &\quad\le C\|D_\tau\|_{H^{s_*}}^2
 \sum_{\boldsymbol j} e^{-2(r-\tau)|\boldsymbol j|^2}(1+|\boldsymbol j|)^{-2s_*}
       |Y_{r,\boldsymbol\ell-\boldsymbol j}|^2\\
 &\quad\le C(r-\tau)^{-1}\|D_\tau\|_{H^{s_*}}^2
       \|Y_r\|_{H^{-(s_*+1)}}^2.
\end{align*}
The second inequality uses the standard convolution bound for the two
summable weights, and the last uses
$e^{-2h|\boldsymbol j|^2}(1+|\boldsymbol j|)^2\le Ch^{-1}$; translating by one of the
finitely many low modes $\boldsymbol\ell$ changes only the constant.  The real
sine cosine sum is the same estimate after an orthogonal change of basis.

To see the time power, let $a$ be the radial noise time.  It\^o isometry
and Minkowski's inequality bound the $L^2(\mathbf P_0;L^2)$ norm of the
mixed term by
\begin{align*}
 C\int_0^{t_M}\int_\tau^{t_M}
 \left(\sum_{\boldsymbol k}|\boldsymbol k|^{-\alpha_{\rm H}}
  \int_0^\tau e^{-2|\boldsymbol k|^2(\tau-a)}
  \|\Pi_0\mathcal R[\mathcal U_{r,\tau,\boldsymbol k}]Y_r\|_2^2
  \,\dd a\right)^{1/2}\dd r\dd\tau.
\end{align*}
Using $\int_0^\tau e^{-2|\boldsymbol k|^2(\tau-a)}\dd a\le\tau$ and the
preceding bound, this is at most
\begin{align*}
 C\|D\|_{C_tH^{s_*}}\|Y\|_{C_tH^{-(s_*+1)}}
 \int_0^{t_M}\int_\tau^{t_M}
     \tau^{1/2}(r-\tau)^{-1/2}\,\dd r\dd\tau
 \le Ct_M^2\|D\|_{C_tH^{s_*}}
       \|Y\|_{C_tH^{-(s_*+1)}}.
\end{align*}
Squaring proves the asserted $t_M^4$ estimate.

A representative cubic mixed background term can be estimated with the
same constants and without using any special form of $D$.  Put $t=t_M$ and
$G_r=\Phi[Z^0,Y]_r$.  The negative Sobolev Gaussian estimate in
\cite[Lemmas~5.3 and~5.6]{HPSRY} gives
\begin{align*}
 \sup_{0<a\le r}\left(\mathbf E_0
   \|G_a\|_{H^{-(s_*+1)}}^2\right)^{1/2}
 \le Cr\|\rho_0\|_{H^{-(s_*+1)}},
 \qquad 0<r\le t.
\end{align*}
Using the integral forms of the two deterministic estimates above and
Minkowski's inequality, we therefore obtain
\begin{align*}
 &\|\mathcal T_3(D,D,Z^0)\|_{L^2(\mathbf P_0;L^2)}\\
 &\quad\le C\|D\|_{C_tC^\gamma}^2
 \int_0^t\int_0^{r_1}(r_1-r_2)^{-1/2}
 \left(\mathbf E_0\|G_{r_2}\|_{H^{-(s_*+1)}}^2\right)^{1/2}
 \,\dd r_2\dd r_1\\
 &\quad\le C\|D\|_{C_tC^\gamma}^2
 \|\rho_0\|_{H^{-(s_*+1)}}
 \int_0^t\int_0^{r_1}(r_1-r_2)^{-1/2}r_2
 \,\dd r_2\dd r_1\\
 &\quad=\frac{8C}{15}t^{5/2}\|D\|_{C_tC^\gamma}^2
 \|\rho_0\|_{H^{-(s_*+1)}}.
\end{align*}
Thus its squared norm is bounded by the second grouped right hand side.
The placements $\mathcal T_3(D,Z^0,D)$ and
$\mathcal T_3(Z^0,D,D)$ follow from the same time simplex, applying,
respectively, the full space and low output Gaussian estimate in the slot
occupied by $Z^0$.

The preceding calculation, its two slot permutations, the two deterministic
estimates above, and the Gaussian estimates in
\cite[Lemmas~5.3 and~5.6]{HPSRY} yield the grouped bounds
\begin{align*}
 \|E_D^{(1)}\|_{L^2(\mathbf P_0;L^2)}^2
 &\le Ct_M^4\|D\|_{C_tC^\gamma}^2
       \|\rho_0\|_{H^{-(s_*+1)}}^2,\\
 \|E_D^{(2)}\|_{L^2(\mathbf P_0;L^2)}^2
 &\le Ct_M^5\|D\|_{C_tC^\gamma}^4
       \|\rho_0\|_{H^{-(s_*+1)}}^2,\\
 \|E_D^{(3)}\|_{L^2(\mathbf P_0;L^2)}^2
 &\le Ct_M^5\|\rho_0\|_{H^{-(s_*+1)}}^2.
\end{align*}
Here $E_D^{(1)}$ is the sum of the $\mathcal T_\Psi$ and
$\mathcal T_2$ terms, $E_D^{(2)}$ is the sum of the three cubic terms
with one radial noise leg, and $E_D^{(3)}$ consists of the three radial
Wick contractions.  For example, with an independent radial copy
$\overline Z^0$, one of the latter equals
\begin{align*}
 \overline{\mathbf E}_0\,
 \mathcal T_3(Z^0,\overline Z^0,\overline Z^0),
\end{align*}
and the other two follow by permuting the slots.  Jensen's inequality in
the copy variable and the same Gaussian estimates give the last grouped
bound.  Since $\|D\|_{C_tC^\gamma}\le C_{R_0}U_0$, consequently
\begin{align}
 \|E_D\|_{L^2(\mathbf P_0;L^2)}^2
 \le C_{R_0}t_M^4U_0^4
       \|\rho_0\|_{H^{-(s_*+1)}}^2.
 \label{eq:revision-background-correction}
\end{align}
These estimates also justify removal of the spatial cutoffs.  Fix
$s_*<\gamma_0<\gamma$.  Let $S_N$ be a smooth Fourier multiplier with
symbol $\chi(\boldsymbol k/N)$, where
$\chi\in C_c^\infty(\mathbb R^2)$, $0\le\chi\le1$, and $\chi=1$ near the
origin.  Put $u_0^N=S_Nu_0$ and choose, by compatibility, smooth
Fourier truncated $z^N\to z$ in $C_tC^\gamma$.  Then
$u_0^N\to u_0$ in $C^{\gamma_0}$ and
$\sup_N\|u_0^N\|_{C^\gamma}\le C\|u_0\|_{C^\gamma}$.  Set
$D^N=P_tu_0^N+z^N$, $Y^N=P_tS_N\rho_0$, and
$Z^{0,N}=S_NZ^0$.  The constants in the preceding operator and Fourier
estimates are independent of $N$: the $D^N$ norms are uniformly bounded by
$C_{R_0}U_0$, and every noise sum is dominated by the corresponding
untruncated sum.  Moreover, $D^N\to D$ in $C_tC^{\gamma_0}$,
$Y^N\to Y$ in $C_tH^{-(s_*+1)}$, and
$Z^{0,N}\to Z^0$ in $L^q(\mathbf P_0;C_tC^{\gamma_0})$ for every
$q<\infty$.  Multilinearity and the displayed bounds therefore make each
first chaos kernel converge in $L^2(\mathbf P_0;L^2)$.  Hence
\eqref{eq:revision-radial-leading} and
\eqref{eq:revision-background-correction}, first proved with cutoffs, pass
to the stated untruncated quantities with the same constants.

The estimates above supply the additional input needed to run the
radial argument in a bounded compatible background.

The quantile hypothesis gives the elementary low high comparison
\begin{align}
 \|\rho_0\|_{H^{-(s_*+1)}}
 \le CM^{1/4}
   \|\rho_0\|_{H^{-(\alpha_{\rm H}/2+1)}}.
 \label{eq:revision-background-quantile}
\end{align}
Taking square roots in
\eqref{eq:revision-radial-leading}--
\eqref{eq:revision-background-correction}, and using
$(\log M)^2\le CM^{1/4}$, yields
\begin{align*}
 \frac{\|E_D\|_{L^2}}{\|A\|_{L^2}}
 \le C_{R_0}M^{-1/2}U_0^2.
\end{align*}
After increasing $K_{R_0},r_0,M_0$, the right hand side is at most $1/2$.

Let $F_D$ be the projection onto $\Pi_0$ of all the polynomial terms in
\eqref{eq:revision-radial-expansion}.  Its Wiener chaos decomposition
contains only orders $0,1,2,3$, and
\begin{align*}
 \Pi_0^{(1)}F_D=A+E_D.
\end{align*}
By the preceding bound on $E_D$ and
\eqref{eq:revision-radial-leading},
\begin{align*}
 \|\Pi_0^{(1)}F_D\|_{L^2(\mathbf P_0;L^2)}
 \ge\|A\|_{L^2(\mathbf P_0;L^2)}
      -\|E_D\|_{L^2(\mathbf P_0;L^2)}
 \ge\frac12\|A\|_{L^2(\mathbf P_0;L^2)},
\end{align*}
and hence
\begin{align*}
 \mathbf E_0\|F_D\|_2^2
 \ge cM^{-6}
 \|\rho_0\|_{H^{-(\alpha_{\rm H}/2+1)}}^2.
\end{align*}
Hilbert valued hypercontractivity for the finite sum of chaoses gives
\begin{align*}
 \mathbf E_0\|F_D\|_2^4
 \le C\bigl(\mathbf E_0\|F_D\|_2^2\bigr)^2.
\end{align*}
Paley--Zygmund therefore gives constants $c_1,\alpha_1>0$ such that
\begin{align}
 \mathbf P_0\left\{
   \|F_D\|_2\ge c_1M^{-3}
       \|\rho_0\|_{H^{-(\alpha_{\rm H}/2+1)}}
 \right\}\ge2\alpha_1.
 \label{eq:revision-background-PZ}
\end{align}

It remains to compare the polynomial with the true endpoint.  The pathwise
remainder estimate in \cite[Lemma~5.5]{HPSRY} is independent of the
covariance and gives
\begin{align*}
 \|\Pi_0\operatorname{Rem}[u,\rho]_{t_M}\|_2
 &\le Ct_M^2\|\rho_0\|_{H^{-(s_*+1)}}
   \exp\left(Ct_M\|u\|_{C_tC^\gamma}^2\right)\\
 &\qquad\times
   \bigl(\|D+Z^0\|_{C_tC^\gamma}
              +\|u\|_{C_tC^\gamma}\bigr)^4.
\end{align*}
We give the uniform path estimate because the compatible path need not have
a time derivative.  Put
\begin{align*}
 \widetilde Z=z+Z^0,
 \qquad v=u-\widetilde Z,
 \qquad \mathcal Z=\|\widetilde Z\|_{C_tC^\gamma}.
\end{align*}
Directly from the mild equation, without differentiating $z$, one has
\begin{align}
 v_r=P_ru_0-\int_0^rP_{r-a}\Pi_{\rm L}
       \operatorname{div}\bigl((v_a+\widetilde Z_a)
                 \mathbin\otimes(v_a+\widetilde Z_a)\bigr)\,\dd a.
 \label{eq:revision-background-mild-v}
\end{align}
For smooth spatial truncations let
$\omega=\operatorname{curl}v$ and
$\chi=\operatorname{curl}\widetilde Z$.  Then
\begin{align*}
 \partial_r\omega
 =\Delta\omega-(v+\widetilde Z)\mathbin\cdot
          \nabla(\omega+\chi).
\end{align*}
The transport of $\omega$ vanishes in the $L^2$ pairing.  Since
$\gamma>s_*>2$, the remaining terms satisfy
\begin{align*}
 |\langle\omega,v\mathbin\cdot\nabla\chi\rangle|
 &\le C\|\nabla\chi\|_\infty
       (\|\omega\|_2^2+U_0^2),\\
 |\langle\omega,\widetilde Z\mathbin\cdot\nabla\chi\rangle|
 &\le\tfrac14\|\omega\|_2^2
       +C\|\widetilde Z\|_\infty^2\|\nabla\chi\|_2^2.
\end{align*}
Poincar\'e's inequality absorbs the displayed quarter term into the
dissipation.
Consequently, uniformly in all spatial truncations,
\begin{align*}
 \frac{\dd}{\dd r}(1+\|\omega_r\|_2^2)
  +\|\nabla\omega_r\|_2^2
 \le C(1+\mathcal Z)(1+\|\omega_r\|_2^2)
       +CU_0^2(1+\mathcal Z)^4.
\end{align*}
Gronwall and Poincar\'e therefore give
\begin{align}
 \sup_{r\le t}\|v_r\|_{H^1}
 \le CU_0(1+\mathcal Z)^2e^{C(1+\mathcal Z)}.
 \label{eq:revision-background-enstrophy}
\end{align}
This is the standard enstrophy step in
\cite[proof of Lemma~7.5]{HPSRY}; the calculation above shows that it uses
only the spatial norm and not a time derivative of the background.

For completeness, choose $\delta>0$ and set
$\theta_0=1-3\delta>0$ and
$\theta_{j+1}=\min\{\gamma,\theta_j+1-\delta\}$.  The product estimate
$H^1\cdot H^1\hookrightarrow H^{1-\delta}$ and the heat estimate in
\eqref{eq:revision-background-mild-v} give
\begin{align*}
 V_0:=\sup_{r\le t}\|v_r\|_{C^{\theta_0}}
 &\le C\{U_0+(\sup_{r\le t}\|v_r\|_{H^1}+\mathcal Z)^2\},\\
 V_{j+1}&\le C\{U_0+(V_j+\mathcal Z)^2\}.
\end{align*}
After a fixed finite number of steps, depending only on $\gamma$, this and
\eqref{eq:revision-background-enstrophy} imply
\begin{align*}
 1+\|D+Z^0\|_{C_tC^\gamma}+\|u\|_{C_tC^\gamma}
 \le C U_0^R(1+\mathcal Z)^R e^{CR\mathcal Z}.
\end{align*}
Since $\mathcal Z\le R_0+\|Z^0\|_{C_tC^\gamma}$, Fernique's theorem now
gives, for every $q<\infty$,
\begin{align}
 \mathbf E_0\left[
 \left(1+\|D+Z^0\|_{C_tC^\gamma}
          +\|u\|_{C_tC^\gamma}\right)^q
 \right]
 \le C_{q,R_0}U_0^{R_q},
 \qquad 0<t\le1.
 \label{eq:revision-background-path-moment}
\end{align}
To remove the truncations, approximate the compatible $z$ and $Z^0$ in
$C_tC^\gamma$ by their Fourier truncations.  Equation
\eqref{eq:revision-background-mild-v} has continuous coefficients and no
$\dot z$ term; its two dimensional mild solutions therefore converge by
the usual energy compactness and pathwise uniqueness argument.  The bounds
are uniform and pass to the limit by Fatou's lemma.  Thus the deterministic
residual path enters only through $R_0$.

Dividing the remainder bound by
$c_1M^{-3}\|\rho_0\|_{H^{-(\alpha_{\rm H}/2+1)}}$ and using
\eqref{eq:revision-background-quantile} gives
\begin{align*}
 CM^{-3/4}(\log M)^2
 \exp(Ct_M\mathcal A_M^2)\mathcal A_M^4,
 \qquad
 \mathcal A_M=1+\|D+Z^0\|_{C_tC^\gamma}+\|u\|_{C_tC^\gamma}.
\end{align*}
On $\{\mathcal A_M\le M^{1/16}\}$ this tends to zero.  On the complement,
Markov's inequality and
\eqref{eq:revision-background-path-moment} at one fixed sufficiently large
order give
\begin{align*}
 \mathbf P_0\left\{
  \|\Pi_0\operatorname{Rem}_{t_M}\|_2
  >\frac{c_1}{2}M^{-3}
    \|\rho_0\|_{H^{-(\alpha_{\rm H}/2+1)}}
 \right\}
 \le C_{R_0}M^{-4}U_0^{R}.
\end{align*}
The last member is at most $\alpha_1$ under
\eqref{eq:revision-background-trial-threshold}, after one enlargement of
the constants.  Combining this estimate with
\eqref{eq:revision-background-PZ} proves
\eqref{eq:revision-background-trial-success}.
\end{proof}

The radial trial is next converted into a statement valid after arbitrary
stopping times for the original nonradial noise.  Conditioning on the
residual subnoise leaves a fresh radial component to which the preceding
trial applies.
\begin{proposition}
\label{prop:revision-conditional-radial-trial}
Let $S$ be an almost surely finite stopping time, freeze
$(u_S,\rho_S)$ with $\rho_S\ne0$, and let $M$ be an
$\mathcal F_S$ measurable integer.  There are deterministic
$K_{\rm tr},r_{\rm tr}<\infty$ and $a_{\rm tr},\alpha_{\rm tr}>0$ such
that, on
\begin{align*}
 M\ge M^{(2)}(\rho_S)
       \vee K_{\rm tr}(1+\|u_S\|_{C^\gamma})^{r_{\rm tr}},
\end{align*}
one has
\begin{align}
 \mathbf P\left(
  \|\Pi_0\rho_{S+t_M}\|_2
  \ge a_{\rm tr}M^{-3}
      \|\rho_S\|_{H^{-(\alpha_{\rm H}/2+1)}}
  \;\middle|\;\mathcal F_S\right)
 \ge\alpha_{\rm tr}.
 \label{eq:revision-conditional-success}
\end{align}
The constants do not depend on $S,M,u_S$, or $\rho_S$.
\end{proposition}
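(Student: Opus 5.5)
The plan is to reduce to \Cref{lem:revision-radial-background-trial} by conditioning on everything except a fresh radial noise increment after $S$. Use the auxiliary realisation $d_0Q_0\dd W^0+Q_0R\dd W^1$ of the noise, which has the same law as $Q_0D\dd W$, on an enlarged product space. The conditional probability in \eqref{eq:revision-conditional-success} depends only on the joint law of the path after $S$ given $\mathcal F_S$. So we may compute it in this realisation, with $W^0,W^1$ restarted at $S$ by the strong Markov property. The restarted processes are independent of $\mathcal F_S$ and of each other.

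Work on the window $[S,S+t_M]$ and write $u_S=u_0$ and $\rho_S=\rho_0$. Split the linear part of the velocity as
\begin{align*}
 X_t=P_tu_S+z_t+Z^0_t,
\end{align*}
where $Z^0$ is the radial stochastic convolution and $z$ is the residual convolution generated by $Q_0R\dd W^1$. Conditionally on $\mathcal F_S\vee\sigma(W^1)$, the path $z$ is frozen. It is compatible because it is a limit of its Fourier truncations. The only remaining randomness is $W^0$, which has the law $\mathbf P_0$ of the lemma. The nonlinear solution and its linearisation are the same measurable functionals of $(u_S,\rho_S,z,Z^0)$ as in the lemma.

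To handle the residual background, which is not bounded, fix $R_0$ and use the event $G=\{\|z\|_{C([0,t_M];C^\gamma)}\le R_0\}$. The entries $r_{\boldsymbol k,\iota}\le C_*$ are bounded, so $z$ is a Gaussian process dominated by the radial covariance with a uniform constant. Gaussian maximal estimates on the short interval $t_M\le1$, as in \Cref{lem:compact-strong-driver-event}, give $\mathbf P(G^c)\le\alpha_0/2$ once $R_0$ is a large deterministic constant. This bound is uniform in $S$, $M$ and $t_M$. On $G$, apply the lemma with $a_{\rm tr}=a_0$. Its threshold condition \eqref{eq:revision-background-trial-threshold} is implied by the hypothesis once $K_{\rm tr}\ge K_{R_0}\vee M_0$ and $r_{\rm tr}=r_0$. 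The factor $M_0$ is absorbed because $1+\|u_S\|_{C^\gamma}\ge1$. Since $M$, $u_S$ and $\rho_S$ are $\mathcal F_S$ measurable, they are frozen as well.

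On $G$ the lemma gives conditional success probability at least $\alpha_0$. Integrating over $W^1$ then gives
\begin{align*}
 \mathbf P(\cdot\mid\mathcal F_S)\ge\alpha_0\,\mathbf P(G)\ge\alpha_0(1-\alpha_0/2)\ge\alpha_0/2=:\alpha_{\rm tr}.
\end{align*}

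The main obstacle is justifying the conditional application rigorously. It requires measurability of the solution map in $(z,Z^0)$ jointly, so that Fubini applies to the conditional law. It also requires checking that regular conditional distributions given $\mathcal F_S$ are realised by the restarted product noise. Both should follow from the pathwise continuity of the mild solution in $C_tC^\gamma$ of the driving paths, which is established in the lemma's truncation-removal step.
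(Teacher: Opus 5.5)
Your proposal is correct and follows essentially the same route as the paper: freeze the residual convolution $Z^{1,S}$ on a high-probability ball $\{\|Z^{1,S}\|_{C([0,t_M];C^\gamma)}\le R_0\}$, apply \Cref{lem:revision-radial-background-trial} in the fresh radial noise $W^0$ with $U_0=1+\|u_S\|_{C^\gamma}$, and integrate over $W^1$. The only difference is cosmetic. The paper fixes $R_0$ by requiring $\mathbf P(\mathcal G_{S,M}\mid\mathcal F_S)\ge 3/4$, independently of $\alpha_0$, and obtains $\alpha_{\rm tr}=3\alpha_0/4$; your calibration $\mathbf P(G^c)\le\alpha_0/2$ ties $R_0$ to $\alpha_0$ unnecessarily, which is harmless here because the lemma's $\alpha_0$ does not depend on $R_0$.
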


\begin{proof}
Let $Z^{1,S}$ be the residual stochastic convolution generated by the fresh
post $S$ increments of $W^1$.  The upper bound on $R$, the dyadic
heat kernel estimate, and Gaussian concentration give, for every admissible
$\gamma$, uniformly over $S$, $0<t\le1$, and Fourier truncations,
\begin{align*}
 \mathbf E\left[
  \|Z^{1,S}\|_{C([0,t];C^\gamma)}^q
  \;\middle|\;\mathcal F_S\right]\le C_q,
 \qquad q<\infty.
\end{align*}
Choose $R_0$ so large that the trial measurable event
\begin{align*}
 \mathcal G_{S,M}
 =\left\{
  \|Z^{1,S}\|_{C([0,t_M];C^\gamma)}\le R_0
  \right\}
\end{align*}
satisfies
\begin{align*}
 \mathbf P(\mathcal G_{S,M}\mid\mathcal F_S)\ge\frac34
\end{align*}
whenever $t_M\le1$.

Condition additionally on the residual path.  The radial increments $W^0$
remain fresh.  On $\mathcal G_{S,M}$ the deterministic background in
\Cref{lem:revision-radial-background-trial} is
$z=Z^{1,S}$.  This path is compatible almost surely by its Fourier
truncations, and on $\mathcal G_{S,M}$ it lies in the fixed $R_0$ ball.
Moreover, the quantity $U_0$ in that lemma is
\begin{align*}
 U_0=1+\|u_S\|_{C^\gamma}.
\end{align*}
Thus, after increasing $K_{\rm tr},r_{\rm tr}$, the conditional radial
success probability is at least $\alpha_0$ for every residual path in
$\mathcal G_{S,M}$.  Integrating first in $W^0$ and then in $W^1$ gives
\eqref{eq:revision-conditional-success} with
$\alpha_{\rm tr}=3\alpha_0/4$.  The substitution of a random
$\mathcal F_S$ measurable integer follows by disintegration.
\end{proof}

The remaining spectral descent will use the base stochastic estimates
collected in \Cref{lem:base-stochastic-estimates}.  They depend only on the
upper and lower power law bounds for the real Fourier phase amplitudes and
not on the radial subnoise decomposition.

\subsection{Fixed ratio spectral descent and terminal smoothing}

Put
\begin{align*}
 m(\rho)=M^{(1)}(\rho),
 \qquad m_2(\rho)=M^{(2)}(\rho),
 \qquad A_*=\frac{\alpha_{\rm H}}2+4=\lambda_0-1.
\end{align*}
Choose once and for all
\begin{align*}
 \vartheta=\left(1-\frac1{2\lambda_0}\right)^{1/2}\in(0,1),
 \qquad
 \lambda_0\vartheta^2=A_*+\frac12.
\end{align*}
Choose a deterministic integer floor $M_\bullet$ so large that
$\lceil\vartheta M\rceil<M$ for every integer $M>M_\bullet$.  For
$M\ge M_\bullet$ set
\begin{align*}
 L_M=M_\bullet\vee\lceil\vartheta M\rceil,
 \quad h_M=\lambda_0M^{-2}\log M,
 \quad n_M=\lceil(\log M)^2\rceil,
 \quad H_M=n_Mh_M.
\end{align*}
After increasing $M_\bullet$ once more, we have, for every $M>M_\bullet$,
\begin{align*}
 \qquad \bar\vartheta=\frac{1+\vartheta}{2}<1,
 \qquad
 L_M=M_\bullet\quad\hbox{or}\quad L_M\le\bar\vartheta M,
 \qquad L_M<M.
\end{align*}
Thus every sequence $M_{i+1}=L_{M_i}$ reaches $M_\bullet$ after finitely
many steps and is geometrically decreasing until its last step.  In
particular,
\begin{align*}
 T_{\rm it}:=
 \sup_{M_0\ge M_\bullet}
 \sum_{i:M_i>M_\bullet}H_{M_i}+H_{M_\bullet}
 \le C_\vartheta H_{M_\bullet}<1
\end{align*}
after one further enlargement of $M_\bullet$.
Indeed, reading the geometric sequence backwards from its last scale and
using $H_M\asymp M^{-2}(\log M)^3$ bounds the sum by a convergent series
$\sum_{k\ge0}\bar\vartheta^{2k}(1+k)^3H_{M_\bullet}$.

A single successful trial only lowers the spectral scale by a fixed ratio.
The next lemma iterates the trials over geometrically decreasing scales and
adds a capped terminal step that supplies uniform $H^1$ moments.
\begin{lemma}
\label{lem:revision-conditional-iteration}
Let $\theta$ be an almost surely finite stopping time with
$u_\theta\in C^\gamma$, and let $M\ge M_\bullet$ be an
$\mathcal F_\theta$ measurable integer with $m(\rho_\theta)\le M$.  Define
\begin{align*}
 d_{\theta,M}
 &=\inf\{t\ge0:m(\rho_{\theta+t})\le L_M\},\notag\\
 e_{\theta,M}
 &=\inf\{t\ge0:m_2(\rho_{\theta+t})>M\},\notag\\
 c_{\theta,M}&=e_{\theta,M}\wedge H_M.
\end{align*}
For every $N<\infty$ there are $C_N,R_N<\infty$ such that
\begin{align}
 \mathbf P_\theta\{d_{\theta,M}>H_M\}
 +\mathbf P_\theta\{e_{\theta,M}\le H_M\}
 \le C_NM^{-N}U_\theta^{R_N},
 \qquad U_\theta=1+\|u_\theta\|_{C^\gamma}.
 \label{eq:revision-level-failure}
\end{align}
For every $q<\infty$ there are $C_q,R_q<\infty$ such that
\begin{align}
 \mathbf E_\theta
 \|\mathsf p_{\theta+c_{\theta,M}}\|_{H^1}^q
 \le C_qM^qU_\theta^{R_q}.
 \label{eq:revision-capped-level-H1}
\end{align}

Starting at an arbitrary finite stopping time $S$ with
$u_S\in C^\gamma$, there is an adapted geometrically descending
construction and a terminal time $\eta_S$ such that
\begin{align*}
 0<\eta_S-S\le T_{\rm it}
\end{align*}
and, for every $r<\infty$,
\begin{align}
 \mathbf E_S\|\mathsf p_{\eta_S}\|_{H^1}^{2r}
 \le C_rU_S^{R_r}.
 \label{eq:revision-stopped-H1}
\end{align}
The deterministic horizon is independent of $S$, of the initial fibre
direction, and of the requested moment order.
\end{lemma}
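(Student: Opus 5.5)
The plan is to treat \Cref{lem:revision-conditional-iteration} as a three-part statement. Part one is a single-level estimate \eqref{eq:revision-level-failure}. Part two is a capped $H^1$ moment bound \eqref{eq:revision-capped-level-H1}. Part three concatenates the levels along the geometric sequence $M_{i+1}=L_{M_i}$ and adds a terminal step.

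\emph{Descent within one level.} For the failure probability of $d_{\theta,M}$, I would split $[\theta,\theta+H_M]$ into $n_M=\lceil(\log M)^2\rceil$ consecutive trial windows of length $h_M=t_M$. At the start $S_k=\theta+kh_M$ of each window, as long as $m_2(\rho_{S_k})\le M$ and $m(\rho_{S_k})>L_M$, apply \Cref{prop:revision-conditional-radial-trial} with the $\mathcal F_{S_k}$-measurable integer $M$. The threshold $M\ge K_{\rm tr}(1+\|u_{S_k}\|_{C^\gamma})^{r_{\rm tr}}$ has to be paid for. I would do this on the event $\{\sup_{s\le H_M}\|u_{\theta+s}\|_{C^\gamma}\le M^{\epsilon}U_\theta\}$, whose complement has probability $C_NM^{-N}U_\theta^{R_N}$ by the base moment bounds in \Cref{lem:base-stochastic-estimates}.

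A successful trial gives $\|\Pi_0\rho\|_2\ge a_{\rm tr}M^{-3}\|\rho_{S_k}\|_{H^{-(\alpha_{\rm H}/2+1)}}$. Combined with $m(\rho_{S_k})\le M$, this yields a lower bound on low-mode mass relative to $\|\rho\|_2$ of order $M^{-3-(\alpha_{\rm H}/2+1)}$. Heat dissipation of the high modes over the subsequent time, at rate $e^{-M^2 t}$ versus the low rate, then forces the quantile below $\lceil\vartheta M\rceil$. The choice $\lambda_0\vartheta^2=A_*+\tfrac12$ is tuned exactly so that $e^{-\vartheta^2M^2h_M}=M^{-\lambda_0\vartheta^2}$ beats $M^{-A_*}$ with a margin $M^{-1/2}$. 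This step must be checked against a pathwise linear growth bound for $\rho$ with the $C^\gamma$ background, using the energy identity.

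Independent-trial failure over $n_M$ windows by the strong Markov property gives probability $(1-\alpha_{\rm tr})^{(\log M)^2}\le C_NM^{-N}$. The event $e_{\theta,M}\le H_M$, that $m_2$ jumps above $M$, is handled by the same energy estimate. Since $\rho$ solves a parabolic equation with a $C^\gamma$-bounded drift, the high-frequency ratio cannot grow by a factor $2$ beyond scale $M$ within time $H_M\ll 1$, except on the bad base event.

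\emph{Capped $H^1$ moments.} On $[\theta,\theta+c_{\theta,M}]$ one has $m_2\le M$, so $\|\Pi_{\le M}\mathsf p\|_{H^1}\le M$ and $\|\Pi_{>M}\mathsf p\|_2\le 2$. Positive-time smoothing over the short window, via \Cref{lem:gain}, controls the high part in $H^1$ by $M$ times a polynomial in $U_\theta$. Taking moments with the base bounds gives \eqref{eq:revision-capped-level-H1}.

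\emph{Concatenation.} Define $\theta_0=S$, $M_0=M_\bullet\vee m(\rho_S)$, and $\theta_{i+1}=\theta_i+c_{\theta_i,M_i}$ on success, stopping at the first failure or once $M_i=M_\bullet$. At that point take $\eta_S$ to be the end of a last capped window of length $H_{M_\bullet}$. The total time is at most $T_{\rm it}<1$ by the geometric bound already established.

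For \eqref{eq:revision-stopped-H1}, sum over the stopping level $i$ by taking conditional expectations at each $\theta_i$. If all levels succeed, the terminal moment is $C_rM_\bullet^{2r}U^{R}$. If level $i$ fails, the capped bound gives $M_i^{2r}U_{\theta_i}^{R}$, weighted by the failure probability $M_i^{-N}U_{\theta_i}^{R_N}$. Choosing $N>2r+2$, the series over the geometric scales converges, uniformly in the possibly huge $M_0$. The $U_{\theta_i}$ factors are replaced by $U_S$ using base moment bounds over times at most $1$ together with Hölder's inequality.

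\textbf{Main obstacle.} The delicate step is the fixed-ratio descent: turning one successful low-mode trial into $m(\rho)\le\lceil\vartheta M\rceil$ within the window. This needs the exponent bookkeeping between $M^{-3}$, the negative Sobolev loss, and $e^{-\vartheta^2M^2h_M}$, all controlled pathwise in the nonradial $C^\gamma$ background. I would follow the corresponding step of \cite{HPSRY} line by line, replacing their radial background by the frozen residual path.
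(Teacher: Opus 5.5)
There is a genuine gap in your capped estimate \eqref{eq:revision-capped-level-H1}. The window $c_{\theta,M}=e_{\theta,M}\wedge H_M$ has no deterministic lower bound. At time $\theta$ you only know $\mathsf p_\theta\in H$ with $m(\rho_\theta)\le M$, so the part of $\mathsf p_\theta$ above frequency $M$ may sit at arbitrarily high frequencies. Smoothing over a window of length $c$ then gives only
\[
\|\mathsf p_{\theta+c}\|_{H^1}\le C\bigl(M+c^{-1/2}\bigr)\bigl(1+\sup_{t\le H_M}\|u_{\theta+t}\|_{C^\gamma}\bigr)^C .
\]
The condition $m_2\le M$ controls $\Pi_{>M}\mathsf p$ in $L^2$, not in $H^1$. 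So ``$M$ times a polynomial in $U_\theta$'' is unjustified when $e_{\theta,M}\ll M^{-2}$.

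The missing idea is a lower tail for the exit time. Put $R_M(t)=\|\Pi_{>M}\rho_{\theta+t}\|_2/\|\Pi_{\le M}\rho_{\theta+t}\|_2$. Then $R_M(0)\le1$, $R_M(e_{\theta,M})=2$ on $\{e_{\theta,M}<\infty\}$, and $\frac{\mathrm d}{\mathrm dt}\arctan R_M\le CMU_{\theta+t}$. Hence $\{Me_{\theta,M}\le s\}$ forces $\sup_{t\le s/M}U_{\theta+t}\ge cs^{-1}$, and \eqref{eq:revision-conditional-path-moment} yields $\mathbf E_\theta(Me_{\theta,M})^{-q}\le C_qU_\theta^{R_q}$. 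Combined with $c_{\theta,M}^{-1/2}\le CM\bigl(1+(Me_{\theta,M})^{-1/2}\bigr)$, this closes the capped bound. The same tail estimate gives $e_{\theta,M}>0$ almost surely, hence the strict positivity $\eta_S-S>0$, which your proposal does not address.

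Three further points need repair.

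\emph{Restart time.} In the concatenation you must restart at $\theta_{i+1}=\theta_i+d_{\theta_i,M_i}$, not at $\theta_i+c_{\theta_i,M_i}$. The next level requires $m(\rho_{\theta_{i+1}})\le L_{M_i}$, which is known only at the hitting time. The median can rise again afterwards, and if $c_{\theta_i,M_i}=e_{\theta_i,M_i}$ one even has $m(\rho)>M_i$ there. Failed branches stop at $\theta_i+c_{\theta_i,M_i}$. Weighting them by the failure probability requires conditional Cauchy--Schwarz, so you need $N>4r+2$ rather than $N>2r+2$.

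\emph{Descent step.} The comparison ``trial low mass versus heat decay of the high part'' cannot be run directly with a pathwise growth bound for $\|\rho\|_2$. The coupling feeds $\Pi_{>L_M}$ at a rate of order $L_MU\|\rho\|_2$. After Duhamel this leaves a leakage of order $L_M^{-1}U\|\rho\|_2$, far above the $M^{-A_*}\|\rho\|_2$ produced by a successful trial. The argument has to be a dichotomy. Either $d_{\theta,M}$ already occurs inside the window, or the median stays above $L_M$ on the whole window. In the second case $\|\rho\|_2\le\sqrt2\,\|\Pi_{>L_M}\rho\|_2$, so the coupling can be absorbed into the exponent. This gives $\|\Pi_{>L_M}\rho_{\theta+(j+1)h_M}\|_2\le CM^{-A_*-1/4}\|\rho_{\theta+jh_M}\|_2$, which is incompatible with success.

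\emph{Trial starts after $\theta$.} At later trial starts only $m_2\le M$ (before $e_{\theta,M}$) is available, not $m\le M$. It is $m_2\le M$ that supplies $\|\rho\|_{H^{-(\alpha_{\rm H}/2+1)}}\ge cM^{-(\alpha_{\rm H}/2+1)}\|\rho\|_2$.
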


\begin{proof}
Let
\begin{align*}
 \upsilon_{\theta,M}
 =\inf\{t\ge0:U_{\theta+t}>M^\varepsilon U_\theta\},
\end{align*}
After enlarging $r_{\rm tr}$ if necessary, assume $r_{\rm tr}\ge1$ and
choose $\varepsilon>0$ so that $\varepsilon r_{\rm tr}<1/2$.  Suppose first that
\begin{align}
 M\ge K_1U_\theta^{2r_{\rm tr}}.
 \label{eq:revision-good-threshold}
\end{align}
For
\begin{align*}
 A_j=\{jh_M<d_{\theta,M}\wedge e_{\theta,M}
                         \wedge\upsilon_{\theta,M}\},
 \qquad 0\le j\le n_M,
\end{align*}
the absence of the upper quantile and velocity exits gives, on $A_j$,
\begin{align*}
 m_2(\rho_{\theta+jh_M})\le M,
 \qquad
 K_{\rm tr}U_{\theta+jh_M}^{r_{\rm tr}}\le M.
\end{align*}
Thus \Cref{prop:revision-conditional-radial-trial} applies at every trial
start with the same success probability $\alpha_{\rm tr}$.

We show that success on trial $j$ is incompatible with $A_{j+1}$.  On
$A_{j+1}$ the median remains above $L_M$ throughout the trial.  The
high mode energy calculation of \cite[Lemma~3.3]{HPSRY}, now made at the
cutoff $L_M$, gives
\begin{align}
 \|\Pi_{>L_M}\rho_{\theta+(j+1)h_M}\|_2
 &\le C\exp\left(
  -L_M^2h_M+CL_Mh_MM^\varepsilon U_\theta
 \right)
 \|\rho_{\theta+jh_M}\|_2\notag\\
 &\le CM^{-A_*-1/4}\|\rho_{\theta+jh_M}\|_2.
 \label{eq:revision-fixed-ratio-high-mode}
\end{align}
Indeed, under \eqref{eq:revision-good-threshold},
$U_\theta\le C M^{1/(2r_{\rm tr})}$, so the positive exponent is
\begin{align*}
 CL_Mh_MM^\varepsilon U_\theta
 \le C M^{-1+\varepsilon+1/(2r_{\rm tr})}\log M=o(1).
\end{align*}
Here we used $r_{\rm tr}\ge1$ and
$\varepsilon r_{\rm tr}<1/2$, which imply
$\varepsilon+1/(2r_{\rm tr})<1$.  On the other hand,
\begin{align*}
 L_M^2h_M
 \ge\lambda_0\vartheta^2\log M
 =\left(A_*+\frac12\right)\log M.
\end{align*}
Thus, after enlarging $M_\bullet$, the positive exponential is bounded by
$M^{1/4}$ and the net decay is at least $M^{-A_*-1/4}$, as claimed in
\eqref{eq:revision-fixed-ratio-high-mode}.  On the other hand,
$m_2(\rho_{\theta+jh_M})\le M$ gives
\begin{align*}
 \|\rho_{\theta+jh_M}\|_{H^{-(\alpha_{\rm H}/2+1)}}
 \ge cM^{-(\alpha_{\rm H}/2+1)}
       \|\rho_{\theta+jh_M}\|_2.
\end{align*}
The success event in
\eqref{eq:revision-conditional-success} therefore produces a fixed low mode
component at least
$cM^{-A_*}\|\rho_{\theta+jh_M}\|_2$.  It dominates
\eqref{eq:revision-fixed-ratio-high-mode}; hence the endpoint median is at
most $L_M$, contradicting $A_{j+1}$.  The strong Markov and tower properties
give
\begin{align*}
 \mathbf P_\theta(A_{n_M})
 \le(1-\alpha_{\rm tr})^{n_M}
 \le C_NM^{-N}
\end{align*}
for every $N$.

The two auxiliary exits are estimated by the same argument as in
\cite[Lemmas~6.1 and~7.5]{HPSRY}.  With
\begin{align*}
 R_M(t)=
 \frac{\|\Pi_{>M}\rho_{\theta+t}\|_2}
      {\|\Pi_{\le M}\rho_{\theta+t}\|_2},
\end{align*}
one has $R_M(0)\le1$, whereas $e_{\theta,M}<\infty$ requires $R_M$ to
reach $2$, and
\begin{align*}
 \frac{\dd}{\dd t}R_M(t)
 \le CMU_{\theta+t}(1+R_M(t)^2).
\end{align*}
Consequently, \eqref{eq:revision-conditional-path-moment} and Markov's
inequality imply, for every $q<\infty$,
\begin{align*}
 \mathbf P_\theta\{e_{\theta,M}\le H_M\}
 &\le C_q(MH_M)^qU_\theta^{R_q},\notag\\
 \mathbf P_\theta\{\upsilon_{\theta,M}\le H_M\}
 &\le C_qM^{-\varepsilon q}U_\theta^{R_q}.
\end{align*}
Because $H_M=n_Mh_M$,
\begin{align*}
 \{d_{\theta,M}>H_M\}
 \subset A_{n_M}\cup\{e_{\theta,M}\le H_M\}
                  \cup\{\upsilon_{\theta,M}\le H_M\}.
\end{align*}
Since $MH_M\le CM^{-1}(\log M)^3$, choosing $q$ after $N$ proves
\eqref{eq:revision-level-failure} under
\eqref{eq:revision-good-threshold}.  If that threshold fails, the trivial
probability bound is absorbed into $C_NM^{-N}U_\theta^{R_N}$.

For the capped estimate, we first record the corresponding short time
exit bound.  Before $e_{\theta,M}$ the ratio $R_M$ stays below $2$, and
therefore
\begin{align*}
 \frac{\dd}{\dd t}\arctan R_M(t)\le CMU_{\theta+t}.
\end{align*}
Since $R_M(0)\le1$ and $R_M(e_{\theta,M})=2$ whenever the exit is finite,
the event $\{Me_{\theta,M}\le s\}$, $0<s\le1$, implies
\begin{align*}
 \sup_{0\le t\le s/M}U_{\theta+t}\ge c s^{-1}.
\end{align*}
The conditional path moment estimate
\eqref{eq:revision-conditional-path-moment} and Markov's inequality
therefore give, for every $\ell<\infty$,
\begin{align*}
 \mathbf P_\theta\{Me_{\theta,M}\le s\}
 \le C_\ell s^\ell U_\theta^{R_\ell}.
\end{align*}
In particular the exit time is strictly positive almost surely, and
integration of this tail against $qs^{-q-1}\dd s$, choosing $\ell>q$,
gives
\begin{align}
 \mathbf E_\theta(Me_{\theta,M})^{-q}
 \le C_qU_\theta^{R_q}.
 \label{eq:revision-inverse-exit}
\end{align}  The deterministic
estimate underlying \cite[Proposition~6.3]{HPSRY} is
\begin{align*}
 \|\mathsf p_{\theta+c_{\theta,M}}\|_{H^1}
 \le C\bigl(M+c_{\theta,M}^{-1/2}\bigr)
   \left(1+\sup_{0\le t\le H_M}
                   \|u_{\theta+t}\|_{C^\gamma}\right)^C.
\end{align*}
Since
\begin{align*}
 c_{\theta,M}^{-1/2}
 \le e_{\theta,M}^{-1/2}+H_M^{-1/2}
 \le CM\left(1+(Me_{\theta,M})^{-1/2}\right),
\end{align*}
\eqref{eq:revision-inverse-exit} and
\eqref{eq:revision-conditional-path-moment} prove
\eqref{eq:revision-capped-level-H1}.

We finish with the full geometric construction.  Set
\begin{align*}
 M_0=m(\rho_S)\vee M_\bullet,
 \qquad
 M_{i+1}=L_{M_i}\quad\text{while }M_i>M_\bullet
\end{align*}
and let $J=\inf\{i:M_i=M_\bullet\}$.  Put
$\mathcal R_0=\Omega$ and $\theta_0=S$.  Suppose level $i<J$ has been
reached.  On $\mathcal R_i$ define
\begin{align*}
 d_i=d_{\theta_i,M_i},
 \qquad e_i=e_{\theta_i,M_i},
 \qquad c_i=e_i\wedge H_{M_i},
\end{align*}
and set
\begin{align*}
 \mathcal F_i&=\mathcal R_i\cap\{c_i<d_i\},
 &\mathcal R_{i+1}&=\mathcal R_i\cap\{d_i\le c_i\}.
\end{align*}
On $\mathcal R_{i+1}$ put $\theta_{i+1}=\theta_i+d_i$; off this event
extend $\theta_{i+1}$ by the already attained finite stopping time.  On
$\mathcal F_i$ stop the construction and put $\eta_S=\theta_i+c_i$.
If every geometric level is crossed, then on $\mathcal R_J$ perform the
final capped step
\begin{align*}
 \eta_S=\theta_J+c_{\theta_J,M_\bullet}.
\end{align*}
The standard pasting lemma makes these random index constructions honest
stopping times.  The disjoint family of reached failure events together
with $\mathcal R_J$ partitions the probability space.  Every continued
branch satisfies
$m(\rho_{\theta_{i+1}})\le M_{i+1}$, exactly the hypothesis needed at the
next level; no failed branch is restarted.  Moreover,
\begin{align*}
 0<\eta_S-S\le\sum_{i=0}^{J}H_{M_i}\le T_{\rm it}.
\end{align*}
Strict positivity follows from the final or failed capped step, since every
cap is positive almost surely.

On a reached nonterminal level,
\begin{align*}
 \{c_i<d_i\}
 \subset\{d_{\theta_i,M_i}>H_{M_i}\}
       \cup\{e_{\theta_i,M_i}\le H_{M_i}\}.
\end{align*}
Conditional Cauchy--Schwarz,
\eqref{eq:revision-level-failure}, and
\eqref{eq:revision-capped-level-H1} at order $4r$ give
\begin{align*}
 \mathbf E_{\theta_i}\left[
  \mathbf1_{\{c_i<d_i\}}
  \|\mathsf p_{\theta_i+c_i}\|_{H^1}^{2r}\right]
 \le C_{r,N}M_i^{2r-N/2}U_{\theta_i}^{R_{r,N}}.
\end{align*}
On $\mathcal R_J$, the capped estimate at the fixed floor gives
\begin{align*}
 \mathbf E_{\theta_J}
 \|\mathsf p_{\theta_J+c_{\theta_J,M_\bullet}}\|_{H^1}^{2r}
 \le C_rM_\bullet^{2r}U_{\theta_J}^{R_r}.
\end{align*}
The geometric decrease implies, for $N>4r+2$,
\begin{align*}
 \sup_{M_0}\sum_{i=0}^{J}M_i^{2r-N/2}<\infty.
\end{align*}
Every reached $\theta_i\in[S,S+T_{\rm it}]$.  Consequently
\eqref{eq:revision-conditional-path-moment} controls all random velocity
powers by a single conditional moment of
$1+\sup_{t\le T_{\rm it}}\|u_{S+t}\|_{C^\gamma}$.  Summing the disjoint
branches and using the tower property now yields
\eqref{eq:revision-stopped-H1}.
\end{proof}

The geometric descent can now be placed after a deterministic positive time
base regularisation and propagated to one common deterministic time.  This
yields the covariance robust reset and block energy hierarchy used
throughout the coupling argument.
\begin{theorem}
\label{lem:robust-HPSRY-hierarchy}
Assume \eqref{eq:q}, put $b=(a-3)/2$, and fix
$b<\beta<a/2-1$.  There is a deterministic time
$t_*>0$, depending only on $(a,\nu,c_Q,C_Q)$, such that the following
assertions hold.

For every $r<\infty$, there are an integer $R_r$, an admissible base weight
\begin{align}
 V_r(w)=
 (1+\|w\|_{H^b}^2)^{R_r}
 \exp(\eta_0\|w\|_2^2),
 \label{eq:revision-base-weight}
\end{align}
where one sufficiently small $\eta_0>0$ is fixed for the whole hierarchy,
and $C_r<\infty$ such that, uniformly over unit $\mathsf p\in H$,
\begin{align}
 \mathbf E_{w,\mathsf p}\|\mathsf p_{t_*}\|_{H^1}^{2r}
 &\le C_rV_r(w),
 \label{eq:robust-all-H1-moments}\\
 \mathbf E_w\|w_{t_*}\|_{C^\beta}^r
 &\le C_rV_r(w).
 \label{eq:robust-base-reset}
\end{align}
The same estimates hold after every almost surely finite stopping time $S$:
\begin{align}
 \mathbf E\left[
  \|\mathsf p_{S+t_*}\|_{H^1}^{2r}\mid\mathcal F_S\right]
 &\le C_rV_r(w_S),
 \label{eq:revision-cond-reset-p}\\
 \mathbf E\left[
  \|w_{S+t_*}\|_{C^\beta}^{r}\mid\mathcal F_S\right]
 &\le C_rV_r(w_S).
 \label{eq:revision-cond-reset-base}
\end{align}
The constants do not depend on $S$ or on the value of $\mathsf p_S$.

For every fixed $T\ge t_*$, every $r<\infty$, and every unit
$\mathsf p\in H^1$,
\begin{align}
 \mathbf E_{w,\mathsf p}
 \left(\int_0^T\|\mathsf p_s\|_{H^1}^2\,\dd s\right)^r
 \le C_{r,T}\bigl(V_r(w)+1+\|\mathsf p\|_{H^1}^{R_r}\bigr),
 \label{eq:robust-block-energy}
\end{align}
where the polynomial orders are independent of $T$.  Finally, all weights
may be chosen from one nested hierarchy.  Given any finite list of moment
orders and polynomial powers, one member $V_*$ dominates every corresponding
right hand side and satisfies
\begin{align}
 P_T^{\rm base}V_*
 \le C_{0,*}e^{-\gamma_*T}V_*+B_*,
 \qquad T\ge0.
 \label{eq:revision-base-Foster}
\end{align}
\end{theorem}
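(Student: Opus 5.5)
The plan is to assemble the hierarchy from the two stopped ingredients already available, namely the conditional geometric descent in \Cref{lem:revision-conditional-iteration} for the fibre and the base stochastic estimates of \Cref{lem:base-stochastic-estimates} for the vorticity. The descent requires $u_S\in C^\gamma$ at its starting time, whereas the initial state is only in $H^b$. I will therefore split the deterministic time as $t_*=t_{\rm reg}+T_{\rm it}$, where $t_{\rm reg}>0$ is a fixed base regularisation time.

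\emph{Base reset.} First I will prove \eqref{eq:robust-base-reset} and \eqref{eq:revision-cond-reset-base}. Subtract the stochastic convolution, use parabolic smoothing in the mild equation together with the Biot--Savart gain, and bootstrap from $H^b$ to $C^\beta$ over a positive time. This uses $\beta<a/2-1$, which makes the convolution $C^\beta$-valued with Gaussian moments of all orders. The nonlinear growth is controlled by the exponential enstrophy moment $\mathbf E\exp(\eta_0\sup_{t\le1}\|w_t\|_2^2)\lesssim\exp(\eta_0\|w\|_2^2)$, valid for small $\eta_0$ by the standard It\^o/exponential martingale argument. This yields weights of the form \eqref{eq:revision-base-weight}. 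The strong Markov property transfers the estimate to an arbitrary finite stopping time $S$, with constants independent of $S$.

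\emph{Fibre reset.} For the fibre, fix $S$ and let $S'=S+t_{\rm reg}$. Apply \Cref{lem:revision-conditional-iteration} at $S'$ to obtain the stopped time $\eta_{S'}\in(S',S'+T_{\rm it}]$ with the bound \eqref{eq:revision-stopped-H1} in terms of $U_{S'}$, whose moments are controlled by the base reset. The stopped time is random, so the remaining step is to propagate the $H^1$ moment from $\eta_{S'}$ to the deterministic time $S+t_*$. On $[\eta_{S'},S+t_*]$, of length at most $T_{\rm it}<1$, the $H^1$ energy inequality for the normalised fibre gives
\[
\frac{\dd}{\dd t}\|\mathsf p_t\|_{H^1}^2\le C(1+\|w_t\|_{C^\gamma})^2\|\mathsf p_t\|_{H^1}^2 .
\]
This holds because the terms from $\langle w,B(\mathsf p,\mathsf p)\rangle$ in the normalisation are bounded by $\|w\|_{H^b}\|\mathsf p\|_{H^1}^2$. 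Gronwall then gives a multiplicative factor $\exp(C\int\|w\|_{C^\gamma}^2)$. Its moments are finite only with exponential weights, so I will instead use $\|w\|_{C^\gamma}$ moments from the conditional path moment estimate \eqref{eq:revision-conditional-path-moment} combined with a localisation. This propagation step, together with making the exponential-in-path factor compatible with a polynomial-times-$\exp(\eta_0\|w\|_2^2)$ weight, is the main obstacle. If the Gronwall route fails, I will first try to replace it by rerunning the capped descent so that its terminal time is capped at the deterministic time, using \eqref{eq:revision-capped-level-H1} and \eqref{eq:revision-inverse-exit}. The conditional statement \eqref{eq:revision-cond-reset-p} follows by the same strong Markov argument, uniformly in $\mathsf p_S$ because the descent is.

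\emph{Block energy and Foster bound.} For \eqref{eq:robust-block-energy}, integrate the $L^2$ energy identity $\frac{\dd}{\dd t}\log\|\rho_t\|_2=\mathfrak a$, or more directly cover $[0,T]$ by intervals of length $t_*$. On the first interval use the initial $H^1$ norm propagated by the Gronwall bound, which gives the $\|\mathsf p\|_{H^1}^{R_r}$ term. On subsequent intervals use \eqref{eq:revision-cond-reset-p} after each deterministic restart together with local dissipation. Hölder's inequality and \eqref{eq:revision-base-Foster} then bound the sums. Finally, the nested hierarchy and \eqref{eq:revision-base-Foster} follow from the standard Itô Lyapunov drift for $(1+\|w\|_{H^b}^2)^R e^{\eta_0\|w\|_2^2}$, taking the maximum of finitely many $R_r$.
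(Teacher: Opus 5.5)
You have a genuine gap, and it sits exactly at the step you flag as the main obstacle. That step is transporting the $H^1$ moment from the random terminal time $\eta_{S'}\in(S',S'+T_{\rm it}]$ of \Cref{lem:revision-conditional-iteration} to the deterministic time $S+t_*$. The same step is needed for \eqref{eq:robust-block-energy}, from time $0$ across the first block.

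With the multiplicative coefficient $C(1+\|w_t\|_{C^\gamma})^2$ that you write, Gronwall produces $\exp(C\int\|w_t\|_{C^\gamma}^2\,\dd t)$ with a large constant. No estimate in \Cref{lem:base-stochastic-estimates} controls its moments. Neither fallback closes this.
\begin{itemize}
\item Polynomial path moments from \eqref{eq:revision-conditional-path-moment} plus localisation cannot handle an exponential factor. Off the localisation event you have no other bound on $\|\mathsf p_{S+t_*}\|_{H^1}$, because the normalised fibre has no a priori $H^1$ control without a lower bound on $\|\rho\|_2$.
\item Re-running the capped descent does not give a deterministic terminal time. The level times $\theta_i$ are random, and \eqref{eq:revision-capped-level-H1} holds only up to the exit time $e_{\theta,M}$, after which the spectral control behind the smoothing bound is lost.
\end{itemize}
Your alternative for the block energy, integrating $\frac{\dd}{\dd t}\log\|\rho_t\|_2=\mathfrak a$, is circular. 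It bounds $\nu\int_0^T\|\nabla\mathsf p_s\|_2^2\,\dd s$ by $\log\|\rho_T\|_2^{-1}$ plus base terms, and $\log\|\rho_T\|_2^{-1}$ has no independent bound. The paper uses this identity in the opposite direction, in \eqref{eq:revision-d-inverse}, with the block energy as input.

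The missing idea has two parts. First, the correct pathwise propagation estimate has an exponent \emph{linear} in the base norm. In $\frac12\frac{\dd}{\dd t}\|\nabla\mathsf p\|_2^2$ the dissipative terms combine to $-\nu\|(\Delta+\lambda)\mathsf p\|_2^2\le0$, where $\lambda=\|\nabla\mathsf p\|_2^2$. Split the cross term as $\langle\Delta\mathsf p,K\mathsf p\cdot\nabla w\rangle=\langle(\Delta+\lambda)\mathsf p,K\mathsf p\cdot\nabla w\rangle-\lambda\langle\mathsf p,K\mathsf p\cdot\nabla w\rangle$. Absorb the first piece into the variance; since $\|K\mathsf p\|_2\le\|\mathsf p\|_2=1$, the Young remainder is an \emph{additive} forcing rather than a multiplicative one. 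This gives the bound of \cite[Lemma~6.4]{HPSRY} used in the paper: with $u=Kw$, $2<\gamma_0<3$, and $T_c=S+t_*$,
\[
 \|\mathsf p_{T_c}\|_{H^1}\le C\exp\Bigl(C\int_{\eta_{S'}}^{T_c}\|u_t\|_{H^{\gamma_0}}\,\dd t\Bigr)\Bigl(\|\mathsf p_{\eta_{S'}}\|_{H^1}+(T_c-\eta_{S'})\sup_{\eta_{S'}\le t\le T_c}\|u_t\|_{H^{\gamma_0+1}}\Bigr).
\]
Second, the super-Lyapunov bound \eqref{eq:revision-lower-norm-exponential} controls $\mathbf E\exp(C_0\int\|u_t\|_{H^\zeta}\,\dd t)$ times polynomial suprema for \emph{every} $C_0$, with a single $\eta_0$. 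In vorticity variables its weight has exactly the form $V_r$. So an exponential weight is not an obstruction; it is the admissible shape of the hierarchy.

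With these two ingredients, bound the base factors on the random interval $[\eta_{S'},T_c]$ by the same quantities on the deterministic interval $[S',T_c]$. Then close by H\"older with \eqref{eq:revision-stopped-H1} at order $4r$. The same bound started at time $0$ from $\|\mathsf p\|_{H^1}$ gives \eqref{eq:robust-block-energy}. Your base reset, strong Markov transfer, and Foster/hierarchy steps are fine and agree in substance with the paper.
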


\begin{proof}
Work first in the unit viscosity clock.  From a deterministic initial state,
run the base equation for one fixed $t_{\rm v}>0$.  The positive time bound
\eqref{eq:revision-positive-time-Cgamma} gives every finite moment of
$U_{t_{\rm v}}=1+\|u_{t_{\rm v}}\|_{C^\gamma}$ with an
exponential polynomial weight of the initial base state.  Apply
\Cref{lem:revision-conditional-iteration} from that time.  Its terminal time
is bounded by the deterministic $T_{\rm it}$, independently of the moment
order.  On paths on which it finishes early, propagate the fibre to the
common endpoint $t_{\rm v}+T_{\rm it}$ with the deterministic estimate
\cite[Lemma~6.4]{HPSRY}.  More explicitly, put
$T_c=t_{\rm v}+T_{\rm it}$ and choose $2<\gamma_0<3$ equal to the Sobolev
exponent $\zeta$ used in
\eqref{eq:revision-lower-norm-exponential}.  Since the first
inequality in that lemma is pathwise, it applies at the stopping time
$\eta=\eta_{t_{\rm v}}$ and gives
\begin{align*}
 \|\mathsf p_{T_c}\|_{H^1}
 &\le C\exp\left(C\int_\eta^{T_c}
                     \|u_t\|_{H^{\gamma_0}}\,\dd t\right)\\
 &\quad\times\left(\|\mathsf p_\eta\|_{H^1}
 +(T_c-\eta)\sup_{\eta\le t\le T_c}
                    \|u_t\|_{H^{\gamma_0+1}}\right).
\end{align*}
Both base path factors are bounded by the corresponding quantities on the
deterministic interval $[t_{\rm v},T_c]$.  Apply
\eqref{eq:revision-stopped-H1} at order $4r$, then H\"older's inequality.
The exponential factor is controlled by
\eqref{eq:revision-lower-norm-exponential}, while the positive time
$H^{\gamma_0+1}$ supremum is controlled by
\eqref{eq:revision-lower-norm-exponential} (recall that
$b>11/2>\gamma_0+1$).  Thus
\begin{align*}
 \sup_{\|\mathsf p\|_2=1}
 \mathbf E_{u,\mathsf p}\|\mathsf p_{t_{\rm v}+T_{\rm it}}\|_{H^1}^{2r}
 \le C_rV_r(u).
\end{align*}
Returning to the original clock defines one common $t_*$ and proves
\eqref{eq:robust-all-H1-moments}.  For the base reset, apply the same
positive time velocity estimate with Hölder exponent $\beta+1$; this is
admissible because $\beta<a/2-1$.  Since $w=\operatorname{curl}u$, it gives
\eqref{eq:robust-base-reset}.  The strong Markov
identity
\begin{align*}
 \mathbf E[F(z_{S+t_*})\mid\mathcal F_S]=P_{t_*}F(z_S)
\end{align*}
then gives \eqref{eq:revision-cond-reset-p}--
\eqref{eq:revision-cond-reset-base} without a separate stopping time
argument.

For the block energy, the deterministic calculation of
\cite[Lemma~6.4]{HPSRY} gives, for $2<\gamma<3$,
\begin{align*}
 \sup_{s\le t\le T}\|\mathsf p_t\|_{H^1}
 &\le C\exp\left(C\int_s^T\|u_t\|_{H^\gamma}\,\dd t\right)\\
 &\quad\times
 \left(\|\mathsf p_s\|_{H^1}
 +(T-s)\sup_{s\le t\le T}\|u_t\|_{H^{\gamma+1}}\right).
\end{align*}
Raise this estimate to the required power, integrate in time, and combine it
with the standard base maximal estimates.  This proves
\eqref{eq:robust-block-energy}; its polynomial orders are determined by the
moment order, not by $T$.

Finally, the super Lyapunov estimate in
\cite[Lemma~3.7]{BBPS22a}, equivalently the second assertion of
\cite[Lemma~7.4]{HPSRY} after the parameter translation above, supplies one
sufficiently small $\eta_0>0$ that works for every polynomial order.  In
particular, for
\begin{align*}
 V^{(n)}(w)=(1+\|w\|_{H^b}^2)^n
                    e^{\eta_0\|w\|_2^2},
\end{align*}
one has
\begin{align*}
 P_T^{\rm base}V^{(n)}
 \le C_ne^{-\gamma_nT}V^{(n)}+B_n.
\end{align*}
Increasing $n$ accommodates any prescribed finite list and proves
\eqref{eq:revision-base-Foster}.
\end{proof}

Only finitely many moment orders are needed in the main proof.  The
following corollary packages them into one sampled block and one inf-compact
Lyapunov function.
\begin{corollary}
\label{cor:revision-common-block}
Fix any finite collection of moment orders and polynomial powers.  There
are one base weight $V_*$, an integer $M_*$, and $N_*<\infty$ such that,
for every $N\ge N_*$, the block $\tau=Nt_*$ admits a constant
$C_\tau<\infty$ for which, with
\begin{align*}
 G_*(w,[\mathsf p])=\|w\|_{C^\beta}^{M_*}+\|\mathsf p\|_{H^1}^{M_*},
\end{align*}
where both norms are extended by $+\infty$ off their natural domains,
\begin{align}
 P_\tau^{\rm base}V_*&\le\theta V_*+C_\tau,
 \qquad 0<\theta<1,\notag\\
 P_\tau G_*&\le C_\tau V_*.
 \label{eq:revision-common-reset}
\end{align}
Thus
\begin{align*}
 W_0=V_*+\varepsilon G_*:
 H^b\times\PP(H)\longrightarrow[1,\infty]
\end{align*}
is lower semicontinuous, finite on
$\mathsf D_0=C^\beta\times\PP(H^1)$, and has compact finite sublevel
sets in $H^b\times\PP(H)$.  For sufficiently small $\varepsilon>0$,
\begin{align}
 P_\tau W_0\le\varrho_0W_0+C_\tau,
 \qquad 0<\varrho_0<1,
 \label{eq:robust-joint-drift}
\end{align}
in the extended valued sense.  Moreover,
\begin{align}
 P_\tau W_0(w,[\mathsf p])
 \le C_\tau(1+V_*(w))<\infty
 \qquad\text{for every }(w,[\mathsf p])\in H^b\times\PP(H).
 \label{eq:revision-W0-pointwise-reset}
\end{align}
Consequently $P_\tau(z,\mathsf D_0)=1$ for every
$z\in H^b\times\PP(H)$.
\end{corollary}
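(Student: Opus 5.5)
The plan is to assemble the corollary from Theorem~\ref{lem:robust-HPSRY-hierarchy}. First I fix the finite list of orders and use the nested hierarchy to choose one weight $V_*$ that dominates every right-hand side $V_r$ appearing in \eqref{eq:robust-all-H1-moments}--\eqref{eq:robust-base-reset} for that list. I then set $M_*$ to the largest requested power. The Foster bound \eqref{eq:revision-base-Foster} gives $P_T^{\rm base}V_*\le C_{0,*}e^{-\gamma_*T}V_*+B_*$. I choose $N_*$ so that $\theta:=C_{0,*}e^{-\gamma_*N_*t_*}<1$; then for every $N\ge N_*$ the first inequality holds at $\tau=Nt_*$ with the same $\theta$.

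For the reset bound \eqref{eq:revision-common-reset}, I write $\tau=(\tau-t_*)+t_*$ and condition on $\mathcal F_{\tau-t_*}$. The conditional reset estimates \eqref{eq:revision-cond-reset-p}--\eqref{eq:revision-cond-reset-base}, applied with $r=M_*$ (after Jensen if needed), give
\begin{align*}
 \mathbf E\bigl[G_*(z_\tau)\mid\mathcal F_{\tau-t_*}\bigr]\le C\,V_*(w_{\tau-t_*}).
\end{align*}
These estimates hold uniformly in the fibre value, so the bound does not depend on the initial direction. Taking expectations and using the Foster bound once more yields $P_{\tau-t_*}^{\rm base}V_*\le C V_*+B_*$. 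Hence $P_\tau G_*\le C_\tau V_*$, where the additive constant is absorbed because $V_*\ge1$.

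For the drift \eqref{eq:robust-joint-drift}, I compute
\begin{align*}
 P_\tau W_0\le\theta V_*+C_\tau+\varepsilon C_\tau V_*\le(\theta+\varepsilon C_\tau)W_0+C_\tau,
\end{align*}
using $V_*\le W_0$. Choosing $\varepsilon$ small then gives $\varrho_0<1$. The pointwise bound \eqref{eq:revision-W0-pointwise-reset} is the same computation without comparing to $W_0$, and it is finite for every $z\in H^b\times\PP(H)$. It follows that $G_*(z_\tau)<\infty$ almost surely, which gives $P_\tau(z,\mathsf D_0)=1$.

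The topological claims come next. Lower semicontinuity follows because $V_*$ is continuous on $H^b$, and the $C^\beta$ and $H^1$ norms, extended by $+\infty$, are lower semicontinuous in the $H^b$ and $H$ topologies; the latter holds because they are suprema of continuous pairings or Fourier partial sums. For $\|\mathsf p\|_{H^1}$ on $\PP(H)$, I use a unit representative, and the sign ambiguity is harmless. Compactness of sublevel sets uses the compact embeddings $C^\beta\Subset H^b$ (valid since $\beta>b$) and $H^1\Subset H$. A bounded set of unit vectors in $H^1$ is therefore relatively compact in $H$, and its projective image is relatively compact in $\PP(H)$; closedness then comes from lower semicontinuity. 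Finiteness on $\mathsf D_0$ is immediate. I expect the main care point to be the uniformity in the fibre direction inside the conditioning step, together with keeping $\theta$ independent of $N$. Both are supplied directly by the stated constants, so no step is genuinely hard.
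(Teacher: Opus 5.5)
Your proposal is correct and matches the paper's proof step for step: you take $N_*$ from the Foster bound \eqref{eq:revision-base-Foster}, condition at $\tau-t_*$ with the conditional resets on the last interval and Foster on the preceding one, choose $\varepsilon$ so that $\theta+\varepsilon C_\tau<1$, and get finiteness, lower semicontinuity and compact sublevels from the finite expectation and the embeddings $C^\beta\Subset H^b$, $H^1\Subset H$. Your justification of lower semicontinuity via suprema of continuous pairings supplies a detail the paper only asserts; the one bookkeeping fix is to choose $M_*$ before $V_*$, so that $V_*$ dominates the weight at the reset order $r=M_*$.
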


\begin{proof}
Choose $N_*$ so that
$C_{0,*}e^{-\gamma_*N_*t_*}<1$ in
\eqref{eq:revision-base-Foster}.  For the joint reset, condition at time
$\tau-t_*$, apply
\eqref{eq:revision-cond-reset-p}--
\eqref{eq:revision-cond-reset-base} on the last reset interval, and use the
base Foster estimate on the preceding interval.  This proves
\eqref{eq:revision-common-reset}, uniformly in the initial projective
direction.  Choosing
$\theta+\varepsilon C_\tau<\varrho_0<1$ gives
\eqref{eq:robust-joint-drift}, and the same estimates give
\eqref{eq:revision-W0-pointwise-reset}.  A nonnegative extended valued
random variable with finite expectation is finite almost surely.  Finally,
lower semicontinuity and the compact embeddings
$C^\beta\Subset H^b$ and $H^1\Subset H$ give compactness of every finite
sublevel.
\end{proof}

\section{Base and linearisation estimates}
\label{sec:base-linearisation-estimates}

This appendix collects the base stochastic estimates and the fixed path
linearisation estimates used throughout the paper.  They are separated
from the projective Lyapunov construction in \Cref{sec:foundations} in order to keep
the latter focused on the positive time reset and inf-compactness mechanism.

\subsection{Base stochastic estimates}

The projective arguments repeatedly require base moments that are uniform
after stopping times and over positive time intervals.  We collect the
precise exponential, maximal, and positive time Schauder estimates here for
later reference.

\begin{lemma}
\label{lem:base-stochastic-estimates}
Let $u=Kw$ be the velocity corresponding to the vorticity solution, with the
unit viscosity rescaling used above.  There is a number $\eta_0>0$ such that,
for every $T<\infty$, $m<\infty$, $C_0<\infty$, and $2<\zeta<3$, there are
$R<\infty$ and $C<\infty$ for which
\begin{align}
 &\mathbf E_{u_0}\left[
  \exp\left(C_0\int_0^T\|u_t\|_{H^\zeta}\,\dd t\right)
  \sup_{0\le t\le T}(1+\|u_t\|_{H^{b+1}}^2)^m
  \right]                                                      \notag\\
 &\hspace{25mm}\le C(1+\|u_0\|_{H^{b+1}}^2)^R
                 \exp(\eta_0\|u_0\|_{H^1}^2).
 \label{eq:revision-lower-norm-exponential}
\end{align}
Equivalently, in the original vorticity variables, for every
$m,r\in[1,\infty)$,
\begin{align}
 \mathbf E_w\left[1+\|w\|_{C([0,T];H^b)}^m\right]^r
 \le C_{T,m,r}(1+\|w\|_{H^b}^2)^R
                    \exp(\eta_0\|w\|_2^2).
 \label{eq:revision-maximal-base-moment}
\end{align}
For $s_*<\gamma<\alpha_{\rm H}/2$ and every $q<\infty$,
\begin{align}
 \mathbf E\left[
  \sup_{0\le t\le1}(1+\|u_{S+t}\|_{C^\gamma})^q
  \;\middle|\;\mathcal F_S\right]
 \le C_q(1+\|u_S\|_{C^\gamma})^{R_q}
 \label{eq:revision-conditional-path-moment}
\end{align}
after every finite stopping time $S$ for which $u_S\in C^\gamma$.
Moreover, for every fixed $t_{\rm v}>0$,
\begin{align}
 \mathbf E\left[
  (1+\|u_{S+t_{\rm v}}\|_{C^\gamma})^q
  \;\middle|\;\mathcal F_S\right]
 \le C_{q,t_{\rm v}}
  (1+\|u_S\|_{H^{b+1}}^2)^{R_{q,t_{\rm v}}}
  e^{\eta_0\|u_S\|_{H^1}^2}.
 \label{eq:revision-positive-time-Cgamma}
\end{align}
The same estimate holds for the supremum over a compact interval bounded
away from $S$.  The maximal and exponential estimates also hold
conditionally after a finite stopping time, with the state at that time on
the right hand side.
\end{lemma}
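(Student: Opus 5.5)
The plan is to deduce all four estimates from the standard exponential martingale (energy) structure of the 2D Navier--Stokes equation, combined with a parabolic bootstrap. The exponent $\eta_0$ is fixed once, in the first step, and kept throughout.

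\emph{Step 1: exponential energy estimate.} Apply It\^o's formula to $\|u_t\|_{H^1}^2$, which is comparable to the enstrophy $\|w_t\|_2^2$. The nonlinearity drops out because $\langle B(w,w),w\rangle=0$, so
\begin{align*}
 \dd\|w\|_2^2=\bigl(-2\nu\|\nabla w\|_2^2+\|Q\|_{\HS}^2\bigr)\dd t+\dd M_t,
 \qquad \dd\langle M\rangle_t\le 4\|Q\|_{\mathcal L}^2\|w_t\|_2^2\,\dd t.
\end{align*}
By Poincar\'e, the quadratic variation is absorbed into the dissipation as soon as $\eta_0\le \nu/(4\|Q\|^2_{\mathcal L})$. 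The exponential martingale inequality then gives
\begin{align*}
 \mathbf E\exp\Bigl(\eta_0\sup_{t\le T}\|w_t\|_2^2+\eta_0\nu\int_0^T\|\nabla w_t\|_2^2\,\dd t\Bigr)\le Ce^{\eta_0\|w_0\|_2^2}.
\end{align*}
Because every step is driven by the strong Markov property and the martingale inequality, the same bound holds conditionally after any finite stopping time.

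\emph{Step 2: higher norms via the mild formulation.} Write $w=v+Z$, where $Z$ is the stochastic convolution. Fernique's theorem gives Gaussian moments of $\|Z\|_{C([0,T];H^{b})}$ and of the H\"older norms $\|Z\|_{C([0,T];C^{\gamma-1})}$ for $\gamma<\alpha_{\rm H}/2$. For $v$, run energy estimates in $H^s$ for successively higher $s$, together with the standard 2D Ladyzhenskaya/Br\'ezis--Gallouet-type bounds. Each step gives $\|v\|_{C_tH^{s}}$ controlled by a polynomial in the initial norm and in $\|Z\|$, multiplied by $\exp(C\int_0^T\|w\|_{H^1}^2)$. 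That exponential factor is what we pay with the Step 1 estimate. To keep $\eta_0$ fixed and small, I will first localise in time to short subintervals; this is the standard super-Lyapunov trick of \cite[Lemma~3.7]{BBPS22a}. This produces \eqref{eq:revision-maximal-base-moment}, the polynomial factor $(1+\|u_0\|_{H^{b+1}}^2)^R$, and, after H\"older, \eqref{eq:revision-lower-norm-exponential}.

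\emph{Step 3: the factor $\exp(C_0\int\|u\|_{H^\zeta})$ with $\zeta<3$.} Interpolate to get $\|u\|_{H^\zeta}\le\|u\|_{H^1}^{1-\theta}\|u\|_{H^{b+1}}^{\theta}$, and use Young's inequality. The term linear in the dissipation is then $\epsilon\int\|\nabla w\|^2$, which is covered by Step 1. The remaining piece is sub-quadratic in the $H^1$ norm, so its exponential moment is finite; combine the pieces by H\"older.

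\emph{Step 4: H\"older estimates \eqref{eq:revision-conditional-path-moment} and \eqref{eq:revision-positive-time-Cgamma}.} For the path moment, start from $u_S\in C^\gamma$ and use Schauder bounds for $e^{t\Delta}$ on $C^\gamma$ together with the bilinear bound $B:C^\gamma\times C^\gamma\to C^{\gamma-1}$. A short-time fixed point on a random interval whose length is controlled by $\|Z\|$ and the energy gives bounds polynomial in $\|u_S\|_{C^\gamma}$. Iterate these random intervals, using the energy bound, until time $1$; this yields polynomial dependence with no exponential factor. For the positive-time estimate, smoothing from $H^{b+1}$ data into $C^\gamma$ costs a power $t_{\rm v}^{-\kappa}$. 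Combine it with Step 2, which is where the exponential factor $e^{\eta_0\|u_S\|^2_{H^1}}$ appears.

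\emph{Main obstacle.} I expect the main difficulty in Step 4 to be obtaining a \emph{polynomial} rather than exponential dependence on $\|u_S\|_{C^\gamma}$ in \eqref{eq:revision-conditional-path-moment}. A plain Gronwall argument brings in an exponential of $\|u\|_{C^\gamma}$. My first attempt is to control the number of short-time fixed-point steps needed on $[0,1]$ by a power of $(1+\|u_S\|_{C^\gamma}+\|Z\|)$. Each step can only multiply the norm by a bounded factor, because the enstrophy is uniformly controlled, so the enstrophy will serve as the anchor that keeps the iteration from compounding.
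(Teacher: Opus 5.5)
\textbf{There are genuine gaps, in Steps 2--3 and in Step 4.} Step~1 is fine.

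\textbf{Steps 2--3 do not yield the exponential estimate.} The enstrophy balance dissipates only $\|\nabla w\|_2^2\asymp\|u\|_{H^2}^2$, whereas $\zeta>2$. After your interpolation $\|u\|_{H^\zeta}\le\|u\|_{H^1}^{1-\theta}\|u\|_{H^{b+1}}^{\theta}$, no Young splitting lets Step~1 absorb the $H^{b+1}$ factor. If you bound that factor by $\sup_t\|u_t\|_{H^{b+1}}$ instead, you are left with $\exp(c\|u_0\|_{H^{b+1}}^{\kappa})$, which $(1+\|u_0\|_{H^{b+1}}^2)^R$ does not dominate.

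A symptom is that you never use $\zeta<3$, although this restriction is sharp already for the heat flow. If $\|u_0\|_{H^1}=1$ and $u_0$ sits at frequency $|\boldsymbol k|$, then $\int_0^T\|e^{t\Delta}u_0\|_{H^\zeta}\,\dd t\approx|\boldsymbol k|^{\zeta-3}$. For $\zeta>3$ this is not $O(\log\|u_0\|_{H^{b+1}})$.

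Step~2 has the same problem. A Gronwall factor $\exp(C\int_0^T\|w_t\|_{H^1}^2\,\dd t)$ with large $C$ cannot be paid for by Step~1 at a fixed $\eta_0$. Time localisation does not help: for high-frequency data, $\int_0^\delta\|\nabla w_t\|_2^2\,\dd t$ is comparable to $\|w_0\|_2^2$ however small $\delta$ is.

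What is missing is the super-Lyapunov mechanism for the full weight $V=(1+\|u\|_{H^{b+1}}^2)^Re^{\eta_0\|u\|_{H^1}^2}$.
\begin{itemize}
\item Under the generator, the polynomial factor contributes the relative dissipation $R\|u\|_{H^{b+2}}^2/(1+\|u\|_{H^{b+1}}^2)$.
\item Put $\lambda=\|u\|_{H^{b+2}}/\|u\|_{H^2}$. Interpolation gives $\|u\|_{H^\zeta}\le\|u\|_{H^2}\lambda^{(\zeta-2)/b}$.
\item Since $\|u\|_{H^{b+1}}\le\|u\|_{H^2}^{1/b}\|u\|_{H^{b+2}}^{(b-1)/b}$, the relative dissipation is $\gtrsim R\lambda^{2/b}$ whenever $\|u\|_{H^{b+1}}\ge1$.
\item Young's inequality then absorbs $C_0\|u\|_{H^\zeta}$ into $\eta_0\|u\|_{H^2}^2$ plus this term, precisely because $\zeta<3$.
\end{itemize}
This gives $\mathcal L V+C_0\|u\|_{H^\zeta}V\le CV$. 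The correct Gronwall factor for the higher norms is $\exp(C\int\|\nabla u\|_\infty)\le\exp(C\int\|u\|_{H^\zeta})$, not $\exp(C\int\|w\|_{H^1}^2)$, and the inequality above controls it.

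This is what \cite[Lemma~3.7]{BBPS22a} supplies. The paper's proof just checks its hypotheses: velocity amplitudes $\asymp|\boldsymbol k|^{-(a+2)/2}$, $b+1$ in the admissible range, and no radial or equal-phase assumption. Since you invoke that lemma in Step~2 anyway, Step~3 should simply be dropped.

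\textbf{Step 4 does not give the polynomial bound \eqref{eq:revision-conditional-path-moment}.} Suppose the number of short-time steps is a power $N$ of $1+\|u_S\|_{C^\gamma}+\|Z\|$ and each step multiplies the norm by a bounded factor $K$. Then the output is $K^N$, exponential in a power of the data. Moreover, the only uniform enstrophy control you have is Step~1, and it brings in $e^{\eta_0\|u_S\|_{H^1}^2}$, which is not allowed on the right side of \eqref{eq:revision-conditional-path-moment}.

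The argument the paper relies on is \cite[Lemma~7.5]{HPSRY}; its core is written out in the proof of \Cref{lem:revision-radial-background-trial}. It is pathwise and needs no iteration.
\begin{itemize}
\item Subtract the stochastic convolution, $u=v+\widetilde Z$, and let $\omega=\operatorname{curl}v$.
\item The full transport term $\langle\omega,(v+\widetilde Z)\cdot\nabla\omega\rangle$ vanishes. So the enstrophy of $v$ satisfies a \emph{linear} Gronwall inequality whose coefficient depends only on $\mathcal Z=\|\widetilde Z\|_{C_tC^\gamma}$.
\item This gives $\sup_t\|v_t\|_{H^1}\le CU_0(1+\mathcal Z)^2e^{C(1+\mathcal Z)}$ with $U_0=1+\|u_S\|_{C^\gamma}$.
\item A fixed, finite Schauder bootstrap $V_{j+1}\le C(U_0+(V_j+\mathcal Z)^2)$ then reaches $C^\gamma$ polynomially in $U_0$.
\item Fernique's theorem integrates the remaining Gaussian factors in $\mathcal Z$.
\end{itemize}
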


\begin{proof}
In the real Fourier basis used in \cite{BBPS22a}, the velocity noise
amplitudes associated with \eqref{eq:q} satisfy
$|\widetilde q_{k,\iota}|\asymp |k|^{-(a+2)/2}$.  Thus
\cite[Assumption~1]{BBPS22a} holds with
$\alpha=(a+2)/2$, while $b+1=(a-1)/2$ lies in the admissible state space
range.  After the deterministic viscosity and clock rescaling, apply
\cite[Lemma~3.7]{BBPS22a} with its growth parameter $\gamma=0$ and with
the polynomial power in its super Lyapunov function chosen larger than the
prescribed $m$.  Since that function contains
\begin{align*}
 (1+\|u\|_{H^{b+1}}^2)^R\exp(\eta_0\|u\|_{H^1}^2),
\end{align*}
the resulting estimate controls simultaneously the exponential
time integral and the displayed polynomial supremum, and gives
\eqref{eq:revision-lower-norm-exponential}.  This is also the estimate
recorded in \cite[Lemma~7.4]{HPSRY}.  The argument is formulated for
independent real Fourier phases with individually comparable amplitudes, so
no radial or equal phase assumption is needed.  The vorticity estimate
\eqref{eq:revision-maximal-base-moment} follows by taking polynomial
moments in the same bound and using
$\|Kw\|_{H^{b+1}}\asymp\|w\|_{H^b}$.

The positive time estimates
\eqref{eq:revision-conditional-path-moment}--
\eqref{eq:revision-positive-time-Cgamma} are the velocity bounds in
\cite[Lemma~7.5]{HPSRY}.  Its proof uses the upper power law estimate for the stochastic
convolution together with the two dimensional enstrophy inequality and a
deterministic Schauder bootstrap.  The same argument therefore applies to
the phase dependent amplitudes in \eqref{eq:q}.  The stopping time formulations follow from the
strong Markov property and fresh Wiener increments.
\end{proof}

\subsection{Regularity estimates for the tangent evolution}

The tangent evolution is controlled by a bilinear Sobolev estimate for
$\cS$ together with parabolic regularity of the associated nonautonomous
evolution family.  These estimates are used in the dense range argument,
the prepared bridge, and the compactness statement in
\Cref{cor:rough-driver-state-compact}.
\begin{lemma}\label{lem:product}
Let $b>2$.  For $-1\le r\le b$, in the distributional sense at the
lower endpoint,
\begin{align*}
 \norm{\cS(w,u)}_{H^{r-1}}
 \le C_{b,r}\norm{w}_{H^b}\norm{u}_{H^r}.
\end{align*}
\end{lemma}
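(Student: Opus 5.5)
The plan is to prove the bilinear estimate by duality and the standard two-dimensional product theorem. We split $\cS(w,u)=-B(w,u)-B(u,w)$ and treat the two terms separately, writing each as a divergence. Since $Kw$ and $Ku$ are divergence free, $B(f,g)=(Kf)\cdot\nabla g=\nabla\cdot(g\,Kf)$. Hence, in both cases,
\begin{align*}
 \|B(f,g)\|_{H^{r-1}}\le C\|g\,Kf\|_{H^r}.
\end{align*}
The zero mode is harmless because a divergence has mean zero.

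For the first term we bound $\|u\,Kw\|_{H^r}$. Because $b>2$, we have $Kw\in H^{b+1}\hookrightarrow C^{1}$, and $H^{b+1}$ with $b+1>1$ is a multiplier algebra on $H^r$ for $|r|\le b+1$. The standard multiplication theorem gives $\|fg\|_{H^r}\le C\|f\|_{H^{s}}\|g\|_{H^r}$ for $s>1$ and $|r|\le s$. Applying it with $f=Kw$ and $s=b+1$ yields
\begin{align*}
 \|u\,Kw\|_{H^r}\le C\|w\|_{H^b}\|u\|_{H^r}
\end{align*}
for all $-1\le r\le b$. The negative range, including the endpoint $r=-1$, follows by duality from the positive range of the same multiplier estimate.

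The second term $\|w\,Ku\|_{H^r}$ is where regularity is split more delicately, since $Ku\in H^{r+1}$ and $w\in H^b$. For $r\ge0$ the argument is direct. If $r+1>1$, we use the algebra-type bound
\begin{align*}
 \|fg\|_{H^r}\lesssim\|f\|_{H^b}\|g\|_{H^{r+1}}
\end{align*}
with $b>r$, $b>1$ and $r+1>1$; this is the product theorem with $s_1+s_2>r+1$ and $\min(s_i)\ge r$. For $-1\le r<0$ we use duality instead. Pairing with $\varphi\in H^{-r}$ gives $\langle w\,Ku,\varphi\rangle=\langle Ku,w\varphi\rangle$. We then bound $\|Ku\|_{H^{r+1}}\|w\varphi\|_{H^{-r-1}}$, and $\|w\varphi\|_{H^{-r-1}}\le\|w\|_{H^b}\|\varphi\|_{H^{-r}}$ because $w\in H^b$ multiplies $H^{-r}$ into $H^{-r}\subset H^{-r-1}$. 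The first factor is controlled by $\|u\|_{H^r}$ because $K:H^r\to H^{r+1}$.

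We expect the main obstacle to be the second term at the lower endpoint $r=-1$. There $Ku\in L^2$ only, and the product $w\,Ku$ must be read distributionally, which is what the statement's phrase ``in the distributional sense'' refers to. The duality argument above handles this case cleanly because $w\in H^b\subset C^1$ is a bounded multiplier on $H^1$. The remaining cases are routine instances of the Sobolev multiplication theorem in dimension two. The constants depend on $b$ and $r$, and uniformity over $r\in[-1,b]$ follows by interpolation between the endpoints.
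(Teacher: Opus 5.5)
Your argument is correct and is essentially the paper's: both rewrite $B(f,g)=\nabla\cdot\bigl((Kf)g\bigr)$, use the Biot--Savart gain $K:H^r\to H^{r+1}$, two dimensional multiplication by the $H^b$ (or $H^{b+1}$) coefficient, and duality at negative order. The only difference is bookkeeping. The paper checks just $r=0$, $r=b$ and $r=-1$ and interpolates in $u$ for fixed $w$, whereas you apply the fractional product theorem directly at every $r$. Two small points in your version: at $r=b$ you need the $H^b$ algebra property rather than your stated condition $b>r$, and at $r=-1$ the product $w\,Ku$ is already an honest $L^2$ function, so the distributional reading is needed only for $\nabla\cdot(u\,Kw)$ with $u\in H^{-1}$.
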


\begin{proof}
Since $\operatorname{div}Kf=0$,
$B(f,g)=\operatorname{div}((Kf)g)$.  At $r=0$, multiplication by the
$H^b$ coefficient and the mapping $K:H\to H^1$ give an $H^{-1}$
bound.  At $r=b$, the Sobolev algebra property in two dimensions and
$K:H^b\to H^{b+1}$ put the products in $H^b$, before the outer
derivative.  The $r=-1$ endpoint follows by duality and multiplication by
the $H^b$ coefficient.  Interpolation gives the intermediate exponents.
\end{proof}

The next step is the parabolic smoothing of the nonautonomous tangent
evolution.  The lemma gives both small fractional gains and their finite
iteration across a positive time gap.
\begin{lemma}
\label{lem:gain}
Let $b>2$, $I=[s_0,s_1]$, and
\begin{align*}
 w\in L^p(I;H^b),\qquad 2<p\le\infty,\qquad
 \kappa_p=1-\frac2p,
\end{align*}
where $\kappa_\infty=1$. Let $U_w(t,s)$ be the evolution family
of
\begin{align*}
 \partial_tz=\nu\Delta z+\cS(w_t,z).
\end{align*}
Then:
\begin{enumerate}[label=\textup{(\roman*)}]
\item $U_w(t,s)$ is bounded on $H^q$, $-1\le q\le b$.
On $\{\|w\|_{L^pH^b}\le M\}$ the bound is uniform for
$s_0\le s\le t\le s_1$.
\item If $0<\delta<\kappa_p$, then
\begin{align*}
 U_w(t,s):H^q\longrightarrow H^{q+\delta},
 \qquad -1\le q\le b,
\end{align*}
with norm locally uniform when $t-s$ is bounded away from zero. For
$q=b$, this endpoint estimate into $H^{b+\delta}$ does not require
the coefficient to belong to that stronger space.
\item If $-1\le q<r\le b$, then, for every $\varepsilon>0$,
\begin{align*}
 \sup_{\substack{s_0\le s<t\le s_1\\t-s\ge\varepsilon}}
 \|U_w(t,s)\|_{H^q\to H^r}
 \le C(\nu,b,p,q,r,I,M,\varepsilon).
\end{align*}
In particular, over a time gap bounded below,
\begin{align*}
 U_w(t,s):H^{-1}\to H^\delta,\qquad
 U_w(t,s):H\to H^b
\end{align*}
for every $0<\delta<b$.
\item The exact energy smoothing estimates
\begin{align*}
 \|U_w(t,s)\|_{H\to H^1}
 +\|U_w(t,s)\|_{H^{-1}\to H}
 \le C_{I,M}(t-s)^{-1/2}
\end{align*}
hold for $s<t$. The second map is the transposition extension. The same
conclusions hold for the backward adjoint evolution.
\end{enumerate}
\end{lemma}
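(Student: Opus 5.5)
The plan is to treat $\cS(w_t,\cdot)$ as a time dependent perturbation of the heat semigroup $e^{\nu(t-s)\Delta}$ and to work entirely with the mild (Duhamel) formulation
\begin{align*}
 U_w(t,s)z=e^{\nu(t-s)\Delta}z+\int_s^te^{\nu(t-r)\Delta}\cS(w_r,U_w(r,s)z)\,\dd r .
\end{align*}
The two inputs are \Cref{lem:product}, which shows that $\cS(w_r,\cdot)$ loses exactly one derivative with constant $\|w_r\|_{H^b}$, and the smoothing bound $\|e^{\nu h\Delta}\|_{H^{q-1}\to H^{q+\delta}}\le Ch^{-(1+\delta)/2}$.

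For \textup{(i)}, I will work on $C([s,t];H^q)$ with $-1\le q\le b$. The forcing term is then bounded by
\begin{align*}
 \int_s^t(t-r)^{-1/2}\|w_r\|_{H^b}\|U(r,s)z\|_{H^q}\,\dd r .
\end{align*}
Hölder's inequality with $w\in L^p$, $p>2$, makes the kernel $(t-r)^{-1/2}$ lie in $L^{p'}$ since $p'<2$. A singular Gronwall lemma of Henry type, or equivalently a contraction on short subintervals whose length depends only on $M$, followed by concatenation, then gives a uniform bound on $\{\|w\|_{L^pH^b}\le M\}$. For $q=-1$ the estimate is read through transposition, or directly via the distributional endpoint of \Cref{lem:product}.

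For \textup{(ii)}, I insert the bound from \textup{(i)} into the Duhamel integral and measure it in $H^{q+\delta}$. The integrand is bounded by $(t-r)^{-(1+\delta)/2}\|w_r\|_{H^b}$, and this is integrable after Hölder precisely when $\frac{1+\delta}{2}p'<1$, that is, when $\delta<1-2/p=\kappa_p$. The free part $e^{\nu(t-s)\Delta}z$ gains arbitrarily many derivatives once $t-s$ is bounded below. At $q=b$ only $\|w_r\|_{H^b}$ enters, so the coefficient never needs to lie in $H^{b+\delta}$.

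For \textup{(iii)}, I split $[s,t]$ into finitely many pieces of length at least $\varepsilon/n$ and apply \textup{(ii)} on each, using the cocycle property $U(t,s)=U(t,r)U(r,s)$. The number of pieces needed is about $(r-q)/\delta$. The gains must stop at $b$, because \Cref{lem:product} only covers $r\le b$. Reaching $H^{b}$ from $H$, with $b>11/2$, is simply several iterations.

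For \textup{(iv)}, I use the energy identity. Pairing the equation with $z$ gives
\begin{align*}
 \tfrac12\tfrac{\dd}{\dd t}\|z\|_2^2+\nu\|\nabla z\|_2^2=\langle\cS(w,z),z\rangle ,
\end{align*}
and the right side is bounded by $C\|w\|_{H^b}\|z\|_2\|z\|_{H^1}$. I then multiply by $(t-s)$ and pair with $-\Delta z$ to obtain the weighted estimate $(t-s)\|\nabla z_t\|^2\lesssim\|z_s\|^2$, where the $L^p$ integrability of $\|w\|_{H^b}^2$ enters through Young's inequality; this handles the case $p\ge 2$, though that case still has to be checked carefully. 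The bound $H^{-1}\to H$ follows by duality with the backward adjoint evolution, which satisfies the same estimates because $\cS_\rho^*$ obeys the transpose bound. The main obstacle I expect is the endpoint $q=b$ in \textup{(ii)} and \textup{(iii)}: I must check that \Cref{lem:product} applies at $r=b$ with the $H^{b-1}$ output, so that the $\delta$-gain to $H^{b+\delta}$ comes purely from the semigroup.
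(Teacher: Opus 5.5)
Your arguments for (i)--(iii) coincide with the paper's. Both use the mild formulation, the product bound $\|\cS(w,z)\|_{H^{q-1}}\le C\|w\|_{H^b}\|z\|_{H^q}$, and subintervals of length $\ell$ with $C\ell^{1/2-1/p}M\le\frac12$. Both use the H\"older condition $\frac{1+\delta}{2}p'<1\Leftrightarrow\delta<\kappa_p$ and finite iteration through the cocycle property. The endpoint $q=b$ you flag is handled exactly as you say, since only the $H^{b-1}$ source bound and the heat semigroup enter.

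The genuine difference is in (iv).
\begin{itemize}
\item You propose the classical weighted $H^1$ energy estimate. From $\frac{\dd}{\dd t}\|z\|_{H^1}^2+\nu\|z\|_{H^2}^2\le C\|w\|_{H^b}^2\|z\|_{H^1}^2$ (\Cref{lem:product} at $r=1$ plus Young), you multiply by $t-s$. You then close with Gronwall and $\int_s^t\|z_r\|_{H^1}^2\,\dd r\lesssim\|z_s\|_2^2$.
\item The paper never differentiates $\|z\|_{H^1}^2$. The $L^2$ energy inequality integrated over the first half of $[s,t]$ yields a time $r\in[s,(s+t)/2]$ with $\|z_r\|_{H^1}\lesssim(t-s)^{-1/2}\|z_s\|_2$. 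The uniform $H^1$ bound from (i) at $q=1$ then propagates this from $r$ to $t$.
\end{itemize}
The paper's route recycles (i) and needs no regularity justification near the singular initial time. Yours is self-contained and uses only $\|w\|_{H^b}^2\in L^1(I)$, so the hesitation about $p$ in your last paragraph is unnecessary: $p>2$ certainly suffices. The cost is the usual Galerkin or interior-regularity justification of the $H^1$ energy identity near $t=s$.

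One small correction concerns the adjoint. The backward adjoint is driven by the transpose of $z\mapsto\cS(w_t,z)$, i.e.\ by $\cS^*_{w_t}$ in the paper's notation, not by $\cS^*_\rho$. What you need is the transposed form of \Cref{lem:product}: $\cS^*_w:H\to H^{-1}$ and $\cS^*_w:H^1\to H$, each with constant $C\|w\|_{H^b}$. The bound \eqref{rev:eq-Srho-transpose-bound} you cite only lands in $H^{-b}$, which is too weak to close the adjoint energy estimate.
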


\begin{proof}
For $-1\le q\le b$, divergence form, duality at the lower endpoint, the
$H^b$ multiplier theorem, and interpolation give
\begin{align*}
 \|\cS(w,z)\|_{H^{q-1}}
 \le C_{b,q}\|w\|_{H^b}\|z\|_{H^q}.
\end{align*}
On $J=[a,a+\ell]\subset I$, the mild equation gives
\begin{align*}
 \|z\|_{C(J;H^q)}
 &\le C\|z_a\|_{H^q}
 +C\sup_{t\in J}\int_a^t(t-r)^{-1/2}
       \|w_r\|_{H^b}\|z_r\|_{H^q}\,\dd r \\
 &\le C\|z_a\|_{H^q}
 +C\ell^{\frac12-\frac1p}
       \|w\|_{L^p(J;H^b)}\|z\|_{C(J;H^q)}.
\end{align*}
For each fixed $p>2$, divide $I$ into finitely many intervals on which the
last coefficient is at most $1/2$. This proves \textup{(i)}, uniformly on
the indicated bounded set. The number of intervals and the resulting
constant may deteriorate as $p\downarrow2$; no uniformity at that endpoint
is claimed.

For $0<\delta<\kappa_p$, use
\begin{align*}
 \|e^{\nu(t-r)\Delta}\|_{H^{q-1}\to H^{q+\delta}}
 \le C_\nu(t-r)^{-(1+\delta)/2}.
\end{align*}
Hölder applies precisely because
\begin{align*}
 \frac{1+\delta}{2}\frac p{p-1}<1
 \quad\Longleftrightarrow\quad \delta<1-\frac2p.
\end{align*}
Together with \textup{(i)} this proves \textup{(ii)}. Only the
$H^q$ bound of the solution and the $H^{q-1}$ source bound enter,
which explains the $q=b$ endpoint.

For \textup{(iii)}, choose $N$ and
$\delta_1,\ldots,\delta_N\in(0,\kappa_p)$ with
$\sum_j\delta_j=r-q$, and put $t_j=s+j(t-s)/N$. Then
\begin{align*}
 U_w(t,s)=U_w(t,t_{N-1})\cdots U_w(t_1,s):
 H^q\longrightarrow H^r
\end{align*}
by successive applications of \textup{(ii)}. If $t-s\ge\varepsilon$,
each factor acts over a gap at least $\varepsilon/N$, proving uniformity.

The $L^2$ energy inequalities for the evolution and its adjoint are
\begin{align*}
 \frac{\dd}{\dd t}\|z_t\|_2^2+\nu\|z_t\|_{H^1}^2
 \le C_\nu(1+\|w_t\|_{H^b}^2)\|z_t\|_2^2.
\end{align*}
The coefficient is integrable since $p>2$. Integration on the first half
of $[s,t]$ gives $r\in[s,(s+t)/2]$ with
\begin{align*}
 \|U_w(r,s)z\|_{H^1}
 \le C_{I,M}(t-s)^{-1/2}\|z\|_2.
\end{align*}
Propagate the $H^1$ norm from $r$ to $t$ by \textup{(i)}. Applying
the same argument to the backward adjoint and dualizing gives the
$H^{-1}\to H$ estimate.
\end{proof}

The dense range proof needs the forward and adjoint fibres to be
differentiable in time on interior intervals.  This follows directly from
the smoothing lemma and the product estimate.
\begin{corollary}
\label{cor:interior-forward-adjoint}
Let $w\in C([0,T];H^b)$, let $2<s<b$, and let $I\Subset(0,T)$.
If $\rho$ solves the forward tangent equation with initial datum in $H$
and $\beta$ solves its backward adjoint equation with terminal datum in
$H$, then
\begin{align}\label{eq:interior-forward-adjoint}
 \rho,\beta\in C(I;H^s)\cap AC(I;H^{s-2}),
 \qquad \rho',\beta'\in L^1(I;H^{s-2}).
\end{align}
\end{corollary}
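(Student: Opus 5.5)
The plan is to work on a fixed interior interval $I\Subset(0,T)$ and combine the mild formulation with the smoothing in \Cref{lem:gain} and the product estimate \Cref{lem:product}. Since $w\in C([0,T];H^b)$, we are in the case $p=\infty$ of \Cref{lem:gain}, so $\kappa_\infty=1$. Write $I=[t_1,t_2]$ with $t_1>0$. By \Cref{lem:gain}\textup{(iii)}, $\rho_t=U_w(t,0)\rho_0$ is bounded in $H^b$, uniformly for $t\ge t_1/2$. In particular $\rho\in L^\infty(I;H^s)$, and indeed in $L^\infty(I;H^b)$ with $b>s$.

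Continuity in $H^s$ comes from restarting the evolution at $t_1/2$ with datum $\rho_{t_1/2}\in H^b$. We write the mild formula
\begin{align*}
 \rho_t=e^{\nu(t-t_1/2)\Delta}\rho_{t_1/2}
 +\int_{t_1/2}^te^{\nu(t-r)\Delta}\cS(w_r,\rho_r)\,\dd r .
\end{align*}
The first term is continuous in $H^b$. For the second, \Cref{lem:product} bounds the source in $L^\infty H^{b-1}$, and the heat kernel maps $H^{b-1}\to H^{s}$ with singularity $(t-r)^{-(1+s-b)/2}$, which is integrable. Standard dominated convergence then gives $C(I;H^s)$, since $s<b$.

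For the time derivative we use the distributional equation $\rho'=\nu\Delta\rho+\cS(w,\rho)$. The term $\nu\Delta\rho$ lies in $C(I;H^{s-2})$ because $\rho\in C(I;H^s)$. By \Cref{lem:product} with $r=s\le b$, the term $\cS(w,\rho)$ lies in $C(I;H^{s-1})\subset C(I;H^{s-2})$, using continuity of $w$ in $H^b$ and bilinearity. Hence $\rho'\in C(I;H^{s-2})\subset L^1(I;H^{s-2})$. The mild and weak formulations agree, being tested against smooth functions, so $\rho$ is the integral of $\rho'$ and is therefore $AC(I;H^{s-2})$.

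The adjoint $\beta$ is handled identically in reversed time, using the last sentence of \Cref{lem:gain}\textup{(iv)} and \Cref{lem:gain}(i)--(iii) for the backward adjoint evolution. The adjoint source $\cS_w^*\beta$ has the same Sobolev mapping as \Cref{lem:product}: by the transposition formula it is $K w\cdot\nabla\beta-K^*(\beta\nabla w)$, and each term loses one derivative with an $H^b$ coefficient. The main point requiring care is this adjoint product estimate at $r=s$. I would verify it by duality from \Cref{lem:product} on the range $-1\le 1-s\le b$, which gives $\cS_w^*:H^{s}\to H^{s-1}$ for $s\le b$. The remaining continuity and absolute-continuity steps are routine.
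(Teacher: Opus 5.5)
Your outline matches the paper's proof. You use \Cref{lem:gain}(iii) on an interior interval, then the equation together with \Cref{lem:product} to place $\rho'$ in $C(I;H^{s-2})$, and the same argument in reversed time for $\beta$. The forward half is fine, including your restart argument for continuity in $H^s$.

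The step that fails is the one you singled out yourself. \Cref{lem:product} gives $\cS(w,\cdot):H^r\to H^{r-1}$ for $-1\le r\le b$, so transposition gives $\cS_w^*:H^{1-r}\to H^{-r}$. Writing $s=1-r$, the constraint $-1\le 1-s\le b$ means $1-b\le s\le 2$, not $s\le b$. Duality therefore gives nothing anywhere in the range $2<s<b$ of the statement, and the dense range argument needs $s>5$.

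Weakening the target does not rescue it. From $H^s\hookrightarrow H^2$ and $\cS_w^*:H^2\to H^1$ you only get $\cS_w^*\beta\in H^1$, which lies in $H^{s-2}$ only when $s\le 3$. Your reversed-time restart for $\beta$ has the same problem, since it needs the source bound $\cS_w^*:H^b\to H^{b-1}$, which corresponds to $r=1-b<-1$.

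The estimate is nevertheless true, but it must be proved directly from the formula $\cS_w^*\beta=Kw\cdot\nabla\beta-K^*(\beta\nabla w)$. This is what the paper means by ``the same transposed estimate for the adjoint''.
\begin{itemize}
\item First term: since $Kw\in H^{b+1}$ with $b+1>1$, multiplication by $Kw$ is bounded on $H^{s-1}$, so $Kw\cdot\nabla\beta\in H^{s-1}$.
\item Second term: since $\beta\in H^s$ and $\nabla w\in H^{b-1}$ with $s>1$ and $b-1>1$, the two dimensional product estimate puts $\beta\nabla w$ in $H^{\min\{s,b-1\}}$. Then $K^*$ gains one derivative, landing in $H^{\min\{s+1,b\}}\subset H^{s-1}$.
\end{itemize}
Hence $\|\cS_w^*\beta\|_{H^{s-1}}\le C\|w\|_{H^b}\|\beta\|_{H^s}$ for $2<s\le b$. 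This covers both the restart at level $b$ and the derivative at level $s$, after which your argument for $\beta$ goes through verbatim.

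Your passing remark that each term loses one derivative against an $H^b$ coefficient was the correct justification. The duality verification you proposed in its place is not.
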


\begin{proof}
Choose $I\Subset I'\Subset(0,T)$. The finite iteration in
\Cref{lem:gain}\textup{(iii)}, across the positive gaps between the
endpoints of $I$ and $I'$, gives the $C(I;H^s)$ bounds for the forward
evolution and, using the last sentence of \Cref{lem:gain}, for the
backward adjoint. The equations and \Cref{lem:product} give
\begin{align*}
 \|\nu\Delta r+\cS(w,r)\|_{H^{s-2}}
 \le C(1+\|w\|_{H^b})\|r\|_{H^s},
\end{align*}
and the same transposed estimate for the adjoint. The right hand sides are
continuous, hence integrable, on $I$, which proves absolute continuity and
\eqref{eq:interior-forward-adjoint}.
\end{proof}

\section{Two jet and chart radius bounds}
\label{subsec:global-two-jet-revision}

This appendix collects the deterministic rough driver input used in the
compact dense coupling argument.  For $T>0$, let $\mathsf Y_T$ be the Hilbert driver space and $Y_T$ the
corresponding Gaussian driver:
\begin{align}\label{e.Tdriver}
 \mathsf Y_T=H^\sigma(0,T;H^b)\oplus H^b,
 \qquad
 Y_T=(Z|_{[0,T]},Z_T).
\end{align}
The subscript is suppressed when $T=\tau$, the fixed coupling block length. Fix once and for all $1/4<\sigma<1/2$, and put
\begin{align*}
 p_\sigma=\frac2{1-2\sigma}>4,
 \qquad
 q_\sigma=\left(\frac12-\frac1{p_\sigma}\right)^{-1}<4.
\end{align*}

The first task is to make the base equation globally well posed in the
Hilbert rough driver topology used for differentiability and Ramer shifts.
The next lemma gives the required pathwise bound; see
\cite[Propositions~2.4.5 and~2.4.7]{KS12} for the corresponding $L^2$
energy space formulation.

\begin{lemma}
\label{lem:compatible-driver-global}
Let $T<\infty$, $b>2$, and $p>4$. For every $w_0\in H^b$ and every
$y^\circ\in L^p(0,T;H^b)$, the equation
\begin{align}\label{eq:compatible-global-mild}
 v(t)=e^{\nu t\Delta}w_0-
 \int_0^t e^{\nu(t-s)\Delta}
 B(v(s)+y_s^\circ,v(s)+y_s^\circ)\,\dd s
\end{align}
has a unique solution $v\in C([0,T];H^b)$. Moreover, for every
$R<\infty$,
\begin{align}\label{eq:rough-driver-global-bound}
 \sup_{\substack{\|w_0\|_{H^b}
 +\|y^\circ\|_{L^p(0,T;H^b)}\le R}}
 \|v\|_{C([0,T];H^b)}<\infty.
\end{align}
Consequently, since
$H^\sigma(0,T;H^b)\hookrightarrow L^{p_\sigma}(0,T;H^b)$ and
$p_\sigma>4$, the equation is globally well posed for every
$(w_0,y)\in H^b\times\mathsf Y_T$.
\end{lemma}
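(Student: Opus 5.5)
The plan is to treat \eqref{eq:compatible-global-mild} as a perturbed 2D Navier--Stokes equation for $v$ with a rough additive coefficient $y^\circ$. Local existence comes from a fixed point in $C([0,t_0];H^b)$, and global existence and \eqref{eq:rough-driver-global-bound} come from an a priori $L^2$ energy bound for $v$ that is then bootstrapped to $H^b$ by the mild formulation. Uniqueness follows from the same Lipschitz estimate that drives the contraction.

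\emph{Local theory.} By \Cref{lem:product} (with $r=b$), $\|B(f,g)\|_{H^{b-1}}\lesssim\|f\|_{H^b}\|g\|_{H^b}$. The heat semigroup maps $H^{b-1}\to H^b$ with norm $C(t-s)^{-1/2}$. Hence the Duhamel map $\mathcal M(v)$ satisfies
\begin{align*}
\|\mathcal M(v)-e^{\nu t\Delta}w_0\|_{C([0,t_0];H^b)}
\le C\int_0^{t}(t-s)^{-1/2}\bigl(\|v_s\|_{H^b}+\|y_s^\circ\|_{H^b}\bigr)^2\,\dd s .
\end{align*}
By H\"older, the term $\|y^\circ\|_{H^b}^2\in L^{p/2}$ pairs with $(t-s)^{-1/2}\in L^{(p/2)'}$ precisely when $p>4$. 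This gives a contraction on a ball for $t_0$ depending only on $\|v(0)\|_{H^b}$ and on the local $L^p$ mass of $y^\circ$. The lifespan is therefore bounded below as long as $\|v\|_{H^b}$ stays bounded, and uniqueness in $C([0,T];H^b)$ follows from the same estimate and Gronwall.

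\emph{A priori bound.} For smooth truncations of $(w_0,y^\circ)$, test the equation $\partial_tv=\nu\Delta v-B(v+y,v+y)$ against $v$. Since $\ip{B(v+y,v)}{v}=0$, only the terms containing $y$ survive. These are handled using $\|y\|_{L^\infty}\lesssim\|y\|_{H^b}$, $\|Kv\|_{L^\infty}$ controlled via $H^1$, and Young:
\begin{align*}
\frac{\dd}{\dd t}\|v\|_2^2+\nu\|v\|_{H^1}^2\le C(1+\|y\|_{H^b}^2)\|v\|_2^2+C\|y\|_{H^b}^4 .
\end{align*}
Since $\|y\|_{H^b}^4\in L^{p/4}\subset L^1$ exactly because $p>4$, Gronwall gives $\sup_t\|v\|_2$ and $\int\|v\|_{H^1}^2$ bounded in terms of $R$. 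Next I would lift to $H^b$ by finitely many mild bootstrap steps of size $<1/2-2/p$ derivatives, as in \Cref{lem:gain}. In each step the quadratic term $B(v,v)$ is estimated with the already available $H^{r}$ bound (using the 2D enstrophy estimate for the first step from $L^2$ to $H^1$), and the cross terms are controlled with $y\in L^pH^b$. Each step gives bounds depending only on $R$ and $T$. I expect the first $L^2\to H^1$ enstrophy step to be the main obstacle, since the transport term $B(v,v)$ has no smallness there. I would handle it with the vorticity-form cancellation $\ip{Kv\cdot\nabla v}{\Delta v}=0$ available in 2D, leaving only terms linear or quadratic in $v$ times $\|y\|_{H^b}$. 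Gronwall then closes because $\|y\|_{H^b}^2$ is integrable.

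Finally, the uniform bounds pass to the untruncated solution by the contraction estimate, which yields \eqref{eq:rough-driver-global-bound}. The last assertion then follows from the Sobolev embedding $H^\sigma(0,T)\hookrightarrow L^{p_\sigma}(0,T)$ with $p_\sigma=2/(1-2\sigma)>4$ for $\sigma>1/4$.
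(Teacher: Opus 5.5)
\paragraph{Overall structure.} Your architecture is the paper's: a local mild contraction in $C([0,t_0];H^b)$, where $p>4$ makes $\|y^\circ\|_{H^b}^2\in L^{p/2}$ integrable against $(t-s)^{-1/2}$; the $L^2$ energy inequality from $\langle B(v+y^\circ,v),v\rangle=0$; an $H^1$ step; finitely many higher steps; and restarting the uniform local time.

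\paragraph{The gap: the $H^1$ step.} This is the step you yourself flag as the crux, and it rests on a false identity. Here $v$ is a vorticity and $Kv$ the velocity. The 2D cancellation $\langle (u\cdot\nabla)u,\Delta u\rangle=0$ for a velocity $u$ is, after taking curls, exactly $\langle Kv\cdot\nabla v,v\rangle=0$. You have already used that in your $L^2$ step. One level higher, integration by parts gives
\begin{align*}
\langle Kv\cdot\nabla v,\Delta v\rangle=-\int_{\T^2}\partial_j(Kv)_i\,\partial_iv\,\partial_jv\,\dd x.
\end{align*}
This is the palinstrophy production term, and it is generically nonzero: palinstrophy is not an invariant of the 2D Euler flow. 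So your $H^1$ inequality drops a genuine cubic term in $v$. The remark that $\|y^\circ\|_{H^b}^2$ is integrable does nothing to control it.

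\paragraph{How to repair it.} The fix uses only what you already have, and it is what the paper does. By Ladyzhenskaya's inequality and Young,
\begin{align*}
|\langle Kv\cdot\nabla v,\Delta v\rangle|\le\|\nabla Kv\|_2\|\nabla v\|_{L^4}^2\lesssim\|v\|_2\|v\|_{H^1}\|v\|_{H^2}\le\frac{\nu}{4}\|v\|_{H^2}^2+C\|v\|_2^2\|v\|_{H^1}^2.
\end{align*}
Gronwall then closes, because $\sup_t\|v_t\|_2$ is already bounded by your energy step. The paper instead writes this term as $C\|v\|_2\|v\|_{H^1}^3$, divides by $1+\|v\|_{H^1}^2$, and uses $\int_0^T\|v_t\|_{H^1}\,\dd t<\infty$ from the $L^2$ energy bound.

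Alternatively, you can skip the $H^1$ energy step entirely and start your mild bootstrap at $L^2$. Since $\operatorname{div}Kv=0$, one has $\|B(v,v)\|_{H^{-1-\epsilon}}\lesssim\|Kv\|_{L^{2/\epsilon}}\|v\|_2\lesssim\|v\|_2^2$. Moreover, $e^{\nu(t-s)\Delta}:H^{-1-\epsilon}\to H^{\delta}$ has an integrable kernel for $\epsilon+\delta<1$. Hence the $L^\infty_tL^2$ bound, together with $y^\circ\in L^pH^b$, already feeds the first bootstrap step. With either fix, the rest of your argument goes through.
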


\begin{proof}
The product estimate
\begin{align*}
 \|B(f,g)\|_{H^{b-1}}
 \le C_b\|f\|_{H^b}\|g\|_{H^b}
\end{align*}
and the heat estimates
\begin{align*}
 \left\|\int_s^t e^{\nu(t-r)\Delta}F_r\,\dd r\right\|_{C_IH^b}
 &\le C\ell^{1/2-1/p}\|F\|_{L^p(I;H^{b-1})},\\
 \left\|\int_s^t e^{\nu(t-r)\Delta}F_r\,\dd r\right\|_{C_IH^b}
 &\le C\ell^{1/2-2/p}\|F\|_{L^{p/2}(I;H^{b-1})}
\end{align*}
on an interval $I$ of length $\ell$ give local mild well posedness in
$C([0,T_0];H^b)$. Since $p>4$, both time powers are positive, and
$T_0$ is uniform on bounded subsets of
$H^b\times L^p(0,T;H^b)$.

The cancellations
\begin{align*}
 \langle B(v,v),v\rangle_2=0,
 \qquad
 \langle B(y^\circ,v),v\rangle_2=0
\end{align*}
and the Biot--Savart estimate give
\begin{align*}
 \frac12\frac{\dd}{\dd t}\|v\|_2^2
 +\nu\|v\|_{H^1}^2
 \le C\bigl(1+\|y_t^\circ\|_{H^b}\bigr)\|v\|_2^2
      +C\|y_t^\circ\|_{H^b}^4.
\end{align*}
Since $p>4$, the coefficients on the right belong to $L^1(0,T)$.
Hence
\begin{align}\label{eq:rough-driver-L2-energy}
 \sup_{t\le T}\|v_t\|_2^2
 +\nu\int_0^T\|v_t\|_{H^1}^2\,\dd t
 \le C_{T,R}
\end{align}
whenever
$\|w_0\|_{H^b}+\|y^\circ\|_{L^p(0,T;H^b)}\le R$.

At the $H^1$ level, the standard two dimensional vorticity estimate
has the form
\begin{align*}
 \frac{\dd}{\dd t}\|v\|_{H^1}^2
 +\frac\nu2\|v\|_{H^2}^2
 \le C\|v\|_2\|v\|_{H^1}^3
 +C\bigl(1+\|y_t^\circ\|_{H^b}^4\bigr)
      \bigl(1+\|v\|_{H^1}^2\bigr).
\end{align*}
After division by $1+\|v\|_{H^1}^2$, the first term is integrable by
\eqref{eq:rough-driver-L2-energy}, since
$\int_0^T\|v_t\|_{H^1}\,\dd t<\infty$. Thus Gronwall's inequality gives
an $L^\infty(0,T;H^1)\cap L^2(0,T;H^2)$ bound depending only on
$T$ and $R$.

For $1<r\le b$, put $r_-:=\max\{1,r-1\}$. The usual commutator and
product estimates yield, for some $m_r<\infty$,
\begin{align*}
 \frac{\dd}{\dd t}\|v\|_{H^r}^2
 +\frac\nu2\|v\|_{H^{r+1}}^2
 \le C_{\nu,r,b}
 \bigl(1+\|y_t^\circ\|_{H^b}^4
       +\|v\|_{H^{r_-}}^{m_r}\bigr)
 \bigl(1+\|v\|_{H^r}^2\bigr).
\end{align*}
Starting with the $H^1$ bound and iterating over finitely many Sobolev
levels gives
\begin{align*}
 \sup_{t\le T}\|v_t\|_{H^b}^2
 +\nu\int_0^T\|v_t\|_{H^{b+1}}^2\,\dd t
 \le C_{T,R,b}.
\end{align*}
The uniform local existence time can therefore be restarted finitely many
times to cover $[0,T]$, proving global existence and
\eqref{eq:rough-driver-global-bound}. The same difference estimate gives
uniqueness. 
\end{proof}

For a reference trajectory $\tilde{w}$, set
$L_{\tilde w}(t)g=\nu\Delta g+\cS(\tilde w_t,g)$, and measure the
coefficient size by
\begin{align}\label{e.031601}
 \Lambda_T(\tilde w)
 =T+T^{1-1/p}\|\tilde w\|_{L^p(0,T;H^b)}.
\end{align}

To control projective two jets we first need energy estimates for the
homogeneous, first variation, and second variation fibre equations.  The
next lemma keeps the relevant time integrability exponents explicit.
\begin{lemma}\label{lem:linearised-variation-energy}
Let $T<\infty$, $b>2$, $p>4$, and
$\tilde w\in L^p(0,T;H^b)$. 
\begin{itemize}
\item If $\partial_t\rho=L_{\tilde w}(t)\rho$, then
\begin{align}\label{eq:revision-raw-homogeneous-energy}
 \mathbb A_T(\rho):=
 \sup_{t\le T}\|\rho_t\|_2^2
 +\nu\int_0^T\|\rho_t\|_{H^1}^2\,\dd t
 \le e^{C\Lambda_T(\tilde w)}\|\rho_0\|_2^2.
\end{align}
\item Let $f_i\in L^p(0,T;H^b), i=1,2$, 
$f_{12}\in L^{p/2}(0,T;H^b)$, and let
\begin{align*}
 \partial_t\zeta^i
 &=L_{\tilde w}(t)\zeta^i+\cS(f_i,\rho),
 &\zeta^i(0)&=\eta_i,\\
 \partial_t\zeta^{12}
 &=L_{\tilde w}(t)\zeta^{12}+\cS(f_{12},\rho)
   +\cS(f_1,\zeta^2)+\cS(f_2,\zeta^1),
 &\zeta^{12}(0)&=\eta_{12}.
\end{align*}
Then
\begin{align}
 \mathbb A_T(\zeta^i)
 &\le Ce^{C\Lambda_T(\tilde w)}
 \left(\|\eta_i\|_2^2
 +T^{1-2/p}\|f_i\|_{L^p(0,T;H^b)}^2\|\rho_0\|_2^2\right),
 \label{eq:revision-raw-first-energy}\\
 \mathbb A_T(\zeta^{12})
 &\le Ce^{C\Lambda_T(\tilde w)}\bigg(
 \|\eta_{12}\|_2^2
 +T^{1-4/p}\|f_{12}\|_{L^{p/2}(0,T;H^b)}^2\|\rho_0\|_2^2
 \notag\\
 &\hspace{26mm}
 +T^{1-2/p}\sum_{i=1}^2
       \|f_i\|_{L^p(0,T;H^b)}^2\mathbb A_T(\zeta^{3-i})
 \bigg).
 \label{eq:revision-raw-second-energy}
\end{align}
\end{itemize}
\end{lemma}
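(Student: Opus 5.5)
The proof is three successive $L^2$ energy estimates. Each one uses the cancellation $\langle B(\tilde w,g),g\rangle_2=0$, the product bound of \Cref{lem:product} in duality form, and Gronwall with a coefficient whose time integral is controlled by $\Lambda_T(\tilde w)$.

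\emph{Homogeneous equation.} For $\partial_t\rho=L_{\tilde w}\rho$ I would write
\begin{align*}
 \cS(\tilde w,\rho)=-B(\tilde w,\rho)-B(\rho,\tilde w).
\end{align*}
The first term drops out of $\frac{\dd}{\dd t}\|\rho\|_2^2$. For the second, $b>2$ gives $\nabla\tilde w\in L^\infty$, so
\begin{align*}
 |\langle B(\rho,\tilde w),\rho\rangle_2|\le\|K\rho\|_2\|\nabla\tilde w\|_\infty\|\rho\|_2\le C\|\tilde w\|_{H^b}\|\rho\|_2^2 .
\end{align*}
This yields
\begin{align*}
 \frac{\dd}{\dd t}\|\rho\|_2^2+2\nu\|\rho\|_{H^1}^2\le C\|\tilde w_t\|_{H^b}\|\rho\|_2^2 .
\end{align*}
Hölder in time gives $\int_0^T\|\tilde w\|_{H^b}\,\dd t\le T^{1-1/p}\|\tilde w\|_{L^p H^b}\le\Lambda_T$, and Gronwall then proves \eqref{eq:revision-raw-homogeneous-energy}. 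The extra $T$ in $\Lambda_T$ is only there to absorb harmless constants.

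\emph{First variation.} The same computation applies to $\zeta^i$, with one extra source term. I would bound it by duality and \Cref{lem:product} at $r=0$:
\begin{align*}
 |\langle\cS(f_i,\rho),\zeta^i\rangle|\le C\|f_i\|_{H^b}\|\rho\|_2\|\zeta^i\|_{H^1}\le\tfrac\nu2\|\zeta^i\|_{H^1}^2+C\|f_i\|_{H^b}^2\|\rho\|_2^2 .
\end{align*}
After Gronwall the source contributes
\begin{align*}
 \sup_t\|\rho_t\|_2^2\int_0^T\|f_i\|_{H^b}^2\,\dd t\le T^{1-2/p}\|f_i\|_{L^pH^b}^2\,e^{C\Lambda_T}\|\rho_0\|_2^2 ,
\end{align*}
using Hölder with exponent $p/2$ and the homogeneous bound for $\sup_t\|\rho_t\|_2^2$. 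Keeping the dissipation on the left gives the full $\mathbb A_T$ norm, which is \eqref{eq:revision-raw-first-energy}.

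\emph{Second variation.} Here there are three sources. The term $\cS(f_{12},\rho)$ is handled as above, but $f_{12}$ lies only in $L^{p/2}$. Hölder with exponents $p/4$ and its conjugate gives
\begin{align*}
 \int_0^T\|f_{12}\|_{H^b}^2\,\dd t\le T^{1-4/p}\|f_{12}\|_{L^{p/2}H^b}^2 .
\end{align*}
Since $p>4$, this exponent is positive. The cross terms $\cS(f_1,\zeta^2)+\cS(f_2,\zeta^1)$ are bounded in the same way by
\begin{align*}
 C\|f_i\|_{H^b}^2\|\zeta^{3-i}\|_2^2\le C\|f_i\|_{H^b}^2\,\mathbb A_T(\zeta^{3-i}),
\end{align*}
which produces \eqref{eq:revision-raw-second-energy}.

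The main point to verify is that the duality estimate $\|\cS(f,g)\|_{H^{-1}}\le C\|f\|_{H^b}\|g\|_2$ holds for both orderings of $B$. For $B(f,g)=\operatorname{div}((Kf)g)$ this is immediate. For $B(g,f)=(Kg)\cdot\nabla f$ it follows from $Kg\in H^1\subset L^4$ and $\nabla f\in L^\infty$, since then $B(g,f)\in L^2\subset H^{-1}$. Everything else is routine Gronwall bookkeeping. All computations are done first for Galerkin truncations and then passed to the limit.
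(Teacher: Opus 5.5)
Your proposal is correct and follows essentially the same route as the paper: a single $L^2$ energy inequality for $\partial_t g=L_{\tilde w}(t)g+F$ with the source controlled through the duality bound $\|\cS(f,g)\|_{H^{-1}}\le C\|f\|_{H^b}\|g\|_2$, then Gronwall and H\"older in time ($L^p\hookrightarrow L^2$ and $L^{p/2}\hookrightarrow L^2$ on $[0,T]$), applied in turn to $\rho$, $\zeta^i$ and $\zeta^{12}$. The only difference is presentational: you make explicit the transport cancellation $\langle B(\tilde w,g),g\rangle_2=0$ that keeps the Gronwall coefficient linear in $\|\tilde w\|_{H^b}$, whereas the paper uses it without comment when it writes the coefficient as $C(1+\|\tilde w\|_{H^b})$.
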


\begin{proof}
Since $b>2$, 
\begin{align}\label{e.030201}
\|\mathcal{S}(f, g)\|_{H^{-1}} \leq C\|f\|_{H^b}\|g\|_2.
\end{align}
Therefore, every solution of
$\partial_tg=L_{\tilde w}(t)g+F$ satisfies
\begin{align}\label{eq:revision-raw-basic-energy}
 \frac{\dd}{\dd t}\|g\|_2^2+\nu\|g\|_{H^1}^2
 \le C(1+\|\tilde w\|_{H^b})\|g\|_2^2
      +C_\nu\|F\|_{H^{-1}}^2.
\end{align}
Gronwall, H\"older's inequality, and
$\|\tilde w\|_{L^1(0,T;H^b)}\le
T^{1-1/p}\|\tilde w\|_{L^p(0,T;H^b)}$ prove
\eqref{eq:revision-raw-homogeneous-energy}. Insert successively
$F=\cS(f_i,\rho)$ and
\begin{align*}
 F=\cS(f_{12},\rho)+\cS(f_1,\zeta^2)
    +\cS(f_2,\zeta^1)
\end{align*}
in \eqref{eq:revision-raw-basic-energy}. Estimating the squared sources
with \eqref{e.030201}, followed by
$L^p\hookrightarrow L^2$ and $L^{p/2}\hookrightarrow L^2$ on $[0,T]$,
gives \eqref{eq:revision-raw-first-energy}--
\eqref{eq:revision-raw-second-energy}. 
\end{proof}

For every $(x,\tilde y)\in H^b\times\mathsf Y_T$, let
$v^{x,\tilde y}\in C([0,T];H^b)$ denote the solution of
\eqref{eq:compatible-global-mild} with initial value $x$ and path
component $\tilde y^\circ$. Set
\begin{align*}
 w^{x,\tilde y}=v^{x,\tilde y}+\tilde y^\circ,
 \qquad
 w_T^{x,\tilde y}=v^{x,\tilde y}(T)+\tilde y^T.
\end{align*}
Use
\begin{align*}
 \mathbb W_T(w_0,y)=1+\|w^{w_0,y}\|_{L^{p_\sigma}(0,T;H^b)}
\end{align*}
as the size of the central base coefficient.

The base solution map must be twice differentiable in the Hilbert driver
topology with quantitative local radii.  The following lemma supplies this
$C^2$ structure together with the pathwise bounds used to define the
projective chart radius.
\begin{lemma}
\label{lem:revision-base-two-jet}
Fix $T<\infty$. There is a deterministic constant
$C_{{\rm loc},T}\ge1$ such that, if
\begin{align*}
 \mathcal L_T(w_0,y)
 &=\exp\left(C_{{\rm loc},T}
       (1+\mathbb W_T(w_0,y)^{q_\sigma})\right),\\
 \delta_T(w_0,y)
 &=\min\left\{1,\frac1{8\mathcal L_T(w_0,y)^2}\right\},
\end{align*}
then the following assertions hold for every $(w_0,y)\in H^b\times\mathsf Y_T$.
\begin{enumerate}[label=\textup{(\roman*)}]
\item
On the open $H^b\times\mathsf Y_T$ ball
\begin{align*}
 B_{H^b\times\mathsf Y_T}
 \left((w_0,y),\frac{\delta_T(w_0,y)}2\right),
\end{align*}
the map $(x,\tilde y)\longmapsto
\bigl(v^{x,\tilde y},w_T^{x,\tilde y}\bigr)$ 
is twice Fr\'echet differentiable from $H^b\times\mathsf Y_T$ into
$C([0,T];H^b)\times H^b$, with continuous operator valued first and
second derivatives. In particular, for every
$\|(h,k)\|_{H^b\times\mathsf Y_T}<\delta_T(w_0,y)/2$,
\begin{align}\label{eq:revision-base-coefficient-tube}
 \|w^{w_0+h,y+k}\|_{L^{p_\sigma}(0,T;H^b)}
 \le \mathbb W_T(w_0,y)+C_T.
\end{align}

\item
For $\mathsf p_0\in H$, put
\begin{align*}
 w^{\circ,x,\tilde y}
 &=v^{x,\tilde y}+\tilde y^\circ,
 \qquad
 \rho_t^{x,\mathsf p_0,\tilde y}
 =U_{w^{\circ,x,\tilde y}}(t,0)\mathsf p_0.
\end{align*}
Then
\begin{align*}
 (x,\mathsf p_0,\tilde y)\longmapsto
 \bigl(w_T^{x,\tilde y},\rho_T^{x,\mathsf p_0,\tilde y}\bigr)
\end{align*}
is locally twice Fr\'echet differentiable into $H^b\times H$, again
with continuous operator valued first and second derivatives. On the
open set where $\rho_T\ne0$, projective normalisation gives an
operator norm $C^2$ projective endpoint.

\item
Let
\begin{align*}
 \mathcal V_T:H^b\times\mathsf Y_T\longrightarrow C([0,T];H^b),
 \qquad
 \mathcal V_T(x,\tilde y)=v^{x,\tilde y}.
\end{align*}
For
$\|(h,k)\|_{H^b\times\mathsf Y_T}<\delta_T(w_0,y)/2$, set
\begin{align*}
 \Xi_1^{h,k}
 &=
 \bigl\|D\mathcal V_T(w_0+h,y+k)\bigr\|_
 {\mathcal L(H^b\times\mathsf Y_T,C([0,T];H^b))},\\
 \Xi_2^{h,k}
 &=
 \bigl\|D^2\mathcal V_T(w_0+h,y+k)\bigr\|_
 {\mathcal L^{(2)}(H^b\times\mathsf Y_T,C([0,T];H^b))},
\end{align*}
and put
\begin{align*}
 \mathcal X_T(w_0,y)
 =
 1+\sup_{\|(h,k)\|_{H^b\times\mathsf Y_T}<\delta_T(w_0,y)/2}
 \bigl(\Xi_1^{h,k}+\Xi_2^{h,k}\bigr).
\end{align*}
Then
\begin{align}\label{eq:revision-base-two-jet-pathwise}
 \log\delta_T(w_0,y)^{-1}
 +\log^+\mathcal X_T(w_0,y)
 \le C_T\bigl(1+\mathbb W_T(w_0,y)^{q_\sigma}\bigr).
\end{align}
\end{enumerate}
\end{lemma}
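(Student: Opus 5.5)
The plan is to treat the base map $(x,\tilde y)\mapsto(v^{x,\tilde y},w_T^{x,\tilde y})$ with the implicit function theorem on the mild formulation \eqref{eq:compatible-global-mild}. The nonlinear map
\begin{align*}
 \mathcal F(v;x,\tilde y)=v-e^{\nu t\Delta}x+\int_0^te^{\nu(t-s)\Delta}B(v_s+\tilde y^\circ_s,v_s+\tilde y^\circ_s)\,\dd s
\end{align*}
is a continuous quadratic polynomial from $C([0,T];H^b)\times H^b\times\mathsf Y_T$ into $C([0,T];H^b)$. It is therefore $C^\infty$ with explicit derivatives. This follows from the product bound $B:H^b\times H^b\to H^{b-1}$, the two heat estimates in the proof of \Cref{lem:compatible-driver-global}, and the embedding $H^\sigma(0,T;H^b)\hookrightarrow L^{p_\sigma}(0,T;H^b)$ with $p_\sigma>4$.

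The key step is to show that $D_v\mathcal F$ is invertible, with a quantitative bound. The equation $D_v\mathcal F\,\zeta=g$ is the mild linear equation
\begin{align*}
 \zeta=g-\int_0^te^{\nu(t-s)\Delta}\bigl(B(w_s,\zeta_s)+B(\zeta_s,w_s)\bigr)\,\dd s,\qquad w=v+\tilde y^\circ .
\end{align*}
I would solve it by splitting $[0,T]$ into subintervals on which $C\ell^{1/2-1/p_\sigma}\|w\|_{L^{p_\sigma}(I;H^b)}\le\tfrac12$. This requires about $N\lesssim 1+\|w\|_{L^{p_\sigma}}^{q_\sigma}$ intervals, where $q_\sigma=(1/2-1/p_\sigma)^{-1}$ is exactly the exponent that makes $\ell^{1/2-1/p_\sigma}$ times an $L^{p_\sigma}$ mass on the interval summable. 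Iterating the contraction across the intervals gives $\|\zeta\|_{C_tH^b}\le 2^{N}\|g\|$, i.e.\ an inverse of size $\exp(C(1+\mathbb W_T^{q_\sigma}))=\mathcal L_T$. Since the equation is linear, no energy estimate is needed here, only this Gronwall-type chaining. The implicit function theorem then gives $C^2$ (indeed $C^\infty$) dependence near every point, with derivative formulas $D\mathcal V=-(D_v\mathcal F)^{-1}D_{(x,y)}\mathcal F$ and the usual second-order formula.

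Next comes the quantitative radius, which I expect to be the main obstacle. The local derivative bounds must hold on a ball of explicit radius $\delta_T/2$, not merely on some neighbourhood. I would use a quantitative inverse function/Newton argument:
\begin{itemize}
\item $D_v\mathcal F$ is Lipschitz in $(v,\tilde y)$ with constant $C(1+\|w\|)$ in the appropriate norms, which gives a bound of the form $C_T\mathcal L_T$;
\item hence $D_v\mathcal F$ stays invertible with inverse norm at most $2\mathcal L_T$ on a ball of radius $\sim\mathcal L_T^{-2}$.
\end{itemize}
Global uniqueness from \Cref{lem:compatible-driver-global} identifies the local branch with the true solution. On this ball the solution moves by at most $C\mathcal L_T\delta_T\le1$ in $C_tH^b$. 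Together with $\|k\|_{L^{p_\sigma}H^b}\lesssim\|k\|_{\mathsf Y_T}$, this gives \eqref{eq:revision-base-coefficient-tube}. Using the coefficient bound, the inverse is controlled uniformly on the ball by $\mathcal L_T$ up to adjusting $C_{{\rm loc},T}$. The explicit derivative formulas then bound $\Xi_1$ by $C\mathcal L_T(1+\mathbb W_T)$ and $\Xi_2$ by $C\mathcal L_T^3(1+\mathbb W_T)^2$. Taking logarithms yields \eqref{eq:revision-base-two-jet-pathwise}, since $\log\delta_T^{-1}\le\log 8+2\log\mathcal L_T$.

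For (ii), the endpoint $w_T=v(T)+\tilde y^T$ is affine in $\tilde y^T$, so it is handled directly. The fibre map $(w^\circ,\mathsf p_0)\mapsto U_{w^\circ}(T,0)\mathsf p_0$ is bilinear-analytic in the coefficient: its first and second variations solve exactly the $\zeta^i$ and $\zeta^{12}$ equations of \Cref{lem:linearised-variation-energy}, with $f_i=D w^\circ[\cdot]\in L^{p_\sigma}H^b$ and $f_{12}=D^2w^\circ[\cdot,\cdot]$. The energy bounds \eqref{eq:revision-raw-first-energy}--\eqref{eq:revision-raw-second-energy} give the operator bounds. Continuity of the derivatives follows by applying the same estimates to differences of coefficients. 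The chain rule with part (i) gives local $C^2$ regularity. Finally, on $\{\rho_T\ne0\}$, normalisation $\rho\mapsto\rho/\|\rho\|_2$ (or the affine chart quotient) is smooth, so composition yields the $C^2$ projective endpoint.
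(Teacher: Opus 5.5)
Your proposal is correct, and for parts (i) and (iii) it takes a genuinely different route from the paper.

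\emph{The paper's route.} The paper never invokes the implicit function theorem. It proves a single linear $H^b$ energy estimate \eqref{eq:revision-global-linear-estimate} with constant $\exp(C_T\|\bar w\|_{L^{p_\sigma}(0,T;H^b)}^2)$, so $q_\sigma$ enters only afterwards, by absorption using $q_\sigma>2$ and $\mathbb W_T\ge1$. It applies that estimate to explicit first and second variation equations. It obtains the tube from the scalar inequality $\|\bar v\|_{C_TH^b}\le\mathcal L_T\|(h,k)\|+\mathcal L_T(\|\bar v\|_{C_TH^b}+\|(h,k)\|)^2$ plus a continuity argument; this is where the specific radius $\delta_T=\min\{1,(8\mathcal L_T^2)^{-1}\}$ comes from. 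Finally, it identifies the Fr\'echet derivatives and their continuity by hand, through quadratic remainder bounds.

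\emph{Your route.} You treat the mild equation as the zero set of the quadratic map $\mathcal F$ and invert $D_v\mathcal F$ by Duhamel chaining over $N\lesssim1+\|w\|_{L^{p_\sigma}(0,T;H^b)}^{q_\sigma}$ subintervals. This buys several things.
\begin{itemize}
\item Smoothness and operator norm continuity of the derivatives come for free.
\item The $\Xi_1,\Xi_2$ bounds become one line, via $D\mathcal V_T=-(D_v\mathcal F)^{-1}D_{(x,\tilde y)}\mathcal F$ and $D^2\mathcal V_T=-(D_v\mathcal F)^{-1}D^2\mathcal F[\cdot,\cdot]$.
\item It shows that $q_\sigma$ is the natural exponent (the count of subintervals) rather than an absorbed one.
\end{itemize}
The paper's route is more hands-on, but it keeps the base estimates parallel to the energy-based fibre estimates of \Cref{lem:linearised-variation-energy}.

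\emph{Two simplifications of your radius step.}
\begin{itemize}
\item $D_v\mathcal F$ is affine in $(v,\tilde y)$, so its Lipschitz constant there is just $C_T$, independent of $w$.
\item Your inverse bound holds at every point, with a constant depending only on the local coefficient norm. So you can skip the Neumann series perturbation entirely. Instead, bootstrap $\|\mathcal V_T(w_0+sh,y+sk)-\mathcal V_T(w_0,y)\|_{C_TH^b}\le1$ along $s\in[0,1]$, absorbing the factor $1+\mathbb W_T$ coming from $D_{\tilde y}\mathcal F$ into $\mathcal L_T$ by enlarging $C_{{\rm loc},T}$.
\end{itemize}

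Your treatment of part (ii) is essentially the paper's argument.
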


\begin{proof}
By \Cref{lem:compatible-driver-global} and $H^\sigma(0,T;H^b)\hookrightarrow L^{p_\sigma}(0,T;H^b)$, the rough base equation is globally wellposed for every $(w_0,y)\in H^b\times\mathsf Y_T$.

We repeatedly use the following linear estimate. Let $\bar w\in L^{p_\sigma}(0,T;H^b)$ and suppose
\begin{align*}
 \partial_tz=\nu\Delta z-B(z,\bar w)-B(\bar w,z)+F,\qquad z(0)=h.
\end{align*}
Since $\|B(f,g)\|_{H^{b-1}}\le C\|f\|_{H^b}\|g\|_{H^b}$, the $H^b$ energy estimate gives
\begin{align*}
 \frac{\dd}{\dd t}\|z\|_{H^b}^2+\nu\|z\|_{H^{b+1}}^2
 \le C\|\bar w\|_{H^b}^2\|z\|_{H^b}^2+C\|F\|_{H^{b-1}}^2.
\end{align*}
Hence, since $p_\sigma>4$ and therefore $L^{p_\sigma/2}(0,T)\hookrightarrow L^2(0,T)$,
\begin{align}\label{eq:revision-global-linear-estimate}
 \|z\|_{C_TH^b}
 \le C_T\exp\left(C_T\|\bar w\|_{L^{p_\sigma}(0,T;H^b)}^2\right)
 \left(\|h\|_{H^b}+\|F\|_{L^{p_\sigma/2}(0,T;H^{b-1})}\right).
\end{align}

Fix $(w_0,y)\in H^b\times\mathsf Y_T$. For $a_i=(h_i,k_i)\in H^b\times\mathsf Y_T$, where $k_i=(k_i^\circ,k_i^T)$, let the first and second base variations be defined by
\begin{align}\label{rev:eq-base-first-variation}
 \delta_iv_t=e^{\nu t\Delta}h_i-\int_0^t e^{\nu(t-s)\Delta}
 \left[B(\delta_iw_s,w_s)+B(w_s,\delta_iw_s)\right]\,\dd s,
 \qquad \delta_iw=\delta_iv+k_i^\circ,
\end{align}
and
\begin{align}\label{rev:eq-base-second-variation}
 \delta_{12}v_t=-\int_0^t e^{\nu(t-s)\Delta}\bigl[
 B(\delta_{12}v_s,w_s)+B(w_s,\delta_{12}v_s)
 +B(\delta_1w_s,\delta_2w_s)+B(\delta_2w_s,\delta_1w_s)\bigr]\,\dd s.
\end{align}
At the endpoint,
\begin{align*}
 \delta_iw_T=\delta_iv_T+k_i^T,\qquad \delta_{12}w_T=\delta_{12}v_T.
\end{align*}

In \eqref{rev:eq-base-first-variation}, the terms containing $k_i^\circ$ are regarded as forcing. Since $\mathsf Y_T\hookrightarrow L^{p_\sigma}(0,T;H^b)$,
\begin{align*}
 \|B(k_i^\circ,w)+B(w,k_i^\circ)\|_{L^{p_\sigma/2}(0,T;H^{b-1})}
 \le C_T\|k_i\|_{\mathsf Y_T}\|w\|_{L^{p_\sigma}(0,T;H^b)}.
\end{align*}
Applying \eqref{eq:revision-global-linear-estimate} first to \eqref{rev:eq-base-first-variation} and then to \eqref{rev:eq-base-second-variation} gives
\begin{align}\label{eq:revision-base-variation-bound}
\begin{split}
 \|\delta_iv\|_{C_TH^b}+\|\delta_iw\|_{L^{p_\sigma}(0,T;H^b)}
 &\le \exp\left(C_T(1+\mathbb W_T(w_0,y)^{q_\sigma})\right)\|a_i\|,\\
 \|\delta_{12}v\|_{C_TH^b}+\|\delta_{12}v\|_{L^{p_\sigma/2}(0,T;H^b)}
 &\le \exp\left(C_T(1+\mathbb W_T(w_0,y)^{q_\sigma})\right)\|a_1\|\|a_2\|.
\end{split}
\end{align}
Here we used $\mathbb W_T(w_0,y)\ge1$ and $q_\sigma>2$ to absorb the quadratic exponent arising from \eqref{eq:revision-global-linear-estimate}.

We next control perturbations of the base equation. Let $v^{w_0+h,y+k}$ be the solution corresponding to the perturbed data and set $\bar v=v^{w_0+h,y+k}-v^{w_0,y}$. Subtracting the two equations and applying \eqref{eq:revision-global-linear-estimate} with coefficient $w^{w_0,y}$ gives
\begin{align*}
 \|\bar v\|_{C_TH^b}
 \le \mathcal L_T(w_0,y)\|(h,k)\|
 +\mathcal L_T(w_0,y)\left(\|\bar v\|_{C_TH^b}+\|(h,k)\|\right)^2.
\end{align*}
By the definition of $\delta_T(w_0,y)$, a continuity argument yields
\begin{align}\label{eq:revision-base-Lipschitz-tube}
 \|\bar v\|_{C_TH^b}\le2\mathcal L_T(w_0,y)\|(h,k)\|
\end{align}
whenever $\|(h,k)\|_{H^b\times\mathsf Y_T}<\delta_T(w_0,y)/2$. Consequently,
\begin{align*}
 \|w^{w_0+h,y+k}\|_{L^{p_\sigma}(0,T;H^b)}
 \le \mathbb W_T(w_0,y)+C_T
\end{align*}
throughout this ball.

To identify the first variation, let $\delta v[h,k]$ be the solution of \eqref{rev:eq-base-first-variation} and set
\begin{align*}
 R=v^{w_0+h,y+k}-v^{w_0,y}-\delta v[h,k].
\end{align*}
Subtracting the nonlinear difference equation and the first variation equation shows that $R$ satisfies a linear equation of the form used in \eqref{eq:revision-global-linear-estimate}, whose forcing is quadratic in $v^{w_0+h,y+k}-v^{w_0,y}+k^\circ$. Hence \eqref{eq:revision-base-Lipschitz-tube} and \eqref{eq:revision-global-linear-estimate} give
\begin{align}\label{eq:revision-base-first-remainder}
 \|R\|_{C_TH^b}
 \le C_T\exp\left(C_T(1+\mathbb W_T(w_0,y)^{q_\sigma})\right)\|(h,k)\|^2.
\end{align}
Thus the first variation is the Fr\'echet derivative of the base solution map. Subtracting the first variation equations at two nearby base points and applying the same estimate gives
\begin{align*}
 \bigl\|D\mathcal V_T(w_0+h_2,y+k_2)[a_1]-D\mathcal V_T(w_0,y)[a_1]-\delta_{12}v\bigr\|_{C_TH^b}
 \le C\|a_1\|\|a_2\|^2.
\end{align*}
Therefore $\delta_{12}v=D^2\mathcal V_T(w_0,y)[a_1,a_2]$. The same estimates at nearby base points prove continuity in operator norm of the first two derivatives. Since the terminal coordinate $y^T$ enters affinely, this proves assertion \textup{(i)}.

Moreover, \eqref{eq:revision-base-coefficient-tube} allows \eqref{eq:revision-base-variation-bound} to be applied uniformly at every point of the $\delta_T(w_0,y)/2$ ball. Hence
\begin{align*}
 \mathcal X_T(w_0,y)
 \le \exp\left(C_T(1+\mathbb W_T(w_0,y)^{q_\sigma})\right).
\end{align*}
Together with the definition of $\delta_T(w_0,y)$, this yields \eqref{eq:revision-base-two-jet-pathwise}.

It remains to treat the raw fibre. Let $\alpha_i=(a_i,\eta_i)$, where $a_i=(h_i,k_i)\in H^b\times\mathsf Y_T$ and $\eta_i\in H$. The first two raw fibre variations satisfy
\begin{align*}
 \partial_t\zeta^i
 &=L_{w^{w_0,y}}(t)\zeta^i+\cS(\delta_iw,\rho),
 &\zeta^i(0)&=\eta_i,\notag\\
 \partial_t\zeta^{12}
 &=L_{w^{w_0,y}}(t)\zeta^{12}+\cS(\delta_{12}v,\rho)
   +\cS(\delta_1w,\zeta^2)+\cS(\delta_2w,\zeta^1),
 &\zeta^{12}(0)&=0.
\end{align*}
Combining \eqref{eq:revision-base-variation-bound} with \Cref{lem:linearised-variation-energy} gives
\begin{align}\label{eq:revision-fibre-variation-bound}
 \|\zeta^i\|_{C_TH}
 &\le C\|\alpha_i\|,\notag\\
 \|\zeta^{12}\|_{C_TH}
 &\le C\|\alpha_1\|\|\alpha_2\|
\end{align}
locally uniformly in the base point.

The Fr\'echet identification follows by the same subtraction argument. If $(\tilde w,\tilde\rho)$ corresponds to a perturbed input and $\zeta$ denotes the first variation, then
\begin{align*}
 R=\tilde\rho-\rho-\zeta
\end{align*}
satisfies a linear equation whose source consists of the quadratic base remainder and products of first variations. By \eqref{eq:revision-base-first-remainder}, \eqref{eq:revision-fibre-variation-bound}, and \Cref{lem:linearised-variation-energy},
\begin{align*}
 \|R\|_{C_TH}\le C\|\alpha\|^2.
\end{align*}
Repeating the same argument for the difference of first variation equations gives the second derivative and continuity of both operator valued derivatives. Thus the raw joint endpoint
\begin{align*}
 (x,\mathsf p_0,\tilde y)\longmapsto\bigl(w_T^{x,\tilde y},\rho_T^{x,\mathsf p_0,\tilde y}\bigr)
\end{align*}
is locally $C^2$ into $H^b\times H$.

Finally, $N(r)=r/\|r\|_2$ is $C^\infty$ on $H\setminus\{0\}$ and
\begin{align*}
 \|DN(r)\|\le\|r\|_2^{-1},\qquad \|D^2N(r)\|\le3\|r\|_2^{-2}.
\end{align*}
Composing with $N$ and the smooth orthogonal bundle frames \eqref{eq:chart-rotation} proves assertion \textup{(ii)} on the open set where the raw fibre endpoint is nonzero.
\end{proof}

Parabolic regularity turns the state derivative into a compact operator in
the rough driver topology.  The next corollary gives the stronger space
factorisation uniformly on bounded coefficient sets with a nonvanishing
endpoint.
\begin{corollary}
\label{cor:rough-driver-state-compact}
Fix
$0<\delta_{\sigma}<\min\{1-2/p_\sigma,b-1\}$.
At every nonvanishing endpoint in
\Cref{lem:revision-base-two-jet}, the state derivative 
\[J(z, y): H^b \oplus \mathsf p_0^{\perp} \longrightarrow H^b \oplus \mathsf p_T^{\perp}\]
is compact. More precisely, on
every set on which
\begin{align*}
 \|w^\circ\|_{L^{p_\sigma}(0,T;H^b)}\le M,
 \qquad
 \|\rho_T\|_2\ge d>0,
\end{align*}
$J(z, y)$ is uniformly bounded from
$H^b\oplus \mathsf p_0^\perp$ into
$H^{b+\delta_{\sigma}}\oplus H^{\delta_{\sigma}}$. 
\end{corollary}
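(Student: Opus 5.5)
We propose to write $J(z,y)$ as a composition of explicit pieces and to show that each smoothing factor maps into the stronger space. Compactness then follows from the compact embedding $H^{b+\delta_\sigma}\oplus H^{\delta_\sigma}\Subset H^b\oplus H$.

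For a tangent vector $(h,\eta)\in H^b\oplus\mathsf p_0^\perp$, the state derivative is given by two variations. The base component is the first variation $\delta w_T=\delta v_T$ from \eqref{rev:eq-base-first-variation}, taken with $k=0$. The fibre component is
\begin{align*}
d_T^{-1}\Pi_{\mathsf p_T^\perp}\zeta_T,
\end{align*}
where $\zeta$ solves $\partial_t\zeta=L_{w^\circ}(t)\zeta+\cS(\delta w,\rho)$ with $\zeta_0=\eta$. The sphere quotient only contributes the bounded projection and the factor $d_T^{-1}\le d^{-1}$, so it suffices to bound $\delta v_T$ in $H^{b+\delta_\sigma}$ and $\zeta_T$ in $H^{\delta_\sigma}$, uniformly on the given set.

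\emph{Fibre.} Write $\zeta_T=U_{w^\circ}(T,0)\eta+\int_0^TU_{w^\circ}(T,s)\cS(\delta w_s,\rho_s)\,\dd s$. For the homogeneous term, \Cref{lem:gain}(ii) with $p=p_\sigma$, $q=0$ and $\delta_\sigma<\kappa_{p_\sigma}$ gives $U_{w^\circ}(T,0):H\to H^{\delta_\sigma}$, uniformly on $\{\|w^\circ\|_{L^{p_\sigma}H^b}\le M\}$. For the Duhamel term, \Cref{lem:product} with $r=0$ puts the source in $H^{-1}$ with norm $\|\delta w_s\|_{H^b}\|\rho_s\|_2$. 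Here $\|\rho_s\|_2$ is bounded by \eqref{eq:revision-raw-homogeneous-energy}, and $\delta w\in L^{p_\sigma}H^b$ by \eqref{eq:revision-base-variation-bound}. I would then use a short-time estimate $\|U(T,s)\|_{H^{-1}\to H^{\delta_\sigma}}\lesssim (T-s)^{-(1+\delta_\sigma)/2}$, obtained as in the proof of \Cref{lem:gain}(ii) from the mild formula and the heat kernel bound. This singularity is integrable against $L^{p_\sigma}$ in $s$ precisely when $\delta_\sigma<1-2/p_\sigma$, by the same Hölder computation as in that proof.

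\emph{Base.} $\delta v$ solves a linear parabolic equation with coefficient $w^\circ\in L^{p_\sigma}H^b$. Its mild form is $e^{\nu T\Delta}h$ plus the heat integral of $B(\delta w,w)+B(w,\delta w)\in L^{p_\sigma/2}H^{b-1}$. The semigroup term lies in $H^{b+\delta_\sigma}$ for $T>0$. For the integral, the kernel bound $\|e^{\nu(T-s)\Delta}\|_{H^{b-1}\to H^{b+\delta_\sigma}}\lesssim(T-s)^{-(1+\delta_\sigma)/2}$ is integrable against an $L^{p_\sigma/2}$ function only if $\frac{1+\delta_\sigma}{2}\cdot\frac{p_\sigma}{p_\sigma-2}<1$, that is $\delta_\sigma<1-4/p_\sigma$, which is stronger than what is assumed.

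This is the step I expect to be the main obstacle. To remove the loss, I would not estimate the quadratic term $B(\delta w,w)$ crudely in $L^{p_\sigma/2}$. Instead I would use that $\delta v\in C_TH^b$ by \eqref{eq:revision-base-variation-bound}, so in the piece $B(\delta v,w)+B(w,\delta v)$ only one factor is time-singular and the piece lies in $L^{p_\sigma}H^{b-1}$. The remaining piece involves $k^\circ$, which vanishes for the state derivative since $k=0$. Thus the whole source lies in $L^{p_\sigma}H^{b-1}$, and the condition becomes $\delta_\sigma<1-2/p_\sigma$, matching the hypothesis. The assumption $\delta_\sigma<b-1$ is used to keep the product estimates within their valid Sobolev range.

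Collecting the constants, which depend only on $M$, $d$ and $T$, gives the uniform bound into $H^{b+\delta_\sigma}\oplus H^{\delta_\sigma}$, and compactness of $J(z,y)$ follows from the compact embedding.
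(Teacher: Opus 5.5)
Your argument is correct. For the base component and the homogeneous fibre term it is the paper's argument. Since $\cS(w,h)=-B(w,h)-B(h,w)$, the base variation with $k=0$ is exactly $\delta v_T=U_{w^\circ}(T,0)h$. So the step you flag as the main obstacle is just \Cref{lem:gain}\textup{(ii)} at the endpoint $q=b$, which the paper cites directly, and your fix (one factor in $C_TH^b$, source in $L^{p_\sigma}H^{b-1}$) is precisely that lemma's proof.

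The genuine difference is the Duhamel term $\int_0^TU_{w^\circ}(T,s)\cS(\rho_s,\delta w_s)\,\dd s$. The paper splits at $T/2$:
\begin{itemize}
\item On $[0,T/2]$ it uses the $H^{-1}$ source and the uniform bound $U_{w^\circ}(T,s):H^{-1}\to H^{\delta_\sigma}$ across a gap $\ge T/2$, from \Cref{lem:gain}\textup{(iii)}.
\item On $[T/2,T]$ it uses positive time smoothing $\rho_s\in H^b$, a source in $H^{b-1}$, boundedness of the evolution on $H^{b-1}$, and $H^{b-1}\hookrightarrow H^{\delta_\sigma}$. This is the only place $\delta_\sigma<b-1$ enters.
\end{itemize}
That route needs only estimates already stated over gaps bounded below.

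You instead integrate one singular bound $\|U_{w^\circ}(T,s)\|_{H^{-1}\to H^{\delta_\sigma}}\lesssim(T-s)^{-(1+\delta_\sigma)/2}$ over all of $[0,T]$. This needs only $L^2$ control of $\rho$ and makes $\delta_\sigma<b-1$ superfluous. That condition is automatic anyway since $b>2$, so your remark about its role is inaccurate but harmless.

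The price is that this bound gains $1+\delta_\sigma$ derivatives from $H^{-1}$. \Cref{lem:gain}\textup{(ii)} only gains $\delta<\kappa_{p_\sigma}$ from a given level $H^q$, so your bound is not literally ``as in'' that proof. It does hold, by the following argument:
\begin{itemize}
\item Run the fixed point in the weighted norm $\sup_{s<r\le t}(r-s)^{(1+\delta_\sigma)/2}\|U_{w^\circ}(r,s)f\|_{H^{\delta_\sigma}}$.
\item Use \Cref{lem:product} at Sobolev level $\delta_\sigma$ and the kernel bound $\|e^{\nu(t-r)\Delta}\|_{H^{\delta_\sigma-1}\to H^{\delta_\sigma}}\lesssim(t-r)^{-1/2}$.
\item H\"older with exponent $p_\sigma/(p_\sigma-1)$ requires exactly $p_\sigma>2$ and $\delta_\sigma<1-2/p_\sigma$. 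It produces the factor $(t-s)^{1/2-1/p_\sigma}M$, which is small on short intervals.
\item Longer gaps follow by composing with \Cref{lem:gain}\textup{(i)}.
\end{itemize}
So in your route the restriction $\delta_\sigma<1-2/p_\sigma$ enters through this kernel bound, not through integrating against $\delta w\in L^{p_\sigma}H^b$. With $k=0$ one has $\delta w\in C_TH^b$, and the outer integral only needs $\delta_\sigma<1$.

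Finally, the uniform $H^{\delta_\sigma}$ bound (as opposed to compactness) needs $\Pi_{\mathsf p_T^\perp}$ to be bounded on $H^{\delta_\sigma}$. This holds because $\|\mathsf p_T\|_{H^{\delta_\sigma}}=\|U_{w^\circ}(T,0)\mathsf p_0\|_{H^{\delta_\sigma}}/d_T\le C_M/d$. The paper leaves this implicit as well.
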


\begin{proof}
Let $0<\delta_{\sigma}<\min\{1-2/p_\sigma,b-1\}$ and
write $\widehat w$ for a coefficient satisfying
$\|\widehat w\|_{L^{p_\sigma}(0,T;H^b)}\le M$. For a unit initial
fibre $\mathsf p_0$, put
\begin{align*}
 \rho_s=U_{\widehat w}(s,0)\mathsf p_0.
\end{align*}
For a state perturbation $(h,\eta)\in H^b\oplus \mathsf p_0^\perp$, the raw
state derivative at time $T$ is
\begin{align*}
 (h,\eta)\longmapsto
 \left(U_{\widehat w}(T,0)h,
 U_{\widehat w}(T,0)\eta+Rh\right),
\end{align*}
where
\begin{align*}
 Rh=\int_0^T U_{\widehat w}(T,s)
 \cS\bigl(\rho_s,U_{\widehat w}(s,0)h\bigr)\,\dd s.
\end{align*}
By \Cref{lem:gain},
\begin{align*}
 U_{\widehat w}(T,0):H^b\longrightarrow H^{b+\delta_{\sigma}},
 \qquad
 U_{\widehat w}(T,0):H\longrightarrow H^{\delta_{\sigma}},
\end{align*}
with operator norms bounded uniformly on the indicated
$L^{p_\sigma}(0,T;H^b)$ bounded set. It remains to control $R$.

For $0\le s\le T/2$, \Cref{lem:product} and boundedness of the
evolution on $H$ and $H^b$ give
\begin{align*}
 \bigl\|\cS(\rho_s,U_{\widehat w}(s,0)h)\bigr\|_{H^{-1}}
 \le C_M\|h\|_{H^b}.
\end{align*}
Since $T-s\ge T/2$, choose finitely many gains, each smaller than
$1-2/p_\sigma$, whose sum is $1+\delta_{\sigma}$. Iterating
\Cref{lem:gain} over a fixed subdivision of $[s,T]$ gives
\begin{align*}
 \sup_{0\le s\le T/2}
 \|U_{\widehat w}(T,s)\|_{H^{-1}\to H^{\delta_{\sigma}}}
 \le C_M.
\end{align*}
Hence the contribution of $[0,T/2]$ to $Rh$ is bounded by
$C_M\|h\|_{H^b}$ in $H^{\delta_{\sigma}}$.

For $T/2\le s\le T$, positive time smoothing from the initial time
gives
\begin{align*}
 \sup_{T/2\le s\le T}\|\rho_s\|_{H^b}\le C_M,
 \qquad
 \sup_{0\le s\le T}
 \|U_{\widehat w}(s,0)h\|_{H^b}\le C_M\|h\|_{H^b}.
\end{align*}
Thus \Cref{lem:product} gives
\begin{align*}
 \bigl\|\cS(\rho_s,U_{\widehat w}(s,0)h)\bigr\|_{H^{b-1}}
 \le C_M\|h\|_{H^b},
 \qquad T/2\le s\le T.
\end{align*}
The evolution is bounded on $H^{b-1}$ and
$H^{b-1}\hookrightarrow H^{\delta_{\sigma}}$, so the second half of
$Rh$ satisfies the same bound. Consequently $R:H^b\longrightarrow H^{\delta_{\sigma}}$
is bounded, uniformly under the stated coefficient bound. The raw state
derivative therefore factors boundedly through
\begin{align*}
 H^{b+\delta_{\sigma}}\oplus H^{\delta_{\sigma}}.
\end{align*}
Since
$H^{b+\delta_{\sigma}}\Subset H^b$ and
$H^{\delta_{\sigma}}\Subset H$, the raw state derivative is compact
into $H^b\oplus H$. If also $\|\rho_T\|_2\ge d>0$, the derivative of
projective normalisation is bounded by $d^{-1}$; composing with it
preserves compactness and gives the uniform normalised statement.

\end{proof}

For the compatible Gaussian driver $Y_T$, use the corresponding base path
$w^{w_0,Y_T}$. Since $Y_T$ is a centred Radon Gaussian variable
in $\mathsf Y_T$, Fernique's theorem gives every finite
$\mathsf Y_T$ moment. Hence there is an integer $m_0\ge q_\sigma$ such
that, with
\begin{align*}
 \mathcal A_T
 =1+\|Y_T\|_{\mathsf Y_T}^{m_0}
   +\|w^{w_0,Y_T}\|_{L^{p_\sigma}(0,T;H^b)}^{m_0},
\end{align*}
for every $r<\infty$ there is $V_r$ of the base estimates in
\eqref{eq:revision-base-weight} such that 
\begin{align}\label{eq:revision-base-path-moments}
 \mathbf E_{w_0}\mathcal A_T^r\le C_{T,r}V_r(w_0).
\end{align}

Use the affine state coordinate $\Delta_z$ introduced in
\eqref{eq:state-difference-chart}.  All tangent and operator norms are the
quotient metric norms fixed above, and continuity as the central projective
point varies is read in the orthogonal frames
\eqref{eq:chart-rotation}.  On the finitely many subcharts used below, the
projective denominators are uniformly separated from zero; the first two
frame transition derivatives are therefore bounded and are included in
$C_{\rm ch}$.

Recall that for $z=(w_0,[\mathsf p])$ with $\mathsf p$ an $L^2$ unit representative, the input coordinate
and its inverse are
\begin{align*}
 \Delta_z(w_0',[q])
 &=\left(w_0'-w_0,\chi_{\mathsf p}([q])\right),\\
 \Delta_z^{-1}(h,\eta)
 &=\left(w_0+h,[\mathsf p+\eta]\right).
\end{align*}
Let $\mathcal E_T$ be the unnormalised base raw fibre endpoint in
this chart and in the Hilbert driver coordinate $\mathsf Y_T$ of
\eqref{e.Tdriver}. At the central raw endpoint
$\rho_T=U(T,0)\mathsf p$, put
$d_T=\|\rho_T\|_2$. If $d_T>0$, set
$\mathsf p_T=\rho_T/d_T$ and use the output chart
\begin{align*}
 \chi_{\mathsf p_T}([r])=
 \frac{\Pi_{\mathsf p_T^\perp}r}{\langle r,\mathsf p_T\rangle},
 \qquad \langle r,\mathsf p_T\rangle\ne0.
\end{align*}
Define the local two jet size by
\begin{align*}
 \begin{aligned}
 M_T(z,y)=1+\sup_{\|(h,\eta,k)\|<\delta_T(w_0,y)/2}\bigl(&
   \|D\mathcal E_T(w_0+h,[\mathsf p+\eta],y+k)\|\\
   &+\|D^2\mathcal E_T(w_0+h,[\mathsf p+\eta],y+k)\|\bigr),
 \end{aligned}
\end{align*}
where $h\in H^b$, $\eta\in\mathsf p^\perp$, and $k\in\mathsf Y_T$, and the
derivatives are taken in the unscaled native coordinates.  Together,
\Cref{lem:linearised-variation-energy,lem:revision-base-two-jet} show that
this supremum is finite at every compatible central point.  Define the
admissible chart radius by
\begin{align}\label{eq:revision-chart-radius}
 \mathsf r_T(z,y)=
 \frac18\min\left\{\delta_T(w_0,y),
                         \frac{d_T(z,y)}{M_T(z,y)}\right\}.
\end{align}
On the ball of radius $\mathsf r_T$, the raw endpoint differs from its central
value by at most $d_T/8$, so the output chart is valid.  Fix a universal
chart constant $C_{\rm ch}$ and define the corresponding two jet bound
\begin{align}\label{eq:revision-C-def}
 \mathsf C_T(z,y)=C_{\rm ch}\left(
 1+M_T+d_T^{-1}M_T+d_T^{-1}M_T^2
                 +d_T^{-2}M_T^2\right).
\end{align}
At points where the central equation or output chart is undefined, set
$\mathsf r_T=0$ and $\mathsf C_T=+\infty$.

For
\begin{align*}
 e=(h,\eta,k)\in H^b\oplus\mathsf p^\perp\oplus\mathsf Y_T,
 \qquad \|e\|<\mathsf r_T,
\end{align*}
write
\begin{align*}
 \mathcal E_T(w_0+h,[\mathsf p+\eta],y+k)
 =\bigl(w_T^e,\rho_T^e\bigr).
\end{align*}
Then
\begin{align}\label{eq:projective-chart-denominator}
 \|\rho_T^e-\rho_T\|_2\le\frac{d_T}{8},
 \qquad
 \langle\rho_T^e,\mathsf p_T\rangle\ge\frac{7d_T}{8}.
\end{align}
In particular, the fixed output chart $\chi_{\mathsf p_T}([r])
=\frac{\Pi_{\mathsf p_T^\perp}r}{\langle r,\mathsf p_T\rangle}$ 
is valid throughout the $\mathsf r_T$ ball. The actual projective endpoint is
\begin{align*}
 \Phi_T(w_0+h,[\mathsf p+\eta],y+k)
 =\bigl(w_T^e,[\rho_T^e]\bigr).
\end{align*}
Its difference from the central endpoint in the unified output coordinate is
\begin{align*}
 \widehat\Phi_T(e)
 &:=\Delta_{\Phi_T(z,y)}\left(
       \Phi_T\bigl(\Delta_z^{-1}(h,\eta),y+k\bigr)
     \right)\notag\\
 &=\left(
 w_T^e-w_T,
 \frac{\Pi_{\mathsf p_T^\perp}\rho_T^e}
      {\langle\rho_T^e,\mathsf p_T\rangle}
 \right)
 \in H^b\oplus \mathsf p_T^\perp.
\end{align*}
Thus $\widehat\Phi_T(0)=0$.  
After enlarging the universal chart constant $C_{\rm ch}$ if necessary,
a direct computation shows that throughout the $\mathsf r_T$ ball
\begin{align}\label{eq:projective-chart-two-jet-bound}
 \|D\widehat\Phi_T(e)\|
 +\|D^2\widehat\Phi_T(e)\|
 \le \mathsf C_T.
\end{align}
At the central point, the restriction of
$D\widehat\Phi_T(0)$ to state directions
$H^b\oplus\mathsf p^\perp$ is the coordinate representative of
$J(z,y)=D_z\Phi(z,y)$. Consequently,
\begin{align*}
 \|J(z,y)\|
 \le C_{\rm ch}\bigl(M_T+d_T^{-1}M_T\bigr)
 \le\mathsf C_T.
\end{align*}

The continuity of the solution map on $H^b\times\mathsf Y_T$ implies that $\mathbb W_T$ and $\delta_T$ are continuous. In each of the fixed trivialisations \eqref{eq:chart-rotation}, the first and second endpoint jets depend continuously on the base point and the perturbation. Moreover, if $\|e\|<\delta_T(w_0,y)/2$, then the same strict inequality holds at all sufficiently nearby base points. Taking the supremum over admissible perturbations therefore shows that $M_T$ is lower semicontinuous.

The raw endpoint norm $d_T$ is continuous. Hence $\mathsf r_T^{-1}$ and $\mathsf C_T$ are lower semicontinuous on $\{d_T>0\}$. We extend both quantities by $+\infty$ on $\{d_T=0\}$. Since $M_T\ge1$ and
\begin{align*}
\mathsf r_T^{-1}\ge 8M_T d_T^{-1},
\end{align*}
these extensions remain lower semicontinuous. Since $\|J\|$ is continuous on $\{d_T>0\}$, the function
\begin{align*}
(z,y)\longmapsto \log^+\left(1+\|J(z,y)\|+\mathsf C_T(z,y)+\mathsf r_T(z,y)^{-1}\right),
\end{align*}
extended by $+\infty$ on $\{d_T=0\}$, is lower semicontinuous and hence Borel measurable. Indeed, if $d_T(z_n,y_n)\to0$, then $\mathsf r_T(z_n,y_n)^{-1}\to+\infty$ by the preceding bound.

Finally, for the compatible driver $y=Y$, the standard backward
uniqueness theorem gives $d_T(z,Y)>0$ at the central endpoint.  The
raw endpoint closeness in \eqref{eq:revision-chart-radius} then guarantees
nonvanishing throughout the corresponding chart ball.

Recall the base weight 
\[V_r(w_0) = (1+\|w_0\|_{H^b}^2)^{R_r}\exp(\eta_0\|w_0\|_2^2) \]
from \eqref{eq:revision-base-weight}  and the reset time $t_*$ from \Cref{lem:robust-HPSRY-hierarchy}. 
The deterministic chart bounds must finally be integrated under the
stochastic driver.  Combining the raw two jet estimates, the positive time
reset, and the base moments gives the logarithmic moment estimate used in
\Cref{sec:random-stable-ball}.
\begin{theorem}
\label{lem:projective-log-moment}
Fix $T\ge t_*$.  For every $r<\infty$, there are integers
$d_r,R_r<\infty$, and a constant
$C_{T,r}$ such that, for every $w_0\in H^b$ and every unit
$\mathsf p\in H^1$,
\begin{align}
 &\mathbf{E}_{w_0,\mathsf p}
 \left[\log^+\left(
  1+\|J(z,Y_T)\|+\mathsf C_T(z,Y_T)+\mathsf r_T(z,Y_T)^{-1}
 \right)\right]^r \le
 C_{T,r}\left(V_r(w_0)+1+\|\mathsf p\|_{H^1}^{d_r}\right).
 \label{eq:revision-global-two-jet-moment}
\end{align}
The orders in $V_r$ and $d_r$ are independent of $T$.  The
estimate is interpreted as $+\infty$ when $[\mathsf p]\notin\PP(H^1)$.
It holds with the same form conditionally after a stopping time.
\end{theorem}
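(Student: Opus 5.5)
The plan is to reduce \eqref{eq:revision-global-two-jet-moment} to polynomial moments of the path quantity $\mathcal A_T$ from \eqref{eq:revision-base-path-moments} and of the raw fibre quantities, and then to integrate. By the definitions \eqref{eq:revision-chart-radius}--\eqref{eq:revision-C-def}, and the bound $\|J\|\le\mathsf C_T$,
\begin{align*}
 \log^+\bigl(1+\|J\|+\mathsf C_T+\mathsf r_T^{-1}\bigr)
 \le C\bigl(1+\log\delta_T^{-1}+\log^+M_T+\log^+d_T^{-1}\bigr).
\end{align*}
The first term is controlled by \eqref{eq:revision-base-two-jet-pathwise}, which gives $\log\delta_T^{-1}\le C_T(1+\mathbb W_T^{q_\sigma})\le C_T\mathcal A_T$, since $m_0\ge q_\sigma$.

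The second term, $\log^+M_T$, is handled using the fibre variation equations of \Cref{lem:linearised-variation-energy}, applied uniformly on the $\delta_T/2$ tube. On that tube the coefficient obeys \eqref{eq:revision-base-coefficient-tube}, so $\Lambda_T\le C_T\mathcal A_T^{1/q_\sigma}$. The base variations are bounded by \eqref{eq:revision-base-variation-bound}, and the initial fibre perturbations satisfy $\|\mathsf p+\eta\|_2\le2$. Together these give $\log^+M_T\le C_T(1+\mathbb W_T^{q_\sigma})$. The projective chart derivatives only add powers of $d_T^{-1}$, which are already accounted for. Hence everything except $\log^+d_T^{-1}$ is bounded pathwise by $C_T\mathcal A_T$. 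Its $r$-th moment is then at most $C_{T,r}V_r(w_0)$ by \eqref{eq:revision-base-path-moments}, with orders independent of $T$ because the dependence enters only through constants.

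The main obstacle is the lower bound on the raw fibre norm $d_T=\|U(T,0)\mathsf p\|_2$, that is, a moment bound for $\log^+d_T^{-1}$. This is where $\|\mathsf p\|_{H^1}$ enters. I would use the logarithmic energy identity \eqref{eq:FK-additive-functional}:
\begin{align*}
 \log d_T=\int_0^T\bigl(-\nu\|\mathsf p_s\|_{H^1}^2+\langle w_s,B(\mathsf p_s,\mathsf p_s)\rangle\bigr)\dd s,
\end{align*}
so that
\begin{align*}
 \log^+d_T^{-1}\le\nu\int_0^T\|\mathsf p_s\|_{H^1}^2\dd s+C\int_0^T\|w_s\|_{H^b}\dd s .
\end{align*}
The first integral has $r$-th moment bounded by $C_{r,T}(V_r(w_0)+1+\|\mathsf p\|_{H^1}^{R_r})$ via the block energy estimate \eqref{eq:robust-block-energy}. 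The second is controlled by \eqref{eq:revision-maximal-base-moment}. Taking $d_r=R_r$ from \eqref{eq:robust-block-energy}, enlarging the base weight within the nested hierarchy of \Cref{lem:robust-HPSRY-hierarchy}, and using Minkowski's inequality yields \eqref{eq:revision-global-two-jet-moment}. The estimate is infinite when $\mathsf p\notin H^1$, as the statement allows.

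The conditional version after a stopping time follows from the strong Markov property, since every ingredient used is stated conditionally: \eqref{eq:revision-cond-reset-p}, the conditional maximal estimates, and fresh Gaussian increments for $Y_T$. The one point I would check carefully is that the supremum over the tube in $M_T$ does not reintroduce a $d_T$-dependence through the fibre energy. It should not, because \eqref{eq:revision-raw-homogeneous-energy} bounds the fibre from above by $\|\mathsf p+\eta\|_2$ alone.
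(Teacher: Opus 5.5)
Your proposal is correct and follows essentially the same route as the paper: the same logarithmic splitting into $\log\delta_T^{-1}$, $\log^+M_T$ and $\log^+d_T^{-1}$, pathwise control of the first two by $C_T(1+\mathbb W_T^{q_\sigma})\le C_T\mathcal A_T$ via the tube bounds and \Cref{lem:linearised-variation-energy}, and the logarithmic energy identity combined with the block energy estimate \eqref{eq:robust-block-energy} for $\log^+d_T^{-1}$. The only cosmetic difference is that you bound $\int_0^T\|w_s\|_{H^b}\,\dd s$ through the maximal moment \eqref{eq:revision-maximal-base-moment}, whereas the paper uses $T^{1-1/p_\sigma}\|w\|_{L^{p_\sigma}(0,T;H^b)}\le C_T\mathcal A_T$.
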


\begin{proof}
Fix a unit representative $\mathsf p\in H^1$, and let $\rho_t=U(t,0)\mathsf p$,  $d_t=\|\rho_t\|_2$,  $ \mathsf p_t=\frac{\rho_t}{d_t}$. 
Backward uniqueness gives $d_t>0$ for $t>0$. Since
\[\partial_t\rho_t=\nu\Delta \rho_t +\cS(w_t,\rho_t),\] 
a direct computation shows 
\begin{align*}
 \frac{\dd}{\dd t}\log d_t^{-1}=-\langle \mathsf p_t, \nu\Delta \mathsf p_t +\cS(w_t,\mathsf p_t)\rangle\leq \nu \|\mathsf p_t\|_{H^1}^2 + C\|w_t\|_{H^b}
\end{align*}
as $\|\mathsf p_t\|_2=1$ and $b>2$. Because $d_0=1$, integration gives
\begin{align}\label{eq:revision-d-inverse}
 \log^+d_T^{-1}
 \le
 \nu\int_0^T\|\mathsf p_t\|_{H^1}^2\,\dd t
 +C\int_0^T\|w_t\|_{H^b}\,\dd t.
\end{align}

We next control the raw endpoint two jet. Fix the compatible central pair
$(w_0,Y_T)$ and write $w^{w_0+h,Y_T+k}$ as $w$ for simplicity. Consider an arbitrary point of the adaptive tube,
\begin{align*}
 \|(h,k)\|_{H^b\times\mathsf Y_T}<\delta_T(w_0,Y_T)/2,
\end{align*}
By
\Cref{lem:revision-base-two-jet}, uniformly over this tube,
\begin{align}\label{eq:revision-base-variation-tube-bound}
\begin{split}
\|w\|_{L^{p_\sigma}(0,T;H^b)}
 &\le \mathbb W_T(w_0,Y_T)+C_T,\\
 \|\delta_iw\|_{L^{p_\sigma}(0,T;H^b)}
 &\le
 \exp\bigl(C_T(1+\mathbb W_T(w_0,Y_T)^{q_\sigma})\bigr)
 \|a_i\|,\\
 \|\delta_{12}w\|_{L^{p_\sigma/2}(0,T;H^b)}
 &\le
 \exp\bigl(C_T(1+\mathbb W_T(w_0,Y_T)^{q_\sigma})\bigr)
 \|a_1\|\,\|a_2\|,
\end{split}
\end{align}
where $a_i=(h_i,k_i)\in H^b\times\mathsf Y_T$ are base driver
directions. The bounds hold as operator and bilinear operator estimates,
respectively.

Let the affine fibre input at this tube point be $\mathsf p+\bar\eta$, where
$\|\bar\eta\|<\delta_T(w_0,Y_T)/2$. Since $\delta_T\le1$, one has
$\|\mathsf p+\bar\eta\|_2\le3/2$. Let $\rho$ be the corresponding raw fibre
solution, and let $\zeta^i$ and $\zeta^{12}$ be its first and second
variations in full directions
\begin{align*}
 \alpha_i=(a_i,\eta_i),
 \qquad \eta_i\in H,
\end{align*}
that satisfying
\begin{align*}
 \partial_t\zeta^i
 &=L_{w}(t)\zeta^i+\cS(\delta_iw,\rho),
 &\zeta^i(0)&=\eta_i,\\
 \partial_t\zeta^{12}
 &=L_{w}(t)\zeta^{12}
   +\cS(\delta_{12}w,\rho)
   +\cS(\delta_1w,\zeta^2)
   +\cS(\delta_2w,\zeta^1),
 &\zeta^{12}(0)&=0.
\end{align*}

Since $q_\sigma>1$, \eqref{e.031601} and  the first line of
\eqref{eq:revision-base-variation-tube-bound} implies, after enlarging
$C_T$,
\begin{align*}
 \exp\bigl(C\Lambda_T(w)\bigr)
 \le \exp\bigl(C_T(1+\mathbb W_T(w_0,Y_T)^{q_\sigma})\bigr).
\end{align*}
The homogeneous estimate
\eqref{eq:revision-raw-homogeneous-energy} and initial $\|\mathsf p+\bar\eta\|_2\le3/2$ therefore give
\begin{align}\label{eq:revision-rho-energy-tube}
 \mathbb A_T(\rho)
 \le \exp\bigl(C_T(1+\mathbb W_T(w_0,Y_T)^{q_\sigma})\bigr).
\end{align}

Applying \eqref{eq:revision-raw-first-energy} and using
\eqref{eq:revision-base-variation-tube-bound}--\eqref{eq:revision-rho-energy-tube}, we obtain
\begin{align*}
 \mathbb A_T(\zeta^i)
 &\le C\exp\bigl(C_T(1+\mathbb W_T(w_0,Y_T)^{q_\sigma})\bigr)
 \left(
  \|\eta_i\|_2^2
  +T^{1-2/p_\sigma}
   \|\delta_iw\|_{L^{p_\sigma}(0,T;H^b)}^2
   \|\mathsf p+\bar\eta\|_2^2
 \right)\\
 &\le
 \exp\bigl(C_T(1+\mathbb W_T(w_0,Y_T)^{q_\sigma})\bigr)
 \|\alpha_i\|^2,
\end{align*}
where repeated exponential factors have been absorbed by increasing the
deterministic constant $C_T$. Similarly,
\eqref{eq:revision-raw-second-energy} and the first variation estimate give
\begin{align*}
 \mathbb A_T(\zeta^{12})
 &\le C\exp\bigl(C_T(1+\mathbb W_T(w_0,Y_T)^{q_\sigma})\bigr)
 \bigg(
 T^{1-4/p_\sigma}
 \|\delta_{12}w\|_{L^{p_\sigma/2}(0,T;H^b)}^2
 \|\mathsf p+\bar\eta\|_2^2\\
 &\hspace{28mm}
 +T^{1-2/p_\sigma}
 \sum_{i=1}^2
 \|\delta_iw\|_{L^{p_\sigma}(0,T;H^b)}^2
 \mathbb A_T(\zeta^{3-i})
 \bigg)\\
 &\le
 \exp\bigl(C_T(1+\mathbb W_T(w_0,Y_T)^{q_\sigma})\bigr)
 \|\alpha_1\|^2\|\alpha_2\|^2.
\end{align*}

Let $\mathcal R_1$ and $\mathcal R_2$ denote the suprema, over the
adaptive tube and over unit full directions, of the $C([0,T];H)$ norms
of the first and second raw fibre variations. Since
$\|\zeta\|_{C_TH}^2\le\mathbb A_T(\zeta)$, the preceding estimates yield
\begin{align}\label{eq:revision-raw-two-jet}
 \log^+(1+\mathcal R_1+\mathcal R_2)
 \le C_T(1+\mathbb W_T(w_0,Y_T)^{q_\sigma}).
\end{align}

The raw endpoint $\mathcal E_T$ consists of the base endpoint and the raw
fibre endpoint. Therefore
\eqref{eq:revision-base-two-jet-pathwise} and
\eqref{eq:revision-raw-two-jet} imply
\begin{align*}
 M_T
 \le C_T\bigl(1+\mathcal X_T+\mathcal R_1+\mathcal R_2\bigr),
\end{align*}
and hence
\begin{align}\label{eq:revision-M-global}
 \log\delta_T^{-1}+\log^+M_T
 \le C_T(1+\mathbb W_T(w_0,Y_T)^{q_\sigma}).
\end{align}
For the Gaussian compatible driver $Y_T$, the integer $m_0$ in
$\mathcal A_T$ was chosen with $m_0\ge q_\sigma$ in \eqref{eq:revision-base-path-moments}. Thus
\begin{align*}
 1+\mathbb W_T(w_0,Y_T)^{q_\sigma}\le C_T\mathcal A_T,
\end{align*}
and \eqref{eq:revision-M-global} becomes
\begin{align}\label{eq:revision-M-global-random}
 \log\delta_T^{-1}+\log^+M_T
 \le C_T\mathcal A_T.
\end{align}

The definitions \eqref{eq:revision-chart-radius} and
\eqref{eq:revision-C-def} give
\begin{align*}
 \mathsf r_T^{-1}
 &\le 8\left(\delta_T^{-1}+M_Td_T^{-1}\right),\\
 \mathsf C_T
 &=C_{\rm ch}\left(
 1+M_T+d_T^{-1}M_T+d_T^{-1}M_T^2+d_T^{-2}M_T^2
 \right).
\end{align*}
Moreover, $\mathsf C_T$ bounds the first two derivatives of the
normalised projective endpoint on the $\mathsf r_T$ ball, and in
particular
\begin{align*}
 \|J(z,Y_T)\|\le \mathsf C_T.
\end{align*}
Elementary logarithmic inequalities therefore give
\begin{align*}
 &\log^+\left(
 1+\|J(z,Y_T)\|+\mathsf C_T+\mathsf r_T^{-1}
 \right)\\
 &\qquad\le
 C_T\left(
 1+\log\delta_T^{-1}
 +\log^+M_T+\log^+d_T^{-1}
 \right).
\end{align*}
Combining this with
\eqref{eq:revision-d-inverse} and
\eqref{eq:revision-M-global-random}, and using
\begin{align*}
 \int_0^T\|w_t\|_{H^b}\,\dd t
 \le
 T^{1-1/p_\sigma}
 \|w\|_{L^{p_\sigma}(0,T;H^b)}
 \le C_T\mathcal A_T,
\end{align*}
gives the pathwise estimate
\begin{align}\label{eq:C2-log-bound}
 \log^+\left(
 1+\|J(z,Y_T)\|+\mathsf C_T+\mathsf r_T^{-1}
 \right)
 \le
 C_T\left(
 \mathcal A_T
 +1+\int_0^T\|\mathsf p_t\|_{H^1}^2\,\dd t
 \right).
\end{align}

Finally take the $r$-th moment. By
\eqref{eq:revision-base-path-moments},
\begin{align*}
 \mathbf E_{w_0}\mathcal A_T^r
 \le C_{T,r}V_r(w_0),
\end{align*}
after choosing a sufficiently large member of the nested base weight
hierarchy. By \eqref{eq:robust-block-energy},
\begin{align*}
 \mathbf E_{w_0,\mathsf p}
 \left(
 \int_0^T\|\mathsf p_t\|_{H^1}^2\,\dd t
 \right)^r
 \le
 C_{T,r}\left(
 V_r(w_0)+1+\|\mathsf p\|_{H^1}^{d_r}
 \right)
\end{align*}
after enlarging $d_r$ and, if necessary, the polynomial exponent
$R_r$ in $V_r$. Applying the inequality for the $r$-th
power of a finite sum to \eqref{eq:C2-log-bound} proves
\eqref{eq:revision-global-two-jet-moment}.

The same deterministic estimates apply after a finite stopping time.
Conditioning on the state at that time and using the strong Markov
property gives the conditional version with the same form.
\end{proof}

\section*{Statements and declarations}

\subsection*{Competing interests.}
The authors declare that they have no competing interests.

\subsection*{Use of generative artificial intelligence}
OpenAI ChatGPT  was used to assist with proof auditing and manuscript preparation. The authors take full responsibility for the mathematical content.

\end{document}